\definecolor{light-gray}{gray}{0.95}
\newcommand\Sym[1]{\mathfrak{S}_{#1}}
\newcommand\Rank[1]{\Gamma(#1)}
\newcommand\Sn{\mathfrak{S}_{n}}
\newcommand\1[1]{\mathds{1}_{#1}}
\newcommand\n{\llbracket n \rrbracket}
\newcommand\set[1]{\llbracket #1 \rrbracket}
\newcommand{\argmax}{\operatornamewithlimits{argmax}}
\newcommand{\supp}{\operatorname{supp}}
\newcommand{\Supp}{\operatorname{\textbf{supp}}}
\newcommand\Subsets[1]{\mathcal{P}(#1)}
\newcommand\SubsetsWE[1]{\bar{\mathcal{P}}(#1)}
\newcommand{\EOD}{\widehat{\mathcal{A}}_{N}}
\newcommand{\Gn}{\bar{\Gamma}_{n}}
\newcommand{\GGn}{\Gamma^{\ast}_{n}}
\newcommand\Hn{\mathbb{H}_{n}}
\newcommand{\Space}[1]{L(#1)}
\newcommand{\Part}{\operatorname{Part}}
\newcommand{\eig}{\operatorname{eig}}
\newcommand{\shape}{\operatorname{shape}}
\newcommand{\SYT}{\text{SYT}_{n}}
\newcommand{\Span}{\operatorname{span}}
\newcommand{\image}{\operatorname{Im}}
\newcommand{\noninv}{\operatorname{noninv}}
\newcommand{\Bij}{\operatorname{Bij}}
\theoremstyle{plain}
\newtheorem{theorem}{Theorem}
\newtheorem{lemma}[theorem]{Lemma}
\newtheorem{proposition}[theorem]{Proposition}
\theoremstyle{remark}
\newtheorem{example}{Example}
\newtheorem{remark}[theorem]{Remark}
\theoremstyle{definition}
\newtheorem{definition}[theorem]{Definition}
\begin{document}

\title{A Multiresolution Analysis Framework\\ for the Statistical Analysis of Incomplete Rankings}

\author[1]{Eric Sibony\thanks{Corresponding author - email: esibony@gmail.com - postal address: Telecom ParisTech 46 rue Barrault, 75013 Paris, France.}}
\author[1]{St\'ephan Cl\'emen\c{c}on}
\author[2]{J\'er\'emie Jakubowicz}
\affil[1]{LTCI UMR No. 5141 Telecom ParisTech/CNRS, Institut Mines-Telecom, Paris, 75013, France}
\affil[2]{SAMOVAR UMR No. 5157 Telecom SudParis/CNRS, Institut Mines-Telecom, Paris, 75013, France}

\date{}



\maketitle

\begin{abstract}%
Though the statistical analysis of ranking data has been a subject of interest over the past centuries, especially in economics, psychology or social choice theory, it has been revitalized in the past 15 years by recent applications such as recommender or search engines and is receiving now increasing interest in the machine learning literature. Numerous modern systems indeed generate ranking data, representing for instance ordered results to a query or user preferences. Each such ranking usually involves a small but varying subset of the whole catalog of items only. The study of the variability of these data, \textit{i.e.} the statistical analysis of \textit{incomplete rankings}, is however a great statistical and computational challenge, because of their heterogeneity and the related combinatorial complexity of the problem. Whereas many statistical methods for analyzing \textit{full rankings} (orderings of all the items in the catalog) are documented in the dedicated literature, \textit{partial rankings} (full rankings with ties) or \textit{pairwise comparisons}, only a few approaches are available today to deal with incomplete ranking, relying each on a strong specific assumption.

It is the purpose of this article to introduce a novel general framework for the statistical analysis of incomplete rankings. It is based on a representation tailored to these specific data, whose construction is also explained here, which fits with the natural multi-scale structure of incomplete rankings and provides a new decomposition of rank information with a \textit{multiresolution analysis} interpretation (MRA). We show that the MRA representation naturally allows to overcome both the statistical and computational challenges without any structural assumption on the data. It therefore provides a general and flexible framework to solve a wide variety of statistical problems, where data are of the form of incomplete rankings.
\end{abstract}


\section{Introduction}
\label{intro}

As they represent observations of ordinal comparisons, rankings naturally arise in a wide variety of situations, especially when the data are related to human activities: ballots in political elections, survey answers, expert judgments, sports results, competition rankings, customer buying behaviors or user preferences among others. Initiated in social choice theory, the statistical analysis of ranking data has been the subject of much attention in the statistics literature, mostly in relation with psychological and economics applications. The last decade has seen a revival of interest for this topic, in particular in the machine learning and artificial intelligence literature, motivated by various modern applications such as recommendation  systems, search engines or crowdsourced annotation, producing or fed by massive ranking data and bringing new statistical and computational challenges. The goal of this paper is twofold: to explain the major limitations of the state-of-the-art in the domain of statistical analysis of ranking data and to introduce a novel framework for overcoming them.

From a broad perspective, rankings are defined as (strict) partial orders ``$\prec$'' on a set of $n\geq 1$ items $\n := \{ 1,\dots, n\}$ \citep[see for instance][for a rigorous definition]{Stanley86}, where $a\succ b$ means that item $a$ is preferred to / ranked higher than item $b$. A dataset of rankings is thus a collection of partial orders $(\prec_{1}, \dots, \prec_{N})$ modeled as IID samples of a probability distribution over the set of partial orders on $\n$. Several typical problems can then be considered. \textit{Ranking aggregation} consists in finding a ranking $\prec$ that best ``summarizes'' the dataset. It corresponds to finding the outcome of an election, the final ranking in a sports competition or the global ordering of items in the presence of several experts or even in a crowdsourced judgment setting. Statistical analysis is then used to define the notion of ``summary'' and to study different notions of variability in this context. Another issue of major interest is the statistical estimation of the model underlying the observations in order to interpret the data or predict new observations. It is applied for instance to analyze survey answers or buying behaviors of customers and take business decisions or to predict individual preferences in a recommendation setting. Clustering individuals based on the rankings they express on alternatives is another important task, used for instance to segment the population of customers based on their tastes, for marketing purposes.


As the set of partial orders on $\n$ exhibits an extremely rich mathematical structure, the vast majority of the approaches introduced in the literature focus on a certain type of rankings. A widely considered type is that of ``full rankings'', defined as strict total orders on $\n$, of the form $a_{1}\succ a_{2}\succ\dots\succ a_{n}$, where $a_{1}$ and $a_{n}$ are respectively the items ranked first and last. Such an order is usually described as the permutation $\sigma$ on $\n$ that maps an item to its rank: $\sigma(a_{i}) = i$ for all $i\in\n$. Statistical analysis of full rankings thus rely on probabilistic modeling on the symmetric group $\Sn$, the set of permutations on $\n$ namely. Approaches include ``parametric'' models based on a psychological interpretation, such as the Mallows model and its extensions \citep[see for instance][]{Mallows57, FV86, CM93, DPR04, MM2014}, the Plackett-Luce model and its extensions \citep[see for instance][]{Luce59, Plackett75, Henery81, FV88, Xu2000, GM08} with several fitting methods \citep[see for instance][]{Hunter04, Guiver09, CD12} or the Thurstone model and its extensions  \citep[see for instance][]{Thurstone27, MO99, WBA02}, applied for instance to label ranking \citep[][]{CHH09, CDH10}, ranking aggregation \citep[]{MPPB07,ACPX13}, ranking prediction \citep[][]{LL02,ACX13} or clustering and data analysis \citep[][]{GM09}. Many other approaches use a ``non-parametric'' model backed by a classic mathematical assumption, such as distance-based assumptions \citep[see for instance][]{FA86, LL03, Sun2012}, independence modeling \citep[see for instance][]{CFV91, Csiszar09b, HG2012}, embedding in Euclidean spaces \citep[see for instance][]{YC01, HW09, PMC11}, pairwise decomposition and modeling of pairwise comparisons \citep[see for instance][]{HFCB08, VZ14}, sparsity assumptions \citep[see for instance]{Jagabathula2011}, sampling-based models \citep[see for instance][]{Diaconis1998,Diaconis06, Ailon08, Ailon14}, algebraic toric models \citep[see for instance][]{Csiszar09a,SW12}, or harmonic analysis \citep[see for instance][]{Diaconis1988, Diaconis89, HGG09, Kondor2010, Kakarala2011, Irurozki2011, Kondor2012}.

In many applications however, observations are not total orders on $\n$ and cannot be represented by permutations. Most of the approaches for the statistical analysis of full rankings thus cannot be applied and either they must be adapted or new ones must be invented, with respect to the considered types of rankings. The literature distinguishes two main types of rankings: the \textit{partial rankings} (also referred to as \textit{bucket orders}), and the incomplete rankings (sometimes called \textit{subset rankings} or \textit{listwise rankings}), see for instance \citet{Marden96} or \citet{AY14}. Partial rankings are orders of the form $a_{1,1},\dots,a_{n_{1},1}\succ\dots\succ a_{1,r},\dots,a_{n_{r},r}$ with $r\geq 1$ and $\sum_{i=1}^{r}n_{i} = n$. They correspond to full rankings with ties and include the particular case of top-$k$ rankings, of the form $a_{1}\succ\dots\succ a_{k}\succ \textit{the rest}$. If some approaches for full rankings directly apply to partial rankings \citep[see for instance][]{Diaconis89, LL03, HFCB08, VZ14}, the extension of other methods has motivated many contributions in the literature, to extend the Mallows or Plackett-Luce models \citep[see for instance][]{BOB07,MLB10, QGL10, CTM14}, to define and study proper distances \citep[see for instance][]{Critchlow1985, FKMSV03, FKMSV06} or to extend nonparametric methods \citep[see for instance][]{Huang12, Kakarala2012}. Other approaches have also been introduced specifically for partial rankings, for different applications such as estimation \citep{LM08}, prediction \citep{CHWW12} or ranking aggregation \citep{AS12}.

Incomplete rankings are defined as partial orders of the form $a_{1}\succ a_{2}\succ\dots\succ a_{k}$ with $2 \leq k < n$. The fundamental difference with full or partial rankings is that each incomplete ranking only involves a (possibly small) subset of items, which can vary a lot among observations. The statistical challenge of the analysis of incomplete rankings is then to handle their heterogeneity with accuracy. Incomplete rankings include the specific case of pairwise comparisons (for $k=2$), which has attracted much attention in the literature. The impressive surveys of \citet{DF76} and \citet{Cattelan12} already show the abundance of the developments from the Thurstone model (with the additional insights from \citet{Mosteller51}) and from the Bradley-Terry model \citep[][]{BT52}, at the origin of the Plackett-Luce model, that keep growing with new results, such as the theoretical guarantees provided in \citet{SPBRBW15}. Other approaches use the Mallows model \citep[see][]{BFHS14,LuB14}, matrix approximation \citep{KO97}, entropy maximization methods \citep{AS11} or graphical models \citep{DIS15}. Recovering a full ranking on $\n$ from pairwise comparisons has been a topic of special interest \citep[see for instance][]{CSS99, BM08, JN11, GL11, Ailon12, NOS12, WJJ13, CBCTH13, RA14}, in particular with the introduction and development of the HodgeRank framework \citep[see][]{JLYY11,XHJYLY12,DSS2012, OBO13} to exploit the topological structure of the pairwise comparisons graph. Another subject of interest concerns the case where items have features, and the task of learning how to rank them can be cast as an ``ordinal regression'' problem, which many off-the-shelf supervised learning algorithms can be applied to \citep[see for instance][]{HGO00, FISS03, BSRLDHH05, CG05}.

Much less contributions however have been devoted to the analysis of incomplete rankings of arbitrary and variable size. Yet in many applications, observed rankings involve subsets of items: customers usually choose their products among the subsets they were presented, races/games involve different subsets of competitors in many racing/gaming competitions, users express their preferences only on a small part of the catalog of items. Ties might be present in these rankings, which do not thus have the exact form of incomplete rankings, but the greatest challenge in their analysis remains to handle the heterogeneity of their sizes and of the subsets of items they are related to. In many practical applications, the number $n$ of items can be very large, around $10^{4}$ say, adding a tremendous computational challenge to the mathematical formulation problem. Among parametric approach, the Plackett-Luce model is well-known to handle such incomplete rankings easily \citep[see for instance][]{CDH10, WL11}. By contrast, only the method introduced in \citet{LuB11} allows to use the Mallows model with incomplete rankings. Besides from that, we are only aware of three nonparametric approaches to handle incomplete rankings, namely those introduced in \citet{YLA02}, \citet{Kondor2010} and \citet{Sun2012} in order to perform tests, estimation and prediction respectively. The principles underlying these approaches are described at length in Subsection \ref{subsec:existing-approaches}.

\subsection{Our contributions}

In this article we introduce a novel general framework for the statistical analysis of incomplete rankings. Our contributions are both methodological and theoretical: we establish a new decomposition of functions of rankings that has a standalone interest and introduce a new approach to analyze ranking data based on this decomposition. Some of the results of this article are already proven in the unpublished manuscript \citet{CJS2014} or in the conference paper \citet{SCJ2015}, though formulated in a different manner. In any case, the present article is fully self-contained.
\begin{enumerate}
	\item We first define a rigorous setting for the statistical analysis of incomplete rankings accounting for their multi-scale structure. This includes a thorough discussion about the assumption of the existence of one single ranking model that explains all possible observations and the data generating process that produces the observations. We also clearly explicit the challenges of the statistical analysis of incomplete rankings and show why the existing approaches either do not overcome them or rely on restrictive assumptions.
	\item Exploiting recent results from algebraic topology, we establish the construction of the MRA representation, a novel decomposition for functions of incomplete rankings that fits with the natural multi-scale structure of incomplete rankings. We detail its multiresolution interpretation and show its strong localization properties.
	\item We use the MRA representation to define the MRA framework for the statistical analysis of incomplete rankings. It provides a general method to tackle many statistical problem on a dataset composed of incomplete rankings. As it uses the MRA representation, it naturally overcomes the challenges aforementioned and at the same time offers a general and flexible sandbox to design many procedures. 
	\item Finally we establish several connections between the MRA representation and other mathematical constructions on rankings or permutations. In particular we explain that the MRA representation decomposes rank information into pieces of ``relative rank information'' whereas $\Sn$-based harmonic analysis decomposes rank information into pieces of ``absolute rank information'', and highlight the relationship between these two decompositions.
\end{enumerate}

In statistical signal and image processing, novel harmonic analysis tools such as wavelet bases and their extensions have completely revitalized structured data analysis these last decades and lead to sparse representations and efficient algorithms for a wide variety of statistical tasks: estimation, prediction, denoising, compression, clustering, etc.  Directly inspired by the seminal contributions of P. Diaconis, where harmonic analysis tools have been first used to analyze ranking data, we believe that the MRA representation introduced in this paper may lead to a novel and powerful way of processing ranking data, in the same way as recent advances in computational harmonic analysis produced successful methods for high-dimensional data analysis. As will be seen throughout the paper, even if the analogy with MRA on the real line and standard wavelet theory has its limitations, it sheds light onto the rationale of our proposal.

\subsection{Related work}

As we have previously tried to give an overview of the general ranking literature and the existing approaches for the statistical analysis of incomplete rankings are recalled in Subsection \ref{subsec:existing-approaches}, we focus here on contributions that inspired the present work, harmonic and multiresolution analysis playing an important role.

Harmonic analysis for rankings was introduced in the seminal contributions \citet{Diaconis1988} and \citet{Diaconis89}, and then developed in several contributions \citep[see for instance][]{Clausen1993, Maslen1998, HGG09, Kondor2010, Irurozki2011, Kakarala2011}.  Its principle is to decompose functions of rankings into projections onto subspaces that are invariant under $\Sn$-based translations (see  Subsection \ref{subsec:background-harmonic-analysis} for the details), computed with the representations of the symmetric group. It has been applied with success to full and partial rankings, but it is by nature not fitted for the analysis of incomplete rankings. As shall be seen below, the MRA representation we introduce decomposes instead functions of rankings into projections that localize the effects of specific items, and has a natural multiresolution interpretation.

Our work is of course inspired by the first multiresolution analysis constructed for rankings, introduced in \citet{Kondor2012}. The latter provides a decomposition of functions on the symmetric group that refines in some way that of $\Sn$-based harmonic analysis as it allows to localize the effects of items inside the projections onto invariant subspaces. Its tree-structure however induces that the projections localize information conditioned upon those of lower scale, and does not fit with the multi-scale structure of subsets of items. More generally, several constructions for multiresolution analysis on discrete data have been introduced in the literature, see for instance \citet{Coifman06}, \citet{Gavish2010}, \citet{Hammond2011}, \citet{RG2013} or \citet{KTG14}. Though they all constitute great sources of inspiration, none of them leads to the MRA representation we introduce. The latter indeed has a different mathematical nature and involves objects from algebraic topology.

The HodgeRank framework is the first to use tools from algebraic topology for the purpose of ranking analysis. It was introduced in \citet{JLYY11} and then developed in several contributions such as \citet{XHJYLY12}, \citet{DSS2012} or \citet{OBO13}. Its principle also relies on decomposing a function of rankings into as a sum of meaningful projections. This decomposition is different from the MRA representation but some connection exists in particular cases, which we detail in Subsection \ref{subsec:pairwise-comparisons}. The HodgeRank framework however only applies to pairwise comparisons, whereas the MRA representation does to incomplete rankings of any size.

\subsection{Outline of the paper}

The paper is organized as follows:
\begin{itemize}
	\item A rigorous setting for the statistical analysis of incomplete rankings is defined in Section \ref{sec:setting}. After describing the classic statistical problems on ranking data, we discuss in depth the ``consistency assumption'', which stipulates the existence of one ranking model to explain all observations, and propose a generic data generating process that produces incomplete ranking observations. Then we explain the statistical and computational challenges of the analysis of incomplete rankings, and show that the existing approaches either do not fully overcome them or rely on a strong assumption on the form of the data. We finish the section with a discussion about the impact of the observation design on the complexity of the analysis.
	\item In Section \ref{sec:MRA-representation} we introduce the notations, concepts and main properties of the MRA representation. The construction and the related proofs are postponed to Section \ref{sec:MRA-construction}. We develop at length the multiresolution interpretation and show that the MRA representation allows to characterize the solutions to linear systems that involve marginals on incomplete rankings. At last we describe a fast procedure to compute the representation (a ``fast wavelet transform'') and give bounds for its complexity.
	\item The MRA framework is introduced in Section \ref{sec:MRA-framework}. After characterizing the parameters of the ranking model that can be inferred from observations, we introduce a general method that uses the MRA representation to do it efficiently. Several examples are displayed in order to show how this method can be combined with other procedures to tackle many statistical problem involving incomplete rankings. Then we demonstrate how this method naturally overcomes the statistical and computational challenges, while still offering many possibilities of fine-tuning and combinations with other methods.
	\item Section \ref{sec:MRA-construction} mainly contains the construction of the MRA representation and the proofs of the properties claimed in Section \ref{sec:MRA-representation}. It also provides some more insights about why the embedding operator used make the construction work whereas a classic, more intuitive, embedding operator would not make it work.
	\item In Section \ref{sec:connections} we establish several connections between the MRA representation and other mathematical constructions. The connection with $\Sn$-based harmonic analysis in particular is treated in depth. We show why the latter can be considered to decompose rank information into pieces of ``absolute rank information'', whereas the MRA representation to decompose rank information into pieces of ``relative rank information'', and we establish a precise relationship between the two decompositions. At last we explicit the connection with card shuffling, generalized Kendall's tau distances, and in the particular case of pairwise comparisons with social choice theory and HodgeRank.
	\item At last, Section \ref{sec:discussion} is devoted to additional discussion and the description of possible directions for further research. Regularity assumptions and regularization procedures in the feature space of the MRA framework are discussed in depth. Other developments of the MRA framework are also considered: refinement of the MRA representation with a wavelet basis or generalization to the analysis of incomplete rankings with ties.
\end{itemize}
We are aware that the length of the present article may make it difficult to approach. This is why we propose the following reading guide:
\begin{itemize}
	\item The reader mainly interested in the statistical analysis of incomplete rankings may focus on Sections \ref{sec:setting}, \ref{sec:MRA-representation} and \ref{sec:MRA-framework}. They contain all the tools to apply the MRA framework to statistical problems of interest on incomplete rankings.
	\item The reader mainly interested in the MRA decomposition and its connection with other mathematical constructions may focus on Sections \ref{sec:MRA-representation}, \ref{sec:MRA-construction} and \ref{sec:connections}. They contain the main fundamental results of the article and provide for each one of them as much insight as possible.
\end{itemize}

\section{Setting for the statistical analysis of incomplete rankings}
\label{sec:setting}

We first introduce the general setting for the statistical analysis of incomplete rankings, formulating the main definitions and assumptions that will be used in the sequel. Here and throughout the article, a probability distribution on a finite set is identified with its probability mass function. For a set $E$ of finite cardinality $\vert E\vert <\infty$, we set $\Subsets{E} = \{ A\subset E \;\vert\; \vert A\vert \geq 2\}$ and denote by $\Space{E} = \{ f : E \rightarrow \mathbb{R} \}$ the linear space of real-valued functions on $E$. It is equipped with the canonic inner product $\left\langle f,g\right\rangle_{E} = \sum_{x\in E}f(x)g(x)$ and the associated Euclidean norm $\Vert\cdot\Vert_{E}$. The indicator function of a subset $S\subset E$ is denoted by $\1{S}$ in general and by $\delta_{x}$ when $S$ is the singleton $\{x\}$, in which case it is called a Dirac function. The indicator function of any event $\mathcal{E}$ is denoted by $\mathbb{I}\{\mathcal{E}\}$. The support of a function $f\in\Space{E}$ is the set $\supp(f) := \{x\in E \;\vert\; f(x)\neq 0\}$.

\subsection{Classic statistical problems}
\label{subsec:problems}

Most ranking applications correspond to unsupervised learning problems on the set $\mathcal{X}$ of all rankings on $\n$ of a certain type (for instance $\mathcal{X} = \Sn$ when data are assumed to be full rankings). One observes a dataset of $N$ rankings $\pi_{1}, \dots, \pi_{N}\in\mathcal{X}$ drawn IID from a probability distribution over $\mathcal{X}$ and seeks to recover some part of the structure of this statistical population. Although the problems of this type that have been considered in the literature are much too numerous to be listed in an exhaustive manner, we may mention the following ones. Some of them are specific to ranking data while others apply to usual vector data but their extension to ranking data requires the definition of specific concepts and the design of dedicated methods.
\begin{itemize}
	\item { \bf Estimation:} The goal is to estimate the probability distribution on $\mathcal{X}$ that generates the observations, either assuming a parametric form or through a nonparametric approach.
	\item { \bf Clustering:} The goal is to divide the statistical population of rankings into groups such that elements in a same cluster are more ``similar'' to each other than to those in other groups.
	\item { \bf Ranking aggregation:} The goal is to find one ranking that best ``summarizes'' the statistical population.
	\item { \bf Best $k$ items recovery:} The goal is to find the $k$ items in $\n$ that are ``the most preferred'' with respect to the statistical population.
	\item { \bf Prediction on a subset:} The goal is, for any subset of items, to find the ``best ranking'' of these items with respect to the statistical population.
	\item { \bf Hypothesis testing / rule mining:} The goal is to test some statistical hypothesis or to identify some logical rules that are ``mostly satisfied'' by the statistical population.
\end{itemize}
All these problems can be considered for a statistical population of full, partial or incomplete rankings (refer to Subsection \ref{subsec:general-method} for a more detailed description of some of them applied to incomplete rankings). In each case they require a different approach but they all rely on a common modeling assumption, which we call the \textit{consistency assumption}.

\subsection{Projectivity: the consistency assumption}
\label{subsec:consistency-assumption}

A \textit{ranking model} is a family of probability distributions that characterize the variability of a statistical population of rankings. In the case of full rankings, the statistical population is only composed of random permutations, and a ranking model reduces to one probability distribution $p$ over the symmetric group $\Sn$. But when one considers partial or incomplete rankings, they usually are of various types, and the global variability of the statistical population is characterized by a family of probability distributions, one over the rankings of each type. In the case of top-$k$ rankings for instance, the number $k$ usually varies from $1$ to $n-1$ between observations, and the global variability of the statistical population is characterized by a family $(P_{k})_{1\leq k\leq n-1}$ where for each $k\in\{1,\dots,n-1\}$, $P_{k}$ is a probability distribution over the set of $k$-tuples with distinct elements \citep[see][for instance]{BOB07}.

Incomplete rankings are rankings on subsets of items. The varying parameter in a statistical population of incomplete rankings is thus the subset of items involved in each ranking. Let us introduce some notations. For distinct items $a_{1},\dots,a_{k}$ with $2\leq k\leq n$, we simply denote the incomplete ranking $a_{1}\succ\dots\succ a_{k}$ by the expression $\pi = a_{1}\dots a_{k}$. Such an expression is called an \textit{injective word}, its content is the set $c(\pi) = \{a_{1},\dots, a_{k}\}$ and its length or size is the number $\vert\pi\vert = k$. The rank of the item $i\in c(\pi)$ in the ranking $\pi$ is denoted by $\pi(i)$. We denote by $\Gamma_{n}$ the set of all incomplete rankings on $\n$ and by $\Rank{A} = \{ \pi\in\Gamma_{n} \;\vert\; c(\pi) = A\}$ the set of incomplete rankings with content $A$, for any $A\in\Subsets{\n}$. Notice that $\Rank{\n}$ corresponds to $\Sn$ and that $\Gamma_{n} = \bigsqcup_{A\in\Subsets{\n}}\Rank{A}$. Equipped with these notations, a ranking model for incomplete rankings is a family $(P_{A})_{A\in\Subsets{\n}}$ where for each $A\in\Subsets{\n}$, $P_{A}$ is a probability distribution over the set $\Rank{A}$ of rankings on $A$.

\begin{example}
For $n = 3$,
\begin{equation*}
\begin{array}{ccccccccc}
\Gamma_{3} 	& = & \{12, 21\} 	 &\sqcup & \{13, 31\} 		& \sqcup & \{23, 32\} 		&\sqcup & \{123, 132, 213, 231, 312, 321\}\\
\vspace{0.05cm}\\
			&	& \Rank{\{1,2\}} & 	 	 & \Rank{\{1,3\}} 	& 		 & \Rank{\{2,3\}} 	&		& \Rank{\{1,2,3\}} \equiv \Sym{3}
\end{array}
\end{equation*}
\end{example}

If there were no relationship between the different probability distributions of a ranking model, the statistical analysis of partial and/or incomplete rankings would boil down to independent analyses for each type of ranking. Yet one should be able to transfer information from the observation of one type of ranking to another. In a context of top-$k$ rankings analysis, if for instance item $a$ appears very frequently in top-$1$ rankings, it is natural to expect that it be ranked in high position in top-$k$ rankings with larger values of $k$, and reciprocally, if it is usually ranked high in top-$k$ rankings, then its probability of being top-$1$ should be high. The same intuition holds for incomplete rankings. If item $a$ is usually preferred to item $b$ in pairwise comparisons then rankings on $\{a,b,c\}$ that place $a$ before $b$ should have higher probabilities than the others. Reciprocally if such rankings appear more frequently than the others, then item $a$ should be preferred to item $b$ with high probability in a pairwise comparison.

The ranking literature thus relies on one fundamental assumption: the observed rankings in a statistical population of interest are induced by full rankings drawn from a single probability distribution $p$ over $\Sn$ \citep[see for instance][]{Luce77}. Permutation $\sigma\in\Sn$ induces ranking $\prec$ or equivalently is a linear extension of ranking $\prec$ if for all $a,b\in\n$, $a\succ b \Rightarrow \sigma(a) < \sigma(b)$. The probability that a random permutation $\Sigma$ drawn from $p$ induces a ranking $\prec$ is thus equal to 
\begin{equation}
\label{eq:linear-extension}
\mathbb{P}\left[\Sigma\in \Sn(\prec)\right] = \sum_{\sigma\in \Sn(\prec)}p(\sigma),
\end{equation}
where $\Sn(\prec)$ is the set of linear extensions of $\prec$. The \textit{consistency assumption} then stipulates that the probability distributions of a ranking model are all given by Eq. \eqref{eq:linear-extension}, forming thus a projective family of distributions. For instance, the set of linear extensions of the top-$k$ ranking $a_{1}\succ\dots\succ a_{k}\succ\textit{the rest}$, where $k\in\{1,\dots,n-1\}$ and $a_{1},\dots,a_{k}$ are distinct items in $\n$, is equal to $\{\sigma\in\Sn \;\vert\; \sigma^{-1}(1) = a_{1},\dots,\sigma^{-1}(k) = a_{k} \}$. The probability $P_{k}(a_{1},\dots,a_{k})$ is thus given by 
\[
P_{k}(a_{1},\dots, a_{k}) = \mathbb{P}\left[\Sigma^{-1}(1) = a_{1}, \dots, \Sigma^{-1}(k) = a_{k}\right] = \sum_{\substack{\sigma\in\Sn \\ \sigma^{-1}(1) = a_{1},\dots,\,\sigma^{-1}(k) = a_{k}}}p(\sigma).
\]
A permutation $\sigma$ induces an incomplete ranking $\pi$ on $A\in\Subsets{\n}$ if it ranks the items of $A$ in the same order as $\pi$, that is if $\sigma(\pi_{1}) < \dots < \sigma(\pi_{\vert\pi\vert})$. More generally, we say that word $\pi^{\prime}$ is a subword of word $\pi$ if there exist indices $1\leq i_{1} < \dots < i_{\vert\pi^{\prime}\vert} \leq \vert\pi\vert$ such that $\pi^{\prime} = \pi_{i_{1}}\dots\pi_{i_{\vert\pi^{\prime}\vert}}$, and we write $\pi^{\prime}\subset\pi$. Hence, permutation $\sigma$ induces ranking $\pi$ if and only if $\pi\subset\sigma$. In addition, it is clear that for a word $\pi\in\Gamma_{n}$ and a subset $A\in\Subsets{c(\pi)}$, there exists a unique subword of $\pi$ of content $A$. We denote it by $\pi_{\vert A}$ and call it the induced ranking of $\pi$ on $A$. The set of linear extensions of a ranking $\pi\in\Rank{A}$ with $A\in\Subsets{\n}$ is then $\Sn(\pi) = \{\sigma\in\Sn \;\vert\; \pi\subset\sigma\} = \{\sigma\in\Sn \;\vert\; \sigma_{\vert A} = \pi \}$ and the probability $P_{A}(\pi)$ is given by
\begin{equation}
\label{eq:consistency-assumption} 
P_{A}(\pi) = \mathbb{P}\left[\Sigma(\pi_{1}) < \dots < \Sigma(\pi_{\vert\pi\vert})\right] = \sum_{\sigma\in\Sn(\pi)}p(\pi) = \sum_{\substack{\sigma\in\Sn\\ \pi\subset\sigma}}p(\sigma) = \sum_{\substack{\sigma\in\Sn\\ \sigma_{\vert A} = \pi}}p(\sigma).\tag{$\ast$}
\end{equation}

\begin{example}
Let $n = 3$. For $\sigma = 231$, one has $\sigma_{\vert\{1,2\}} = 21$, $\sigma_{\vert\{1,3\}} = 31$ and $\sigma_{\vert\{2,3\}} = 23$. For $A = \{1,3\}$ and $\pi = 31$, one has
\[
P_{\{1,3\}}(31) = \mathbb{P}\left[\Sigma(3) < \Sigma(1)\right] = p(231) + p(321) + p(312).
\]
\end{example}

We call Eq. \eqref{eq:consistency-assumption} the \textit{consistency assumption} for the statistical analysis of incomplete rankings. It implies that all the $P_{A}$'s in the ranking model are \textit{marginal distributions} of the same probability distribution $p$ over $\Sn$. Abusively, $p$ is also called the ranking model in the sequel. 

We also extend the definition of a marginal to any function of incomplete rankings. As $\Gamma_{n} = \bigsqcup_{A\in\Subsets{\n}}\Rank{A}$, we embed all the spaces $\Space{\Rank{A}}$ into $\Space{\Gamma_{n}}$, identifying a function $F$ on $\Rank{A}$ to the function $f$ on $\Gamma_{n}$ equal to $F$ on $\Rank{A}$ and to $0$ outside $\Rank{A}$. One thus has $\Space{\Gamma_{n}} = \bigoplus_{A\in\Subsets{\n}}\Space{\Rank{A}}$. We then define $M_{A} : \Space{\Gamma_{n}} \rightarrow \Space{\Rank{A}}$, the \textit{marginal operator} on $A\in\Subsets{\n}$, for any $f\in \Space{\Gamma_{n}}$ by
\begin{equation}
\label{eq:marginal-operator}
M_{A}f(\pi) = \sum_{\sigma\in\Gamma_{n},\ \pi\subset\sigma}f(\sigma) \qquad\text{for }\pi\in\Rank{A}.
\end{equation}
In particular, $M_{A}p = P_{A}$ for all $A\in\Subsets{\n}$ and $M_{A}f = 0$ if $f\in \Space{\Rank{B}}$ with $A\not\in\Subsets{B}$.

\subsection{Probabilistic setting for the observation of incomplete rankings}

A dataset of full rankings is naturally modeled as a collection of random permutations $(\Sigma_{1},\dots,\Sigma_{N})$ drawn IID from a ranking model $p$. The latter thus fully characterizes the statistical population as well as its observation process. This property does not hold true in the statistical analysis of incomplete rankings, where the ranking model characterizes the statistical population, but it does not entirely characterize the generating process of this population. More specifically, it characterizes the variability of the observations on each subset of items $A\in\Subsets{\n}$, but it does not account for the variability of the observed subsets of items. 

\begin{example}
A ranking model $p$ for incomplete rankings on $\set{3}$ induces the probability distributions $P_{\{1,2\}}$, $P_{\{1,3\}}$, $P_{\{2,3\}}$ and $P_{\{1,2,3\}} = p$. For each $A\in\Subsets{\set{3}}$, a random ranking on $A$ can thus be drawn from the probability distribution $P_{A}$. But the $P_{A}$'s do not induce a probability distribution on $\Subsets{\set{3}}$ that would generate the samplings of the subsets $A$.
\end{example}

To model this double variability, we represent the observation of an incomplete ranking by a couple of random variables $(\mathbf{A},\Pi)$, where $\mathbf{A}\in\Subsets{\n}$ is the observed subset of items and $\Pi\in\Rank{A}$ is the observed ranking \textit{per se} on this subset of items. Let $\nu$ be the distribution of $\mathbf{A}$ over $\Subsets{\n}$. A dataset of incomplete rankings is then a collection $((\mathbf{A}_{1},\Pi^{(1)}),\dots,(\mathbf{A}_{N},\Pi^{(N)}))$ of IID samples of $(\mathbf{A},\Pi)$ drawn from the following process:
\begin{equation}
\label{eq:scheme}
\mathbf{A}\sim\nu\qquad\text{then}\qquad \Pi\vert (\mathbf{A} = A) \sim P_{A},
\end{equation}
where, for $\mathbf{X}$ and $P$ respectively a random variable and a probability distribution on a measurable space $\mathcal{X}$, $\mathbf{X}\sim P$ means that $\mathbf{X}$ is drawn from $P$. The interpretation of probabilistic setting \eqref{eq:scheme} is that first the subset of items $\mathbf{A}\in\Subsets{\n}$ is drawn from $\nu$ and then the ranking $\Pi\in\Rank{A}$ is drawn from $P_{A}$. It can be reformulated by exploiting the consistency assumption \eqref{eq:consistency-assumption}. The latter stipulates that for $A\in\Subsets{\n}$, the distribution of the random variable $\Pi$ on $\Rank{A}$ is the same as that of the induced ranking $\Sigma_{\vert A}$ of a random permutation $\Sigma$ drawn from $p$. A drawing of $(\mathbf{A},\Pi)$ can thus be reformulated as 
\begin{equation}
\label{eq:censoring-process}
\Sigma\sim p\qquad\text{then}\qquad\mathbf{A}\sim\nu\qquad\text{and}\qquad\Pi = \Sigma_{\vert \mathbf{A}}.
\end{equation}
Reformulation \eqref{eq:censoring-process} leads to the following interpretation: first a random permutation $\Sigma\in\Sn$ is drawn from $p$ then the subset of items $\mathbf{A}$ is drawn from $\nu$ and the ranking $\Pi$ is set equal to $\Sigma_{\vert \mathbf{A}}$. The permutation $\Sigma$ can then be seen as a latent variable that expresses the full preference of a user in the statistical population but its observation is censored by $\mathbf{A}$. We point out that this interpretation motivates the broader probabilistic setting introduced in \citet{Sun2012}. The authors model more generally the observation of any partial and/or incomplete ranking as the drawing of a latent random permutation $\Sigma$ from $p$ followed by a censoring process that \textit{can depend} on $\Sigma$. In the context of incomplete rankings observation, their probabilistic setting can be defined as: $\Sigma\sim p$ then $\mathbf{A}\sim\nu_{\Sigma}$ and $\Pi = \Sigma_{\vert \mathbf{A}}$, where $\nu_{\sigma}$ is a probability distribution over $\Subsets{\n}$ for each $\sigma\in\Sn$. Probabilistic setting \eqref{eq:scheme} fits into this broader one by setting all distributions $\nu_{\sigma}$ equal to $\nu$ or, equivalently, assuming that $\Sigma$ and $\mathbf{A}$ are \textit{independent}.

The independence of between $\Sigma$ and $\mathbf{A}$ corresponds to the \textit{missing at random assumption} in the general context of learning from incomplete data \citep[see][]{GJ95}. This assumption is not realistic in all situations, particularly in settings where the users choose the items on which they express their preferences, their choices being naturally biased by their tastes \citep[see][for instance]{MZRS07}. It remains however realistic in many situations where the subset of items proposed to the user is determined by the context: the available items in stock in a specific store or the possible recommendations in a specific area for instance. This assumption is thus made in many contributions of the literature \citep[see for instance][]{LuB14, RA14, DIS15}. 

\begin{remark}
We maintain furthermore that making a dependence assumption is incompatible with the principle of the statistical analysis of incomplete rankings. Indeed, the purpose of assuming that $\mathbf{A}$ and the latent variable $\Sigma$ are not independent is to infer from the observation of $\Pi = \Sigma_{\vert\mathbf{A}}$ some more information on $\Sigma$ than just $\Sigma_{\vert\mathbf{A}}$. For instance, to model the fact that the expression of a user's preferences would be biased by her tastes, one can assume that the full ranking $\Sigma$ is censored to items that have a low expected rank (meaning that they have a high probability to be ranked in the first positions). The subset of items $\mathbf{A}$ could then be obtained by sampling items without replacement from a distribution over $\n$ of the form $\eta_{\sigma}(i) \varpropto e^{-\alpha\sigma(i)}$, where $\alpha\in\mathbb{R}$ is a spread parameter, conditioned upon $\Sigma = \sigma$. The observed ranking $\Pi$ on a subset $A=\{a_{1},\dots,a_{k}\}\in\Subsets{\n}$ would then not only provide information on the relative ordering $\Sigma_{\vert A}$ of the items of $A$ but even more on their absolute ranks $(\Sigma(a_{1}),\dots,\Sigma(a_{k}))$ in the latent full ranking $\Sigma$. Exploiting this additional information requires to analyze $\Pi$ as a partial ranking. Thus it cannot be done in a setting of statistical analysis of incomplete rankings.
\end{remark}

\subsection{Challenges of the statistical analysis of incomplete rankings}
\label{subsec:challenges}

The general setting for the statistical analysis of incomplete rankings is now formalized. For any application mentioned in subsection \ref{subsec:problems}, we suppose we observe a dataset $\mathcal{D}_{N} = ((\mathbf{A}_{1},\Pi^{(1)}),\dots,(\mathbf{A}_{N},\Pi^{(N)}))$ of $N$ incomplete rankings drawn IID from the process \eqref{eq:scheme}. The goal is then to recover some part of the ranking model $p$: it can be $p$ itself or only the marginals involved in the generation of the dataset (estimation), the partition of $\Gamma_{n}$ that best fits with $p$ (clustering), the ranking that best summarizes $p$ (ranking aggregation), the $k$ items that are most preferred according to $p$, the best ranking $\pi\in\Rank{A}$ according to $P_{A}$ for any subset of items $A\in\Subsets{\n}$, some logical rules that are highly probable, such as $a\succ b \Rightarrow c\succ d$ if the mutual information of the events $\{a\succ b\}$ and $\{c \succ d\}$ is high (rule mining / hypothesis testing).

\begin{remark}
We point out that it may be desirable to estimate the probability distribution $\nu$ in addition to recovering some parts of $p$. This problem can however be treated independently and is thus not addressed in the present paper. The distribution $\nu$ remains however the censoring process that generates the design of observations and therefore has a major impact on the parts of $p$ that can be inferred from the dataset $\mathcal{D}_{N}$. A deeper analysis of the impact of $\nu$ is provided in Subsection \ref{subsec:impact-of-nu}.
\end{remark}

Characterizing separately the variability of the observed subset of item $\mathbf{A}$ leads to an unexpected analogy with supervised learning: in the couple $(\mathbf{A}, \Pi)$, the subset $\mathbf{A}$ can be seen as an input generated by the distribution $\nu$ and the ranking $\Pi$ can be seen as the output generated by the ranking model $p$ given the input $\mathbf{A}$. Analyzing incomplete rankings data thus requires to face two classical issues in statistical learning, which can be easily formulated in the context of binary classification, the flagship problem in machine-learning theory.
\begin{itemize}
	\item {\bf Consolidate knowledge on already seen subsets of items.} For an observed subset of items $A\in\Subsets{\n}$, one must consolidate all the observations on $A$ in order to recover a maximum amount of information about $P_{A}$. The corresponding task in binary classification is to consolidate all the outputs $y$ for a given input $x$ (or very close inputs) that was observed many times, where $x$ and $y$ are the values taken by IID samples of a random couple $(X,Y)$. Its difficulty depends on how much the value $\mathbb{P}\left[ Y = 1 \vert X = x\right]$ is close to $1/2$: the closer the more difficult. Analogously, the difficulty of consolidating observations on a given subset of items $A$ depends on the complexity of the marginal $P_{A}$. If $P_{A}$ is a Dirac function, it is easy to recover. If $P_{A}$ is more complex, its recovery is more challenging.
	\item {\bf Transfer knowledge to unseen subsets of items.} For a new unseen subset of items, one needs to transfer a maximum amount of acquired information from the observed subsets of items. In binary classification, one faces an analogous problem when trying to predict the output $y$ related to an input value $x$ never observed before and potentially far from all previously observed inputs. The difficulty of this task then depends on the ``regularity'' of the function $\eta : x \mapsto \mathbb{P}\left[Y = 1 \vert X = x\right]$: it is easier to infer the value of $\mathbb{P}\left[Y = 1 \vert X = x\right]$ for an unobserved $x$ when $\eta$ is ``regular'', in the sense that $\eta(x)$ does not vary unexpectedly when $x$ varies. Similarly for incomplete rankings, it is easier to transfer information to an unobserved subset of items $A$ when the function $B\mapsto P_{B}$ does not vary unexpectedly when $B$ varies in $\Subsets{\n}$.
\end{itemize}
These two tasks require to cope with two different sources of variability and can be tackled independently in a theoretical setting. But in a statistical setting, they must be handled simultaneously in order to best reduce the sampling noise of a dataset $\mathcal{D}_{N}$. It is better indeed to transfer between subsets information that has been consolidated on each subset and conversely, it is better to consolidate information on a subset with information transferred from other subsets. A major difficulty however remains: incomplete rankings are heterogeneous. They can have different sizes and for a given size they can be observed on different subsets of items. Consolidating and transferring information for incomplete rankings is thus far from being obvious, and represents the main challenge of the statistical analysis of incomplete rankings.

\begin{example}
Let $n = 4$ and assume that one observes rankings on $\{1,3\}$, $\{1,3,4\}$ and $\{2,4\}$. Information could be consolidated on each of these three subsets independently and then transferred to unobserved subsets. It would certainly be more efficient however to consolidate information on these subsets simultaneously, transferring at the same time information between them. The question is now to find a way to achieve this.
\end{example}

The consistency assumption \eqref{eq:consistency-assumption} defines the base structure to transfer information between subsets of items. Namely for two subsets of items $A,B\in\Subsets{\n}$ with $B\subset A$, it stipulates that $M_{B}P_{A} = P_{B}$. The knowledge of $P_{A}$ thus implies the knowledge of $P_{B}$. Information must therefore be transferred from $A$ to $B$ through the marginal operator $M_{B}$. The condition is a slightly more subtle in the other direction: information must be transferred from $B$ to $A$ through the constraint on $P_{A}$ to satisfy $M_{B}P_{A} = P_{B}$. Hence, the knowledge of $P_{B}$ does not imply the knowledge of $P_{A}$, but it provides some part of it. More generally, the knowledge of any marginal $P_{A}$ provides some information on $p$ through the constraint $M_{A}p = P_{A}$. How to transfer information from $A$ to a subset $C$ such that neither $C\subset A$ nor $A\subset C$ is however a priori unclear. 

\begin{example}
Coming back to the previous example, information on $\{1,3,4\}$ should be used to consolidate information on $\{1,3\}$ through the relation $M_{\{1,3\}}P_{\{1,3,4\}} = P_{\{1,3\}}$. Information on $\{1,3\}$ should be used to enforce a constraint in consolidating information on $\{1,3,4\}$ through the same relationship. Information on each subset can be used to enforce a constraint on the global ranking model $p$. It is however unclear if or how information should be transferred between $\{2,4\}$ and $\{1,3\}$ or $\{1,3,4\}$.
\end{example}

In addition to this major statistical challenge, practical applications also raise a great computational challenge. The analysis of incomplete rankings always involve at some point the computation of a marginal of a ranking model. Performed naively using the definition \eqref{eq:marginal-operator}, the computation of $M_{A}p(\pi)$ for $A\in\Subsets{\n}$ and $\pi\in\Rank{A}$ requires $n!/\vert A\vert !$ operations. This is by far intractable in practical applications where $\vert A\vert$ is around $10$ and $n$ is around $10^{4}$.

\subsection{Limits of existing approaches}
\label{subsec:existing-approaches}

We now review the existing approaches in the literature for the statistical analysis of incomplete rankings and outline their limits.\\

\noindent
{\bf Parametric models.} The most widely used approaches rely on parametric modeling. One considers a family of models $\{p_{\theta} \;\vert\; \theta\in\Theta\}$, where $\Theta$ is a parameter space, and assumes that $p = p_{\theta^{\ast}}$ for a certain $\theta^{\ast}\in\Theta$. The goal is then to recover $\theta^{\ast}$ from the dataset $\mathcal{D}_{N}$. One standard method is to take the parameter that maximizes the likelihood of the model on the dataset. For $\theta\in\Theta$, let $\mathbb{P}_{\theta}$ be the distribution of a random permutation $\Sigma$ corresponding to the ranking model $p_{\theta}$, that is to say the distribution defined by $\mathbb{P}_{\theta}\left[\Sigma\in S\right] = \sum_{\sigma\in S}p_{\theta}(\sigma)$ for any subset $S\subset\Sn$. The relevance of a model candidate $p_{\theta}$ on the dataset $\mathcal{D}_{N}$ is thus measured through the conditional likelihood
\[
\mathcal{L}(\theta \vert \mathbf{A}_{1},\dots, \mathbf{A}_{N}) = \prod_{i=1}^{N}\mathbb{P}_{\theta}\left[\Sigma_{\vert\mathbf{A}_{i}} = \Pi^{(i)}\right] = \prod_{i=1}^{N}M_{\mathbf{A}_{i}}p_{\theta}\left(\Pi^{(i)}\right).
\]
One then compute $\widehat{\theta}_{N} = \argmax_{\theta\in\Theta}\mathcal{L}(\theta \vert \mathbf{A}_{1},\dots, \mathbf{A}_{N})$ exactly or approximately and uses the ranking model $\widehat{p}_{N} := p_{\widehat{\theta}_{N}}$. In this approach, the consolidation of information is performed implicitly through the selection of the ranking model from the family $\{p_{\theta} \;\vert\; \theta\in\Theta\}$ that best explains the data. It is then transferred to any subset of items $B\in\Subsets{\n}$ through the marginal $M_{B}\widehat{p}_{N}$. The computational challenge is easily overcome when using the Plackett-Luce model because the marginals of the latter have a closed-form expression. It is much less straightforward for the Mallows model, but a dedicated method was introduced in \citet{LuB11}. From a global point of view, approaches based on a parametric model have the advantage to offer a simple framework for all applications of the statistical analysis of incomplete rankings. Their major drawback however is to rely on a very rigid assumption on the form of the ranking model, which is rarely satisfied in practice.\\

\noindent
{\bf Nonparametric methods based on identifying an incomplete ranking with the set of its linear extensions.} The three nonparametric methods introduced in the literature to analyze incomplete rankings all face the heterogeneity of incomplete rankings the same way: they represent an incomplete ranking $\pi\in\Gamma_{n}$ by the set of its linear extensions $\Sn(\pi)\subset\Sn$. \citet{YLA02} generalize a distance $d$ on $\Sn$ to a distance $d^{\ast}$ on $\Gamma_{n}$ by setting $d^{\ast}(\pi,\pi')$ proportional to $\sum_{\sigma\in\Sn(\pi)}\sum_{\sigma'\in\Sn(\pi')}d(\sigma,\sigma')$ for two incomplete rankings $\pi,\pi'\in\Gamma_{n}$ and use it to perform statistical tests. In \citet{Sun2012}, the Kendall's tau distance\footnote{The Kendall's tau distance on $\Sn$ is defined as the number of pairwise disagreements between two permutations: $d(\sigma,\sigma') = \sum_{1\leq i < j \leq n}\mathbb{I}\{\sigma_{\vert\{i,j\}}\neq\sigma'_{\vert\{i,j\}}\}$.} is generalized in the same way and then used to define a kernel-based estimator of $p$. Finally, \citet{Kondor2010} define kernels on $\Gamma_{n}$ based, for two incomplete rankings $\pi,\pi'\in\Gamma_{n}$, on the Fourier transform of the indicator functions of the sets $\Sn(\pi)$ and $\Sn(\pi')$. Broadly speaking, these three approaches transfer information between different incomplete rankings through a given \textit{similarity measure} between their sets of linear extensions. They overcome some part of the computational challenge through explicit simplifications of the extended distance $d^{\ast}$ or the Fourier transform of the indicator function of an incomplete ranking. They are however fundamentally biased. To best illustrate this point, let us consider the following estimator:
\begin{equation}
\label{eq:biased-empirical-estimator}
\widehat{p}_{N} = \frac{1}{N}\sum_{i=1}^{N}\frac{\vert\mathbf{A}_{i}\vert!}{n!}\1{\Sn(\Pi^{(i)})}.
\end{equation}
It corresponds to the natural empirical estimator of $p$ when one represents an incomplete ranking by the set of its linear extensions. In this representation indeed, one considers that the observation of an incomplete ranking $\Pi$ indicates that the underlying permutation $\Sigma$ should belong to $\Sn(\Pi)$. The amount of knowledge about $\Sigma$ is thus modeled by the uniform distribution on $\Sn(\Pi)$. The estimator $\widehat{p}_{N}$ is then the average of the uniform distributions over the sets $\Sn(\Pi^{(i)})$ for $i\in\{1,\dots,N\}$. As stated in the following proposition, it is always strongly biased, except in a few specific situations, irrelevant in practice.

\begin{proposition}
\label{prop:bias}
Let $N\geq 1$ and $\widehat{p}_{N}$ be the estimator defined by equation \eqref{eq:biased-empirical-estimator}. Then for any $\sigma\in\Sn$,
\[
\mathbb{E}\left[\widehat{p}_{N}(\sigma)\right] = \sum_{\sigma'\in\Sn}\left(\sum_{A\in\Subsets{\n} }\nu(A)\frac{\vert A\vert !}{n!}\mathbb{I}\{\sigma'_{\vert A} = \sigma_{\vert A}\}\right)p(\sigma').
\]
\end{proposition}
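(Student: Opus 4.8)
The plan is to compute the expectation of a single summand and then invoke independence of the samples. Since the $N$ couples $(\mathbf{A}_{i},\Pi^{(i)})$ are IID, linearity of expectation gives
\[
\mathbb{E}\left[\widehat{p}_{N}(\sigma)\right] = \mathbb{E}\left[\frac{\vert\mathbf{A}\vert!}{n!}\1{\Sn(\Pi)}(\sigma)\right],
\]
so the factor $1/N$ cancels and it suffices to treat one generic observation $(\mathbf{A},\Pi)$ drawn from \eqref{eq:scheme}. The first key step is to rewrite the indicator: by the characterization $\Sn(\pi) = \{\sigma\in\Sn \;\vert\; \sigma_{\vert A} = \pi\}$ established for $\pi\in\Rank{A}$, one has $\1{\Sn(\Pi)}(\sigma) = \mathbb{I}\{\sigma_{\vert\mathbf{A}} = \Pi\}$. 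This turns a membership test on the set of linear extensions into a plain equality of induced rankings, which is exactly the quantity governed by the marginals $P_{A}$.

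Next I would unfold the expectation according to the two-stage scheme \eqref{eq:scheme}. Conditioning first on $\mathbf{A} = A$ and then on $\Pi\sim P_{A}$, and using that $\mathbb{I}\{\sigma_{\vert A} = \Pi\}$ is nonzero only for the single value $\Pi = \sigma_{\vert A}\in\Rank{A}$, the inner expectation over $\Pi$ collapses to $P_{A}(\sigma_{\vert A})$. Averaging over the draw of $\mathbf{A}$ then yields
\[
\mathbb{E}\left[\widehat{p}_{N}(\sigma)\right] = \sum_{A\in\Subsets{\n}}\nu(A)\frac{\vert A\vert!}{n!}\,P_{A}(\sigma_{\vert A}).
\]
Equivalently, one may use the censoring reformulation \eqref{eq:censoring-process}, writing $\Pi = \Sigma_{\vert\mathbf{A}}$ with $\Sigma\sim p$ \emph{independent} of $\mathbf{A}\sim\nu$, so that the indicator becomes $\mathbb{I}\{\sigma_{\vert\mathbf{A}} = \Sigma_{\vert\mathbf{A}}\}$ and the final formula is reached in a single pass over the independent pair $(\Sigma,\mathbf{A})$.

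The final step is to substitute the consistency assumption \eqref{eq:consistency-assumption} in the form $P_{A}(\sigma_{\vert A}) = \sum_{\sigma'\in\Sn}p(\sigma')\,\mathbb{I}\{\sigma'_{\vert A} = \sigma_{\vert A}\}$, and then interchange the two finite summations to factor out $p(\sigma')$. This produces
\[
\mathbb{E}\left[\widehat{p}_{N}(\sigma)\right] = \sum_{\sigma'\in\Sn}\left(\sum_{A\in\Subsets{\n}}\nu(A)\frac{\vert A\vert!}{n!}\mathbb{I}\{\sigma'_{\vert A} = \sigma_{\vert A}\}\right)p(\sigma'),
\]
which is the claimed identity. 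There is no genuine analytical obstacle here: the argument is essentially bookkeeping. The only point that requires care is the indicator rewrite of the first step---correctly identifying $\1{\Sn(\Pi)}(\sigma)$ with $\mathbb{I}\{\sigma_{\vert\mathbf{A}} = \Pi\}$ and observing that integrating it against $P_{A}$ reproduces exactly a marginal value---since everything downstream is just the consistency assumption together with an interchange of finite sums.
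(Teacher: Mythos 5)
Your proof is correct and follows essentially the same route as the paper: both reduce to a single observation by IID-ness and linearity, rewrite $\1{\Sn(\Pi)}(\sigma)$ as the equality of induced rankings $\mathbb{I}\{\sigma_{\vert\mathbf{A}}=\Pi\}$, average over the censoring process using the consistency assumption, and finish with an interchange of finite sums. The paper goes directly through the reformulation \eqref{eq:censoring-process}, which you also note as the equivalent one-pass variant, so there is no substantive difference.
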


\begin{proof}
Using the reformulation \eqref{eq:censoring-process} of the data generating process producing the observations, one has for any $\sigma\in\Sn$
\[
\mathbb{E}\left[\widehat{p}_{N}(\sigma)\right] = \frac{1}{N}\sum_{i=1}^{N}\mathbb{E}\left[\frac{\vert\mathbf{A}_{i}\vert!}{n!}\mathbb{I}\{\Sigma_{\vert\mathbf{A}_{i}} = \sigma_{\vert\mathbf{A}_{i}}\}\right] = \sum_{A\in\Subsets{\n}}\nu(A)\frac{\vert A\vert !}{n!}\sum_{\sigma'\in\Sn}p(\sigma')\mathbb{I} \{ \sigma'_{\vert A} = \sigma_{\vert A} \}.
\]
A simple sum inversion concludes the proof.
\end{proof}

Proposition \ref{prop:bias} says that unless $p$ is a Dirac distribution (which is a too restrictive assumption) or $\nu$ is solely concentrated on $\n$ (which boils down to statistical analysis on full rankings), $\mathbb{E}\left[\widehat{p}_{N}(\sigma)\right]$ is fundamentally different from $p(\sigma)$ for $\sigma\in\Sn$.

\begin{example}
Let $n = 4$ and $\nu$ with support $\{ \{1,3\}, \{2,4\}, \{1,3,4\} \}$. Then for any $N\geq 1$,
\begin{align*}
\mathbb{E}\left[\widehat{p}_{N}(2134)\right] &= \frac{\nu(\{1,3\})}{12}\Big[p(2413) + p(4213) + p(2134) + p(4132) + p(1324) + p(1342)\Big]\\
&+ \frac{\nu(\{2,4\})}{12}\Big[ p(1324) + p(3124) + p(1243) + p(3241) + p(2413) + p(2431)\Big]\\
&+ \frac{\nu(\{1,3,4\})}{4}\Big[ p(2134) + p(1342) \Big].
\end{align*}
\end{example}
We point out that Proposition \ref{prop:bias} says more specifically that $\widehat{p}_{N}$ is actually an unbiased estimator of $T_{\nu}p$, where $T_{\nu}$ is the matrix of similarity defined by
\[
T_{\nu}(\sigma,\sigma') = \sum_{A\in\Subsets{\n}}\nu(A)\frac{\vert A\vert !}{n!}\mathbb{I}\{\sigma_{\vert A} = \sigma'_{\vert A}\}\qquad\text{for } \sigma,\sigma'\in\Sn,
\]
In particular, if $\nu$ is the uniform distribution over the pairs of $\n$, $T_{\nu}(\sigma,\sigma')$ simply reduces to an affine transform of the Kendall's tau distance between $\sigma$ and $\sigma'$.\\

\noindent
{\bf Learning as a regularized inverse problem.} A general framework for the statistical analysis of incomplete rankings could take the paradigmatic form of a regularized inverse problem. Assume first that one knows exactly some of the marginals of the ranking model $p$, for a collection of subsets $\mathcal{A}\subset\Subsets{\n}$. She could try to recover $p$ through the minimization problem
\begin{equation}
\label{eq:theoretic-inverse-problem}
\min_{\substack{q:\Sn\rightarrow\mathbb{R}\\ q\geq 0\\ \sum_{\sigma\in\Sn}q(\sigma) = 1}}\Omega(q) \qquad\text{subject to}\qquad M_{A}q = P_{A} \text{ for all } A\in\mathcal{A},
\end{equation}
where $\Omega$ is a penalty function that measures a certain level of regularity, so that $p$ should be a solution of \eqref{eq:theoretic-inverse-problem}. Information from the $P_{A}$'s would then be transferred to an unknown subset $B\in\Subsets{\n}$ through the computation of $M_{B}p^{\ast}$, where $p^{\ast}$ is an exact or approximate solution of \eqref{eq:theoretic-inverse-problem}. In a statistical setting, one cannot know exactly the marginals of $p$. The natural extension is then to consider the naive empirical estimator defined for an observed subset $A$ by
\begin{equation}
\label{eq:empirical-marginal}
\widehat{P_{A}}(\pi) = \frac{\vert\{1\leq i\leq N \;\vert\; \Pi^{(i)} = \pi \}\vert}{N_{A}} \qquad\text{for }\pi\in\Rank{A},
\end{equation}
where $N_{A}$ is the number of times that $A$ was observed in $\mathcal{D}_{N}$, and to consider the following generic minimization problem
\begin{equation}
\label{eq:empirical-inverse-problem}
\min_{\substack{q:\Sn\rightarrow\mathbb{R}\\ q\geq 0\\ \sum_{\sigma\in\Sn}q(\sigma) = 1}} \quad \sum_{\substack{A\in\Subsets{\n}\\ N_{A} > 0}}\frac{N_{A}}{N}\Delta_{A}\left( M_{A}q,\widehat{P_{A}} \right) \quad + \quad \lambda_{N}\Omega(q),
\end{equation}
where $\Delta_{A}$ is a dissimilarity measure between two probability distributions over $\Rank{A}$\footnote{One can take for instance an $l^{p}$ norm or the Kullback-Leibler divergence.} and $\lambda_{N}$ is a regularization parameter. Information is then simultaneously consolidated on the observed subsets into an exact or approximate solution $\widehat{p}_{N}$ and can then be transferred to unobserved subsets by computing $M_{B}\widehat{p}_{N}$. 

Though this approach is quite common in the machine learning literature, where $\Omega(q)$ typically enforces the sparsity of $q$ in a certain basis, it has been applied to the ranking literature only in a few contributions. In \citet{Jagabathula2011} for instance, the problem of recovering the ranking model $p$ from the observation of its first-order marginals $\mathbb{P}\left[\Sigma(i) = j\right]$ for $i\in\n$ and $j\in\{1,\dots,n\}$ is considered under a sparsity assumption over $\Sn$. A maximal entropy assumption is made in \citet{AS12} in order to recover the ranking model $p$ either from its first-order marginals or from its pairwise marginals $\mathbb{P}\left[a \succ b\right]$ for $a,b\in\n$, $a\neq b$.

In the setting of the statistical analysis of incomplete rankings, this approach has the advantage to allow less restrictive assumptions than parametric modeling and to avoid the bias of the aforementioned nonparametric approaches. It suffers however from a major drawback: it requires to compute the marginal operators. It is therefore inapplicable on practical datasets if this computation is performed naively through definition \eqref{eq:marginal-operator}.

\ \\

All the existing approaches follow the same two steps: first, information from the dataset $\mathcal{D}_{N}$ is consolidated and transferred into a ranking model $\widehat{p}_{N}\in \Space{\Sn}$. Then it can be transferred to any subset of items $B\in\Subsets{\n}$ through the marginal $M_{B}\widehat{p}_{N}$. This is of course the most natural method to exploit the consistency assumption \eqref{eq:consistency-assumption} and try to overcome the statistical challenge of the analysis of incomplete rankings. It does not provide however any help to overcome the challenge of the computation of the marginal. This is why each approach requires a specific trick to be applicable. 

The MRA framework we introduce in this paper provides a general method to handle both the computational and statistical challenges of the analysis of incomplete rankings. Instead of consolidating information into a ranking model $\widehat{p}_{N}\in \Space{\Sn}$, observations are first represented into a feature space, which we call the MRA representation, fitted to exploit the consistency assumption and to compute the marginal operator efficiently. The framework then provides many possibilities to consolidate and transfer information in this feature space. The MRA representation is entirely model-free, it simply arises from the natural multi-scale structure of the marginal operators and its algebraic and topological properties. Before we describe its objects and properties in details, we present a brief analysis of the impact of the probability distribution $\nu$.

\subsection{The impact of the observation design}
\label{subsec:impact-of-nu}

Depending on the application, the probability distribution $\nu$ involved in the statistical process \eqref{eq:scheme} may or may not be known. In any case, as explained in Subsection \ref{subsec:challenges}, it is not the goal of the statistical analysis of incomplete rankings to learn it. It is rather seen as a parameter that adds some noise to the observations through the censoring process \eqref{eq:censoring-process}. It has nonetheless a direct impact on the complexity of the analysis, both on the statistical and computational points of view, especially through its support $\mathcal{A} = \{A\in\Subsets{\n} \;\vert\; \nu(A) > 0\}$, that we call the \textit{observation design}.

The impact of the distribution $\nu$ naturally occurs on the number of parameters required to store a dataset $\mathcal{D}_{N}$. Let $\widehat{\nu}_{N}$ be the empirical probability distribution over $\Subsets{\n}$ defined for $A\in\Subsets{\n}$ by $\widehat{\nu}_{N}(A) = N_{A}/N$, and let $\EOD = \{A\in\Subsets{\n} \;\vert\; N_{A} > 0\}$ be its support. Notice that one necessarily has $\EOD\subset\mathcal{A}$. A dataset $\mathcal{D}_{N}$ is then fully characterized by the probability distribution $\widehat{\nu}_{N}$ and the collection of empirical estimators $(\widehat{P_{A}})_{A\in\EOD}$. 

\begin{lemma}
	\label{lem:dataset-storage}
The number of parameters required to store the dataset $\mathcal{D}_{N}$ is upper bounded by $\min(N,\sum_{A\in\mathcal{A}}\vert A\vert!)$.
\end{lemma}

\begin{proof}
On the one hand, the number of parameters to store $\mathcal{D}_{N}$ is obviously bounded by $N$. On the other hand, the number of parameters required to store $\widehat{\nu}_{N}$ is $\vert\EOD\vert$, thus at most equal to $\vert\mathcal{A}\vert$. The number of parameters required to store $(\widehat{P_{A}})_{A\in\EOD}$ is equal to $\sum_{A\in\EOD}\vert\supp(\widehat{P_{A}})\vert$, thus at most equal to $\sum_{A\in\mathcal{A}}(\vert A\vert!-1)$. Summing these two quantities gives the desired result.
\end{proof}

The number $\min(N,\sum_{A\in\mathcal{A}}\vert A\vert!)$ given by Lemma \ref{lem:dataset-storage} is a measure of the ``complexity'' of the dataset $\mathcal{D}_{N}$, in the sense that any procedure that exploits all the information contained in $\mathcal{D}_{N}$ will necessarily require at least as many operations. Notice that the number $\sum_{A\in\mathcal{A}}\vert A\vert!$ is entirely characterized by $\mathcal{A}$, the observation design. It increases both with its ``spread'' $\vert\mathcal{A}\vert$ and its ``depth'' $K = \max_{A\in\mathcal{A}}\vert A\vert$, and is bounded by $\vert\mathcal{A}\vert\times K!$. In particular if $\mathcal{A} = \{A\in\Subsets{\n} \;\vert\; \vert A\vert \leq K\}$ then this bound is of order $O(K!\,n^{K})$. Figures \ref{fig:nu-example-1} and \ref{fig:nu-example-2} show examples of two different observation designs for $n = 5$ with the associated number $\sum_{A\in\mathcal{A}}\vert A\vert!$. The elements in $\mathcal{A}$ are in black whereas the elements of $\Subsets{\n}\setminus\mathcal{A}$ are in gray.

\begin{figure}[h!]
\centering
	\begin{tikzpicture}
	\node at (0,3) {\color{gray}$\{1,2,3,4,5\}$};
	\node at (-4,2) {\color{gray}$\{1,2,3,4\}$};
	\node at (-2,2) {\color{gray}$\{1,2,3,5\}$};
	\node at (0,2) {\color{gray}$\{1,2,4,5\}$};
	\node at (2,2) {\color{gray}$\{1,3,4,5\}$};
	\node at (4,2) {\color{gray}$\{2,3,4,5\}$};
	\node at (-6.75,1) {\color{gray}$\{1,2,3\}$};
	\node at (-5.25,1) {\color{gray}$\{1,2,4\}$};
	\node at (-3.75,1) {$\{1,2,5\}$};
	\node at (-2.25,1) {\color{gray}$\{1,3,4\}$};
	\node at (-0.75,1) {\color{gray}$\{1,3,5\}$};
	\node at (0.75,1) {\color{gray}$\{1,4,5\}$};
	\node at (2.25,1) {$\{2,3,4\}$};
	\node at (3.75,1) {\color{gray}$\{2,3,5\}$};
	\node at (5.25,1) {\color{gray}$\{2,4,5\}$};
	\node at (6.75,1) {\color{gray}$\{3,4,5\}$};
	\node at (-6.75,0) {$\{1,2\}$};
	\node at (-5.25,0) {$\{1,3\}$};
	\node at (-3.75,0) {$\{1,4\}$};
	\node at (-2.25,0) {\color{gray}$\{1,5\}$};
	\node at (-0.75,0) {$\{2,3\}$};
	\node at (0.75,0) {\color{gray}$\{2,4\}$};
	\node at (2.25,0) {$\{2,5\}$};
	\node at (3.75,0) {$\{3,4\}$};
	\node at (5.25,0) {$\{3,5\}$};
	\node at (6.75,0) {$\{4,5\}$};
	\end{tikzpicture}
	\caption{Example of an observation design $\mathcal{A}$ for $n = 5$, $\sum_{A\in\mathcal{A}}\vert A\vert ! = 28$}
	\label{fig:nu-example-1}
\end{figure}

\begin{figure}[h!]
	\centering
	\begin{tikzpicture}
	\node at (0,3) {\color{gray}$\{1,2,3,4,5\}$};
	\node at (-4,2) {\color{gray}$\{1,2,3,4\}$};
	\node at (-2,2) {$\{1,2,3,5\}$};
	\node at (0,2) {\color{gray}$\{1,2,4,5\}$};
	\node at (2,2) {\color{gray}$\{1,3,4,5\}$};
	\node at (4,2) {$\{2,3,4,5\}$};
	\node at (-6.75,1) {\color{gray}$\{1,2,3\}$};
	\node at (-5.25,1) {\color{gray}$\{1,2,4\}$};
	\node at (-3.75,1) {\color{gray}$\{1,2,5\}$};
	\node at (-2.25,1) {\color{gray}$\{1,3,4\}$};
	\node at (-0.75,1) {\color{gray}$\{1,3,5\}$};
	\node at (0.75,1) {$\{1,4,5\}$};
	\node at (2.25,1) {\color{gray}$\{2,3,4\}$};
	\node at (3.75,1) {\color{gray}$\{2,3,5\}$};
	\node at (5.25,1) {\color{gray}$\{2,4,5\}$};
	\node at (6.75,1) {\color{gray}$\{3,4,5\}$};
	\node at (-6.75,0) {\color{gray}$\{1,2\}$};
	\node at (-5.25,0) {\color{gray}$\{1,3\}$};
	\node at (-3.75,0) {\color{gray}$\{1,4\}$};
	\node at (-2.25,0) {\color{gray}$\{1,5\}$};
	\node at (-0.75,0) {\color{gray}$\{2,3\}$};
	\node at (0.75,0) {\color{gray}$\{2,4\}$};
	\node at (2.25,0) {\color{gray}$\{2,5\}$};
	\node at (3.75,0) {\color{gray}$\{3,4\}$};
	\node at (5.25,0) {\color{gray}$\{3,5\}$};
	\node at (6.75,0) {\color{gray}$\{4,5\}$};
	\end{tikzpicture}
	\caption{Example of an observation design $\mathcal{A}$ for $n = 5$, $\sum_{A\in\mathcal{A}}\vert A\vert ! = 54$}
	\label{fig:nu-example-2}
\end{figure}

The observation design also impacts the amount of information about the ranking model $p$ it gives access to. Indeed, if one makes a structural assumption on $p$ and seeks to recover some part of it from the observation of incomplete rankings drawn from \eqref{eq:scheme}, the complexity of this task will significantly depend on the interplay between $p$ and $\nu$, especially through the observation design $\mathcal{A}$. 

\begin{example}
\label{ex:toy-example}
As a toy example, consider the very simple case where one observes the exact induced rankings of one full ranking $\pi^{\ast}$ on $\set{5}$, on the subsets $\{1,2,3\}$, $\{3,4\}$ and $\{4,5\}$. The goal is then to recover the ranking model $p = \delta_{\pi^{\ast}}$ through the observation design $\mathcal{A} = \{\{1,2,3\}, \{3,4\}, \{4,5\} \}$. If $\pi^{\ast} = 12345$, then the observed induced rankings are $123$, $34$ and $45$. It happens that there is only one full ranking on $\set{5}$ that induces these three rankings, namely $12345$, and $\pi^{\ast}$ is recovered with certainty. Now, if $\pi^{\ast} = 24153$ for instance, the observed rankings are $213$, $43$ and $45$. In that case, there are twelve full rankings on $\set{5}$ that can induce these three rankings. The amount of information provided by these observations is therefore not sufficient for recovering $\pi^{\ast}$.
\end{example}

In a general context, one may assume that $p$ has a more general structure than a Dirac function on $\Rank{\n}$ and that the observations are made in the presence of a statistical noise. But the principle illustrated by the Example \ref{ex:toy-example} remains valid. Quantifying the amount of accessible information with respect to the interplay between $p$ and $\mathcal{A}$ is however not obvious because the latter is of a complex combinatorial nature. Some results have been provided in this sense in \citet{SPBRBW15}, when $p$ is assumed to be a Plackett-Luce or a Thurstone model and the observations are pairwise comparisons. When no structural assumption is made on $p$, the accessible information can be characterized exactly through the MRA representation described in the next section. The result is provided in \citet{SCJ2015} and recalled in the present paper by Theorem \ref{th:accessible-information} in Section \ref{sec:MRA-framework}.

\section{The MRA representation}
\label{sec:MRA-representation}

In this section, we introduce the MRA representation, describe its main properties and provide insight on how to interpret it. The terminology used here and throughout the article is borrowed from wavelet theory. Though it can appear peculiar at first reading, the analogy is explained at length in Subsection \ref{subsec:interpretation}.

\subsection{Definitions}
\label{subsec:definitions}

We first introduce the main objects of the MRA representation but postpone their explicit construction to Section \ref{sec:MRA-construction} for clarity. For any finite set $E$, we set $\SubsetsWE{E} := \Subsets{E}\cup\{\emptyset\}$.
\begin{itemize}
	\item {\bf Signal space.} The MRA representation applies to functions of incomplete rankings, which are seen as ``signals'' in order to borrow the language of standard MRA in wavelet theory (refer to \citet{Mallat}) for interpretation purpose. Let $\bar{0}$ denote by convention the unique injective word of content $\emptyset$ and length $0$. We set $\Gn:=\Gamma_{n}\cup\{\bar{0}\}$. Any space $\Space{\Rank{A}}$ for $A\in\SubsetsWE{\n}$ is seen as \textit{local} signal space and they are all embedded into the \textit{global} signal space defined by
	\[
	\Space{\Gn} = \bigoplus_{A\in\SubsetsWE{\n}}\Space{\Rank{A}}.
	\]
	Elements of the signal space are seen as collections of functions $F = (F_{A})_{A\in\SubsetsWE{\n}}$. The global support of an element $F$ is the set $\Supp(F) = \{A\in\SubsetsWE{\n} \;\vert\; F_{A} \neq 0 \}$, and we usually identify an element $F$ with the collection restricted to its global support $(F_{A})_{A\in\Supp(F)}$. We extend naturally the marginal operator to the space $\Space{\Gn}$ and define by convention the marginal operator on $\emptyset$ by $M_{\emptyset}:\Space{\Gn}\rightarrow\Space{\Rank{\bar{0}}},\ F\mapsto(\sum_{\pi\in\Gn}F(\pi))\delta_{\bar{0}}$.
	\item {\bf Feature space.} The feature space is defined by
	\[
	\Hn = \bigoplus_{B\in\SubsetsWE{\n}}H_{B},
	\]
	where for each $B\in\SubsetsWE{\n}$, $H_{B}$ is a linear space with dimension equal to $d_{\vert B\vert}$, the number of fixed-point free permutations on a set of $\vert B\vert$ elements (see Section \ref{sec:MRA-construction} for the definition and proof of the dimension). Hence $\dim\Hn = n!$, by elementary combinatorial arguments. Elements of the feature space are viewed as collections of vectors $\mathbf{X} = (X_{B})_{B\in\SubsetsWE{\n}}$. The global support of an element $\mathbf{X}$ is the set $\Supp(\mathbf{X}) = \{B\in\SubsetsWE{\n} \;\vert\; X_{B} \neq 0 \}$, and we usually identify an element $\mathbf{X}$ with the collection restricted to its global support $(X_{B})_{B\in\Supp(\mathbf{X})}$.
	\item {\bf Wavelet transform.} The wavelet transform is an operator $\Psi$ that maps a signal to its features:
	\[
	\Psi :\qquad \Space{\Gn}\rightarrow\Hn,\qquad F\mapsto \left(\Psi_{B}F\right)_{B\in\SubsetsWE{\n}},
	\]
	where for each $B\in\SubsetsWE{\n}$, $\Psi_{B}: \Space{\Gn} \rightarrow H_{B}$ is the wavelet projection on $H_{B}$. We precise that, writing $F = (F_{A})_{A\in\SubsetsWE{\n}}$, this definition means that for all $B\in\SubsetsWE{\n}$,
	\begin{equation}
	\label{eq:wavelet-transform}
	\Psi_{B}(F) = \sum_{A\in\SubsetsWE{\n}}\Psi_{B}F_{A}.
	\end{equation}
	\item {\bf Synthesis operators.} The synthesis operators are a family of operators $(\phi_{A})_{A\in\SubsetsWE{\n}}$ where for each $A\in\SubsetsWE{\n}$, $\phi_{A}:\Hn\rightarrow \Space{\Rank{A}}$ allows to reconstruct a signal in the local space $\Space{\Rank{A}}$ from its features. It satisfies the following properties:
	\begin{align*}
	\phi_{A}\Psi (F) &=F \qquad\text{for any } F\in \Space{\Rank{A}},\\
	\text{and}\qquad\Psi\phi_{\n}(\mathbf{X}) &= \mathbf{X} \qquad\text{for any } \mathbf{X} = (X_{B})_{B\in\SubsetsWE{\n}}\in\Hn.
	\end{align*}
	We precise that, writing $\mathbf{X} = (X_{B})_{B\in\SubsetsWE{\n}}$, this definition means that for all $A\in\SubsetsWE{\n}$,
	\begin{equation}
	\label{eq:operator-phi}
	\phi_{A}(\mathbf{X}) = \sum_{B\in\SubsetsWE{\n}}\phi_{A}X_{B}.
	\end{equation}
\end{itemize}
The spaces $H_{B}$, the operators $\phi_{A}$ and the wavelet transform $\Psi$ are all constructed explicitly in Section \ref{sec:MRA-construction}. They satisfy, for all $A,B\in\SubsetsWE{\n}$, $F_{A}\in \Space{\Rank{A}}$ and $X_{B}\in H_{B}$,
\begin{align*}
\Psi_{B}F_{A} = 0 &\qquad\text{if } B\not\subset A\\
\phi_{A}X_{B} = 0 &\qquad\text{if } B\not\subset A.
\end{align*}
For $B\in\SubsetsWE{\n}$, we define the set $\mathcal{Q}(B) = \{A\in\SubsetsWE{\n} \;\vert\; B\subset A\}$. Equations \eqref{eq:wavelet-transform} and \eqref{eq:operator-phi} then become, for any $A,B\in\Subsets{\n}$, $F\in \Space{\Gn}$ and $\mathbf{X}\in\Hn$,
\[
\Psi_{B}(F) = \sum_{A\in\mathcal{Q}(B)}\Psi_{B}F_{A} \qquad\text{and}\qquad \phi_{A}(\mathbf{X}) = \sum_{B\in\SubsetsWE{A}}\phi_{A}X_{B}.
\]

\subsection{Main properties} 

The strength of the MRA representation comes from the properties of the wavelet transform, the synthesis operator and the marginal operator, summarized in the following theorem. For any collection of subsets $\mathcal{S}\subset\SubsetsWE{\n}$, we define the subspace of $\Hn$:
\[
\mathbb{H}(\mathcal{S}) = \bigoplus_{B\in\mathcal{S}}H_{B}.
\]

\begin{theorem}[Fundamental properties of the MRA representation]
\label{th:MRA-general}
Let $A\in\SubsetsWE{\n}$ and $F\in \Space{\Rank{A}}$. The MRA representation satisfies the following properties.
\begin{itemize}
	\item $\Psi F$ is the unique element in $\mathbb{H}(\SubsetsWE{A})$ such that 
	\begin{equation}
	\label{eq:MRA-general-1}
	F = \phi_{A}\Psi F = \sum_{B\in\SubsetsWE{A}}\phi_{A}\Psi_{B}F.
	\end{equation}
	\item For any $A'\in\SubsetsWE{A}$,
	\begin{equation}
	\label{eq:MRA-general-2}
	M_{A'}F = \phi_{A'}\Psi F \qquad\text{or equivalently }\qquad \Psi_{B}M_{A'}F = \Psi_{B}F \text{ for all } B\in\SubsetsWE{A'}.
	\end{equation}
\end{itemize}
\end{theorem}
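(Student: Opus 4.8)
The plan is to reduce the whole statement to the reconstruction identity $\phi_A\Psi F = F$ and the localization rules ($\Psi_B F_A = 0$ and $\phi_A X_B = 0$ for $B\not\subset A$) stated above, supplemented by a single commutation relation between marginalization and synthesis. For the first item, existence is immediate: localization gives $\Psi_B F = 0$ whenever $B\not\subset A$, so $\Psi F\in\mathbb{H}(\SubsetsWE{A})$, and then $F = \phi_A\Psi F = \sum_{B\in\SubsetsWE{A}}\phi_A\Psi_B F$ by reconstruction together with $\phi_A\Psi_B F = 0$ for $B\not\subset A$. For uniqueness I would show that the restricted map $\phi_A\colon\mathbb{H}(\SubsetsWE{A})\to\Space{\Rank{A}}$ is a bijection. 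It is already surjective, since each $F$ is the image of $\Psi F$, so by finite-dimensionality it suffices to match dimensions. Here $\dim\Space{\Rank{A}} = \vert\Rank{A}\vert = \vert A\vert!$, while $\dim\mathbb{H}(\SubsetsWE{A}) = \sum_{B\in\SubsetsWE{A}}d_{\vert B\vert} = \sum_{B\subset A}d_{\vert B\vert} = \sum_{k=0}^{\vert A\vert}\binom{\vert A\vert}{k}d_{k} = \vert A\vert!$: the singletons contribute nothing because $d_{1} = 0$, and the last equality is the classical count of the permutations of $A$ according to their set of non-fixed points (of size $k$, carrying a derangement). Equal dimension plus surjectivity force injectivity, so $\Psi F$ is the unique preimage of $F$; as a byproduct this also gives $\Psi\phi_A = \mathrm{id}$ on $\mathbb{H}(\SubsetsWE{A})$, and likewise $\Psi\phi_{A'} = \mathrm{id}$ on $\mathbb{H}(\SubsetsWE{A'})$ for every $A'\in\SubsetsWE{A}$.

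For the second item, fix $A'\in\SubsetsWE{A}$, so that both $M_{A'}F$ and $\phi_{A'}\Psi F$ lie in $\Space{\Rank{A'}}$. By the uniqueness just established for $A'$, these two functions coincide as soon as their projections $\Psi_B$ agree for all $B\in\SubsetsWE{A'}$. Setting $\mathbf{Y} := \sum_{B\in\SubsetsWE{A'}}\Psi_B F\in\mathbb{H}(\SubsetsWE{A'})$, localization gives $\phi_{A'}\Psi F = \phi_{A'}\mathbf{Y}$, and $\Psi\phi_{A'} = \mathrm{id}$ then yields $\Psi_B(\phi_{A'}\Psi F) = \Psi_B F$ for each $B\in\SubsetsWE{A'}$. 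This simultaneously proves that the two formulations of \eqref{eq:MRA-general-2} are equivalent and reduces the item to the identity $\Psi_B M_{A'}F = \Psi_B F$ for $B\in\SubsetsWE{A'}$, equivalently (after applying $M_{A'}$ to $F = \sum_{B\in\SubsetsWE{A}}\phi_A\Psi_B F$) to the commutation relation $M_{A'}\phi_A = \phi_{A'}$ on each space $H_B$.

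This commutation is the step I expect to be the real obstacle. It cannot follow from the reconstruction and localization properties alone, since none of them involves the marginal operator; it genuinely records that a feature $\Psi_B F$ at ``scale'' $B$ is left untouched by marginalization onto any $A'\supset B$, the discrete counterpart of low-frequency content surviving downsampling. I would therefore prove $M_{A'}\phi_A = \phi_{A'}$ directly from the explicit construction of the synthesis operators in Section \ref{sec:MRA-construction}, verifying it on the generators of each $H_B$, and then feed it back into the reduction above to close both items. Everything else is a formal consequence of the first item.
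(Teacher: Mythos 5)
Your reduction is sound and in fact mirrors the paper's own architecture. The uniqueness-via-dimension-count step is exactly the paper's argument in \eqref{eq:dimensional-argument}, resting on $\dim H_{B} = d_{\vert B\vert}$ (itself the nontrivial Theorem \ref{th:topology}, imported from the algebraic topology of the complex of injective words) together with $\sum_{k}\binom{\vert A\vert}{k}d_{k} = \vert A\vert!$; and your reduction of the second item to the relation $M_{A'}\phi_{A} = \phi_{A'}$ on each $H_{B}$ is precisely Property 2 of Theorem \ref{th:MRA-decomposition}, which the paper then combines with uniqueness exactly as you do. So the skeleton is right.

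The gap is the step you flag and then defer: the commutation relation. This is not a routine verification but the mathematical heart of the construction, to which the paper devotes Lemma \ref{lem:commutativity} and a full appendix section. Two points make the deferral more serious than it looks. First, your proposed method, ``verifying it on the generators of each $H_{B}$,'' is not actually available: $H_{B}$ is defined implicitly as $\Space{\Rank{B}}\cap\bigcap_{B'\subsetneq B}\ker M_{B'}$ and no explicit generating set is at hand (the paper only cites a separate manuscript for a basis of these spaces). The workable route, and the one the paper takes, is to prove the stronger identity $M_{B}\phi_{C}\delta_{\pi} = \phi_{B}M_{A\cap B}\delta_{\pi}$ on all Dirac functions $\delta_{\pi}$ with $\pi\in\Rank{A}$ and $A\cup B\subset C$, and only then specialize to $F\in H_{B}$, where the defining property $M_{B\cap A'}F = 0$ for $B\not\subset A'$ kills the unwanted terms. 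Second, the identity is sensitive to the exact normalization $1/(\vert A\vert - \vert\pi\vert + 1)!$ and to the restriction to \emph{contiguous} subwords in Definition \ref{def:wavelet-synthesis-operator}: Subsection \ref{subsec:embedding-operator} shows that the ``obvious'' embedding $\phi'_{A}$ onto all linear extensions fails this very commutation, so no soft argument can deliver it. Its proof requires decomposing the set of extensions by concatenation, a change of variables, and the binomial identity $\sum_{k=0}^{m}\binom{k+r}{r}\binom{m-k+s}{s} = \binom{m+r+s+1}{m}$. Until that computation is carried out, both items of the theorem remain open, since, as you correctly observe, nothing in the reconstruction and localization axioms involves the marginal operator.
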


Theorem \ref{th:MRA-general} is proved in Section \ref{sec:MRA-construction}. It has several implications in practice. First, Property \eqref{eq:MRA-general-1} says that a function $F\in \Space{\Rank{A}}$ with $A\in\Subsets{\n}$ can be reconstructed from its wavelet transform $\Psi F$. The latter thus contains all information related to $F$ or in other words, the knowledge of $\Psi F$ implies the knowledge of $F$. In addition, this information is decomposed between all the wavelet projections $\Psi_{B}F$, and Property \eqref{eq:MRA-general-2} says that this decomposition is consistent with the marginal operator: the marginal $M_{A'}F$ of $F$ on any subset $A'\in\Subsets{A}$ can be reconstructed from the wavelet transform of $F$ restricted to the subsets $B\in\SubsetsWE{A'}$. Figure \ref{fig:th1-illustration} illustrates these properties for a ranking model $p$ over $\Sym{3}$ with marginals $P_{\{1,2\}}$, $P_{\{1,3\}}$ and $P_{\{2,3\}}$.

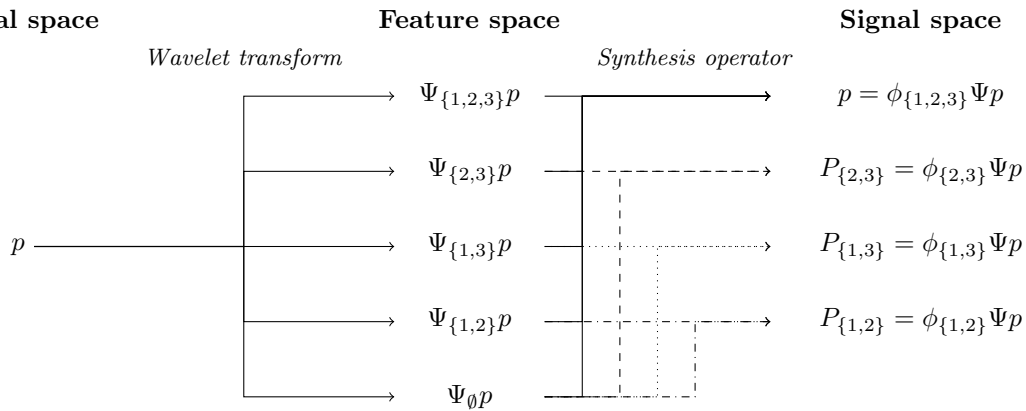
\begin{figure}[h]
\centering
	\begin{tikzpicture}
	\node at (-3,3) {\textbf{Signal space}};
	\node (a) at (-3,0) {$p$};
	\coordinate (i123) at (2,2);
	\coordinate (i23) at (2,1);
	\coordinate (i13) at (2,0);
	\coordinate (i12) at (2,-1);
	\coordinate (i0) at (2,-2);
	\node at (0,2.5) {\small{\textit{Wavelet transform}}};
	\draw[->] (a) -- ++(3,0) |- (i123);
	\draw[->] (a) -- ++(3,0) |- (i23);
	\draw[->] (a) -- ++(3,0) |- (i13);
	\draw[->] (a) -- ++(3,0) |- (i12);
	\draw[->] (a) -- ++(3,0) |- (i0);
	\node at (3,3) {\textbf{Feature space}};
	\node at (3,2) {$\Psi_{\{1,2,3\}}p$};
	\node at (3,1) {$\Psi_{\{2,3\}}p$};
	\node at (3,0) {$\Psi_{\{1,3\}}p$};
	\node at (3,-1) {$\Psi_{\{1,2\}}p$};
	\node at (3,-2) {$\Psi_{\emptyset}p$};
	\coordinate (o123) at (4,2);
	\coordinate (o23) at (4,1);
	\coordinate (o13) at (4,0);
	\coordinate (o12) at (4,-1);
	\coordinate (o0) at (4,-2);
	\node at (9,3) {\textbf{Signal space}};
	\node at (9,2) {$p = \phi_{\{1,2,3\}}\Psi p$};
	\coordinate (b1) at (7,2);
	\node at (6,2.5) {\small{\textit{Synthesis operator}}};
	\draw[->] (o123) -- ++(0.5,0) |- (b1);
	\draw[->] (o23) -- ++(0.5,0) |- (b1);
	\draw[->] (o13) -- ++(0.5,0) |- (b1);
	\draw[->] (o12) -- ++(0.5,0) |- (b1);
	\draw[->] (o0) -- ++(0.5,0) |- (b1);
	\node at (9,1) {$P_{\{2,3\}} = \phi_{\{2,3\}}\Psi p$};
	\coordinate (b2) at (7,1);
	\draw[dashed, ->] (o23) -- ++(1,0) |- (b2);
	\draw[dashed, ->] (o0) -- ++(1,0) |- (b2);
	\node at (9,0) {$P_{\{1,3\}} = \phi_{\{1,3\}}\Psi p$};
	\coordinate (b3) at (7,0);
	\draw[dotted, ->] (o13) -- ++(1.5,0) |- (b3);
	\draw[dotted, ->] (o0) -- ++(1.5,0) |- (b3);
	\node at (9,-1) {$P_{\{1,2\}} = \phi_{\{1,2\}}\Psi p$};
	\coordinate (b4) at (7,-1);
	\draw[dashdotted, ->] (o12) -- ++(2,0) |- (b4);
	\draw[dashdotted, ->] (o0) -- ++(2,0) |- (b4);
	\end{tikzpicture}
\caption{Illustration of Theorem \ref{th:MRA-general} for $n = 3$}
\label{fig:th1-illustration}
\end{figure}

\subsection{Multiresolution interpretation}\label{subsec:interpretation}

We now show that the MRA representation exploits the natural multi-scale structure of the marginals, justifying the use of terms ``MRA representation'' and ``wavelet transform''. The definition of the marginal operator \eqref{eq:marginal-operator} leads to the following relations for any subsets $A,B\in\Subsets{\n}$ with $B\subset A$:
\begin{equation}
\label{eq:projective-system}
\left(M_{A}\right)_{\Space{\Rank{A}}} = Id_{\Space{\Rank{A}}} \qquad\text{and}\qquad M_{B}M_{A} = M_{B},
\end{equation}
where $\left(M_{A}\right)_{\Space{\Rank{A}}}$ denotes the restriction of $M_{A}$ to $\Space{\Rank{A}}$ and $Id_{\Space{\Rank{A}}}$ is the identity operator on $\Space{\Rank{A}}$. Relations \eqref{eq:projective-system} actually mean that the collection of linear spaces $(\Space{\Rank{A}})_{A\in\Subsets{\n}}$ together with the collection of linear operators $(M_{A})_{A\in\Subsets{\n}}$ form a \textit{projective system}. The partial order associated with this projective system is the inclusion order on $\Subsets{\n}$. It is canonically graded with the rank function $A\mapsto\vert A\vert$. This defines a notion of \textit{scale} for the marginals, and this is why we call the projective system defined by relations \eqref{eq:projective-system} the \textit{multi-scale structure} of the marginals. Figure \ref{fig:multiscale-structure} provides an illustration for $n = 4$.

\begin{figure}[h!]
	\centering
	\begin{tikzpicture}
	\node (scale-4) at (-7,2) {\bf Scale 4};
	\node (scale-3) at (-7,0) {\bf Scale 3};
	\node (scale-2) at (-7,-2) {\bf Scale 2};
	\node (1) at (0,2) {$F$};
	\node (2) at (-4.5,0) {$M_{\{1,2,3\}}F$};
	\node (3) at (-1.5,0) {$M_{\{1,2,4\}}F$};
	\node (4) at (1.5,0) {$M_{\{1,3,4\}}F$};
	\node (5) at (4.5,0) {$M_{\{2,3,4\}}F$};
	\node (6) at (-5,-2) {$M_{\{1,2\}}F$};
	\node (7) at (-3,-2) {$M_{\{1,3\}}F$};
	\node (8) at (-1,-2) {$M_{\{1,4\}}F$};
	\node (9) at (1,-2) {$M_{\{2,3\}}F$};
	\node (10) at (3,-2) {$M_{\{2,4\}}F$};
	\node (11) at (5,-2) {$M_{\{3,4\}}F$};	
	\draw 
	(1) -- (2)
	(1) -- (3)
	(1) -- (4)
	(1) -- (5)
	(2) -- (6)
	(2) -- (7)
	(2) -- (9)
	(3) -- (6)
	(3) -- (8)
	(3) -- (10)
	(4) -- (7)
	(4) -- (8)
	(4) -- (11)
	(5) -- (9)
	(5) -- (10)
	(5) -- (11)
	;
	\end{tikzpicture}
	\caption{Multi-scale structure of the marginals of a function $F\in \Space{\Rank{\set{4}}}$}
	\label{fig:multiscale-structure}
\end{figure}

From a practical point of view, the scale of a marginal corresponds to the number of items in the subset on which the marginal is considered. By equation \eqref{eq:projective-system}, a marginal on a subset $B\in\Subsets{A}$ induces the marginals on all the subsets $C\in\Subsets{B}$. The collection of marginals $(M_{B}F)_{B\subset A,\ \vert B\vert = k}$ for $F\in \Space{\Rank{A}}$ and $k\in\{2,\dots,\vert A\vert\}$ thus induces all the marginals on subsets $C\subset A$ with $\vert C\vert \leq k-1$. Hence we say that $(M_{B}F)_{B\subset A,\ \vert B\vert = k}$ contains all the information of $F$ at scale up to $k$. This notion of scale can be naturally compared to the usual notion in image analysis: its version in low resolution can be recovered from a higher resolution. The version of the image in the higher resolution thus contains more information than the version in low resolution.

The same as for images, the piece of information gained when increasing the scale corresponds to an additional level of details. For instance, if one has access to the triple-wise marginals of a ranking model $p$ then one has access to the information contained in the pairwise marginals plus the piece of information of scale 3. This decomposition can be further refined: marginals of the same scale on different subsets provide different additional pieces of information. For instance, compared to the marginal on $\{1,4\}$, the marginals on $\{1,2,4\}$ and $\{1,3,4\}$ both provide an additional but different level of details. Pursuing the analogy with images, this decomposition of the information into pieces related to subsets of items can be compared with the space decomposition of an image: for each resolution level, an image can be spatially decomposed into different components. Therefore, through their multi-scale structure, the marginals of a function $F\in \Space{\Rank{A}}$ for $A\in\Subsets{\n}$ each contain a part of its total information, both delimited in \textit{scale} and in \textit{space}. 

The multiresolution representation allows to localize, in each of these parts, the component that is specific to the marginal. First, one has
\[
\Psi_{\emptyset}F = \left(\sum_{\pi\in\Rank{A}}F(\pi)\right)\delta_{\bar{0}},
\]
this is proven in Section \ref{sec:MRA-construction}. The projection $\Psi_{\emptyset}F$ can thus be seen as containing the piece of information of $F$ at scale $0$. Then for a pair $\{a,b\}\subset A$, applying Eq. \eqref{eq:MRA-general-1} to $M_{\{a,b\}}F$ combined with \eqref{eq:MRA-general-2} gives
\begin{equation}
\label{eq:recursion-level-2}
\Psi_{\{a,b\}}F = M_{\{a,b\}}F - \phi_{\{a,b\}}\Psi_{\emptyset}F.
\end{equation}
Hence, starting from $\Psi_{\emptyset}F$, $\Psi_{\{a,b\}}F$ contains the exact additional piece of information to recover $M_{\{a,b\}}F$. This is the part of information that is specific to $M_{\{a,b\}}F$. For a triple $\{a,b,c\}\subset A$, the same calculation gives
\begin{equation}
\label{eq:recursion-level-3}
\Psi_{\{a,b,c\}}F = M_{\{a,b,c\}}F - \phi_{\{a,b,c\}}\left[\Psi_{\emptyset}F + \Psi_{\{a,b\}}F + \Psi_{\{a,c\}}F + \Psi_{\{b,c\}}F\right].
\end{equation}
The projection $\Psi_{\{a,b,c\}}F$ of $F$ thus contains all the additional piece of information to get from the pairwise marginals $M_{\{a,b\}}F$, $M_{\{a,c\}}F$ and $M_{\{b,c\}}F$ to the triple-wise marginal $M_{\{a,b,c\}}F$. More generally, for $B\in\Subsets{A}$, $\Psi_{B}F$ contains the piece of information that is specific to $M_{B}F$, or equivalently the part of the information of $F$ that is localized on scale $\vert B\vert$ on the subset $B$. 

\begin{example}
Let $p$ be a ranking model over $\Rank{\set{3}}$ and $\Sigma$ a random permutation drawn from $p$. For clarity's sake, we denote by $\mathbb{P}\left[a_{1}\succ \dots \succ a_{k}\right]$ the probability of the event $\{\Sigma(a_{1}) < \dots < \Sigma(a_{k})\}$. One has for instance (see Section \ref{sec:MRA-construction} for the general formulas):
\[
\arraycolsep=2pt
\def\arraystretch{1.5}
\begin{array}{ccccccc}
\mathbb{P}\left[2\succ 1\succ 3\right] & = & p(213) & & & & \\
& = & \phi_{\set{3}}\Psi_{\emptyset} p(213) & + & \phi_{\set{3}}\left[\Psi_{\{1,2\}}p + \Psi_{\{1,3\}}p + \Psi_{\{2,3\}}p\right](213) & + & \phi_{\set{3}}\Psi_{\set{3}}p(213)\\
& = & \frac{1}{6} & + & \frac{1}{2}\left[\left(\mathbb{P}\left[2\succ 1\right] - \frac{1}{2}\right) + \left(\mathbb{P}\left[1\succ 3\right] - \frac{1}{2}\right)\right] & + & \Psi_{\set{3}}p(213).
\end{array}
\]
In this decomposition, the first term is the value of the uniform distribution over $\Rank{\set{3}}$, it represents information at scale $0$. The second term represents the part of information at scale 2 of $p$ that is involved in the probability of the ranking $2\succ 1\succ 3$. The last term represents the part of information involved at scale 3, it can be interpreted as a residual.
\end{example}

In wavelet analysis over a Euclidean space, each wavelet coefficient of a function $f$ contains a specific part of information, localized in scale and space. In the present context, for $F\in \Space{\Rank{A}}$ with $A\in\Subsets{\n}$ and $B\in\SubsetsWE{A}$, the coefficient $\Psi_{B}F$ contains the part of information that is specific to the marginal $M_{B}F$, or in other words localized at scale $\vert B\vert$ and subset $B$. This is why we call the operator $\Psi$ the \textit{wavelet transform} and more generally the construction the \textit{MRA representation}.

\begin{example}
\label{ex:illustration-MRA}
Here we provide a graphical illustration of the MRA representation applied to a real dataset, obtained from \citet{Croon1989} and studied for example in \citet{Diaconis1998} or \citet{YB99}. It consists of 2,262 full rankings of four political goals for the government collected from a survey conducted in Germany. Let $p$ be the normalized histogram of the results, which we consider as a ranking model over $\Sym{4}$.

The ranking model and all its marginals are represented on the left of Figure \ref{fig:German} whereas all its wavelet projections are represented on the right of Figure \ref{fig:German}. For each $B\in\Subsets{\n}$, we represent the wavelet projection $\Psi_{B}p\in H_{B}$ as an element of $\Space{\Rank{B}}$. We however point out that $\dim H_{B} = d_{\vert B\vert}$ by virtue of Theorem \ref{th:MRA-decomposition}. This means that for each $B\in\Subsets{\n}$, the wavelet projection $\Psi_{B}p$ actually characterizes $d_{\vert B\vert}$ degrees of freedom of $p$ and not $\vert B\vert !$. The graphical representation of $\Psi_{B}p$ as an element of $\Space{\Rank{B}}$ can thus be misleading. Table \ref{tab:factorial-derangements} provides a comparison of $k!$ and $d_{k}$ for different values of $k$.

\begin{figure}[h!]
\centering
\begin{tabular}{cccccccccccc}
\multicolumn{12}{@{}c@{}}{\tiny $p$}\\
\multicolumn{12}{@{}c@{}}{\includegraphics[scale=0.4]{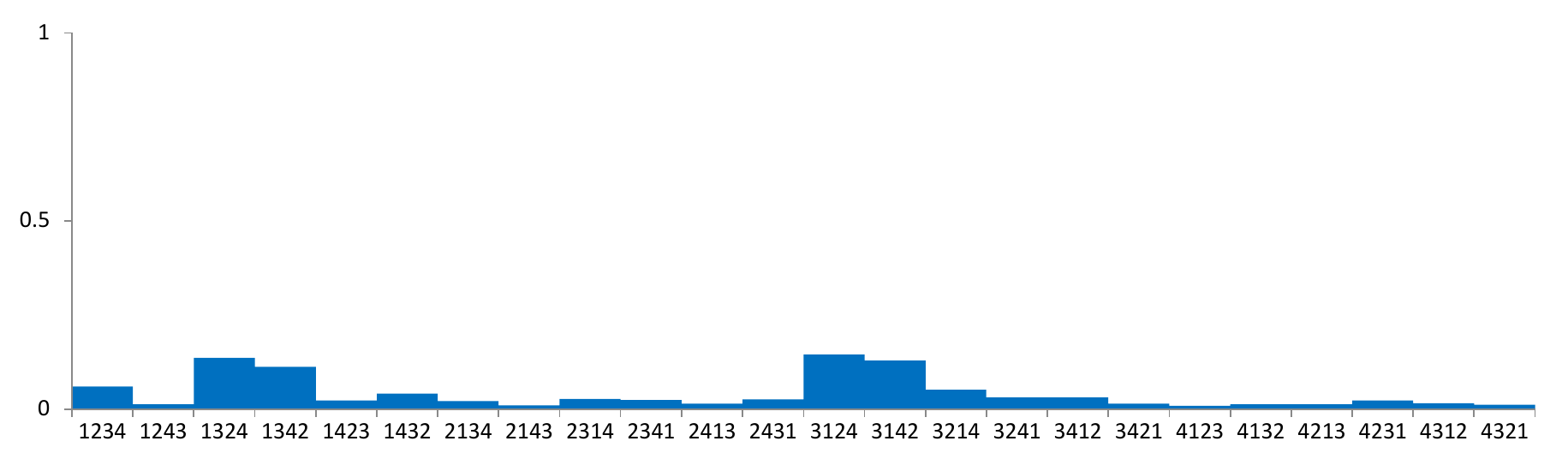}}\\
\multicolumn{3}{p{1.2cm}}{\tiny $M_{\{1,2,3\}}p$} & \multicolumn{3}{p{1.2cm}}{\tiny $M_{\{1,2,4\}}p$} & \multicolumn{3}{p{1.2cm}}{\tiny $M_{\{1,3,4\}}p$} & \multicolumn{3}{p{1.2cm}}{\tiny $M_{\{2,3,4\}}p$}\\
\multicolumn{3}{p{1.2cm}}{\includegraphics[scale=0.4]{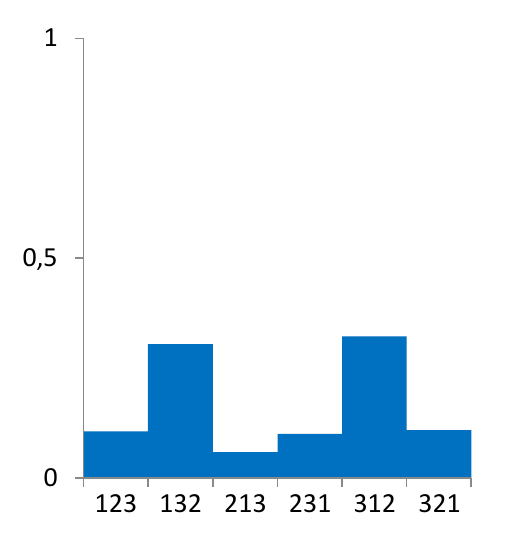}} & \multicolumn{3}{p{1.2cm}}{\includegraphics[scale=0.4]{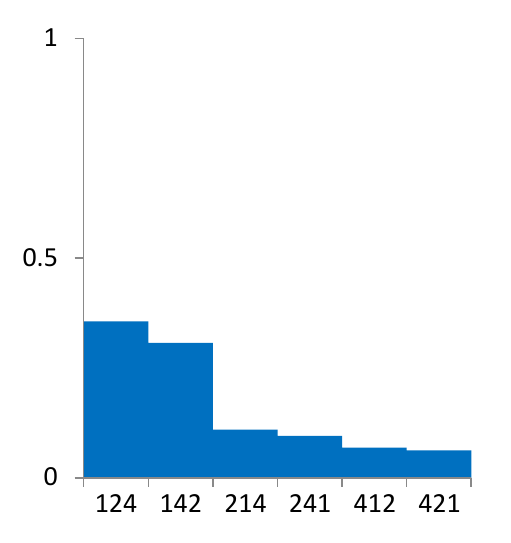}} & \multicolumn{3}{p{1.2cm}}{\includegraphics[scale=0.4]{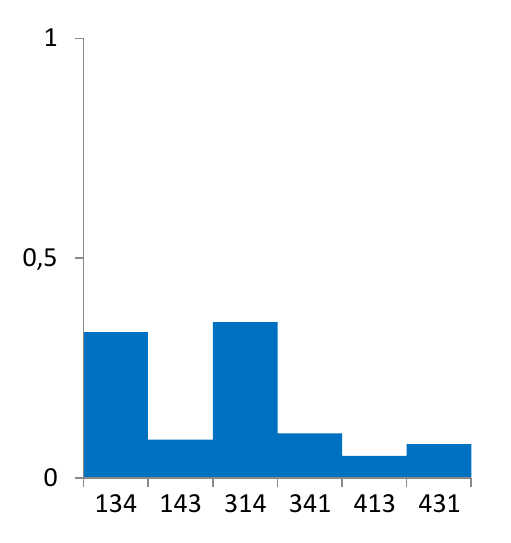}} & \multicolumn{3}{p{1.2cm}}{\includegraphics[scale=0.4]{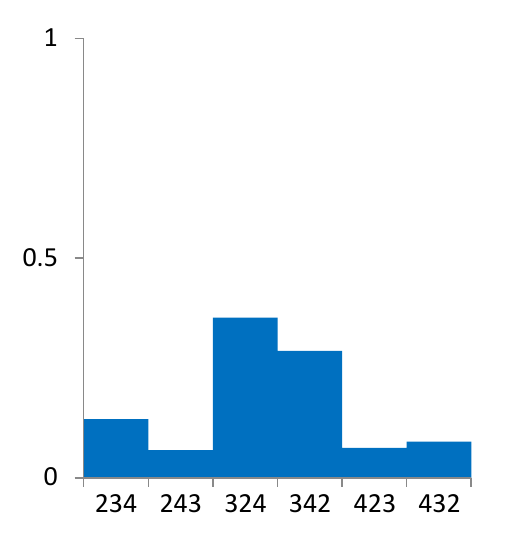}}\\
\multicolumn{2}{p{0.8cm}}{\tiny $M_{\{1,2\}}p$} & \multicolumn{2}{p{0.8cm}}{\tiny $M_{\{1,3\}}p$} & \multicolumn{2}{p{0.8cm}}{\tiny $M_{\{1,4\}}p$} & \multicolumn{2}{p{0.8cm}}{\tiny $M_{\{2,3\}}p$} & \multicolumn{2}{p{0.8cm}}{\tiny $M_{\{2,4\}}p$} & \multicolumn{2}{p{0.8cm}}{\tiny $M_{\{3,4\}}p$}\\
\multicolumn{2}{p{0.8cm}}{\includegraphics[scale=0.4]{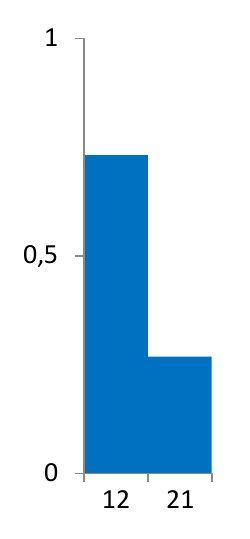}} & \multicolumn{2}{p{0.8cm}}{\includegraphics[scale=0.4]{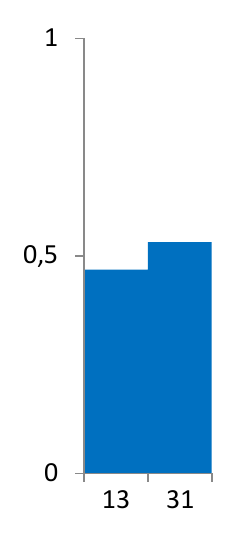}} & \multicolumn{2}{p{0.8cm}}{\includegraphics[scale=0.4]{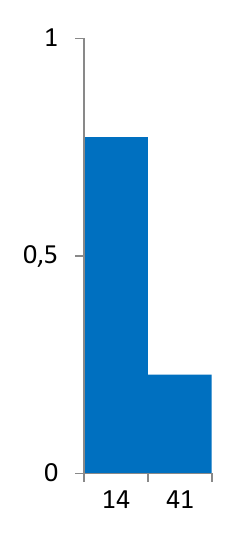}} & \multicolumn{2}{p{0.8cm}}{\includegraphics[scale=0.4]{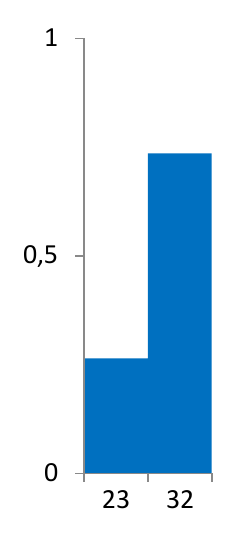}} & \multicolumn{2}{p{0.8cm}}{\includegraphics[scale=0.4]{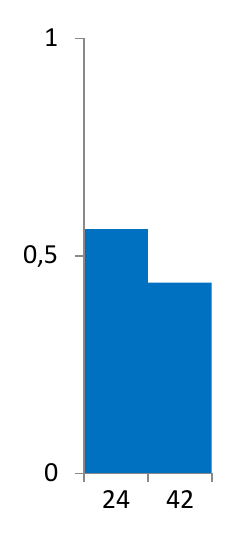}} & \multicolumn{2}{p{0.8cm}}{\includegraphics[scale=0.4]{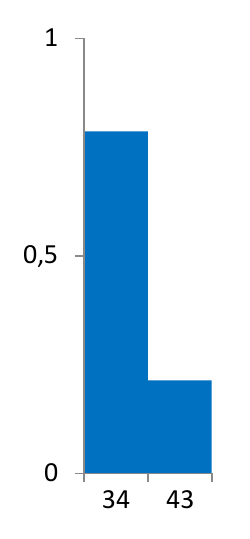}}
\end{tabular}
\ 
\begin{tabular}{cccccccccccc}
\multicolumn{12}{@{}c@{}}{\tiny $X_{\{1,2,3,4\}}p$}\\
\multicolumn{12}{@{}c@{}}{\includegraphics[scale=0.4]{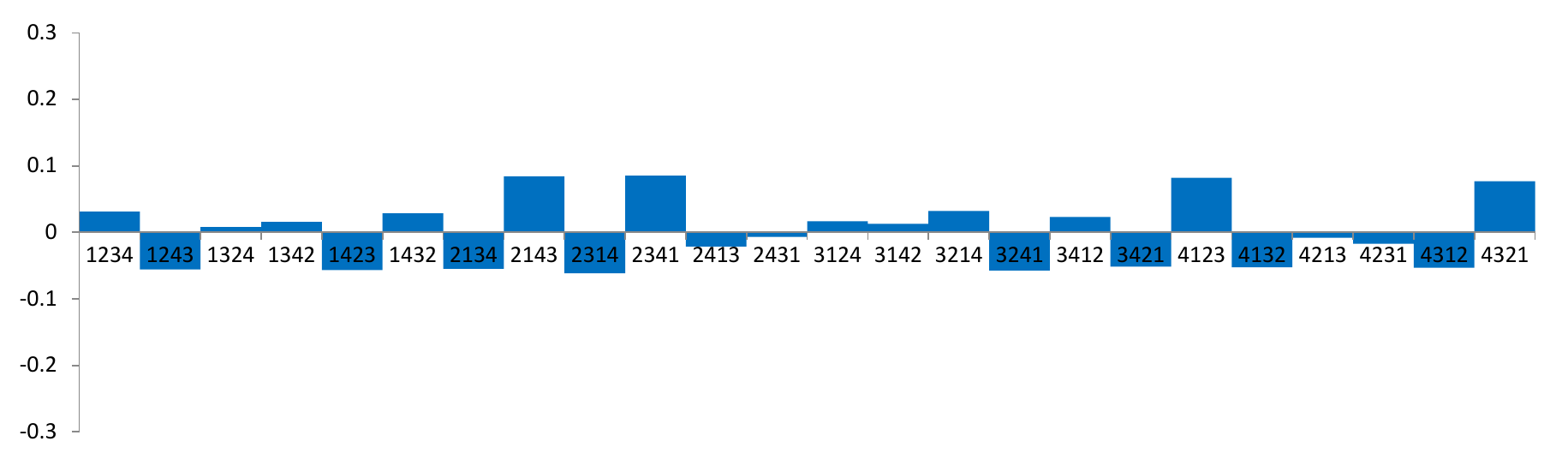}}\\
\multicolumn{3}{p{1.2cm}}{\tiny $X_{\{1,2,3\}}p$} & \multicolumn{3}{p{1.2cm}}{\tiny $X_{\{1,2,4\}}p$} & \multicolumn{3}{p{1.2cm}}{\tiny $X_{\{1,3,4\}}p$} & \multicolumn{3}{p{1.2cm}}{\tiny $X_{\{2,3,4\}}p$}\\
\multicolumn{3}{p{1.2cm}}{\includegraphics[scale=0.4]{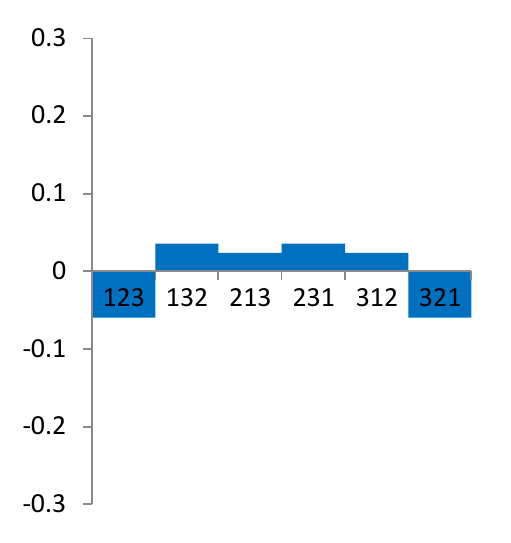}} & \multicolumn{3}{p{1.2cm}}{\includegraphics[scale=0.4]{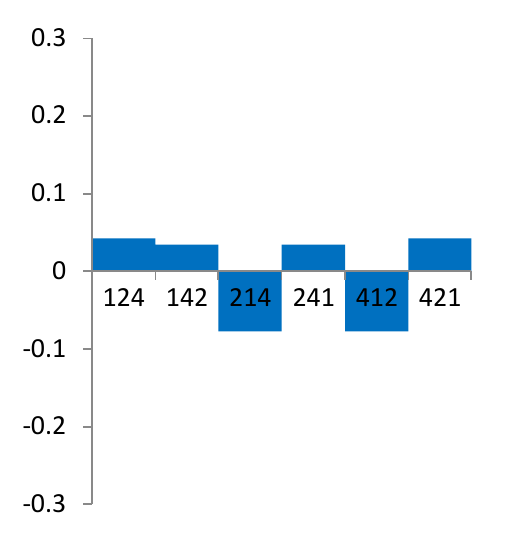}} & \multicolumn{3}{p{1.2cm}}{\includegraphics[scale=0.4]{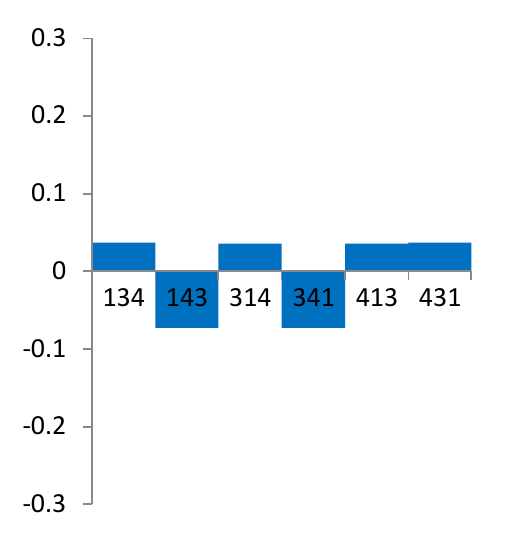}} & \multicolumn{3}{p{1.2cm}}{\includegraphics[scale=0.4]{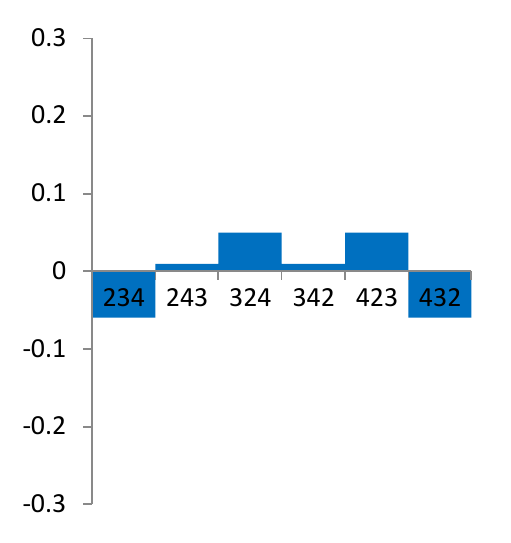}}\\
\multicolumn{2}{p{0.8cm}}{\tiny $X_{\{1,2\}}p$} & \multicolumn{2}{p{0.8cm}}{\tiny $X_{\{1,3\}}p$} & \multicolumn{2}{p{0.8cm}}{\tiny $X_{\{1,4\}}p$} & \multicolumn{2}{p{0.8cm}}{\tiny $X_{\{2,3\}}p$} & \multicolumn{2}{p{0.8cm}}{\tiny $X_{\{2,4\}}p$} & \multicolumn{2}{p{0.8cm}}{\tiny $X_{\{3,4\}}p$}\\
\multicolumn{2}{p{0.8cm}}{\includegraphics[scale=0.4]{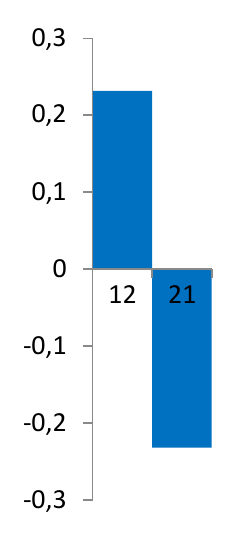}} & \multicolumn{2}{p{0.8cm}}{\includegraphics[scale=0.4]{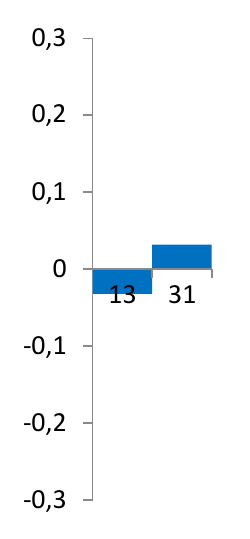}} & \multicolumn{2}{p{0.8cm}}{\includegraphics[scale=0.4]{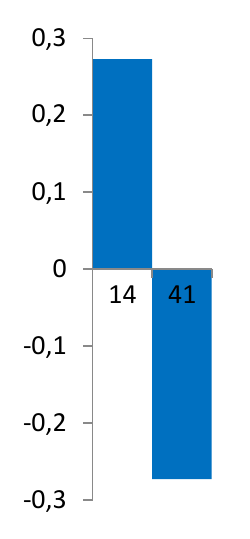}} & \multicolumn{2}{p{0.8cm}}{\includegraphics[scale=0.4]{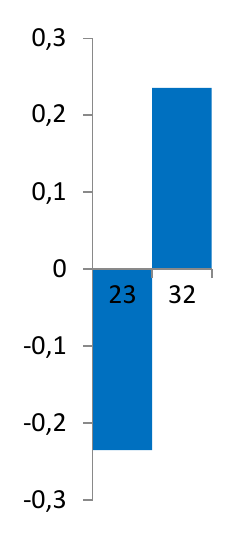}} & \multicolumn{2}{p{0.8cm}}{\includegraphics[scale=0.4]{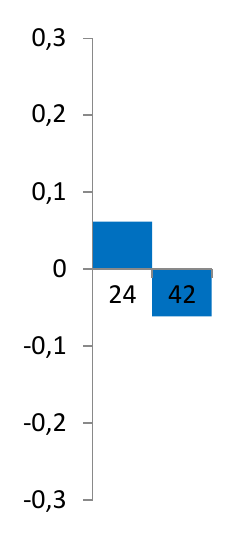}} & \multicolumn{2}{p{0.8cm}}{\includegraphics[scale=0.4]{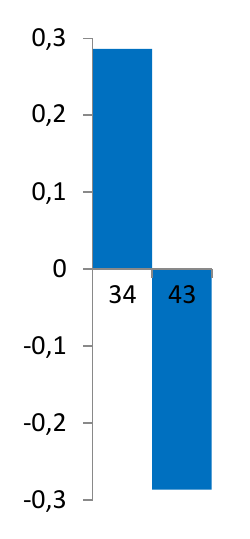}}
\end{tabular}
\caption{Ranking model $p$ and its marginals on the left. Wavelet projections of the ranking model $p$ on the right}
\label{fig:German}
\end{figure}
\begin{table}[h!]
\centering
\begin{tabular}{c|ccccccc}
$k$ & 0 & 1 & 2 & 3 & 4 & 5 & 6 \\
\hline
$k!$ & 1 & 1 & 2 & 6 & 24 & 120 & 720 \\
$d_{k}$ & 1 & 0 & 1 & 2 & 9 & 44 & 265
\end{tabular}
\caption{Values of $k!$ and $d_{k}$}
\label{tab:factorial-derangements}
\end{table}
\end{example}

\begin{remark}
Beyond the useful analogy, we point out several differences between the MRA representation for incomplete rankings and standard wavelet analysis over a Euclidean space.
\begin{itemize}
	\item The signal space $\Space{\Gn} = \bigoplus_{A\in\SubsetsWE{\n}}\Space{\Rank{A}}$ being heterogeneous, it is usually required in applications to reconstruct a signal only in a local signal space $\Space{\Rank{A}}$. This is why the MRA representation comes with a family of synthesis operators $\phi_{A}$ and not just one. 
	\item The wavelet transform $\Psi$ maps a function $F$ to a collection of \textit{vector coefficients} $\Psi_{B}F$ and not \textit{scalar coefficients} as it is the case in Euclidean harmonic analysis. This means that each wavelet projection localizes a part of information with several degrees of freedom.
	\item The subspace decomposition associated to $\Psi$ is not orthogonal and $\Psi$ is not an isometry. More specifically, for $F\in \Space{\Rank{A}}$ with $A\in\Subsets{\n}$, one has in general 
	\[
	\Vert F\Vert_{A}^{2}\quad \neq \sum_{B\in\SubsetsWE{A}}\Vert \Psi_{B}F\Vert_{B}^{2},
	\] 
	where $\Vert\cdot\Vert_{B}$ is an abbreviated notation for the Euclidean norm $\Vert\cdot\Vert_{\Rank{B}}$ on $\Space{\Rank{B}}$ for any $B\in\Subsets{\n}$. This last fact implies in particular that the classic nonlinear approximation theory based on wavelet analysis is not applicable: keeping only the wavelet projections with highest $l^{2}$ norm will not necessarily provide a good approximation of the signal for the $l^{2}$ norm (nor any $l^{p}$ norm).
\end{itemize}
\end{remark}

\subsection{Solving linear systems involving the marginal operator}

One of the main consequences of Theorem \ref{th:MRA-general} is that the MRA representation ``simultaneously block-diagonalize'' the marginal operators $M_{A}$. To be more specific, let $\mathcal{M}_{A}:\Hn\rightarrow\Hn$ be the operator defined by $\mathcal{M}_{A} = \Psi M_{A}\phi_{\n}$ for $A\in\Subsets{\n}$. The following proposition is a direct consequence of Theorem \ref{th:MRA-general}.

\begin{proposition}[Marginal operator in the feature space]
\label{prop:marginal-operator-feature-space}
Let $A\in\Subsets{\n}$. For all $F\in \Space{\Gn}$,
\[
\Psi M_{A} F = \mathcal{M}_{A}\Psi F.
\]
In other words, the operator $\mathcal{M}_{A}$ is such that the following diagram is commutative.
\begin{center}
	\begin{tikzpicture}
		\node (a) at (0,2) {$\Space{\Gn}$};
		\node (b) at (3,2) {$\Hn$};
		\node (c) at (3,0) {$\Hn$};
		\node (d) at (0,0) {$\Space{\Rank{A}}$};
		\path[->] (a) edge node[above] {$\Psi$} (b);
		\path[->] (a) edge node[left] {$M_{A}$} (d);
		\path[->] (b) edge node[right] {$\mathcal{M}_{A}$} (c);
		\path[->] (d) edge node[below] {$\Psi$} (c);
	\end{tikzpicture}
\end{center}
\end{proposition}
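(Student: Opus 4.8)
The plan is to unfold the definition of $\mathcal{M}_{A}$ and reduce the claimed identity to a statement about ordinary marginals, which Theorem~\ref{th:MRA-general} controls directly. Since $\mathcal{M}_{A} = \Psi M_{A}\phi_{\n}$, for every $F\in\Space{\Gn}$ we have
\[
\mathcal{M}_{A}\Psi F = \Psi\,M_{A}\phi_{\n}\Psi F,
\]
so the equality $\Psi M_{A}F = \mathcal{M}_{A}\Psi F$ will follow as soon as we show that $M_{A}F$ and $M_{A}\phi_{\n}\Psi F$ agree in $\Space{\Rank{A}}$ and then apply $\Psi$. First I would use linearity of $\Psi$, $M_{A}$ and $\phi_{\n}$ together with the direct sum $\Space{\Gn} = \bigoplus_{A'\in\SubsetsWE{\n}}\Space{\Rank{A'}}$ to reduce to a single homogeneous component $F\in\Space{\Rank{A'}}$.

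The key device is the lift $G := \phi_{\n}\Psi F\in\Space{\Rank{\n}}$. Because the synthesis operator satisfies $\Psi\phi_{\n} = \mathrm{Id}_{\Hn}$, this lift preserves the wavelet transform,
\[
\Psi G = \Psi\phi_{\n}\Psi F = \Psi F,
\]
so $G$ is a signal living on the top space $\Space{\Sn}$ with exactly the same features as $F$. Applying Property~\eqref{eq:MRA-general-2} of Theorem~\ref{th:MRA-general} to $G$, whose content $\n$ satisfies $A\in\SubsetsWE{\n}$, yields $M_{A}G = \phi_{A}\Psi G = \phi_{A}\Psi F$. Hence $M_{A}\phi_{\n}\Psi F = \phi_{A}\Psi F$, an expression that depends on $F$ only through its features $\Psi F$. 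The remaining task is to identify $M_{A}F$ with the same quantity $\phi_{A}\Psi F$: this is again Property~\eqref{eq:MRA-general-2}, read now for $F$ itself, so that $M_{A}F = \phi_{A}\Psi F = M_{A}\phi_{\n}\Psi F$, and applying $\Psi$ concludes. Phrased conceptually, the entire content of the proposition is that the marginal $M_{A}F$ is a function of the features $\Psi F$ alone, so replacing $F$ by any other signal sharing its wavelet transform — in particular by its full lift $\phi_{\n}\Psi F$ — leaves $M_{A}F$ untouched.

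I expect the delicate point to be precisely this last identification together with the bookkeeping of supports it requires. Property~\eqref{eq:MRA-general-2} expresses a marginal as $\phi_{A}\Psi(\cdot)$ only for a subset of the content, so in the component-by-component reduction one must distinguish the components $F_{A'}$ whose content $A'$ actually contains $A$ (where $M_{A}F_{A'}=\phi_{A}\Psi F_{A'}$ holds outright) from the others, where $M_{A}F_{A'}=0$ because no $\pi\in\Rank{A}$ is a subword of a word of content $A'\not\supseteq A$. Controlling these terms is where the localization relations $\Psi_{B}F_{A'}=0$ and $\phi_{A}X_{B}=0$ for $B\not\subset A$ must be invoked, and it is the one spot in the argument that is not a purely formal manipulation. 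The cleanest case, and the one relevant to ``block-diagonalizing'' the marginal operators on a ranking model, is $F\in\Space{\Sn}$: there $\phi_{\n}\Psi F = F$ directly by the first defining property of $\phi_{\n}$ (taken with $A=\n$), so $\mathcal{M}_{A}\Psi F = \Psi M_{A}\phi_{\n}\Psi F = \Psi M_{A}F$ is immediate, which is why the statement can be presented as a direct corollary of Theorem~\ref{th:MRA-general}.
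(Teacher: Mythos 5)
Your argument is correct and follows essentially the same route as the paper: unfold $\mathcal{M}_{A}\Psi F = \Psi M_{A}\phi_{\n}\Psi F$, apply Property \eqref{eq:MRA-general-2} once to the lift $\phi_{\n}\Psi F$ and once to $F$ itself, and use $\Psi\phi_{\n} = \mathrm{Id}_{\Hn}$ to identify both with $\phi_{A}\Psi F$. Your extra care about reducing to homogeneous components $F_{A'}$ and tracking the cases $A\not\subset A'$ is a welcome tightening of a step the paper leaves implicit, but it does not change the substance of the proof.
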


\begin{proof}
Let $A\in\Subsets{\n}$ and $F\in \Space{\Gn}$. By definition of the operator $\mathcal{M}_{A}$,
\[
\mathcal{M}_{A}\Psi F = \Psi M_{A}\phi_{\n}\Psi F.
\]
Now, applying Property \eqref{eq:MRA-general-2} successively to $\phi_{\n}\Psi F$ and $F$ gives
\[
M_{A}\phi_{\n}\Psi F = \phi_{A}\Psi\phi_{\n}\Psi F = \phi_{A}\Psi F = M_{A}F,
\]
where we recall that $\Psi\phi_{\n}\mathbf{X} = \mathbf{X}$ for any $\mathbf{X}\in\Hn$. Hence $\mathcal{M}_{A}\Psi F = \Psi M_{A}F$.
\end{proof}

Proposition \ref{prop:marginal-operator-feature-space} says that applying the operator $\mathcal{M}_{A}$ in the feature space is equivalent to applying the marginal operator $M_{A}$ in the signal space. This is why we call $\mathcal{M}_{A}$ the \textit{marginal operator in the feature space}. Now, Theorem \ref{th:MRA-general} also implies that this operator is actually a simple projection.

\begin{proposition}[Simultaneous block-diagonalization]
\label{prop:projector}
For $A\in\Subsets{\n}$, $\mathcal{M}_{A}$ is the projection on $\mathbb{H}(\SubsetsWE{A})$: for any $(X_{B})_{B\in\SubsetsWE{\n}}\in\Hn$,
\[
\mathcal{M}_{A}\left((X_{B})_{B\in\SubsetsWE{\n}}\right) = (X_{B})_{B\in\SubsetsWE{A}}.
\]
Equivalently, the matrix of $\mathcal{M}_{A}$ in any basis of $\Hn$ consistent with the decomposition $\bigoplus_{B\in\SubsetsWE{\n}}H_{B}$ is of the form
\[
\kbordermatrix{
	& H_{\emptyset} & \cdots & H_{\n}\\
	H_{\emptyset} & \mathbf{m}_{\emptyset} & \cdots & 0\\
	\vdots & \vdots & \ddots & 0 \\
	H_{\n} & 0 & \cdots & \mathbf{m}_{\n}
},
\]
where for $B\in\SubsetsWE{\n}$, $\mathbf{m}_{B} = \mathbf{I}_{B}$, the matrix of the identity operator $Id_{H_{B}}$ on $H_{B}$, if $B\subset A$ and $\mathbf{m}_{B} = 0$ otherwise.
\end{proposition}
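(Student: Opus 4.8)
The plan is to deduce everything from Theorem~\ref{th:MRA-general} together with the reconstruction identity $\Psi\phi_{\n}\mathbf{X}=\mathbf{X}$ and the support property $\Psi_{B}F_{A}=0$ for $B\not\subset A$; no new computation with the explicit construction of $\Psi$ or $\phi_{A}$ should be needed. The idea is that an arbitrary element of $\Hn$ can always be realized as the wavelet transform of a function supported on $\Rank{\n}$, which lets me replace the abstract operator $\mathcal{M}_{A}=\Psi M_{A}\phi_{\n}$ by the concrete action of the marginal operator on such a function.

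Concretely, I would fix $\mathbf{X}=(X_{B})_{B\in\SubsetsWE{\n}}\in\Hn$ and set $F:=\phi_{\n}(\mathbf{X})\in\Space{\Rank{\n}}$. Since $\Psi\phi_{\n}=\mathrm{Id}_{\Hn}$, we have $\Psi F=\mathbf{X}$, that is $\Psi_{B}F=X_{B}$ for every $B\in\SubsetsWE{\n}$. By the definition of $\mathcal{M}_{A}$ this gives $\mathcal{M}_{A}\mathbf{X}=\Psi M_{A}F$, so it remains only to identify the components $\Psi_{B}(M_{A}F)$ for each $B$.

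I would then split into two cases. When $B\subset A$ (i.e. $B\in\SubsetsWE{A}$), I apply the second assertion of Theorem~\ref{th:MRA-general} to $F\in\Space{\Rank{\n}}$ with the subset $A\in\SubsetsWE{\n}$ playing the role of $A'$, which yields $\Psi_{B}M_{A}F=\Psi_{B}F=X_{B}$. When $B\not\subset A$, I use that $M_{A}F\in\Space{\Rank{A}}$ together with the support property $\Psi_{B}G=0$ for $G\in\Space{\Rank{A}}$ and $B\not\subset A$, giving $\Psi_{B}M_{A}F=0$. Assembling the two cases shows $\mathcal{M}_{A}\mathbf{X}=(X_{B})_{B\in\SubsetsWE{A}}$, which is exactly the coordinate projection onto $\mathbb{H}(\SubsetsWE{A})$; its matrix in any basis adapted to $\bigoplus_{B}H_{B}$ is therefore block-diagonal with $\mathbf{m}_{B}=\mathbf{I}_{B}$ when $B\subset A$ and $\mathbf{m}_{B}=0$ otherwise.

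The step requiring the most care --- and the only genuine content beyond bookkeeping --- is the vanishing of the off-diagonal blocks in the case $B\not\subset A$. Everything there hinges on the fact that $M_{A}$ lands in the single local space $\Space{\Rank{A}}$ and that $\Psi_{B}$ kills that space as soon as $B\not\subset A$; I would make sure the hypotheses of Theorem~\ref{th:MRA-general} are invoked with the correct ambient set ($\n$, not $A$) so that the diagonal identity $\Psi_{B}M_{A}F=X_{B}$ is legitimately available for all $B\subset A$. Idempotence of $\mathcal{M}_{A}$ then needs no separate verification, since a coordinate projection in a direct-sum decomposition is manifestly a projection.
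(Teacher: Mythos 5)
Your proof is correct and takes essentially the same route as the paper: both arguments apply Theorem \ref{th:MRA-general} to $\phi_{\n}\mathbf{X}$ together with the identity $\Psi\phi_{\n}=\mathrm{Id}_{\Hn}$, the only difference being that you invoke the componentwise form $\Psi_{B}M_{A}F=\Psi_{B}F$ and the support property of $\Psi_{B}$, while the paper uses the equivalent form $M_{A}F=\phi_{A}\Psi F$ followed by the uniqueness of the decomposition. The case split you flag as delicate ($B\not\subset A$) is handled correctly by the fact that $M_{A}F\in\Space{\Rank{A}}$ and $\Psi_{B}$ vanishes on that space.
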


\begin{proof}
Let $A\in\Subsets{\n}$ and $\mathbf{X}\in\Hn$. Applying Property \eqref{eq:MRA-general-2} to $\phi_{\n}\mathbf{X}$ one obtains
\[
\mathcal{M}_{A}(\mathbf{X}) = \Psi M_{A}\phi_{\n}(\mathbf{X}) = \Psi \phi_{A}(\mathbf{X}) = \Psi \sum_{B\in\SubsetsWE{A}}\phi_{A}X_{B} = \left(X_{B}\right)_{B\in\SubsetsWE{A}},
\]
which concludes the proof.
\end{proof}

\begin{example}
	The matrix of $\mathcal{M}_{\{1,2,4\}}$ in any basis of $\mathbb{H}_{4}$ consistent with the decomposition $\bigoplus_{B\in\SubsetsWE{\set{4}}}H_{B}$ is equal to
	\[
	\kbordermatrix{
		& H_{\emptyset} & H_{\{1,2\}} & H_{\{1,3\}} & H_{\{1,4\}} & H_{\{2,3\}} & H_{\{2,4\}} & H_{\{3,4\}} & H_{\{1,2,3\}} & H_{\{1,2,4\}} & H_{\{1,3,4\}} & H_{\{2,3,4\}} & H_{\set{4}}\\
		H_{\emptyset} & \mathbf{I}_{\emptyset} & 0 & 0 & 0 & 0 & 0 & 0 & 0 & 0 & 0 & 0 & 0\\
		H_{\{1,2\}}   & 0 & \mathbf{I}_{\{1,2\}} & 0 & 0 & 0 & 0 & 0 & 0 & 0 & 0 & 0 & 0\\
		H_{\{1,3\}}   & 0 & 0 & 0 & 0 & 0 & 0 & 0 & 0 & 0 & 0 & 0 & 0\\
		H_{\{1,4\}}   & 0 & 0 & 0 & \mathbf{I}_{\{1,4\}} & 0 & 0 & 0 & 0 & 0 & 0 & 0 & 0\\
		H_{\{2,3\}}   & 0 & 0 & 0 & 0 & 0 & 0 & 0 & 0 & 0 & 0 & 0 & 0\\
		H_{\{2,4\}}   & 0 & 0 & 0 & 0 & 0 & \mathbf{I}_{\{2,4\}} & 0 & 0 & 0 & 0 & 0 & 0\\
		H_{\{3,4\}}   & 0 & 0 & 0 & 0 & 0 & 0 & 0 & 0 & 0 & 0 & 0 & 0\\
		H_{\{1,2,3\}} & 0 & 0 & 0 & 0 & 0 & 0 & 0 & 0 & 0 & 0 & 0 & 0\\
		H_{\{1,2,4\}} & 0 & 0 & 0 & 0 & 0 & 0 & 0 & 0 & \mathbf{I}_{\{1,2,4\}} & 0 & 0 & 0\\
		H_{\{1,3,4\}} & 0 & 0 & 0 & 0 & 0 & 0 & 0 & 0 & 0 & 0 & 0 & 0\\
		H_{\{2,3,4\}} & 0 & 0 & 0 & 0 & 0 & 0 & 0 & 0 & 0 & 0 & 0 & 0\\
		H_{\set{4}}   & 0 & 0 & 0 & 0 & 0 & 0 & 0 & 0 & 0 & 0 & 0 & 0\\
	}
	\]
\end{example}

Proposition \ref{prop:projector} says at the same time that the marginal operator in the MRA representation boils down to a simple filter, and that all the marginal operators are ``block-diagonalized'' in the MRA representation. These properties mean that the MRA representation is best fitted to solve linear systems that involve the marginal operator. This is formalized in the following theorem. For any collection $\mathcal{S}\subset{\Subsets{\n}}$, we set 
\[
\SubsetsWE{\mathcal{S}} := \bigcup_{A\in\mathcal{S}}\SubsetsWE{A}.
\]

\begin{theorem}[Solutions to linear systems]
\label{th:inverse-linear-system}
Let $A\in\Subsets{\n}$ and $F_{0}\in\Space{\Rank{A}}$.
\begin{itemize}
	\item For $A'\in\Subsets{A}$, the solutions to the problem
	\begin{equation}
	\label{eq:linear-system-one-subset}
	\text{Find } F\in\Space{\Rank{A}} \text{ such that } M_{A'}F = M_{A'}F_{0}
	\end{equation}
	are all of the form 
	\[
	\phi_{A}\sum_{B\in\SubsetsWE{A'}}\Psi_{B}F_{0} \quad +\quad \phi_{A}\mathbf{X},
	\]
	with $\mathbf{X}\in\mathbb{H}(\SubsetsWE{A}\setminus\Subsets{A'})$. In particular the space of solutions has dimension $\dim \mathbb{H}(\SubsetsWE{A}\setminus\Subsets{A'}) = \vert A\vert ! - \vert A'\vert !$.
	\item More generally for $\mathcal{S}\subset\Subsets{A}$, the solutions to the problem
	\begin{equation}
	\label{eq:linear-system-several-subsets}
	\text{Find } F\in\Space{\Rank{A}} \text{ such that } M_{A'}F = M_{A'}F_{0} \text{ for all } A'\in\mathcal{S}
	\end{equation}
	are all of the form 
	\[
	\phi_{A}\sum_{B\in\SubsetsWE{\mathcal{S}}}\Psi_{B}F_{0} \quad +\quad \phi_{A}\mathbf{X}
	\]
	with $\mathbf{X}\in\mathbb{H}(\SubsetsWE{A}\setminus\Subsets{\mathcal{S}})$. In particular the space of solutions has dimension $\dim \mathbb{H}(\SubsetsWE{A}\setminus\Subsets{\mathcal{S}}) = \vert A\vert ! - \sum_{A'\in\mathcal{S}}d_{\vert A'\vert}$.
\end{itemize}
\end{theorem}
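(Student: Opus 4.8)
The plan is to push the entire problem through the wavelet transform into the feature space, where each marginal constraint becomes the prescription of a disjoint block of coordinates, after which both the description of the solution set and the dimension count become transparent.

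First I would reformulate the single constraint. Fix $A'\in\Subsets{A}$ and take any $F\in\Space{\Rank{A}}$. Since $M_{A'}F$ and $M_{A'}F_{0}$ both lie in $\Space{\Rank{A'}}$, property \eqref{eq:MRA-general-1} applied in $\Space{\Rank{A'}}$ shows that each is uniquely determined by its wavelet transform, so $M_{A'}F = M_{A'}F_{0}$ holds if and only if $\Psi_{B}M_{A'}F = \Psi_{B}M_{A'}F_{0}$ for every $B\in\SubsetsWE{A'}$. Now property \eqref{eq:MRA-general-2} gives $\Psi_{B}M_{A'}F = \Psi_{B}F$ and $\Psi_{B}M_{A'}F_{0} = \Psi_{B}F_{0}$ for all such $B$. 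Hence the constraint \eqref{eq:linear-system-one-subset} is \emph{exactly} the condition $\Psi_{B}F = \Psi_{B}F_{0}$ for all $B\in\SubsetsWE{A'}$: it constrains only the wavelet coefficients of $F$ indexed by $\SubsetsWE{A'}$ and imposes nothing on the others.

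Next I would parametrize the solutions. By property \eqref{eq:MRA-general-1}, together with the vanishing rule $\Psi_{B}F = 0$ for $B\not\subset A$, the restriction of $\Psi$ to $\Space{\Rank{A}}$ is a linear bijection onto $\mathbb{H}(\SubsetsWE{A})$ whose inverse is $\phi_{A}$. Under this bijection the reformulated constraint fixes the component of $\Psi F$ in the sub-block $\mathbb{H}(\SubsetsWE{A'})$ to equal $\sum_{B\in\SubsetsWE{A'}}\Psi_{B}F_{0}$ and leaves the complementary block $\mathbb{H}(\SubsetsWE{A}\setminus\SubsetsWE{A'})$ free. Writing $\Psi F = \sum_{B\in\SubsetsWE{A'}}\Psi_{B}F_{0} + \mathbf{X}$ and applying $\phi_{A}$ yields precisely the announced form, while conversely every such $F$ satisfies the constraint because $\Psi\phi_{A}$ is the identity on $\mathbb{H}(\SubsetsWE{A})$. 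The general case $\mathcal{S}\subset\Subsets{A}$ is handled verbatim: each constraint $M_{A'}F = M_{A'}F_{0}$ fixes the coefficients indexed by $\SubsetsWE{A'}$, so their conjunction fixes exactly the union $\SubsetsWE{\mathcal{S}} = \bigcup_{A'\in\mathcal{S}}\SubsetsWE{A'}$, and the free block is $\mathbb{H}(\SubsetsWE{A}\setminus\SubsetsWE{\mathcal{S}})$. Here the consistency relations \eqref{eq:projective-system} are what guarantee that overlapping constraints never conflict and add no condition beyond fixing this union.

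Finally, the dimension count reduces to the identity $\sum_{B\subset A}d_{\vert B\vert} = \vert A\vert!$, valid because every permutation of $A$ is specified by its set of non-fixed points and the derangement it induces there (singletons contribute $d_{1}=0$ and $\emptyset$ contributes $d_{0}=1$). This gives $\dim\mathbb{H}(\SubsetsWE{A}) = \vert A\vert!$ and $\dim\mathbb{H}(\SubsetsWE{A'}) = \vert A'\vert!$, whence $\dim\mathbb{H}(\SubsetsWE{A}\setminus\SubsetsWE{A'}) = \vert A\vert! - \vert A'\vert!$ in the single-subset case and $\vert A\vert! - \dim\mathbb{H}(\SubsetsWE{\mathcal{S}})$ in general. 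The one delicate point, and the only genuine obstacle, is the equivalence established in the first step: everything hinges on the fact that a marginal constraint is captured by the wavelet coefficients on $\SubsetsWE{A'}$ and by nothing else, which is the combined force of property \eqref{eq:MRA-general-2} and the injectivity of $\Psi$ on the local spaces. Once that is secured, the remainder is bookkeeping with direct sums.
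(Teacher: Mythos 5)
Your proof is correct and follows essentially the same route as the paper: both reduce the constraint $M_{A'}F = M_{A'}F_{0}$ to the condition $\Psi_{B}F = \Psi_{B}F_{0}$ for all $B\in\SubsetsWE{A'}$ and then use the bijectivity of $\Psi$ on $\Space{\Rank{A}}$ (with inverse $\phi_{A}$) to parametrize the solution set; the only difference is that you derive this equivalence directly from Theorem \ref{th:MRA-general}, whereas the paper routes it through Propositions \ref{prop:marginal-operator-feature-space} and \ref{prop:projector}, which are themselves immediate consequences of that theorem. Your free block $\mathbb{H}(\SubsetsWE{A}\setminus\SubsetsWE{A'})$ is in fact the one consistent with the stated dimension $\vert A\vert! - \vert A'\vert!$, since the constraint also fixes $\Psi_{\emptyset}F$.
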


\begin{proof}
It is sufficient to prove the theorem for problem \eqref{eq:linear-system-several-subsets}. Let $F\in\Space{\Rank{A}}$. For any $A'\in\mathcal{S}$,
\[
\begin{array}{ccll}
M_{A'}F = M_{A'}F_{0} 	& \Leftrightarrow & \Psi M_{A'}F = \Psi M_{A'}F_{0} & \text{by Theorem \ref{th:MRA-general}}\\
						& \Leftrightarrow & \mathcal{M}_{A'}\Psi F = \mathcal{M}_{A'}\Psi F_{0} & \text{by Proposition \ref{prop:marginal-operator-feature-space}}\\
						& \Leftrightarrow & \Psi_{B} F = \Psi_{B}F_{0} \text{ for all } B\in\SubsetsWE{A'} & \text{by Proposition \ref{prop:projector}}.
\end{array}
\]
Thus $M_{A'}F = M_{A'}F_{0}$ for all $A'\in\mathcal{S}$ if and only if $\Psi_{B}F = \Psi_{B}F_{0}$ for all $B\in\SubsetsWE{\mathcal{S}}$. Applying Theorem \ref{th:MRA-general} one last time concludes the proof.
\end{proof}

\begin{example}
We illustrate Theorem \ref{th:inverse-linear-system} for the ranking model $p$ over $\Sym{4}$ constructed from the real dataset already considered in Example \ref{ex:illustration-MRA}. Let us assume that one does not know the ranking model $p$, but knows exactly some of its marginals $M_{A}p$ for subsets $A$ in the observation design $\mathcal{A} = \{ \{1,3\}, \{2,4\}, \{3,4\}, \{1,2,3\}, \{1,3,4\} \}$. One then has $\Subsets{\set{4}}\setminus\SubsetsWE{\mathcal{A}} = \{ \{1,2,4\}, \{2,3,4\}, \{1,2,3,4\} \}$. Theorem \ref{th:inverse-linear-system} thus tells us that the functions on $\Sym{4}$ that have the same marginal as $p$ for all subsets $A\in\mathcal{A}$ are of the form
\[
F = \phi_{\set{4}}\sum_{B\in\SubsetsWE{\mathcal{A}}}\Psi_{B}p + \phi_{\set{4}}\left[X_{\{1,2,4\}} + X_{\{2,3,4\}} + X_{\{1,2,3,4\}}\right] \qquad\text{with } X_{B}\in H_{B},
\]
where the $X_{B}$'s can be arbitrary. The set composed of such functions is therefore a linear space of dimension $d_{4} + 2d_{3} = 13$. Examples of such functions with their marginals are represented in Figure \ref{fig:approximations}. The graphs on the left represent the function with $X_{B} = 0$ and the graphs on the right represent a function obtained with $X_{B}$'s sampled randomly.

\begin{figure}[h!]
\centering
\begin{tabular}{cccccccccccc}
\multicolumn{12}{@{}c@{}}{\tiny $F$}\\
\multicolumn{12}{@{}c@{}}{\includegraphics[scale=0.4]{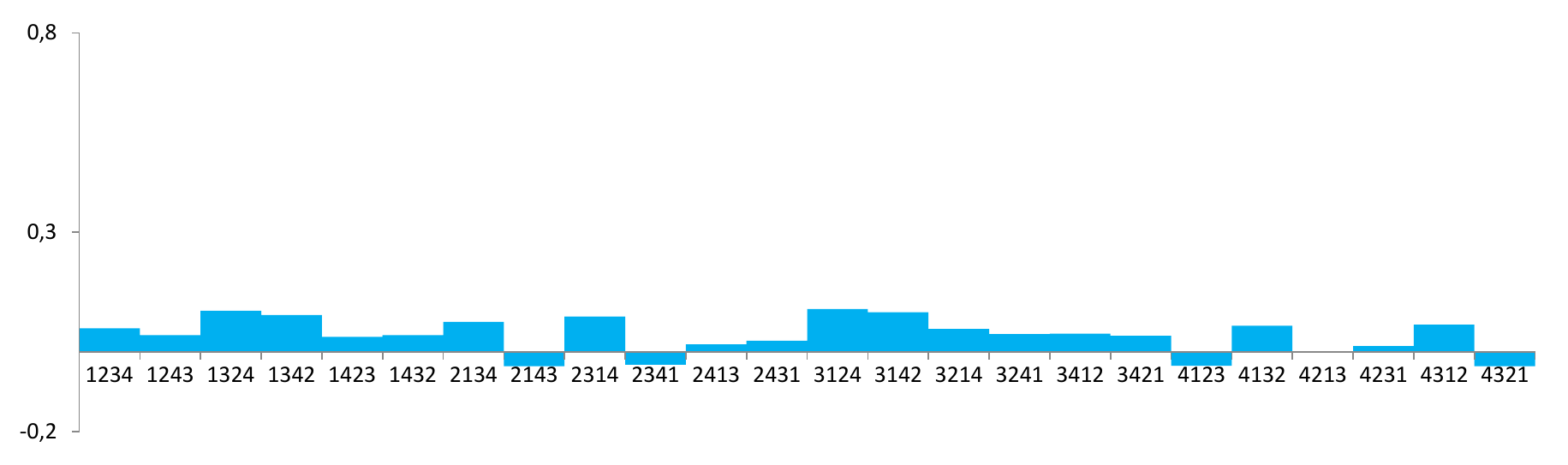}}\\
\multicolumn{3}{p{1.2cm}}{\tiny $M_{\{1,2,3\}}F$} & \multicolumn{3}{p{1.2cm}}{\tiny $M_{\{1,2,4\}}F$} & \multicolumn{3}{p{1.2cm}}{\tiny $M_{\{1,3,4\}}F$} & \multicolumn{3}{p{1.2cm}}{\tiny $M_{\{2,3,4\}}F$}\\
\multicolumn{3}{p{1.2cm}}{\includegraphics[scale=0.4]{P123-German.pdf}} & \multicolumn{3}{p{1.2cm}}{\includegraphics[scale=0.4]{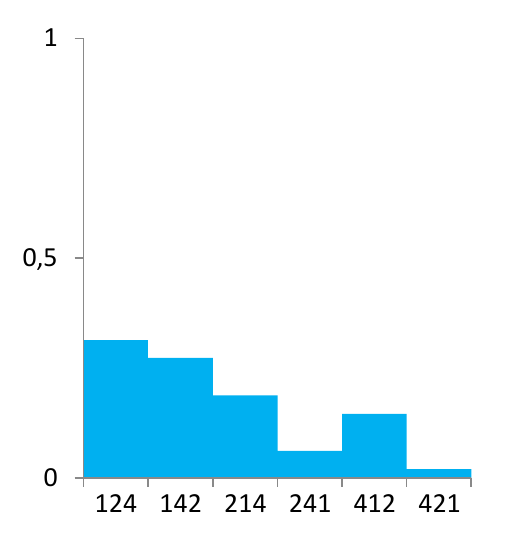}} & \multicolumn{3}{p{1.2cm}}{\includegraphics[scale=0.4]{P134-German.pdf}} & \multicolumn{3}{p{1.2cm}}{\includegraphics[scale=0.4]{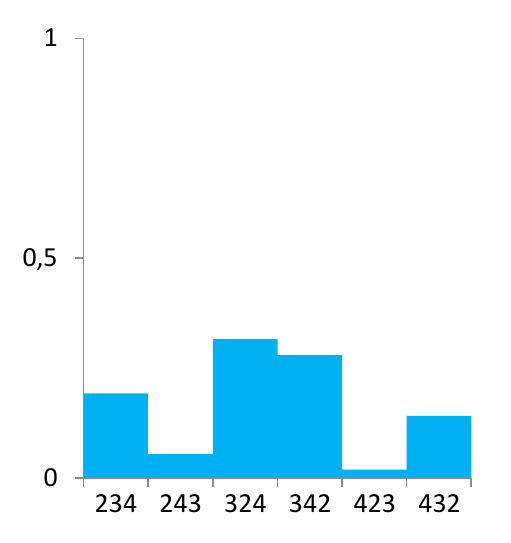}}\\
\multicolumn{2}{p{0.8cm}}{\tiny $M_{\{1,2\}}F$} & \multicolumn{2}{p{0.8cm}}{\tiny $M_{\{1,3\}}F$} & \multicolumn{2}{p{0.8cm}}{\tiny $M_{\{1,4\}}F$} & \multicolumn{2}{p{0.8cm}}{\tiny $M_{\{2,3\}}F$} & \multicolumn{2}{p{0.8cm}}{\tiny $M_{\{2,4\}}F$} & \multicolumn{2}{p{0.8cm}}{\tiny $M_{\{3,4\}}F$}\\
\multicolumn{2}{p{0.8cm}}{\includegraphics[scale=0.4]{P12-German.pdf}} & \multicolumn{2}{p{0.8cm}}{\includegraphics[scale=0.4]{P13-German.pdf}} & \multicolumn{2}{p{0.8cm}}{\includegraphics[scale=0.4]{P14-German.pdf}} & \multicolumn{2}{p{0.8cm}}{\includegraphics[scale=0.4]{P23-German.pdf}} & \multicolumn{2}{p{0.8cm}}{\includegraphics[scale=0.4]{P24-German.pdf}} & \multicolumn{2}{p{0.8cm}}{\includegraphics[scale=0.4]{P34-German.pdf}}
\end{tabular}
\
\begin{tabular}{cccccccccccc}
\multicolumn{12}{@{}c@{}}{\tiny $F$}\\
\multicolumn{12}{@{}c@{}}{\includegraphics[scale=0.4]{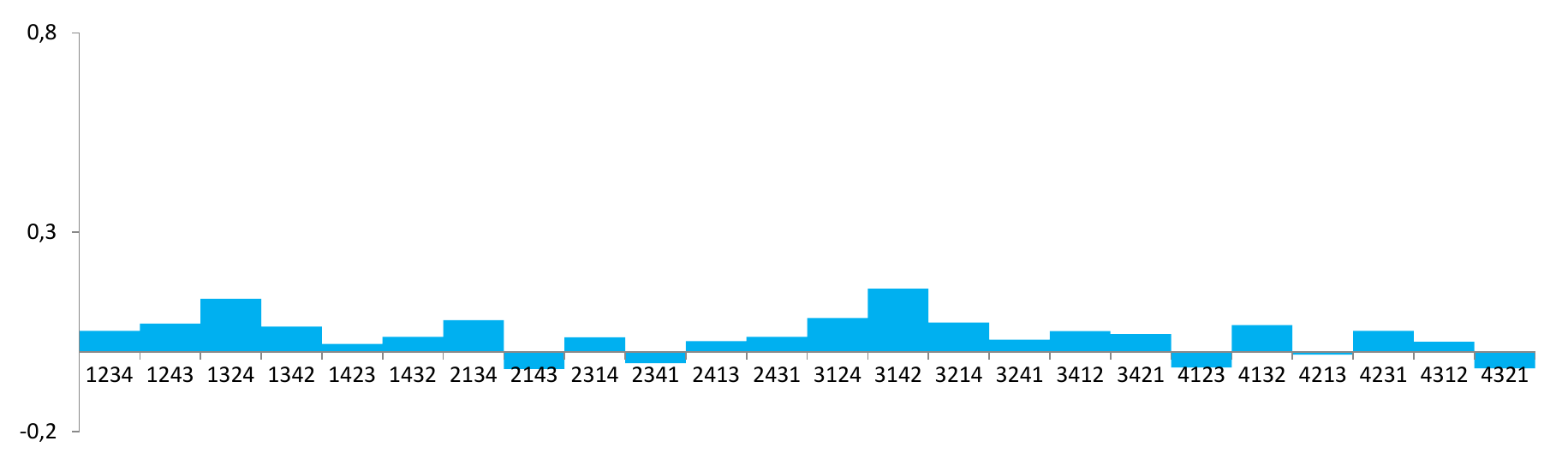}}\\
\multicolumn{3}{p{1.2cm}}{\tiny $M_{\{1,2,3\}}F$} & \multicolumn{3}{p{1.2cm}}{\tiny $M_{\{1,2,4\}}F$} & \multicolumn{3}{p{1.2cm}}{\tiny $M_{\{1,3,4\}}F$} & \multicolumn{3}{p{1.2cm}}{\tiny $M_{\{2,3,4\}}F$}\\
\multicolumn{3}{p{1.2cm}}{\includegraphics[scale=0.4]{P123-German.pdf}} & \multicolumn{3}{p{1.2cm}}{\includegraphics[scale=0.4]{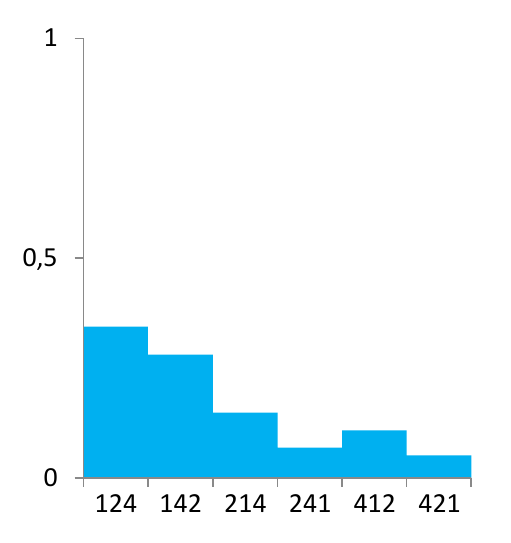}} & \multicolumn{3}{p{1.2cm}}{\includegraphics[scale=0.4]{P134-German.pdf}} & \multicolumn{3}{p{1.2cm}}{\includegraphics[scale=0.4]{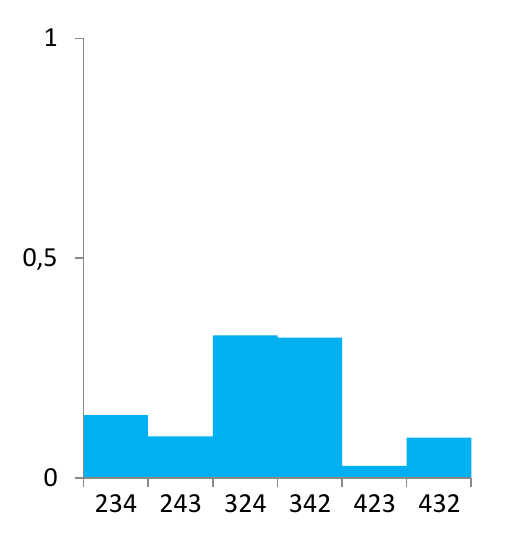}}\\
\multicolumn{2}{p{0.8cm}}{\tiny $M_{\{1,2\}}F$} & \multicolumn{2}{p{0.8cm}}{\tiny $M_{\{1,3\}}F$} & \multicolumn{2}{p{0.8cm}}{\tiny $M_{\{1,4\}}F$} & \multicolumn{2}{p{0.8cm}}{\tiny $M_{\{2,3\}}F$} & \multicolumn{2}{p{0.8cm}}{\tiny $M_{\{2,4\}}F$} & \multicolumn{2}{p{0.8cm}}{\tiny $M_{\{3,4\}}F$}\\
\multicolumn{2}{p{0.8cm}}{\includegraphics[scale=0.4]{P12-German.pdf}} & \multicolumn{2}{p{0.8cm}}{\includegraphics[scale=0.4]{P13-German.pdf}} & \multicolumn{2}{p{0.8cm}}{\includegraphics[scale=0.4]{P14-German.pdf}} & \multicolumn{2}{p{0.8cm}}{\includegraphics[scale=0.4]{P23-German.pdf}} & \multicolumn{2}{p{0.8cm}}{\includegraphics[scale=0.4]{P24-German.pdf}} & \multicolumn{2}{p{0.8cm}}{\includegraphics[scale=0.4]{P34-German.pdf}}
\end{tabular}
\caption{Function $F$ and its marginals, with $X_{B} = 0$ on the left and $X_{B}$ drawn randomly on the right.}
\label{fig:approximations}
\end{figure}
\end{example}

\subsection{Fast Wavelet Transform}

The MRA representation would be of little interest without efficient procedures to compute the wavelet transform of a function $F\in\Space{\Gn}$ an the synthesis of an element $\mathbf{X}\in\Hn$. Fortunately, such procedures exist and we now describe them in details. They are directly inspired by the \textit{Fast Wavelet Transform} (FWT) introduced in \citet{Mallat1989}. We first recall some background about it.

\ \\
\noindent
{\bf Background on FWT in classic wavelet theory.} In classic multiresolution analysis on $l^{2}(\mathbb{Z})$\footnote{$l^{2}(\mathbb{Z}) = \{ f :\mathbb{Z}\rightarrow\mathbb{R} \;\vert\; \sum_{m\in\mathbb{Z}}f(m)^{2} < \infty\}$.}, one is given a scaling basis $(\phi_{j,k})_{j,k\in\mathbb{Z}}$ and a wavelet basis $(\phi_{j,k})_{j,k\in\mathbb{Z}}$, so that any function $f\in l^{2}(\mathbb{Z})$ decomposes as
\[
f = \sum_{k\in\mathbb{Z}}\left\langle f,\phi_{j_{0},k}\right\rangle\phi_{j_{0},k} + \sum_{j=j_{0}}^{+\infty}\sum_{k\in\mathbb{Z}}\left\langle f,\psi_{j,k}\right\rangle\psi_{j,k}
\]
for any $j_{0}\in\mathbb{Z}$ \citep[see][for the details]{Mallat}. The scalars $d_{j}[k] := \left\langle f,\psi_{j,k}\right\rangle$ are the wavelet coefficients and the scalars $a_{j}[k] := \left\langle f,\phi_{j,k}\right\rangle$ are called the approximation coefficients. The fast wavelet transform computes efficiently the wavelet coefficients by exploiting the two following properties of wavelet bases.
\begin{itemize}
	\item All the wavelet coefficients at scale $j$ can be computed from the approximation coefficients at scale $j$ via a linear operator $h$:
	\begin{equation}
	\label{eq:classic-high-pass-filter}
	d_{j}[k] = (h a_{j})[k]
	\end{equation}
	\item All the approximation coefficients at scale $j$ can be computed from the approximation coefficients at scale $j+1$ via a linear operator $g$:
	\begin{equation}
	\label{eq:classic-low-pass-filter}
	a_{j}[k] = (g a_{j+1})[k]
	\end{equation}
\end{itemize}
The operator $g$ computes local averages of the signal and is therefore called a low-pass filter. The operator $h$ computes local differences of the signal and is therefore called a high-pass filter. The FWT then consists in applying recursively these filters in two steps:
\begin{enumerate}
	\item Apply the high-pass filter $h$ to $a_{j}$ to obtain the wavelet coefficients $d_{j}$
	\item Apply the low-pass filter $g$ to $a_{j}$ to obtain $a_{j-1}$
\end{enumerate}
This procedure is illustrated by Figure \ref{fig:FWT-classic} (the wavelet coefficients are highlighted in blue). It is called ``fast'' because it computes all the coefficients of a same scale at the same time.

\begin{figure}[h!]
\centering
\begin{tikzpicture}
\node (signal) at (0,0) {$f = a_{J}$};
\node[draw] (h1) at (2.5,0) {$h$};
\node[draw] (g1) at (2.5,-0.75) {$g$};
\draw[->] (signal) -- (h1);
\draw[->] (signal) -- ++(1.5,0) |- (g1);
\node (dJ) at (4,0) {$\textcolor{blue}{d_{J}}$};
\node (aJ-1) at (4,-0.75) {$a_{J-1}$};
\draw[->] (h1) -- (dJ);
\draw[->] (g1) -- (aJ-1);
\node[draw] (h2) at (6,-0.75) {$h$};
\node[draw] (g2) at (6,-1.5) {$g$};
\draw[->] (aJ-1) -- (h2);
\draw[->] (aJ-1) -- ++(1,0) |- (g2);
\node (dJ-1) at (7.5,-0.75) {$\textcolor{blue}{d_{J-1}}$};
\node (aJ-2) at (7.5,-1.5) {$a_{J-2}$};
\draw[->] (h2) -- (dJ-1);
\draw[->] (g2) -- (aJ-2);
\node[draw] (h3) at (9.5,-1.5) {$h$};
\node[draw] (g3) at (9.5,-2.25) {$g$};
\draw[->] (aJ-2) -- (h3);
\draw[->] (aJ-2) -- ++(1,0) |- (g3);
\node (dJ-2) at (11,-1.5) {$\textcolor{blue}{d_{J-2}}$};
\node (aJ-3) at (11,-2.25) {$a_{J-3}$};
\draw[->] (h3) -- (dJ-2);
\draw[->] (g3) -- (aJ-3);
\draw[->,dashed] (aJ-3) -- (13,-2.25); 
\end{tikzpicture}
\caption{Fast Wavelet transform with filter banks}
\label{fig:FWT-classic}
\end{figure}
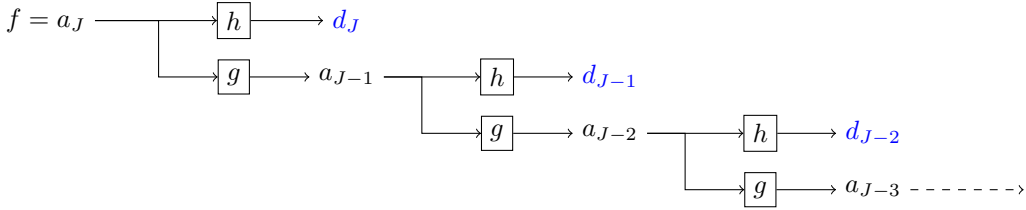

In practice for a function $f\in l^{2}(\mathbb{Z})$ with finite support, the number of wavelet and approximation coefficients decreases with the scale. The application of the filters $g$ and $h$ at scale $j$ then only involve the operations with the finite vector $a_{j}$. The implementation of the FWT therefore uses families of filters $(g_{j})_{j}$ and $(h_{j})_{j}$ where $g_{j}$ and $h_{j}$ are the operators applied effectively on $a_{j}$.

\begin{example}[FWT for the Haar wavelets]
\label{ex:FWT-Haar}
The following diagram illustrates the fast Haar wavelet transform of a signal $f = (f_{1},\dots,f_{8})\in\mathbb{R}^{8}$. 
\begin{center}
\begin{tikzpicture}
\node (signal) at (0,0) {$\left[\begin{array}{c}
	f_{1}\\
	f_{2}\\
	f_{3}\\
	f_{4}\\
	f_{5}\\
	f_{6}\\
	f_{7}\\
	f_{8}
	\end{array}\right]$};
\node[draw] (h3) at (1.5,0) {$h_{3}$};
\node[draw] (g3) at (1.5,-2) {$g_{3}$};
\draw[->] (signal) -- (h3);
\draw[->] (signal) -- ++(0.75,0) |- (g3);
\node (d3) at (3.75,0) {$\textcolor{blue}{d_{3} = \left[\begin{array}{c}
		f_{1}-f_{2}\\
		f_{3}-f_{4}\\
		f_{5}-f_{6}\\
		f_{7}-f_{8}\\
		\end{array}\right]}$};
\node (a2) at (3.75,-2) {$a_{2} = \left[\begin{array}{c}
		f_{1}+f_{2}\\
		f_{3}+f_{4}\\
		f_{5}+f_{6}\\
		f_{7}+f_{8}\\
	\end{array}\right]$};
\draw[->] (h3) -- (d3);
\draw[->] (g3) -- (a2);
\node[draw] (h2) at (6.25,-2) {$h_{2}$};
\node[draw] (g2) at (6.25,-3.25) {$g_{2}$};
\draw[->] (a2) -- (h2);
\draw[->] (a2) -- ++(1.75,0) |- (g2);
\node (d2) at (8.75,-2) {$\textcolor{blue}{d_{2} = \left[\begin{array}{c}
		a_{2}[1]-a_{2}[2]\\
		a_{2}[3]-a_{2}[4]\\
		\end{array}\right]}$};
\node (a1) at (8.75,-3.25) {$a_{1} = \left[\begin{array}{c}
		a_{2}[1]+a_{2}[2]\\
		a_{2}[3]+a_{2}[4]\\
	\end{array}\right]$};
\draw[->] (h2) -- (d2);
\draw[->] (g2) -- (a1);
\node[draw] (h1) at (11.5,-3.25) {$h_{3}$};
\node[draw] (g1) at (11.5,-4) {$g_{3}$};
\draw[->] (a1) -- (h1);
\draw[->] (a1) -- ++(2,0) |- (g1);
\node (d1) at (13.75,-3.25) {$\textcolor{blue}{d_{1} = [a_{1}[1] - a_{1}[2]]}$};
\node (a0) at (13.75,-4) {$\textcolor{blue}{a_{0} = [a_{1}[1] + a_{1}[2]]}$};
\draw[->] (h1) -- (d1);
\draw[->] (g1) -- (a0);
\end{tikzpicture}
\end{center}
\end{example}

\noindent
{\bf The FWT for the MRA representation.} We now define the FWT for the MRA representation. We first consider the wavelet transform of a function $F\in\Space{\Rank{A}}$ with $A\in\Subsets{\n}$. For $k\in\{2,\dots,\vert A\vert\}$ we denote by $\Gamma^{k}_{A} := \bigsqcup_{B\subset A,\, \vert B\vert = k}\Rank{B}$ the set of all incomplete rankings of $k$ items of $A$.
\begin{itemize}
	\item The analogues of the approximation coefficients of $F$ at scale $j\in\{2,\dots,\vert A\vert\}$ are the marginals $M_{B}F$ for $B\subset A$ with $\vert B\vert = j$. The vector of approximation coefficients of $F$ at scale $j$ is defined by
	\begin{equation}
	\label{eq:def-MRA-approximation-coefficients}
	M^{j}F  = (M_{B}F(\pi))_{\pi\in\Rank{B},\vert B\vert = j} = (M_{c(\pi)}F(\pi))_{\pi\in\Gamma_{A}^{j}} \in\mathbb{R}^{\vert A\vert!/(\vert A\vert - j)!}.
	\end{equation}
	\item The wavelet coefficients of $F$ at scale $j\in\{2,\dots,\vert A\vert\}$ are the wavelet projections $\Psi_{B}F$ for $B\subset A$ with $\vert B\vert = j$. The vector of wavelet coefficients of $F$ at scale $j$ is defined by
	\begin{equation}
	\label{eq:def-MRA-wavelet-coefficients}
	\Psi^{j}F = (\Psi_{B}F(\pi))_{\pi\in\Rank{B},\vert B\vert = j} = (\Psi_{c(\pi)}F(\pi))_{\pi\in\Gamma_{A}^{j}} \in\mathbb{R}^{\vert A\vert !/(\vert A\vert-j)!}.
	\end{equation}
\end{itemize}
Same as in classic wavelet theory, the FWT for the MRA representation also relies on two major relations between the wavelet and approximation coefficients, analogous to Formulas \eqref{eq:classic-high-pass-filter} and \eqref{eq:classic-low-pass-filter}. The analogue of Formula \eqref{eq:classic-low-pass-filter} stems from the properties of the marginal operators. For $\pi = \pi_{1}\dots\pi_{j}\in\Gamma_{n}$ with $c(\pi)\varsubsetneq A$, one has 
\begin{equation}
\label{eq:spanning-tree}
M_{c(\pi)}F(\pi) = M_{c(\pi)\cup\{b\}}F(b\pi_{1}\dots\pi_{j}) + M_{c(\pi)\cup\{b\}}F(\pi_{1}b\dots\pi_{j}) + \dots + M_{c(\pi)\cup\{b\}}F(\pi_{1}\dots\pi_{j}b)
\end{equation}
for any $b\in A\setminus c(\pi)$. In addition one has $M_{\emptyset}F(\bar{0}) = M_{\{a,b\}}F(ab) + M_{\{a,b\}}F(ba)$ for any $a,b\in A$ with $a\neq b$. We therefore define the low-pass filters as follows.

\begin{definition}[Low-pass filters]
\label{def:MRA-low-pass-filters}
We define the order $2$ low-pass filter $g_{A}^{2}:\Space{\Gamma^{2}_{A}}\rightarrow\mathbb{R}\bar{0}$ on $A\in\Subsets{\n}$ by
\[
g_{A}^{2}F(\bar{0}) = F_{\{a,b\}}(ab) + F_{\{a,b\}}(ba) \qquad\text{for any } F\in\Space{\Gamma^{2}_{A}},
\]
where $a$ and $b$ are distinct items in $A$ (we take the two smallest by convention). For $j\in\{3,\dots,\vert A\vert\}$ we define the order $j$ low-pass filter $g_{A}^{j}:\Space{\Gamma^{j}_{A}}\rightarrow\Space{\Gamma^{j-1}_{A}}$ on $A$ by 
\[
g_{A}^{j}F(\pi_{1}\dots\pi_{j-1}) = F(b_{\pi}\pi_{1}\dots\pi_{j-1}) + F(\pi_{1}b_{\pi}\dots\pi_{j-1}) + \dots + F(\pi_{1}\dots\pi_{j-1}b_{\pi})
\]
for any $F\in\Space{\Gamma^{j}_{A}}$ and $\pi = \pi_{1}\dots\pi_{j-1}\in\Gamma^{j-1}_{A}$, where $b_{\pi}$ is any item in $A\setminus c(\pi)$ (we take the smallest by convention).
\end{definition}

To define the high-pass filters, first observe that by Property \eqref{eq:MRA-general-2} of Theorem \ref{th:MRA-general}, one has $\Psi_{B}F = \Psi_{B}M_{B}F$ for any $B\in\SubsetsWE{\n}$ and $F\in\Space{\Gn}$. This justifies the following definition.

\begin{definition}[Alpha coefficients]
\label{def:alpha-coefficients}
For $B\in\SubsetsWE{\n}$ and $\pi,\pi'\in\Rank{B}$, we define the alpha coefficient 
\[
\alpha_{B}(\pi,\pi') = \Psi_{B}\delta_{\pi'}(\pi) \qquad\text{so that}\qquad \Psi_{B}F(\pi) = \sum_{\pi'\in\Rank{B}}\alpha_{B}(\pi,\pi')M_{B}F(\pi') \qquad\text{for any } F\in\Space{\Gn}.
\]
\end{definition}

The high-pass filters are constructed with the alpha coefficients from Definition \ref{def:alpha-coefficients}.

\begin{definition}[High-pass filters]
\label{def:MRA-high-pass-filters}
Let $A\in\Subsets{\n}$. For $k\in\{2,\dots,\vert A\vert\}$, the high-pass filter on $A$ at scale $j$ is the operator $h_{A}^{j}:\Space{\Gamma^{j}_{A}}\rightarrow\Space{\Gamma^{j}_{A}}$ defined by
\[
h_{A}^{j}F(\pi) = \sum_{\pi'\in\Rank{c(\pi)}}\alpha_{c(\pi)}(\pi,\pi')F(\pi') \qquad\text{for any } F\in\Space{\Gamma^{j}_{A}} \text{ and } \pi\in\Gamma^{j}_{A}.
\]
\end{definition}

The analogues of Formulas \eqref{eq:classic-low-pass-filter} and \eqref{eq:classic-high-pass-filter} are then given by the following proposition. As it is a direct consequence of Definitions \ref{def:MRA-low-pass-filters} and \ref{def:MRA-high-pass-filters}, its proof is left to the reader.

\begin{proposition}
\label{prop:structural-formulas-FWT}
Let $A\in\Subsets{\n}$ and $F\in\Space{\Rank{A}}$. 
\begin{itemize}
	\item The wavelet coefficients $\Psi^{j}F$ of $F$ at scale $j\in\{2,\dots,\vert A\vert\}$ can all be computed from the approximation coefficients $M^{j}F$ at scale $j$ through the high-pass filter $h_{A}^{j}$:
	\begin{equation}
	\label{eq:MRA-high-pass-filter}
	\Psi^{j}F = h_{A}^{j}M^{j}F.
	\end{equation}
	\item The approximation coefficients $M^{j}F$ of $F$ at scale $j\in\{2,\dots,\vert A\vert - 1\}$ can all be computed from the approximation coefficients $M^{j+1}F$ at scale $j+1$ through the low-pass filter $g_{A}^{j+1}$:
	\begin{equation}
	\label{eq:MRA-low-pass-filter}
	M^{j}F = g_{A}^{j+1}M^{j+1}F.
	\end{equation}
\end{itemize}
\end{proposition}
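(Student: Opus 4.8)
The plan is to verify the two identities \eqref{eq:MRA-high-pass-filter} and \eqref{eq:MRA-low-pass-filter} componentwise, tracing through the definitions of the filters and of the coefficient vectors \eqref{eq:def-MRA-approximation-coefficients} and \eqref{eq:def-MRA-wavelet-coefficients}. Both sides of each identity are vectors indexed by words $\pi\in\Gamma_{A}^{j}$ (resp. $\Gamma_{A}^{j-1}$), so it suffices to fix such a word and compare the corresponding entries.

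For the low-pass relation \eqref{eq:MRA-low-pass-filter}, I would fix $j\in\{2,\dots,\vert A\vert-1\}$ and a word $\pi=\pi_{1}\dots\pi_{j}\in\Gamma_{A}^{j}$. Unwinding Definition \ref{def:MRA-low-pass-filters}, the $\pi$-entry of $g_{A}^{j+1}M^{j+1}F$ is the sum $M^{j+1}F(b_{\pi}\pi_{1}\dots\pi_{j})+\dots+M^{j+1}F(\pi_{1}\dots\pi_{j}b_{\pi})$ for some $b_{\pi}\in A\setminus c(\pi)$. Every word appearing here has content $c(\pi)\cup\{b_{\pi}\}$, so by \eqref{eq:def-MRA-approximation-coefficients} this sum equals $M_{c(\pi)\cup\{b_{\pi}\}}F(b_{\pi}\pi_{1}\dots\pi_{j})+\dots+M_{c(\pi)\cup\{b_{\pi}\}}F(\pi_{1}\dots\pi_{j}b_{\pi})$, which is precisely the right-hand side of the marginal relation \eqref{eq:spanning-tree} and hence equals $M_{c(\pi)}F(\pi)$, the $\pi$-entry of $M^{j}F$. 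A point worth flagging is that \eqref{eq:spanning-tree} holds for \emph{any} $b\in A\setminus c(\pi)$, so the result is insensitive to the convention fixing $b_{\pi}$; this is exactly what makes the low-pass filter consistent when applied to genuine marginals.

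For the high-pass relation \eqref{eq:MRA-high-pass-filter}, I would fix $j\in\{2,\dots,\vert A\vert\}$ and $\pi\in\Gamma_{A}^{j}$, and compute the $\pi$-entry of $h_{A}^{j}M^{j}F$ from Definition \ref{def:MRA-high-pass-filters}: it equals $\sum_{\pi'\in\Rank{c(\pi)}}\alpha_{c(\pi)}(\pi,\pi')M^{j}F(\pi')$. The summation index runs over $\pi'\in\Rank{c(\pi)}$, so every such $\pi'$ satisfies $c(\pi')=c(\pi)$, whence $M^{j}F(\pi')=M_{c(\pi)}F(\pi')$ by \eqref{eq:def-MRA-approximation-coefficients}. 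Substituting and invoking the defining property of the alpha coefficients from Definition \ref{def:alpha-coefficients}, namely $\Psi_{B}F(\pi)=\sum_{\pi'\in\Rank{B}}\alpha_{B}(\pi,\pi')M_{B}F(\pi')$ with $B=c(\pi)$, the sum collapses to $\Psi_{c(\pi)}F(\pi)$, the $\pi$-entry of $\Psi^{j}F$.

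I do not expect any serious obstacle: the statement is a bookkeeping consequence of the definitions, which is why it is left to the reader. The only step requiring a little care is the alpha-coefficient formula of Definition \ref{def:alpha-coefficients}, which itself rests on the identity $\Psi_{B}F=\Psi_{B}M_{B}F$ (the $A'=B$ case of Property \eqref{eq:MRA-general-2}): expanding $M_{B}F=\sum_{\pi'}M_{B}F(\pi')\delta_{\pi'}$ and using linearity of $\Psi_{B}$ together with $\alpha_{B}(\pi,\pi')=\Psi_{B}\delta_{\pi'}(\pi)$ yields the stated expression. Once this is available, both filter identities follow by direct substitution, the sole thing to track being that the content of each word is preserved so that the entries of $M^{j}F$ and $M^{j+1}F$ are correctly identified with the appropriate marginals.
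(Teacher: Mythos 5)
Your proof is correct and is exactly the routine unwinding of Definitions \ref{def:MRA-low-pass-filters}, \ref{def:MRA-high-pass-filters} and \ref{def:alpha-coefficients} (via Eq.~\eqref{eq:spanning-tree} and Property \eqref{eq:MRA-general-2}) that the paper has in mind when it leaves the proof to the reader. Your remark that the low-pass identity is insensitive to the convention fixing $b_{\pi}$ is accurate and matches the paper's own discussion of the spanning-tree choice.
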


Formulas \eqref{eq:MRA-high-pass-filter} and \eqref{eq:MRA-low-pass-filter} are the respective analogues of Formulas \eqref{eq:classic-high-pass-filter} and \eqref{eq:classic-low-pass-filter} in classic wavelet analysis. The FWT for the MRA representation can then be formulated as the FWT in classic wavelet theory: starting from the highest scale, apply recursively the high-pass filter on the approximation coefficients to obtain the wavelet coefficients and the low-pass filter to obtain the approximation coefficients of lower scale. The procedure is formalized in Algorithm \ref{alg:FWT-MRA}.

\begin{algorithm}
\caption{FWT for a function $F\in\Space{\Rank{A}}$ with $A\in\Subsets{\n}$}
\label{alg:FWT-MRA}
\begin{algorithmic}
\Require $F\in\Space{\Rank{A}}$ with $A\in\Subsets{\n}$
\State $M^{\vert A\vert}F = F$
\For{$j$ from $\vert A\vert$ to $2$}
	\State $\Psi^{j}F = h_{A}^{j}M^{j}F$
	\State $M^{j-1}F = g_{A}^{j}M^{j}F$
\EndFor
\State \Return $\Psi F = \{M^{1}F\}\cup(\Psi^{j}F)_{2\leq j\leq \vert A\vert}$
\end{algorithmic}
\end{algorithm}

\begin{example}[FWT for the MRA representation]
The following diagram illustrates the FWT for a function $F\in\Space{\Rank{\set{3}}}$. For any $F'\in\Space{\Gn}$ and $\pi\in\Gn$, the value $F'(\pi)$ is denoted by $F'_{\pi}$.
\begin{center}
	\begin{tikzpicture}
	\node (signal) at (0,0) {$\left[\begin{array}{c}
		F_{123}\\
		F_{132}\\
		F_{213}\\
		F_{231}\\
		F_{312}\\
		F_{321}\\
		\end{array}\right]$};
	\node[draw] (h3) at (1.6,0) {$h_{\set{3}}^{3}$};
	\node[draw] (g3) at (1.6,-3) {$g_{\set{3}}^{3}$};
	\draw[->] (signal) -- (h3);
	\draw[->] (signal) -- ++(1,0) |- (g3);
	\node (d3) at (5.5,0) {$\textcolor{blue}{\Psi_{\set{3}}^{3}F = \left[\begin{array}{c}
			\sum_{\pi\in\Rank{\set{3}}}\alpha_{\set{3}}(123,\pi)F_{\pi}\\
			\sum_{\pi\in\Rank{\set{3}}}\alpha_{\set{3}}(132,\pi)F_{\pi}\\
			\sum_{\pi\in\Rank{\set{3}}}\alpha_{\set{3}}(213,\pi)F_{\pi}\\
			\sum_{\pi\in\Rank{\set{3}}}\alpha_{\set{3}}(231,\pi)F_{\pi}\\
			\sum_{\pi\in\Rank{\set{3}}}\alpha_{\set{3}}(312,\pi)F_{\pi}\\
			\sum_{\pi\in\Rank{\set{3}}}\alpha_{\set{3}}(321,\pi)F_{\pi}\\
			\end{array}\right]}$};
	\node (a2) at (5.5,-3) {$M_{\set{3}}^{2}F = \left[\begin{array}{c}
		F_{123} + F_{132} + F_{312}\\
		F_{213} + F_{231} + F_{321}\\
		F_{132} + F_{123} + F_{213}\\
		F_{312} + F_{321} + F_{231}\\
		F_{231} + F_{213} + F_{123}\\
		F_{321} + F_{312} + F_{231}\\
		\end{array}\right]$};
	\draw[->] (h3) -- (d3);
	\draw[->] (g3) -- (a2);
	\node[draw] (h2) at (9.25,-3) {$h_{2}$};
	\node[draw] (g2) at (9.25,-5) {$g_{2}$};
	\draw[->] (a2) -- (h2);
	\draw[->] (a2) -- ++(3,0) |- (g2);
	\node (d2) at (12.5,-3) {$\textcolor{blue}{\Psi_{\set{3}}^{2}F = \left[\begin{array}{c}
		M^{2}F_{12}-M^{2}F_{21}\\
		M^{2}F_{21}-M^{2}F_{12}\\
		M^{2}F_{13}-M^{2}F_{31}\\
		M^{2}F_{31}-M^{2}F_{13}\\
		M^{2}F_{23}-M^{2}F_{32}\\
		M^{2}F_{32}-M^{2}F_{23}\\
		\end{array}\right]}$};
	\node (a0) at (12.5,-5) {$\textcolor{blue}{\Psi_{\emptyset}F = [M^{12}F_{12} + M^{2}F_{21}]}$};
	\draw[->] (h2) -- (d2);
	\draw[->] (g2) -- (a0);
	\end{tikzpicture}
\end{center}
\end{example}

Same as the FWT in classic wavelet theory, we call Algorithm \ref{alg:FWT-MRA} a ``fast'' wavelet transform because it computes all the coefficients of a same scale at the same time. Several differences are worth being pointed out though. We refer the reader to \cite{Mallat} for background on classic wavelet theory.
\begin{itemize}
	\item {\bf Forest structure instead of tree structure.} The classic FWT involves a recursive partitioning of the signal space: at each scale $j$, the vector of approximation coefficients $a_{j}$ is partitioned into sub-vectors and each sub-vector is averaged to output the approximation coefficients at scale $j-1$. This structure is encoded in the definition of the low-pass filter, Example \ref{ex:FWT-Haar} provides an illustration. The recursive partitioning can be represented by a tree, as shown by Figure \ref{fig:tree-structure-classic-FWT}. By contrast, the FWT for the MRA representation follows more a ``forest structure'', namely the multi-scale structure of the marginals represented by Figure \ref{fig:multiscale-structure}. At scale $j$, each approximation coefficient can be computed as the average of several subsets of approximation coefficients of scale $j+1$, as shown by Equation \eqref{eq:spanning-tree}. As a consequence, the low-pass filters from Definition \ref{def:MRA-low-pass-filters} are defined up to a convention. They correspond to a certain choice of a spanning tree for the forest structure of the marginals, as illustrated by Figure \ref{fig:forest-structure-FWT-MRA}.
	
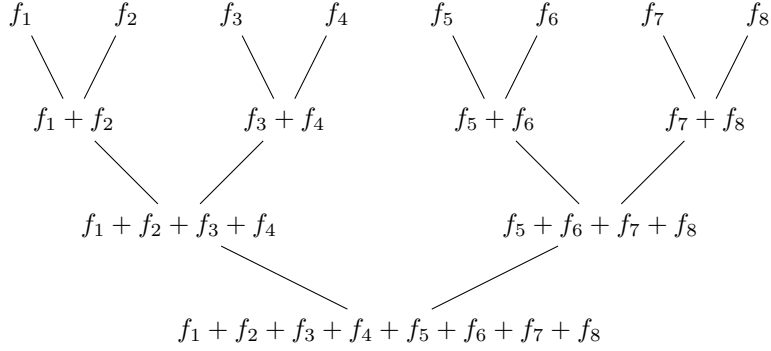
\begin{figure}
\centering
\begin{tikzpicture}[scale = 0.7]
\node (f1) at (-7,0) {$f_{1}$};
\node (f2) at (-5,0) {$f_{2}$};
\node (f3) at (-3,0) {$f_{3}$};
\node (f4) at (-1,0) {$f_{4}$};
\node (f5) at (1,0) {$f_{5}$};
\node (f6) at (3,0) {$f_{6}$};
\node (f7) at (5,0) {$f_{7}$};
\node (f8) at (7,0) {$f_{8}$};
\node (f12) at (-6,-2) {$f_{1} + f_{2}$};
\node (f34) at (-2,-2) {$f_{3} + f_{4}$};
\node (f56) at (2,-2) {$f_{5} + f_{6}$};
\node (f78) at (6,-2) {$f_{7} + f_{8}$};
\node (f1234) at (-4,-4) {$f_{1} + f_{2} + f_{3} + f_{4}$};
\node (f5678) at (4,-4) {$f_{5} + f_{6} + f_{7} + f_{8}$};
\node (f12345678) at (0,-6) {$f_{1} + f_{2} + f_{3} + f_{4} + f_{5} + f_{6} + f_{7} + f_{8}$};
\draw 
	(f1) -- (f12)
	(f2) -- (f12)
	(f3) -- (f34)
	(f4) -- (f34)
	(f5) -- (f56)
	(f6) -- (f56)
	(f7) -- (f78)
	(f8) -- (f78)
	(f12) -- (f1234)
	(f34) -- (f1234)
	(f56) -- (f5678)
	(f78) -- (f5678)
	(f1234) -- (f12345678)
	(f5678) -- (f12345678)
	;
\end{tikzpicture}
\caption{Tree structure of the FWT in classic wavelet theory}
\label{fig:tree-structure-classic-FWT}
\end{figure}

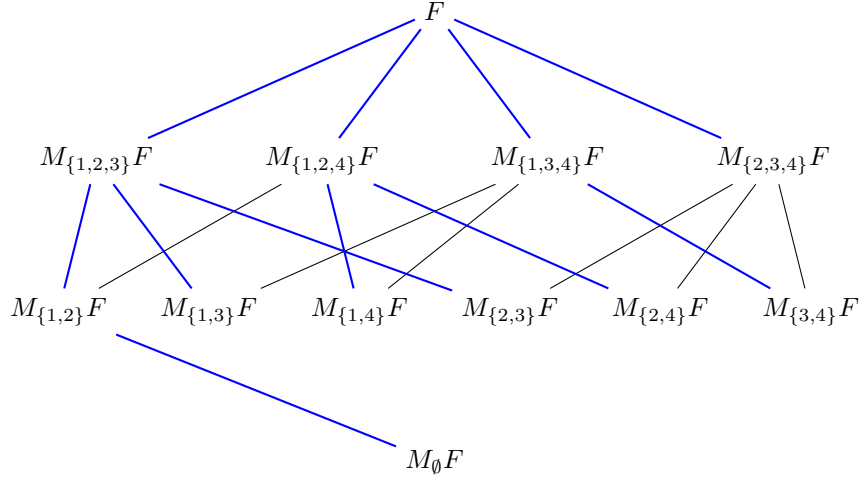
\begin{figure}
\centering
\begin{tikzpicture}
	\node (1234) at (0,2) {$F$};
	\node (123) at (-4.5,0) {$M_{\{1,2,3\}}F$};
	\node (124) at (-1.5,0) {$M_{\{1,2,4\}}F$};
	\node (134) at (1.5,0) {$M_{\{1,3,4\}}F$};
	\node (234) at (4.5,0) {$M_{\{2,3,4\}}F$};
	\node (12) at (-5,-2) {$M_{\{1,2\}}F$};
	\node (13) at (-3,-2) {$M_{\{1,3\}}F$};
	\node (14) at (-1,-2) {$M_{\{1,4\}}F$};
	\node (23) at (1,-2) {$M_{\{2,3\}}F$};
	\node (24) at (3,-2) {$M_{\{2,4\}}F$};
	\node (34) at (5,-2) {$M_{\{3,4\}}F$};
	\node (0) at (0, -4) {$M_{\emptyset}F$};
	\draw 
	(124) -- (12)
	(134) -- (13)
	(134) -- (14)
	(234) -- (23)
	(234) -- (24)
	(234) -- (34)
	;
	\draw[thick, blue]
	(12) -- (0)
	(123) -- (12)
	(123) -- (13)
	(123) -- (23)
	(124) -- (14)
	(124) -- (24)
	(134) -- (34)
	(1234) -- (123)
	(1234) -- (124)
	(1234) -- (134)
	(1234) -- (234)
	;
\end{tikzpicture}
\caption{Forest structure of the FWT for the MRA representation for $A = \set{4}$. The spanning tree highlighted in blue is the one obtained for $b_{\pi} = \min A\setminus c(\pi)$ in the Definition \ref{def:MRA-low-pass-filters} of the low-pass filters.}
\label{fig:forest-structure-FWT-MRA}
\end{figure}

	\item {\bf Downsampling.} The FWT in classic wavelet theory more specifically relies on a binary tree structure. At each step, the low-pass filter therefore divides the number of approximation (and thus also wavelet) coefficients by $2$. Example \ref{ex:FWT-Haar} provides an illustration. In the MRA representation, the number of approximation and wavelet coefficients of a function $F\in\Space{\Rank{A}}$ with $A\in\Subsets{\n}$ at scale $j\in\{2,\dots,\vert A\vert\}$ is equal to $\vert A\vert ! / (\vert A\vert - j)!$, as shown by Equations \ref{eq:def-MRA-approximation-coefficients} and \eqref{eq:def-MRA-wavelet-coefficients}. Hence at scale $j$, the FWT divides the number of coefficients by $(\vert A\vert - j)$.
	\item {\bf Support of the high-pass filters.} In classic wavelet theory, each wavelet coefficient at scale $j$ is computed from a specific subset of approximation coefficients at scale $j$. Equivalently, each approximation coefficient is involved in the computation of only one wavelet coefficient. As a consequence, the computation of all the wavelet coefficients at scale $j$ can be done in one convolution of the vector $a_{j}$. The structure of the high-pass filter is a little more complicated in the MRA representation: for a function $F\in\Space{\Rank{A}}$ with $A\in\Subsets{\n}$ and a subset $B\in\Subsets{A}$, the computation of each of the wavelet coefficients $\Psi_{B}F(\pi)$ for $\pi\in\Rank{B}$ involves all the approximation coefficients $M_{B}F(\pi')$ for $\pi'\in\Rank{B}$, by Definition \ref{def:MRA-high-pass-filters} of the high-pass filters. This means that for $j\in\{2,\dots,\vert A\vert\}$, the application of the high-pass filter $h_{A}^{j}$ requires $j!$ convolutions of the vector $M^{j}F$.
\end{itemize}

\begin{remark}[Further Optimization of the FWT]
We point out that the FWT could be further optimized. Indeed for $B\in\Subsets{\n}$, the space $H_{B}$ has dimension $d_{\vert B\vert}$, whereas the wavelet $\Psi_{B}F$ projection of a function $F\in\Space{\Gn}$ on $H_{B}$ is a vector of size $\vert B\vert !$. A fully optimized procedure would therefore compute only $d_{\vert B\vert}$ scalar coefficients and not $\vert B\vert !$. This could be done for instance with the use of a wavelet basis (see Section \ref{sec:discussion} for more details). This direction is left for future work.
\end{remark}

The aforementioned differences between the FWT in classic wavelet theory and the FWT for the MRA representation are due to the specific combinatorial structure of the latter. They also stem from the differences between the notions of information localization. In classic multiresolution analysis, the wavelet coefficients are localized in ``space'' and ``scale'', where ``space'' is the very object the signal is defined on. In other words, the metric in this space corresponds to the difference between the indexes of the coordinates: for a signal $f = (f_{1},\dots,f_{m})\in\mathbb{R}^{m}$, $f_{i}$ and $f_{i'}$ corresponds to the values of the function $f$ at points that are separated by a distance of $\vert i'-i\vert$. Then at each scale, the coordinates are partitioned recursively into subsets of adjacent coordinates (see Figure \ref{fig:tree-structure-classic-FWT}), defining a metric for the scale that is coarser but consistent with the metric of the higher scales. Each wavelet coefficient is thus localized in space and scale because its computation only involves a small number of approximation coefficients that are close with respect the scale.

The notion of information localization in the MRA representation is fundamentally different. The signal is defined on rankings but the wavelet coefficients are localized in ``items'' and ``scale''. They thus do not localize components of the signal in the ``space of rankings''. In other words each wavelet coefficient is not computed from a subset of the signal's coordinates that are ``close''. Instead, they are computed from subsets of coordinates that lead to the localization properties through the marginal operators that we described at length in the previous subsections. 

Algorithm \ref{alg:FWT-MRA} computes the wavelet transform of a function $F\in\Space{\Rank{A}}$ with $A\in\Subsets{\n}$. To extend it for any function $F\in\Space{\Gn}$, recall that $\Psi F = \sum_{A\in\Supp(F)}\Psi F_{A}$, where $\Supp(F) = \{A\in\SubsetsWE{\n} \;\vert\; F_{A} \neq 0 \}$ is the global support of $F$ (see Subsection \ref{subsec:definitions}). We naively extend the FWT by applying Algorithm \ref{alg:FWT-MRA} to each $F_{A}$ and summing all the wavelet transforms $\Psi F_{A}$. This procedure is formalized by Algorithm \ref{alg:FWT-MRA-naive}.

\begin{algorithm}
\caption{FWT for a function $F\in\Space{\Gn}$}
\label{alg:FWT-MRA-naive}
\begin{algorithmic}
\Require $F\in\Space{\Gn}$
\For{$A\in\Supp(F)$}
	\State Compute $\Psi F_{A}$ with Algorithm \ref{alg:FWT-MRA}
\EndFor
\State \Return $\Psi F = \sum_{A\in\Supp(F)}\Psi F_{A}$
\end{algorithmic}
\end{algorithm}

Algorithm \ref{alg:FWT-MRA-naive} is of course not optimal to compute the wavelet transform of any function $F\in\Gn$. Indeed, if there exists $B\in\Subsets{\n}$ included in at least two subsets of items in $\Supp(F)$, then the computation of the wavelet coefficients $\Psi_{B}F(\pi)$ for $\pi\in\Rank{B}$ will involve redundant applications of the high-pass filters of scale $\vert B\vert$ whereas it requires only one. The definition of an optimal FWT for any function $F\in\Space{\Gn}$ necessitates however to introduce new definitions and notations. For clarity's sake, we leave it to the reader. In addition, we assert that the optimal FWT would still have a complexity of same order of magnitude as the one of Algorithm \ref{alg:FWT-MRA-naive} (see below).

\ \\
{\bf Algorithmic complexity.} We now turn to the analysis of the complexity of the FWT and related computations. First, the high-pass filters $h_{A}^{j}$ are constructed from the alpha coefficients given by Definition \ref{def:alpha-coefficients}. The latter does not however provide an explicit formula for them. Fortunately, they can be precomputed once and for all with an efficient procedure provided in Section \ref{sec:MRA-construction}. The following proposition then gives an upper bound for its complexity.

\begin{proposition}[Complexity of the computation of alpha coefficients]
	\label{prop:complexity-alpha-coefficients}
	For $k\in\{2,\dots,n\}$, the computation of all coefficients $\alpha_{B}(\pi,\pi')$ for $\pi,\pi'\in\Rank{B}$ and $B\in\SubsetsWE{\n}$ with $\vert B\vert \leq k$ has complexity bounded by $(1/2)k^{2}k!$.
\end{proposition}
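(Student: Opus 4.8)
The plan is to collapse the a priori enormous collection of coefficients to a tiny canonical family by a symmetry argument, and then to bound the cost of a scale-by-scale recursion. First I would exploit the fact that the construction of the wavelet projections in Section~\ref{sec:MRA-construction} is equivariant under relabelling of items: any bijection between two subsets of equal cardinality intertwines the corresponding projections. Hence $\alpha_{B}(\pi,\pi')=\Psi_{B}\delta_{\pi'}(\pi)$ depends on $B$ only through $\vert B\vert$ and, once rankings on $B$ are identified with permutations, only through the relative permutation $\pi'\pi^{-1}$. Since the diagonal action of $\Sym{j}$ on pairs of rankings of a $j$-element set is free with exactly $j!$ orbits, there are only $j!$ distinct coefficient values at each scale $j$. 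This is the crucial reduction: it removes all dependence on $n$ (which is why the bound in the statement does not involve $n$) and reduces the whole task to computing, for each $j\in\{2,\dots,k\}$, the $j!$ distinct values attached to the canonical set $\set{j}$, every other $\alpha_{B}(\pi,\pi')$ being recovered by relabelling at no additional asymptotic cost.

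Next I would set up the recursion that drives the computation. Since $\delta_{\pi'}\in\Space{\Rank{B}}$, relation \eqref{eq:projective-system} gives $M_{B}\delta_{\pi'}=\delta_{\pi'}$; applying Property~\eqref{eq:MRA-general-1} to $\delta_{\pi'}$ and using $\Psi_{C}\delta_{\pi'}=\Psi_{C}\delta_{\pi'_{\vert C}}$ for $C\subset B$ (a consequence of \eqref{eq:MRA-general-2} together with $M_{C}\delta_{\pi'}=\delta_{\pi'_{\vert C}}$) yields, exactly as in the low-scale instances \eqref{eq:recursion-level-2} and \eqref{eq:recursion-level-3},
\[
\Psi_{B}\delta_{\pi'}=\delta_{\pi'}-\phi_{B}\sum_{\substack{C\in\SubsetsWE{B}\\ C\neq B}}\Psi_{C}\delta_{\pi'_{\vert C}}.
\]
This expresses every scale-$j$ coefficient in terms of the already-computed coefficients of strictly lower scale, the marginalisations $\pi'_{\vert C}$, and the explicit synthesis operator $\phi_{B}$ built in Section~\ref{sec:MRA-construction}. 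Organising the computation from the bottom scale upwards, I would argue that producing the full layer of $j!$ distinct values at scale $j$ can be carried out in a number of operations $c_{j}\le j\cdot j!$, the dominant work being, for each of the $j!$ relative permutations, a combination of $O(j)$ lower-scale contributions indexed by the immediate predecessors of $B$ in the multi-scale structure of Figure~\ref{fig:multiscale-structure}.

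Finally I would sum the per-scale costs. With $c_{j}\le j\cdot j!$ one gets
\[
\sum_{j=2}^{k}c_{j}\;\le\;\sum_{j=2}^{k}j\cdot j!\;=\;(k+1)!-2,
\]
and an elementary check shows $(k+1)!-2\le \tfrac12 k^{2}k!$ for every $k\ge 2$ (with equality at $k=2$), so the total complexity is bounded by $(1/2)k^{2}k!$, as claimed. The main obstacle I anticipate is the middle step: pinning down the precise per-scale operation count, which requires the explicit form of $\phi_{B}$ and a careful argument that the entire scale-$j$ layer is produced in $O(j\cdot j!)$ operations rather than the naive $(j!)^{2}$ that treating each pair $(\pi,\pi')$ separately would suggest. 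The equivariance reduction to $j!$ distinct values per scale is exactly what makes this feasible; once it is in place, the recursion and the summation are routine.
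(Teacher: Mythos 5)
Your overall architecture is exactly the paper's: reduce by $\Sn$-equivariance to the $j!$ canonical coefficients $\bigl(\alpha_{\set{j}}(12\dots j,\pi)\bigr)_{\pi\in\Rank{\set{j}}}$ at each scale $j$ (the paper does this via Lemma~\ref{lem:alpha-translation-invariance}, a consequence of $T_{\tau}\Psi_{B}=\Psi_{\tau(B)}T_{\tau}$), then compute these scale by scale with the recursion $\Psi_{B}\delta_{\pi'}=\delta_{\pi'}-\phi_{B}\sum_{C\subsetneq B}\Psi_{C}\delta_{\pi'_{\vert C}}$, then sum the per-scale costs. The equivariance reduction and the recursion are correct and are precisely what the paper uses.

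The gap is in the step you yourself flag as the obstacle: the per-scale operation count. You assert that each of the $j!$ canonical coefficients at scale $j$ is obtained as "a combination of $O(j)$ lower-scale contributions indexed by the immediate predecessors of $B$". That is not what the recursion gives. The sum $\phi_{B}\sum_{C\in\SubsetsWE{B}\setminus\{B\}}\Psi_{C}\delta_{\pi'_{\vert C}}$ runs over \emph{all} proper subsets $C$ of $B$, not just the codimension-one ones; when evaluated at a fixed $\pi\in\Rank{B}$, Lemma~\ref{lem:useful-lemma} shows that the surviving terms are indexed by the \emph{contiguous subwords} $\pi_{\set{i,j}}$ of $\pi$, of which there are $\binom{\vert B\vert}{2}$ (this is exactly the recursive formula of Theorem~\ref{th:alpha-recursive-formula}). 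So the cost per coefficient is $\binom{j}{2}=O(j^{2})$, not $O(j)$, the per-scale cost is $\binom{j}{2}\,j!$ rather than $j\cdot j!$, and your telescoping identity $\sum_{j=2}^{k}j\cdot j!=(k+1)!-2$ does not apply to the true cost. The final bound nevertheless survives, but via a different summation: $\binom{j}{2}j!=\tfrac{j-1}{2}\bigl[(j+1)!-j!\bigr]\le\tfrac{k-1}{2}\bigl[(j+1)!-j!\bigr]$, which telescopes to $\tfrac{k-1}{2}(k+1)!\le\tfrac12 k^{2}k!$. To close your argument you need the explicit form of $\phi_{B}$ (Definition~\ref{def:wavelet-synthesis-operator}) and the contiguous-subword support it induces; without it, the claim $c_{j}\le j\cdot j!$ is both unjustified and, as an operation count for this recursion, false.
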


The proof of Proposition \ref{prop:complexity-alpha-coefficients} finely exploits the combinatorial structure of the operators of the MRA representation. It is postponed to the Appendix. Once the alpha coefficients and therefore the high-pass filters are precomputed, one can apply the FWT, the complexity of which is bounded by the following proposition. We recall that the support of a function $F\in\Space{\Gn}$ is defined by $\supp(F) = \{\pi\in\Gn \;\vert\; F(\pi) \neq 0\}$ whereas its global support is defined by $\Supp(F) = \{A\in\Subsets{\n} \;\vert\; F_{A}\neq 0\}$.

\begin{proposition}[Complexity of the FWT for the MRA representation]
\label{prop:complexity-wavelet-transform}
Let $F\in\Space{\Gn}$ and $k = \max\{\vert A\vert \;\vert\; A\in\Supp(F)\}$. The complexity of Algorithm \ref{alg:FWT-MRA-naive} applied to $F$ is bounded by
\[
\sum_{A\in\Supp(F)}[e\,\vert A\vert! + \vert A\vert(2^{\vert A\vert-1}-1)]\vert\supp(F_{A})\vert \quad\leq\quad [e\,k! + k(2^{k-1}-1)]\vert\supp(F)\vert.
\]
\end{proposition}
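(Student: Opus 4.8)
The plan is to first reduce the bound for Algorithm~\ref{alg:FWT-MRA-naive} to a per-component bound for Algorithm~\ref{alg:FWT-MRA}, and then to control the high-pass and low-pass costs separately by tracking how the support propagates through the scales. Since Algorithm~\ref{alg:FWT-MRA-naive} merely runs Algorithm~\ref{alg:FWT-MRA} on each $F_{A}$, $A\in\Supp(F)$, and sums, its cost is $\sum_{A\in\Supp(F)}C(F_{A})$, where $C(F_{A})$ denotes the cost of Algorithm~\ref{alg:FWT-MRA} on $F_{A}\in\Space{\Rank{A}}$ (components with $\vert A\vert<2$ cost nothing). It therefore suffices to establish $C(F_{A})\leq[e\,\vert A\vert!+\vert A\vert(2^{\vert A\vert-1}-1)]\vert\supp(F_{A})\vert$ and then conclude: the disjoint decomposition $\Gamma_{n}=\bigsqcup_{A}\Rank{A}$ gives $\vert\supp(F)\vert=\sum_{A\in\Supp(F)}\vert\supp(F_{A})\vert$, and since the bracket $[e\,m!+m(2^{m-1}-1)]$ is nondecreasing in $m$ while $\vert A\vert\leq k$ on $\Supp(F)$, bounding each bracket by its value at $k$ and summing yields the claimed right-hand inequality.

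For the per-component bound I would write $m=\vert A\vert$, $s=\vert\supp(F_{A})\vert$, $F=F_{A}$, and isolate the governing combinatorial fact: at each scale $\ell$ the number of \emph{nonzero} approximation coefficients is small. Indeed $M_{B}F(\pi)=\sum_{\sigma\in\Rank{A},\,\sigma_{\vert B}=\pi}F(\sigma)$, so $\supp(M_{B}F)\subset\{\sigma_{\vert B}\;\vert\;\sigma\in\supp(F)\}$, whence $\sum_{B\subset A,\,\vert B\vert=\ell}\vert\supp(M_{B}F)\vert\leq s\binom{m}{\ell}$. I would then apply this to each filter family. For the high-pass filters, Definition~\ref{def:MRA-high-pass-filters} computes each of the $j!$ coefficients $\Psi_{B}F(\pi)$, $\pi\in\Rank{B}$, as a sum over $\pi'\in\Rank{B}$ in which only the $\vert\supp(M_{B}F)\vert$ nonzero terms matter; the cost of $h_{A}^{j}$ is thus at most $j!\sum_{\vert B\vert=j}\vert\supp(M_{B}F)\vert\leq j!\,s\binom{m}{j}=s\,m!/(m-j)!$. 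Summing over $j\in\{2,\dots,m\}$ and reindexing $i=m-j$ gives $s\,m!\sum_{i=0}^{m-2}1/i!\leq e\,m!\,s$, which is the first term.

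For the low-pass filters, Definition~\ref{def:MRA-low-pass-filters} computes each approximation coefficient at scale $\ell\in\{2,\dots,m-1\}$ (via $g_{A}^{\ell+1}$), and the single coefficient at scale $0$ (via $g_{A}^{2}$), as a sum of $\ell+1$ terms, i.e.\ with at most $\ell$ additions. With at most $s\binom{m}{\ell}$ nonzero outputs at scale $\ell$, the total low-pass cost is at most $\sum_{\ell=1}^{m-1}\ell\,s\binom{m}{\ell}$, where the phantom scale-$1$ term $sm$ generously absorbs the cheap scale-$0$ pass. The identity $\sum_{\ell=0}^{m}\ell\binom{m}{\ell}=m2^{m-1}$ then gives $\sum_{\ell=1}^{m-1}\ell\binom{m}{\ell}=m2^{m-1}-m=m(2^{m-1}-1)$, the second term; adding the two contributions proves the per-component bound.

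The main obstacle is not any single identity but the bookkeeping of the operation count: one must fix a convention (count only the reads and multiply-adds of nonzero coefficients, namely at most $\ell$ additions per nonzero low-pass output and $\vert\supp(M_{B}F)\vert$ multiply-adds per high-pass output) and then verify that under this convention the support bound $\sum_{\vert B\vert=\ell}\vert\supp(M_{B}F)\vert\leq s\binom{m}{\ell}$ genuinely dominates the work at every scale. Extra care is needed because the approximation coefficients live on scales $\{0,2,3,\dots,m\}$ rather than on every integer scale; handling the scale-$0$ pass, which the clean sum $\sum_{\ell}\ell\binom{m}{\ell}$ would otherwise omit, and checking that the stated figure is a valid, if slightly loose, upper bound is the one point where the analogy with the classical dyadic FWT must be applied with caution.
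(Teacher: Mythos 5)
Your proposal is correct and follows essentially the same route as the paper: reduce to a per-component bound for Algorithm \ref{alg:FWT-MRA}, use $\sum_{B\subset A,\,\vert B\vert=j}\vert\supp(M_{B}F)\vert\leq\binom{\vert A\vert}{j}\vert\supp(F)\vert$ to bound the high-pass cost by $j!\binom{\vert A\vert}{j}\vert\supp(F)\vert$ (summing to $e\,\vert A\vert!\,\vert\supp(F)\vert$) and the low-pass cost by $\vert A\vert(2^{\vert A\vert-1}-1)\vert\supp(F)\vert$. The only cosmetic difference is that you count the low-pass work per output coefficient ($\ell$ additions for each of the at most $s\binom{m}{\ell}$ outputs at scale $\ell$, i.e.\ $\sum_{\ell=1}^{m-1}\ell\binom{m}{\ell}$) while the paper counts it per input coefficient ($\sum_{j=2}^{m}j\binom{m}{j}$); both sums equal $m(2^{m-1}-1)$.
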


\begin{proof}
We first prove the proposition for a function $F\in\Space{\Rank{A}}$ with $A\in\Subsets{\n}$. Let $k = \vert A\vert$ and $j\in\{2,\dots,k\}$. At scale $j$, Algorithm \ref{alg:FWT-MRA} involves
\begin{itemize}
	\item the application of the high-pass filter $h_{A}^{j}$ on $M^{j}F$, with complexity equal to
	\[
	\sum_{B\subset A, \vert B\vert = j}\sum_{\pi\in\Rank{B}}\vert\supp(M_{B}F)\vert = j!\sum_{B\subset A,\vert B\vert = j}\vert\supp(M_{B}F)\vert;
	\]
	\item the application of the low-pass filter $g_{A}^{j}$ on $M^{j}F$, with complexity bounded by
	\[
	\sum_{\pi\in\Gamma_{A}^{j}}\mathbb{I}\{\pi\in\supp(M^{j}F)\}j = j\vert\supp(M^{j}F)\vert.
	\]
	Indeed, each coefficient $M^{j}F(\pi)$ for $\pi\in\Gamma_{A}^{j}$ is involved in the computation of at most $j$ approximation coefficients of scale $j-1$, namely the approximation coefficients $M^{j-1}F(\pi')$ for $\pi'\subset\pi$ with $\vert\pi'\vert = j-1$.
\end{itemize}
Now, it is easy to see that for any $B\in\Subsets{\n}$, $\vert\supp(M_{B}F)\vert \leq \vert\supp(F)\vert$. One therefore has $\vert\supp(M^{j}F)\vert = \sum_{B\subset A, \vert B\vert = j}\vert\supp(M_{B}F)\vert \leq \binom{k}{j}\vert\supp(F)\vert$ and the complexity of Algorithm \ref{alg:FWT-MRA} is bounded by
\[
\vert\supp(F)\vert\sum_{j=2}^{k}\binom{k}{j}\left(j! + j\right).
\]
Classic combinatorial calculations then give
\[
\sum_{j=2}^{k}\binom{k}{j}j! = \sum_{j=2}^{k}\frac{k!}{(k-j)!}\leq k!\sum_{j=0}^{+\infty}\frac{1}{j!} = e\, k! \qquad\text{and}\qquad \sum_{j=2}^{k}\binom{k}{j}j = k(2^{k-1}-1).
\]
For a function $F\in\Space{\Gn}$, the complexity of Algorithm \ref{alg:FWT-MRA-naive} is then clearly bounded by
\[
\sum_{A\in\Supp(F)}[e\,\vert A\vert! + \vert A\vert(2^{\vert A\vert-1}-1)]\vert\supp(F_{A})\vert \quad\leq\quad [e\,k! + k(2^{k-1}-1)]\vert\supp(F)\vert.
\]
\end{proof}

We finish this subsection with the analysis of the wavelet synthesis. In classic multiresolution analysis, the inverse wavelet transform can be computed with a ``dual'' procedure of the FWT. In the present context, it happens that the synthesis operator $\phi_{A}$ involves computations that are not similar to the ones involved in the wavelet transform (refer to Section \ref{sec:MRA-construction} for the definition). A simple procedure leads however to the following complexity bounds.

\begin{proposition}[Complexity of the wavelet synthesis]
\label{prop:complexity-synthesis-operator}
Let $A\in\Subsets{\n}$ and $\mathbf{X}\in\Hn$. The computation of $\phi_{A}\mathbf{X}(\pi)$ can be done with complexity bounded by $\binom{\vert A\vert}{2}$ for any $\pi\in\Rank{A}$, and the computation of $\phi_{A}\mathbf{X}$ with complexity bounded by $\vert A\vert ! \binom{\vert A\vert}{2}$.
\end{proposition}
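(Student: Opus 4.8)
The plan is to reduce the global bound to the pointwise one and then to exploit the combinatorial sparsity of the synthesis operator. Since $\vert\Rank{A}\vert = \vert A\vert!$, computing the whole function $\phi_{A}\mathbf{X}\in\Space{\Rank{A}}$ amounts to evaluating $\phi_{A}\mathbf{X}(\pi)$ independently at each of the $\vert A\vert!$ injective words $\pi\in\Rank{A}$, so the second bound follows immediately from the first by multiplication. It therefore suffices to show that, for a fixed $\pi\in\Rank{A}$, the value $\phi_{A}\mathbf{X}(\pi)$ can be assembled with at most $\binom{\vert A\vert}{2}$ operations.

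First I would write, using \eqref{eq:operator-phi} together with the vanishing rule $\phi_{A}X_{B}=0$ for $B\not\subset A$,
\[
\phi_{A}\mathbf{X}(\pi)=\sum_{B\in\SubsetsWE{A}}\phi_{A}X_{B}(\pi).
\]
The decisive ingredient, to be read off from the explicit construction of $\phi_{A}$ in Section \ref{sec:MRA-construction}, is that each summand $\phi_{A}X_{B}(\pi)$ vanishes unless the items of $B$ occupy \emph{consecutive positions} in the word $\pi$, i.e. unless $B$ is the content of a contiguous factor of $\pi$ (the case $B=\emptyset$ contributing the constant scale-$0$ term). This is exactly the phenomenon already visible in the worked example for $n=3$, where the expansion of $p(213)$ involves the pairs $\{1,2\}$ and $\{1,3\}$ but not $\{2,3\}$, precisely because $2$ and $3$ are not adjacent in $2\succ1\succ3$. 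Moreover, when it does not vanish, $\phi_{A}X_{B}(\pi)$ equals a precomputed scalar (depending, as the construction will show, only on the cardinalities $\vert A\vert$ and $\vert B\vert$) times the coordinate $X_{B}(\pi_{\vert B})$, so that each surviving term is obtained in constant time.

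It then remains to count the nonzero contributions. For a word of length $k=\vert A\vert$, the contiguous factors of length $j$ are indexed by their starting position, giving $k-j+1$ of them; summing over $j\in\{2,\dots,k\}$ yields $\sum_{j=2}^{k}(k-j+1)=\sum_{i=1}^{k-1}i=\binom{k}{2}$ factors of length at least two. Hence, beyond the $B=\emptyset$ term that initializes the accumulator, the sum defining $\phi_{A}\mathbf{X}(\pi)$ has exactly $\binom{\vert A\vert}{2}$ nonzero terms, each available in constant time, so accumulating them costs $\binom{\vert A\vert}{2}$ operations. Multiplying by the $\vert A\vert!$ words of $\Rank{A}$ gives the global bound $\vert A\vert!\binom{\vert A\vert}{2}$.

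The main obstacle is not the counting, which is elementary, but the structural input borrowed from Section \ref{sec:MRA-construction}: one must verify from the explicit formula for $\phi_{A}$ that $\phi_{A}X_{B}(\pi)$ is supported on the contiguous factors of $\pi$ and that each such entry reduces to a single scalar multiplication of a precomputed coefficient with a coordinate of $X_{B}$. Once that support property and the $O(1)$ evaluation of each entry are in hand, everything else is bookkeeping.
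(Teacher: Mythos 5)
Your argument is correct and is essentially the paper's own: the support property you invoke is exactly Lemma \ref{lem:useful-lemma}, which rewrites $\phi_{A}\mathbf{X}(\pi)$ as the constant term plus $\sum_{1\leq i<j\leq\vert A\vert}\frac{1}{(\vert A\vert-j+i)!}X_{c(\pi_{\set{i,j}})}(\pi_{\set{i,j}})$, a sum over the $\binom{\vert A\vert}{2}$ contiguous factors of $\pi$ with each term a single precomputed scalar times one coordinate of $\mathbf{X}$. Your counting of the contiguous factors and the reduction of the global bound to the pointwise one match the paper's (very brief) proof.
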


Refer to the Appendix for the proof of Proposition \ref{prop:complexity-synthesis-operator}. The complexity bounds of Propositions \ref{prop:complexity-alpha-coefficients} and \ref{prop:complexity-wavelet-transform} can appear a little high at first glance, as they involve powers and factorials. We however point out that the value of the exponent or under the factorial is the size of the subset of items considered. This size is actually small in practical applications typically around $10$, and the complexity thus does not explode.

\begin{remark}[Connection with the Fourier transform]
In classic multiresolution analysis, the wavelet transform is connected to the Fourier transform. As we shall see in Section \ref{sec:connections}, it happens that some connections exist too in the present context between the MRA representation and $\Sn$-based harmonic analysis. The algorithms we introduced in this section do not however use the Fourier transform on $\Sn$ at all. The design of such procedures would certainly be an interesting direction for future work.
\end{remark}

\section{The MRA framework for the statistical analysis of incomplete rankings}
\label{sec:MRA-framework}

We now describe a general framework to apply the MRA representation to the statistical analysis of incomplete rankings, in the setting defined in Section \ref{sec:setting}.

\subsection{Identifiability issues}

In each of the statistical application mentioned in Subsection \ref{subsec:problems}, the goal is to recover a certain target part of $p$. In the context of full ranking analysis, one observes drawings of permutations $\Sigma_{1}, \dots, \Sigma_{N}$ that provide a direct access to global information about $p$. The task is then to best approximate the target part of $p$ from global information about $p$. In the context of incomplete ranking analysis, the target part of $p$ must be recovered from the observation of a dataset $\mathcal{D}_{N} = ((\mathbf{A}_{1},\Pi^{(1)}),\dots, (\mathbf{A}_{N},\Pi^{(N)}))$ where the $(\mathbf{A}_{i},\Pi^{(i)})$'s are drawn IID from the process \eqref{eq:scheme}. This brings an additional difficulty as information about $p$ in $\mathcal{D}_{N}$ is censored by the probability distribution $\nu$. One must therefore deal with two types of uncertainty: 
\begin{enumerate}
	\item Remove the noise from the observation process \eqref{eq:scheme} to access to information about $p$.
	\item Recover the target part of $p$ from the accessible part of information about $p$.
\end{enumerate}

By the law of large numbers, it is obvious that the (asymptotically) accessible part of information about $p$ (as $N$ grows to infinity) are the marginals $P_{A}$ for observable subsets of items $A$, that is to say subsets of items in the observation design $\mathcal{A}$. The second problem then boils down to recover the target part of $p$ from the knowledge of the marginals $(P_{A})_{A\in\mathcal{A}}$. 

Depending on the target part and the observation design $\mathcal{A}$, this task can require a structural assumption on $p$. Suppose for instance that one seeks to recover the full ranking model $p$ from the observation of pairwise comparisons only. In other words, with an observation design $\mathcal{A}$ included in the set of pairs of $\n$. Each pairwise marginal $P_{\{a,b\}}$ for $\{a,b\}\subset\n$ being a probability distribution on a set with two elements, it is characterized by one parameter. The number of accessible parameters is therefore at most $\binom{n}{2}$, whereas characterizing the full ranking model $p$ requires $n!-1$ parameters. This task thus requires to stipulate an additional structural assumption on $p$, so that $p$ becomes identifiable from the knowledge of its pairwise marginals only.

In a general context, we consider the following question: without any structural assumption, what part of $p$ can be recovered from the knowledge of the marginals $(P_{A})_{A\in\mathcal{A}}$? The following theorem provides the answer. It is already proved in \citet{SCJ2015} and is a direct consequence of Theorem \ref{th:inverse-linear-system}. Its proof is thus left to the reader.

\begin{theorem}[Identifiable parameters]
\label{th:accessible-information}
The knowledge of $(P_{A})_{A\in\mathcal{A}}$ characterizes the component 
\[
\left(\Psi_{B}p\right)_{B\in\SubsetsWE{\mathcal{A}}}\in\mathbb{H}(\SubsetsWE{\mathcal{A}})
\]
of the ranking model $p$. In particular, it has a number of degrees of freedom equal to $\dim\mathbb{H}(\SubsetsWE{\mathcal{A}}) = \sum_{B\in\SubsetsWE{\mathcal{A}}}d_{\vert B\vert}$.
\end{theorem}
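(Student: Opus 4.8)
The plan is to show that the two linear maps $p\mapsto (P_{A})_{A\in\mathcal{A}}$ and $p\mapsto (\Psi_{B}p)_{B\in\SubsetsWE{\mathcal{A}}}$ carry exactly the same information — each is recoverable from the other — and then to read off the dimension count from the codomain of the second map. Throughout I would work purely at the level of $\Space{\Rank{\n}}=\Space{\Sn}$, treating $p$ as an arbitrary element there, since the statement is linear-algebraic and the probabilistic constraints on $p$ play no role. The cleanest route, as announced, is to invoke the chain of equivalences established inside the proof of Theorem \ref{th:inverse-linear-system}, specialized to $A=\n$, $\mathcal{S}=\mathcal{A}$, and $F_{0}$ a competing model $p'$: one has $M_{A'}p=M_{A'}p'$ for all $A'\in\mathcal{A}$ if and only if $\Psi_{B}p=\Psi_{B}p'$ for all $B\in\SubsetsWE{\mathcal{A}}$. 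This equality of fibers is precisely the assertion that $(P_{A})_{A\in\mathcal{A}}$ and $(\Psi_{B}p)_{B\in\SubsetsWE{\mathcal{A}}}$ determine one another.

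To make the word ``characterizes'' fully concrete, I would exhibit the two reconstruction formulas explicitly. For the forward direction, fix $B\in\SubsetsWE{\mathcal{A}}$ and pick any $A\in\mathcal{A}$ with $B\subset A$; since $B\in\SubsetsWE{A}$, Property \eqref{eq:MRA-general-2} of Theorem \ref{th:MRA-general} applied to $p\in\Space{\Rank{\n}}$ gives $\Psi_{B}p=\Psi_{B}M_{A}p=\Psi_{B}P_{A}$, so each wavelet projection is obtained by applying the known operator $\Psi_{B}$ to the known marginal $P_{A}$. For the converse, fix $A\in\mathcal{A}$ and combine \eqref{eq:MRA-general-1} with \eqref{eq:MRA-general-2} to write $P_{A}=M_{A}p=\phi_{A}\Psi p=\sum_{B\in\SubsetsWE{A}}\phi_{A}\Psi_{B}p$; every index appearing lies in $\SubsetsWE{A}\subset\SubsetsWE{\mathcal{A}}$, so the right-hand side involves only the prescribed component. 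These two formulas reprove the equivalence above while also showing the recovery is effective.

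Finally I would count degrees of freedom. The component $(\Psi_{B}p)_{B\in\SubsetsWE{\mathcal{A}}}$ lives in $\mathbb{H}(\SubsetsWE{\mathcal{A}})=\bigoplus_{B\in\SubsetsWE{\mathcal{A}}}H_{B}$, so its number of free parameters is at most $\dim\mathbb{H}(\SubsetsWE{\mathcal{A}})=\sum_{B\in\SubsetsWE{\mathcal{A}}}d_{\vert B\vert}$. Equality holds because the restriction of $\Psi$ to $\Space{\Rank{\n}}$ is a bijection onto $\Hn$ — this follows from $\phi_{\n}\Psi=\mathrm{Id}$, $\Psi\phi_{\n}=\mathrm{Id}$ and $\dim\Space{\Sn}=n!=\dim\Hn$ — so the coordinate projection onto the blocks indexed by $\SubsetsWE{\mathcal{A}}$ is surjective, and hence $(\Psi_{B}p)_{B\in\SubsetsWE{\mathcal{A}}}$ sweeps out all of $\mathbb{H}(\SubsetsWE{\mathcal{A}})$ as $p$ varies.

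The argument involves no hard estimate; the one point that genuinely requires care is the index bookkeeping. I would make sure that $\SubsetsWE{A}\subset\SubsetsWE{\mathcal{A}}$ for every $A\in\mathcal{A}$ (immediate from the definition $\SubsetsWE{\mathcal{A}}=\bigcup_{A\in\mathcal{A}}\SubsetsWE{A}$), so that neither reconstruction formula ever references a block $H_{B}$ with $B\notin\SubsetsWE{\mathcal{A}}$, and correspondingly that the surjectivity in the dimension count lands on exactly the claimed set of blocks. Getting this indexing consistency right — rather than any analytic difficulty — is the main obstacle, and it is precisely why the result is a direct consequence of Theorem \ref{th:inverse-linear-system}.
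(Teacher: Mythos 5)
Your argument is correct and follows exactly the route the paper intends: the paper leaves this as a direct consequence of Theorem \ref{th:inverse-linear-system}, and your proof simply makes that explicit via the equivalence $M_{A'}p=M_{A'}p'$ for all $A'\in\mathcal{A}$ iff $\Psi_{B}p=\Psi_{B}p'$ for all $B\in\SubsetsWE{\mathcal{A}}$, together with the two reconstruction formulas from Theorem \ref{th:MRA-general} and the bijectivity of $\Psi$ on $\Space{\Sn}$ for the dimension count. No gaps; the only cosmetic remark is that for an actual probability distribution the block $\Psi_{\emptyset}p=\delta_{\bar{0}}$ is fixed, but the paper's own count $\sum_{B\in\SubsetsWE{\mathcal{A}}}d_{\vert B\vert}$ likewise treats $p$ as an arbitrary element of $\Space{\Sn}$, so your convention matches the statement.
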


Through Theorem \ref{th:accessible-information}, the MRA representation allows to quantify the part of $p$ that is identifiable without any structural assumption in the statistical setting introduced in Section \ref{sec:setting}. This justifies the general method we introduce for the statistical analysis of incomplete rankings.

\subsection{General method for the statistical analysis of incomplete rankings}
\label{subsec:general-method}

The MRA framework we now introduce is performed in two steps, one to perform each of the two tasks mentioned in the previous Subsection.

\begin{definition}[MRA framework]
The MRA framework for the statistical analysis of incomplete rankings is described by the following general procedure.
\begin{enumerate}
	\item Construct from the dataset $\mathcal{D}_{N}$ the \textit{wavelet empirical estimator} $\mathbf{\widehat{X}}\in\mathbb{H}(\SubsetsWE{\mathcal{A}})$ defined for each $B\in\SubsetsWE{A}$ as the simple average of the wavelet projections of the $\delta_{\Pi^{(i)}}$:
	\begin{equation}
	\label{eq:wavelet-estimator}
	\widehat{X}_{B} = \frac{1}{\vert\{1\leq i\leq N \;\vert\; B\subset\mathbf{A}_{i}\}\vert}\sum_{i=1}^{N}\Psi_{B}\delta_{\Pi^{(i)}}
	\end{equation}
	(we recall that $\Psi_{B}\delta_{\pi} = 0$ if $B\not\subset c(\pi)$ by construction). By convention, $\widehat{X}_{B} = 0$ if $\vert\{1\leq i\leq N \;\vert\; B\subset\mathbf{A}_{i}\}\vert = 0$. As shown in Subsection \ref{subsec:statistical-challenge}, $\mathbf{\widehat{X}}$ is an unbiased estimator of the accessible component $\left(\Psi_{B}p\right)_{B\in\SubsetsWE{\mathcal{A}}}$ of $p$. 
	\item Perform the task related to the considered application in the feature space $\Hn$ using $\mathbf{\widehat{X}}$ as empirical distribution.
\end{enumerate}
\end{definition}

\begin{remark}
The wavelet empirical estimator $\mathbf{\widehat{X}}$ is equal to the weighted least square estimator considered in \citet{SCJ2015} and denoted by $\widehat{X}^{WLS}$. We use a different notation here for simplicity's sake.
\end{remark}

Beyond this decomposition in two steps, the major novelty of the MRA framework is to offer the possibility to perform the analysis of the data in the feature space $\Hn$. This is a radical change from existing approaches that all rely on the construction of a ranking model $\widehat{p}_{N}$ over $\Sn$ (see Subsection \ref{subsec:existing-approaches}). Subsections \ref{subsec:statistical-challenge} and \ref{subsec:computational-challenge} respectively show how this method allows to overcome the statistical and computational challenges. Before that, we illustrate the application of the MRA framework on different applications.

\ \\
\noindent
{\bf Estimation.} Two estimation problems naturally arise when observing incomplete rankings: the estimation of the full ranking model $p$ or the estimation of the accessible marginals $(P_{A})_{A\in\mathcal{A}}$ only. In the latter, the target part of $p$ is accessible. The MRA framework can therefore be applied in its simplest form: construct the wavelet empirical estimator $\mathbf{\widehat{X}}$ and use it directly to estimate the marginals, taking $\phi_{A}\mathbf{\widehat{X}}$ as estimator of $P_{A}$ for each $A\in\mathcal{A}$. This approach is used in \citet{SCJ2015} and shown to have strong theoretical guarantees as well as a good performance in numerical applications. Depending on the dataset, it can nonetheless be useful to add a regularization procedure. Estimator $\mathbf{\widehat{X}}$ is indeed characterized by $\sum_{B\in\SubsetsWE{\mathcal{A}}}d_{\vert B\vert}$ independent parameters. If for instance $\mathcal{A} = \{A \subset\n \;\vert\; 2\leq \vert A\vert \leq K\}$ for some $K \in\{2,\dots,n\}$, then this quantity is of order $O(n^{K})$. It may thus require a huge number of observations $N$ available to attain a good accuracy on a large dataset. Regularization procedures can then help to obtain a more robust estimator. Many approaches are possible, we provide here two examples for illustration purpose\footnote{Examples to define the proposed mathematical objects are provided in Section \ref{sec:discussion}.}.
\begin{itemize}
	\item Kernel-based estimation: Given a distance $D$\ on the set $\SubsetsWE{\n}$ of subsets of items, one can define a kernel $K_{h}:\Hn\rightarrow\Hn$ that maps an element $\mathbf{X}\in\Hn$ to a smoother element $K_{h}\mathbf{X}\in\Hn$ with $h$ as a window parameter on the distance $D$, and consider the kernel-based wavelet estimator $\mathbf{\widehat{X}}^{Ker}$ defined for $B\in\SubsetsWE{\mathcal{A}}$ by
	\[
	\widehat{X}_{B}^{Ker} = \frac{1}{Z_{B,N}}\sum_{i=1}^{N}\sum_{B'\in\SubsetsWE{\mathcal{A}}}K_{h}(\Psi_{B'}\delta_{\Pi^{(i)}}),
	\]
	where $Z_{B,N}$ is a normalizing constant. 
	\item Penalty minimization: Given a distance $\Delta$ on $\Hn$, one can construct a regularized estimator as the solution $\mathbf{\widehat{X}}^{Pen}$ of a minimization problem of the form
	\[
	\min_{\mathbf{X'}\in\Hn} \Delta(\mathbf{X'},\mathbf{\widehat{X}}) + \lambda_{N}\Omega(\mathbf{X'}),
	\]
	where $\Omega:\Hn\rightarrow\mathbb{R}$ is a penalty function and $\lambda_{N}>0$ is a regularization parameter. 
\end{itemize}
Regularization procedures are discussed in more details in Section \ref{sec:discussion}. They are also required when one seeks to recover the full ranking model $p$. In this case, it may not be necessary to reduce the variance of the estimator $\mathbf{\widehat{X}}$ but the goal is to recover information that is not accessible in absence of any structural model assumption. It can be expressed as an inverse problem of the form: knowing an estimation $\mathbf{\widehat{X}}$ of $(\Psi_{B}p)_{B\in\SubsetsWE{\mathcal{A}}}$, recover $p$. This task of course requires a structural assumption on $p$ and can typically be tackled by minimizing a penalty function that quantifies it. In both cases, the MRA framework is applied the following way:
\begin{enumerate*}[label=\arabic*\upshape.]
	\item construct the wavelet empirical estimator $\mathbf{\widehat{X}}$;
	\item apply a regularization procedure to obtain a final estimator $\mathbf{\widehat{X}}^{\ast}$.
\end{enumerate*}

\ \\
\noindent
{\bf Clustering.} Here we consider a clustering problem. We assume that the observations from the dataset $\mathcal{D}_{N}$ come from a set of $m\geq 1$ users. For $j\in\{1,\dots,m\}$, user $j$ provides a dataset of $N_{j}\geq 1$ incomplete rankings that we denote by $\mathcal{D}_{N}^{j} = ((\mathbf{A}_{j,1},\Pi^{(j,1)}),\dots, (\mathbf{A}_{j,N_{j}},\Pi^{(j,N_{j})}))$, so that $\mathcal{D}_{N} = \mathcal{D}_{N}^{1}\sqcup\dots\sqcup\mathcal{D}_{N}^{m}$. We assume that each user $j$ is modeled by a ranking model $p_{j}$ over $\Sn$ and the goal is regroup the ranking models $p_{1},\dots,p_{m}$ into $k$ clusters, with $k\in\{1,\dots,m\}$ known in advance for simplicity. The difficulty is of course that none of the $p_{j}$'s is known, and not even accessible from the observations, since each dataset $\mathcal{D}_{N}^{j}$ is composed of censored thus incomplete rankings. The MRA framework can be applied as follows:
\begin{enumerate}
	\item For each user $j\in\{1,\dots,m\}$, compute the wavelet empirical estimator $\mathbf{\widehat{X}}^{j}$ defined for each $B\in\SubsetsWE{\mathcal{A}}$ by
	\[
	\widehat{X}_{B}^{j} = \frac{1}{\vert\{1\leq i\leq N_{j} \;\vert\; B\subset\mathbf{A}_{j,i}\}\vert}\sum_{i=1}^{N_{j}}\Psi_{B}\delta_{\Pi^{(j,i)}}.
	\]
	\item Apply a clustering algorithm to the data points $\mathbf{\widehat{X}}^{1},\dots,\mathbf{\widehat{X}}^{N}$ in the feature space $\Hn$. It can be for instance the $k$-means algorithm or a spectral clustering method based on a similarity measure on $\Hn$.
\end{enumerate}

\ \\
\noindent{\bf Ranking aggregation.} Ranking aggregation is certainly one the most considered applications in the ranking literature. Broadly speaking, it consists in ``summarizing'' a population of rankings into one single ranking. In the most classic setting, the population of rankings is a finite collection $(\sigma^{(1)},\dots,\sigma^{(N)})$ of full rankings and the goal is to summarize it into one full ranking $\sigma$ whose performance is measured by the following cost function
\begin{equation}
\label{eq:empirical-cost-function}
\sum_{i=1}^{N}d\left(\sigma,\sigma^{(i)}\right),
\end{equation}
where $d$ is a distance on $\Sn$. Minimizers of \eqref{eq:empirical-cost-function} are called consensus rankings of the collection $(\sigma^{(1)},\dots,\sigma^{(N)})$ for the distance $d$. Though this problem has mainly been considered in a deterministic setting in most of the dedicated literature, it can be naturally extended to a statistical setting where the population is a collection of $N$ random permutations $(\Sigma^{(1)},\dots,\Sigma^{(N)})$ drawn IID from a ranking model $p$. The aggregation performance of a full ranking $\sigma$ is then measured by the expected cost function
\begin{equation}
\label{eq:expected-cost-function}
\mathbb{E}_{\Sigma\sim p}\left[d(\sigma,\Sigma)\right] = \sum_{\sigma'\in\Sn}d(\sigma,\sigma')p(\sigma').
\end{equation}
Consensuses for \eqref{eq:expected-cost-function} are for instance considered in \citet{Sibony2014} or \citet{PPR2015}. A possible aggregation procedure in this context is for instance to take a minimizer of the empirical cost function $\sum_{i=1}^{N}d(\sigma,\Sigma^{(i)})$. In the context of the statistical analysis of incomplete rankings, one does not have access to drawings of $p$ but to a dataset $\mathcal{D}_{N}$ only. It is still natural however to consider the problem of aggregating the statistical population of rankings into a full ranking $\sigma$ and measure its performance by the same function \eqref{eq:expected-cost-function}. This setting is used for instance in \citet{RGLA15}, with the Kendall's tau distance, in the context of pairwise comparisons. In a general setting, the MRA framework applies as follows:
\begin{enumerate}
	\item Compute the wavelet empirical estimator $\mathbf{\widehat{X}}$ defined by \eqref{eq:wavelet-estimator}.
	\item Take the minimizer of the cost function
	\[
	\Delta_{d}\left(\Psi\delta_{\sigma},\mathbf{X}\right),
	\]
	where $\Delta_{d}$ is a distance on $\Hn$ that can be defined from $d$\footnote{How to define a distance $\Delta_{d}$ on $\Hn$ that would lead to efficient procedures requires however some deeper analysis and is left to future work.}.
\end{enumerate}

\subsection{Overcoming the statistical challenge}
\label{subsec:statistical-challenge}

We now describe the advantages of the MRA framework for the statistical analysis of incomplete rankings. First, $\mathbf{\widehat{X}}$ is an unbiased estimator of $(\Psi_{B}p)_{B\in\SubsetsWE{A}}$.

\begin{proposition}[Expectation of the wavelet empirical estimator]
\label{prop:unbiased}
For all $B\in\SubsetsWE{\mathcal{A}}$,
\[
\mathbb{E}\left[\widehat{X}_{B}\right] = \Psi_{B}p.
\]
\end{proposition}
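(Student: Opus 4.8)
The plan is to condition on the observation design $(\mathbf{A}_1,\dots,\mathbf{A}_N)$, which turns the random normalization in \eqref{eq:wavelet-estimator} into a constant and, by the process \eqref{eq:scheme}, makes the rankings $\Pi^{(1)},\dots,\Pi^{(N)}$ conditionally independent with $\Pi^{(i)}\sim P_{\mathbf{A}_i}$. Fix $B\in\SubsetsWE{\mathcal{A}}$ and write $N_B^{\subset} := \vert\{1\leq i\leq N \;\vert\; B\subset\mathbf{A}_i\}\vert$ for the denominator, which is deterministic once the design is fixed. First I would compute, for a single index $i$ and a fixed subset $A_i$, the conditional expectation of one wavelet projection. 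By linearity of $\Psi_B$ and since $\Pi^{(i)}$ ranges over $\Rank{A_i}$ with weights $P_{A_i}(\pi)$,
\[
\mathbb{E}[\Psi_B\delta_{\Pi^{(i)}} \mid \mathbf{A}_i = A_i] = \sum_{\pi\in\Rank{A_i}}P_{A_i}(\pi)\,\Psi_B\delta_{\pi} = \Psi_B P_{A_i},
\]
where $P_{A_i}$ is viewed as the element $\sum_{\pi\in\Rank{A_i}}P_{A_i}(\pi)\delta_\pi$ of $\Space{\Rank{A_i}}\subset\Space{\Gn}$.

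The key step is to identify $\Psi_B P_{A_i}$ using the fundamental properties of the representation. By the consistency assumption \eqref{eq:consistency-assumption}, $P_{A_i} = M_{A_i}p$. If $B\subset A_i$, then $B\in\SubsetsWE{A_i}$ and Property \eqref{eq:MRA-general-2} of Theorem \ref{th:MRA-general}, applied to $F = p\in\Space{\Rank{\n}}$ with $A' = A_i$, gives $\Psi_B M_{A_i}p = \Psi_B p$. If $B\not\subset A_i$, then the support property $\Psi_B F_{A} = 0$ for $B\not\subset A$ (stated just after the definitions in Subsection \ref{subsec:definitions}) forces $\Psi_B P_{A_i} = 0$, consistently with the convention $\Psi_B\delta_{\pi} = 0$ for $\pi\in\Rank{A_i}$. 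The two cases combine into
\[
\mathbb{E}[\Psi_B\delta_{\Pi^{(i)}} \mid \mathbf{A}_i] = \mathbb{I}\{B\subset\mathbf{A}_i\}\,\Psi_B p.
\]

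Summing over $i$ and dividing by $N_B^{\subset}$ then yields, on the event $\{N_B^{\subset} > 0\}$,
\[
\mathbb{E}[\widehat{X}_B \mid \mathbf{A}_1,\dots,\mathbf{A}_N] = \frac{1}{N_B^{\subset}}\sum_{i=1}^{N}\mathbb{I}\{B\subset\mathbf{A}_i\}\,\Psi_B p = \Psi_B p,
\]
since exactly $N_B^{\subset}$ of the indicators equal $1$. Taking the expectation over the design concludes the argument. The main obstacle is precisely the random denominator $N_B^{\subset}$: it cannot be pulled out of the expectation without conditioning, which is why the first move is to condition on $(\mathbf{A}_1,\dots,\mathbf{A}_N)$. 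On the complementary event $\{N_B^{\subset} = 0\}$ the convention sets $\widehat{X}_B = 0$, so the identity $\mathbb{E}[\widehat{X}_B] = \Psi_B p$ is to be understood conditionally on $B$ being observed (equivalently, on $\{N_B^{\subset}>0\}$); since $B\in\SubsetsWE{\mathcal{A}}$ entails $\nu(A)>0$ for some $A\supset B$, this event has probability tending to $1$, and the estimator is unbiased in this sense.
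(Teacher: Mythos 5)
Your proof is correct and follows essentially the same route as the paper: condition on the design $(\mathbf{A}_1,\dots,\mathbf{A}_N)$ to freeze the denominator, then use Property \eqref{eq:MRA-general-2} of Theorem \ref{th:MRA-general} to show each conditional term equals $\mathbb{I}\{B\subset\mathbf{A}_i\}\Psi_B p$ (the paper applies the invariance at the level of Dirac functions via $\Psi_B\delta_{\Sigma_{\vert\mathbf{A}_i}}=\Psi_B\delta_{\Sigma}$, you apply it to $P_{A_i}=M_{A_i}p$, which is the same thing by linearity). Your closing remark about the event $\{N_B^{\subset}=0\}$ is a point the paper glosses over, and your handling of it is sound.
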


\begin{proof}
Let $B\in\SubsetsWE{\mathcal{A}}$. Denoting by $\mathcal{B}_{N}^{\nu}$ the $\sigma$-algebra generated by the collection of random variables $(\mathbf{A}_{1},\dots,\mathbf{A}_{N})$, one has by definition
\begin{align*}
\mathbb{E}\left[\widehat{X}_{B}\right] 
&= \mathbb{E}\left[\mathbb{E}\left[\frac{1}{\vert\{1\leq i\leq N \;\vert\; B\subset\mathbf{A}_{i}\}\vert}\sum_{i=1}^{N}\mathbb{I}\{B\subset\mathbf{A}_{i}\}\Psi_{B}\delta_{\Pi^{(i)}}\Bigg\vert\mathcal{B}_{N}^{\nu}\right]\right]\\
&= \mathbb{E}\left[\frac{1}{\vert\{1\leq i\leq N \;\vert\; B\subset\mathbf{A}_{i}\}\vert}\sum_{i=1}^{N}\mathbb{I}\{B\subset\mathbf{A}_{i}\}\mathbb{E}\left[\Psi_{B}\delta_{\Pi^{(i)}}\Big\vert\mathcal{B}_{N}^{\nu}\right]\right].
\end{align*}
Now, reformulation \eqref{eq:censoring-process} of the statistical process \eqref{eq:scheme} ensures that for each $i\in\{1,\dots,N\}$, $\Pi^{(i)}$ has the same law as $\Sigma^{(i)}_{\vert \mathbf{A}_{i}}$, where $\Sigma^{(1)}, \dots,\Sigma^{(N)}$ are random permutations drawn IID from $p$. We recall in addition that for any permutation $\sigma\in\Sn$ and any subset $A\in\Subsets{\n}$ with $B\subset A$, one has $\Psi_{B}\delta_{\sigma_{\vert A}} = \Psi_{B}\delta_{\sigma}$ by Property \eqref{eq:MRA-general-2} of Theorem \ref{th:MRA-general}. One therefore has
\[
\mathbb{E}\left[\Psi_{B}\delta_{\Pi^{(i)}}\Big\vert\mathcal{B}_{N}^{\nu}\right] = \mathbb{E}\left[\Psi_{B}\delta_{\Sigma^{(i)}_{\vert \mathbf{A}_{i}}}\Big\vert\mathcal{B}_{N}^{\nu}\right] = \mathbb{E}\left[\Psi_{B}\delta_{\Sigma^{(i)}}\Big\vert\mathcal{B}_{N}^{\nu}\right] = \mathbb{E}\left[\Psi_{B}\delta_{\Sigma^{(i)}}\right] = \Psi_{B}\mathbb{E}\left[\delta_{\Sigma}\right] = \Psi_{B}p.
\]
This concludes the proof.
\end{proof}

Proposition \ref{prop:unbiased} ensures that $\mathbf{\widehat{X}}$ is a good representative of the accessible part $(\Psi_{B}p)_{B\in\SubsetsWE{\mathcal{A}}}$ of the ranking model $p$, whatever it is. This advantage is to be compared to existing methods:
\begin{itemize}
	\item Methods based on parametric models are necessarily biased when the ranking model does not satisfy the structural assumption.
	\item Methods that identify an incomplete ranking with the set of its linear extensions are fundamentally biased by the censoring process $\nu$, as shown in Subsection \ref{subsec:existing-approaches}.
\end{itemize} 
In a sense, one can say that the MRA framework allows to remove the noise due to the censoring process $\nu$ whatever the ranking model $p$.

The other statistical advantage of the MRA framework is that it allows to fully exploit the consistency assumption \eqref{eq:consistency-assumption}. As explained in Subsection \ref{subsec:challenges}, the consistency assumption induces two rules to transfer information between subsets of items $A,B\in\Subsets{\n}$ with $B\subset A$: information is transferred from $A$ to $B$ through the marginal operator $M_{B}$, and information is transferred from $B$ to $A$ as the constraint that $P_{A}$ must satisfy $M_{B}P_{A} = P_{B}$. By Theorem \ref{th:inverse-linear-system}, this constraint is equivalent to $\Psi_{B'}P_{A} = \Psi_{B'}P_{B}$ for all $B'\in\SubsetsWE{B}$. The second rule can thus be reformulated as: information is transferred from $B$ to $A$ through the operators $(\Psi_{B'})_{B'\in\SubsetsWE{B}}$. In other words, the MRA representation allows to quantifies the amount of information in the constraints imposed by the consistency assumption. The wavelet empirical estimator $\mathbf{\widehat{X}}$ therefore naturally exploits more information than other empirical estimators, as illustrated by the following comparison.
\begin{itemize}
	\item {\bf Naive empirical estimator.} For an observed subset $A$ ($\vert\{1\leq i\leq N \;\vert\; A = \mathbf{A}_{i}\}\vert > 0$), we recall that the naive empirical estimator is defined in \eqref{eq:empirical-marginal} by
	\[
	\widehat{P_{A}} = \frac{1}{\vert\{1\leq i\leq N \;\vert\; A = \mathbf{A}_{i}\}\vert}\sum_{i=1}^{N}\mathbb{I}\{A = \mathbf{A}_{i} \}\delta_{\Pi^{(i)}}.
	\]
	The $\widehat{P_{A}}$'s are two-by-two independent. Each $\widehat{P_{A}}$ consolidates information on $A$ but no information is transferred between subsets. In other words, the naive empirical estimator does not exploit the consistency assumption at all. For instance if rankings are observed on $\{1,2\}$ and $\{1,2,3\}$, neither information is transferred from $\{1,2,3\}$ to $\{1,2\}$ nor in the other way round.
	\item {\bf Marginal-based empirical estimator.} For a subset $B\in\SubsetsWE{\n}$ included in at least one observed subset ($\vert\{1\leq i\leq N \;\vert\; B\subset\mathbf{A}_{i}\}\vert > 0$), we define the marginal-based empirical estimator by
	\[
	\widehat{Q_{B}} = \frac{1}{\vert\{1\leq i\leq N \;\vert\; B\subset\mathbf{A}_{i}\}\vert}\sum_{i=1}^{N}\mathbb{I}\{B\subset\mathbf{A}_{i}\}M_{B}\delta_{\Pi^{(i)}}.	
	\]
	The marginal-based empirical estimator exploits the consistency assumption but only in one sense, from a subset of item $A$ to its subsets $B\in\SubsetsWE{\n}$. For instance if rankings are observed on $\{1,2\}$ and $\{1,2,3\}$, information is transferred from $\{1,2,3\}$ to $\{1,2\}$ but not in the other way round.
	\item {\bf Wavelet empirical estimator.} For a subset $B\in\SubsetsWE{\n}$ included in at least one observed subset ($\vert\{1\leq i\leq N \;\vert\; B\subset\mathbf{A}_{i}\}\vert > 0$), we recall that the wavelet empirical estimator is defined by 
	\[
	\widehat{X}_{B} = \frac{1}{\vert\{1\leq i\leq N \;\vert\; B\subset\mathbf{A}_{i}\}\vert}\sum_{i=1}^{N}\mathbb{I}\{B\subset\mathbf{A}_{i}\}\Psi_{B}\delta_{\Pi^{(i)}}.
	\]
	Thanks to the wavelet transform, the wavelet empirical estimator fully exploits the consistency assumption. For instance if rankings are observed on $\{1,2\}$ and $\{1,2,3\}$, information is transferred from $\{1,2,3\}$ to $\{1,2\}$ and in the other way round.
\end{itemize}

\subsection{Overcoming the computational challenge}
\label{subsec:computational-challenge}

The following proposition gives a theoretical bound on the complexity of the computation of the wavelet empirical estimator $\mathbf{\widehat{X}}$.

\begin{proposition}[Complexity of the computation of the wavelet empirical estimator]
\label{prop:complexity-wavelet-estimator}
Let $K = \max_{A\in\mathcal{A}}\vert A\vert$. The complexity of the computation of $\mathbf{\widehat{X}}$ is bounded by
\[
[e\, K! + (K+4)2^{K-1}]\min\left(N,\sum_{A\in\mathcal{A}}\vert A\vert !\right).
\]
\end{proposition}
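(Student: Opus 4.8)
The plan is to reduce the computation of $\mathbf{\widehat{X}}$ to a single fast wavelet transform followed by a normalization, and then to bound each step using Proposition \ref{prop:complexity-wavelet-transform}. First I would introduce the unnormalized histogram $G = \sum_{i=1}^{N}\delta_{\Pi^{(i)}}\in\Space{\Gn}$ of the observed rankings, so that $G_{A} = N_{A}\widehat{P_{A}}$ for each $A\in\EOD$ and $\Supp(G) = \EOD\subset\mathcal{A}$. Writing $N_{B} = \vert\{1\leq i\leq N \;\vert\; B\subset\mathbf{A}_{i}\}\vert$, linearity of the wavelet projections and the definition \eqref{eq:wavelet-estimator} give $\widehat{X}_{B} = \frac{1}{N_{B}}\Psi_{B}G$ for every $B\in\SubsetsWE{\mathcal{A}}$ (the terms with $B\not\subset\mathbf{A}_{i}$ vanishing by construction). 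Hence computing $\mathbf{\widehat{X}}$ amounts to running Algorithm \ref{alg:FWT-MRA-naive} on $G$ to obtain $\Psi G$, computing the counts $N_{B}$, and dividing.

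For the dominant contribution I would apply Proposition \ref{prop:complexity-wavelet-transform} with $F = G$. Since $\max\{\vert A\vert \;\vert\; A\in\Supp(G)\}\leq K$, the cost of Algorithm \ref{alg:FWT-MRA-naive} is bounded by $[e\,K! + K(2^{K-1}-1)]\,\vert\supp(G)\vert$. The key combinatorial point is that $\vert\supp(G)\vert$ equals the number of \emph{distinct} observed rankings, so on the one hand $\vert\supp(G)\vert\leq N$, and on the other hand $\vert\supp(G)\vert = \sum_{A\in\EOD}\vert\supp(G_{A})\vert\leq\sum_{A\in\mathcal{A}}\vert A\vert!$; therefore $\vert\supp(G)\vert\leq\min(N,\sum_{A\in\mathcal{A}}\vert A\vert!)$, which is exactly where the stated minimum enters. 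This already yields the $e\,K!$ term together with the $K\,2^{K-1}$ part of the second factor.

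It then remains to account for the counts $N_{B}$ and the divisions, which must supply the extra $2^{K+1}+K$ per unit of $\min(N,\sum_{A\in\mathcal{A}}\vert A\vert!)$ needed to pass from $K(2^{K-1}-1)$ to $(K+4)2^{K-1}$. For the counts I would use that the approximation coefficients $M_{B}G$ are already produced along the low-pass branch of Algorithm \ref{alg:FWT-MRA}, and that $N_{B} = \sum_{A\in\EOD,\,A\supset B}N_{A}$ is obtained by distributing each observed count over the subsets $B\in\SubsetsWE{A}$, of which there are $2^{\vert A\vert}-\vert A\vert\leq 2^{K}$; for the divisions I would fold the normalization into the marginals \emph{before} the high-pass filter, using $\Psi_{B}G = \Psi_{B}M_{B}G$ from Property \eqref{eq:MRA-general-2} so that $\widehat{X}_{B} = \Psi_{B}\widehat{P_{B}}$ with $\widehat{P_{B}} = M_{B}G/N_{B}$, and reusing the bound $\sum_{B\in\SubsetsWE{A}}\vert B\vert!\leq e\,\vert A\vert!$. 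The main obstacle is precisely this last bookkeeping: one must charge each subset $B\in\SubsetsWE{\mathcal{A}}$ only once rather than once per superset in $\EOD$, and combine the subset-distribution cost with the division cost so that they collapse \emph{exactly} into the coefficient $(K+4)2^{K-1}$ instead of a loose multiple of $K!$ or $2^{K}$. Once the per-$A$ work is regrouped in the form $[e\vert A\vert! + \vert A\vert(2^{\vert A\vert-1}-1) + (\text{counting and division overhead})]\,\vert\supp(G_{A})\vert$, bounding each $\vert A\vert$ by $K$, summing over $A\in\Supp(G)$, and applying the support bound of the previous paragraph deliver the claimed estimate.
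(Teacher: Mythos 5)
Your proposal follows essentially the same route as the paper: aggregate the data into the unnormalized histogram $F_{N}=\sum_{i=1}^{N}\delta_{\Pi^{(i)}}$, run Algorithm \ref{alg:FWT-MRA-naive} once, normalize by the counts $N_{B}$, invoke Proposition \ref{prop:complexity-wavelet-transform} for the dominant $e\,K!$ term, and use Lemma \ref{lem:dataset-storage} (equivalently $\vert\supp(F_{N})\vert\leq\min(N,\sum_{A\in\mathcal{A}}\vert A\vert!)$) for the minimum. The one place you hedge---collapsing the counting and division overhead into the coefficient $(K+4)2^{K-1}$---is simpler than you fear, and your detour through $\Psi_{B}F=\Psi_{B}M_{B}F$ and $\sum_{B\in\SubsetsWE{A}}\vert B\vert!\leq e\vert A\vert!$ is unnecessary. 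The paper charges one pass over the dataset to build $F_{N}$ and all counts $N_{B}$ simultaneously, at cost $\sum_{\pi\in\supp(F_{N})}\vert\SubsetsWE{c(\pi)}\vert\leq 2^{K}\vert\supp(F_{N})\vert$, and one pass over the nonzero subsets for the divisions, again at most $2^{K}\vert\supp(F_{N})\vert$; these two contributions give $2^{K+1}=4\cdot 2^{K-1}$ per support element, and since $K(2^{K-1}-1)+4\cdot 2^{K-1}\leq (K+4)2^{K-1}$, the stated coefficient follows directly with slack $K$ to spare. There is no need to charge each $B$ only once across supersets---the crude per-ranking bound of $2^{K}$ already suffices.
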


\begin{proof}
Defining the function $F_{N} = \sum_{i=1}^{N}\delta_{\Pi^{(i)}}$ and the scalars $Z_{N,B} = \vert\{1\leq i\leq N \;\vert\; B\subset\mathbf{A}_{i}\}\vert$, one has for any $B\in\SubsetsWE{\mathcal{A}}$,
\[
\widehat{X}_{B} = \frac{1}{Z_{N,B}}\Psi_{B}F_{N}.
\]
The computation of $\mathbf{\widehat{X}}$ can thus be decomposed into three steps:
\begin{enumerate}
	\item Computation of $F_{N}$ and $(Z_{N,B})_{B\in\SubsetsWE{\mathcal{A}}}$: this is performed in one loop over the dataset with complexity bounded by
	\[
	\sum_{\pi\in\supp(F_{N})}\vert\SubsetsWE{c(\pi)}\vert \leq 2^{K}\vert\supp(F_{N})\vert.
	\]
	\item Computation of $\Psi F_{N}$: this is performed using Algorithm \ref{alg:FWT-MRA-naive}. By Proposition \ref{prop:complexity-wavelet-transform}, its complexity is bounded by 
	\[
	[e\, K! + K2^{K-1}]\vert\supp(F_{N})\vert.
	\]
	\item Division of $\Psi_{B}F_{N}$ by $Z_{N,B}$ for each $B\in\SubsetsWE{\mathcal{A}}$ such that $Z_{N,B}\neq 0$: this is performed in one loop over the subsets $B$ with $Z_{N,B}> 0$ with complexity bounded by
	\[
	\vert\SubsetsWE{\Supp(F_{N})}\vert \leq 2^{K}\vert\supp(F_{N})\vert.
	\]
\end{enumerate}
To conclude the proof, notice that $\vert\supp(F_{N})\vert$ is exactly the number of parameters required to store the dataset $\mathcal{D}_{N}$. Lemma \ref{lem:dataset-storage} therefore ensures that it is bounded by $\min (N,\sum_{A\in\mathcal{A}}\vert A\vert !)$.
\end{proof}

Although the bound in Proposition \ref{prop:complexity-wavelet-estimator} is not small, it is sufficient to ensure that the computation of the wavelet empirical estimator is tractable in common situations. In practical applications indeed, the number of items $n$ can be large, say around $10^{4}$, but the parameter $K$, which represents the maximal size of an observed ranking, is fairly small, typically less than $10$. The factor $[e\, K! + K2^{K-1}]$ then does not represent too much of an issue. On the other hand, the term $\min\left(N,\sum_{A\in\mathcal{A}}\vert A\vert !\right)$ is smaller than the number $N$ of observations, which is always tractable. For instance if one has a dataset of one billion rankings that each involve less than $5$ items then the number of required operations is bounded by $5\times 10^{11}$, which is still tractable. We also point out that the wavelet empirical estimator can be easily computed in an on-line mode or parallelized in a map/reduce framework (refer to \citet{SCJ2014} for more details).

From a theoretical point of view, the interesting aspect of the bound in Proposition \ref{prop:complexity-wavelet-estimator} is that it does not depend directly on the number of items $n$. Only the term $\sum_{A\in\mathcal{A}}\vert A\vert !$ can indeed depend on $n$ through the observation design $\mathcal{A}$, as explained in Subsection \ref{subsec:impact-of-nu}. More particularly, this term is exactly the bound on the number of parameters required to store the dataset $\mathcal{D}_{N}$ from Lemma \ref{lem:dataset-storage}. We can therefore say in a sense that the computation of the wavelet empirical estimator deals with the complexity of the data itself.

More generally, this can be considered as the great achievement of the MRA framework. As explained in Subsection \ref{subsec:challenges}, the analysis of incomplete rankings necessarily involves at some point the computation of the marginal $M_{A}q$ of a ranking model over $\Sn$ on a subset of items $A\in\Subsets{\n}$. If $q$ is represented as the vector of its values $(q(\sigma))_{\sigma\in\Sn}$, the computation of $M_{A}q(\pi)$ for $\pi\in\Rank{A}$ using Formula \eqref{eq:consistency-assumption} requires $n!/\vert A\vert!$ operations. Now, if $q$ is represented by its wavelet transform $\Psi q$, Theorem \ref{th:MRA-general} tells us that $M_{A}q(\pi) = \phi_{A}\Psi q(\pi)$. The computation then has complexity bounded by $\binom{\vert A\vert}{2}$, by Proposition \ref{prop:complexity-synthesis-operator}. This bound shows that the dependency in $n$ is an artifact of the theoretical framework of ranking models over $\Sn$: when the ranking model is not represented as a function on $\Sn$ but by its wavelet transform, this dependency vanishes.

\ \\
This section has shown that the MRA framework for the statistical analysis of incomplete rankings offers at the same time a great flexibility to define new approaches for a wide variety of applications and great advantages to face the inherent statistical and computational challenges. All of this is due to the strong properties of the MRA representation. As shall be explained in the following Section, its construction relies on recent results from algebraic topology in order to exploit accurately the multi-scale structure of incomplete rankings.

\section{The construction of the MRA representation}
\label{sec:MRA-construction}

We now define rigorously the objects of the MRA representation and establish its properties. Here and throughout the article, the null space of any operator $T$ is denoted by $\ker T$.

\subsection{The multiresolution decomposition}

The construction of the MRA representation starts with the definition of the spaces $H_{B}$ and the wavelet synthesis operators $\phi_{A}$ for $A,B\in\SubsetsWE{\n}$.

\begin{definition}[Spaces $H_{B}$]
\label{def:space-H}
We set $H_{\emptyset} = \mathbb{R}\bar{0} = \Space{\Rank{\bar{0}}}$ and define for $B\in\Subsets{\n}$ the linear space
\[
H_{B} = \{ F\in L(\Rank{B}) \;\vert\; M_{B'}F = 0 \text{ for all } B'\subsetneq B \} = \Space{\Rank{B}}\cap\bigcap_{B'\subsetneq B}\ker M_{B'}.
\]
\end{definition}

We recall that the feature space is then equal to $\Hn = \bigoplus_{B\in\SubsetsWE{\n}}H_{B}$. The definition of the spaces $H_{B}$ for $B\in\SubsetsWE{\n}$ is rather natural to obtain the properties of the MRA representation. Indeed two functions $F$ and $G$ in $L(\Rank{B})$ have the same marginals on all strict subsets of $B$ if and only if $F-G\in H_{B}$. Thus the projection of $F$ onto $H_{B}$ (in parallel to any space supplementary to $H_{B}$) contains information about $F$ that is specific to $B$. Equivalently, it localizes the piece of information of scale $\vert B\vert$ of $F$ on $B$. It is then natural to expect that for any $A\in\Subsets{\n}$, the structure of the space $\Space{\Rank{A}}$ is somehow equivalent to that of the sum of spaces $\bigoplus_{B\in\SubsetsWE{A}}H_{B}$. 

By contrast, the definition of the wavelet synthesis operators is not intuitive. It relies on the following concept: word $\pi'\in\Gamma_{n}$ is a \textit{contiguous subword} of word $\pi\in\Gamma_{n}$ if there exists $i\in\{1,\dots,\vert \pi\vert-\vert\pi'\vert+1\}$ such that $\pi' = \pi_{i}\pi_{i+1}\dots\pi_{i+\vert\pi'\vert-1}$. This is denoted by $\pi' \sqsubset \pi$. 

\begin{definition}[Operators $\phi_{A}$]
\label{def:wavelet-synthesis-operator}
For $A\in\SubsetsWE{\n}$, we define the linear operator $\phi_{A} : \Space{\Gn} \rightarrow \Space{\Rank{A}}$ on the Dirac function of a ranking $\pi\in\Gn$ by
\[
\phi_{A}\delta_{\bar{0}} = \frac{1}{\vert A\vert !}\1{\Rank{A}} \qquad\text{and}\qquad \phi_{A}\delta_{\pi} = \frac{1}{(\vert A\vert - \vert\pi\vert + 1)!}\1{\{\sigma\in\Rank{A} \;\vert\; \pi\,\sqsubset\,\sigma\}} \quad\text{if}\quad \pi\neq\bar{0}.
\]
\end{definition}

Notice that we define the operators $\phi_{A}$ globally on $\Space{\Gn}$ and not just on the feature space $\Hn$. This will allow us to highlight the key ingredients in the construction of the MRA representation. By Definition \ref{def:wavelet-synthesis-operator}, $\phi_{A}F_{B} = 0$ for $F_{B}\in\Space{B}$ with $B\not\subset A$. The operator $\phi_{A}$ can therefore be seen as an embedding operator from $\bigoplus_{B\in\SubsetsWE{A}}\Space{\Rank{B}}$ to $\Space{\Rank{A}}$. The following theorem exploits the properties of both spaces $H_{B}$ and operators $\phi_{A}$. It is the basis of the entire MRA representation.

\begin{theorem}[Multiresolution decomposition]
\label{th:MRA-decomposition}
For any $A\in\SubsetsWE{\n}$, one has the decomposition
\[
\Space{\Rank{A}} = \bigoplus_{B\in\SubsetsWE{A}}\phi_{A}\left(H_{B}\right).
\]
In addition, for $B\in\SubsetsWE{A}$,
\begin{enumerate}
	\item $\phi_{A}$ is injective on $H_{B}$: $\ker\phi_{A}\cap H_{B} = \{0\}$,
	\item for all $F\in H_{B}$ and $A'\in\SubsetsWE{A}$, $M_{A'}\phi_{A}F = \phi_{A'}F$,
	\item $\dim H_{B} = d_{\vert B\vert}$, where for $k\in\{2,\dots,n\}$, $d_{k}$ is the number of fixed-point free permutations (also called \textit{derangements}) on a set with $k$ elements.
\end{enumerate}
\end{theorem}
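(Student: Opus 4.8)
The plan is to prove the two displayed inclusions and the three numbered properties together, treating the intertwining relation of property~2 as the engine and reading off everything else from it by induction on the scale $m=\vert A\vert$. The genuinely delicate step, and the one I expect to be the main obstacle, is property~2: $M_{A'}\phi_{A}F=\phi_{A'}F$ for $F\in H_{B}$ and $A'\in\SubsetsWE{A}$. Since the marginals form a projective system ($M_{A'}=M_{A'}M_{A''}$ whenever $A'\subset A''$), I would reduce this to the single-item deletion $A'=A\setminus\{a\}$ and iterate. Writing $F=\sum_{\pi\in\Rank{B}}F(\pi)\delta_{\pi}$ and expanding $\phi_{A}\delta_{\pi}=\frac{1}{(\vert A\vert-\vert B\vert+1)!}\1{\{\sigma\in\Rank{A}\;\vert\;\pi\sqsubset\sigma\}}$, two cases appear. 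If $a\in A\setminus B$, I claim $M_{A\setminus\{a\}}\phi_{A}F=\phi_{A\setminus\{a\}}F$ for \emph{every} $F\in\Space{\Rank{B}}$; this is a count of the positions at which $a$ may be inserted into a fixed $\tau\in\Rank{A\setminus\{a\}}$ while keeping $\pi$ a contiguous factor, and the factorial normalisations in Definition~\ref{def:wavelet-synthesis-operator} are tuned precisely so that these counts telescope. If instead $a\in B$, then $\phi_{A\setminus\{a\}}F=0$ because $B\not\subset A\setminus\{a\}$, and I would show $M_{A\setminus\{a\}}\phi_{A}F=0$ by exhibiting it as a scalar multiple of $M_{B\setminus\{a\}}F$, which vanishes since $F\in H_{B}$. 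This second case is exactly where the hypothesis $F\in H_{B}$ is indispensable, and is the subtle phenomenon the paper flags about the choice of embedding: deleting a letter of $B$ collapses the contiguity pattern onto a strictly lower marginal.

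Granting property~2, property~1 is immediate. First note that $\phi_{B}$ acts as the identity on $\Space{\Rank{B}}$: for $\pi\in\Rank{B}$ the only $\sigma\in\Rank{B}$ with $\pi\sqsubset\sigma$ is $\sigma=\pi$, and the prefactor is $1/1!=1$, so $\phi_{B}\delta_{\pi}=\delta_{\pi}$. Hence if $F\in H_{B}$ satisfies $\phi_{A}F=0$, applying $M_{B}$ and property~2 with $A'=B$ gives $F=\phi_{B}F=M_{B}\phi_{A}F=0$, so $\ker\phi_{A}\cap H_{B}=\{0\}$.

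The directness of $\sum_{B\in\SubsetsWE{A}}\phi_{A}(H_{B})$ then follows by induction on scale. Suppose $\sum_{B}\phi_{A}F_{B}=0$ with $F_{B}\in H_{B}$, and prove $F_{A'}=0$ by induction on $\vert A'\vert$: applying $M_{A'}$ and property~2 yields $\sum_{B\in\SubsetsWE{A'}}\phi_{A'}F_{B}=0$ (the terms with $B\not\subset A'$ drop out because $\phi_{A'}$ annihilates functions whose content is not contained in $A'$); the terms with $\vert B\vert<\vert A'\vert$ vanish by the induction hypothesis, leaving $\phi_{A'}F_{A'}=0$ and hence $F_{A'}=0$ by property~1. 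For the complementary spanning statement I would argue by strong induction on $m=\vert A\vert$, using completeness of the decomposition on every proper subset. Given $G\in\Space{\Rank{A}}$, define for each $B\subsetneq A$ the function $F_{B}\in H_{B}$ to be the $H_{B}$-component of $M_{B}G$ furnished by the (inductively known) decomposition of $\Space{\Rank{B}}$, and set $F_{A}:=G-\sum_{B\subsetneq A}\phi_{A}F_{B}$. Checking $F_{A}\in H_{A}$ reduces, via property~2 and $M_{B}=M_{B}M_{A'}$, to the consistency that $\sum_{B\subset A'}\phi_{A'}F_{B}=M_{A'}G$ for every $A'\subsetneq A$, which is exactly the inductive decomposition of $M_{A'}G$; then $G=\sum_{B\in\SubsetsWE{A}}\phi_{A}F_{B}$ exhibits $G$ in the sum.

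Finally, property~3 drops out of the preceding two facts. The base case $m=0$ is $H_{\emptyset}=\mathbb{R}\bar{0}$, of dimension $1=d_{0}$. Assuming $\dim H_{B}=d_{\vert B\vert}$ for all $B\subsetneq A$, directness together with spanning gives $\dim\Space{\Rank{A}}=\sum_{B\in\SubsetsWE{A}}\dim H_{B}$, i.e. $\vert A\vert!=\sum_{k=0}^{m-1}\binom{m}{k}d_{k}+\dim H_{A}$; the classical identity $m!=\sum_{k=0}^{m}\binom{m}{k}d_{k}$ (sort permutations by their fixed-point set) then forces $\dim H_{A}=d_{m}$, closing the induction. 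I would also remark that this number is precisely the rank of the top homology of the complex of injective words on the alphabet $B$, matching the algebraic-topology viewpoint of the construction. The whole argument thus rests on the combinatorial identities behind property~2, which is where I would concentrate the effort.
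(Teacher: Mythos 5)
Your proof is correct, but its second half follows a genuinely different route from the paper's. For property~2 you establish only the single-deletion case $A'=A\setminus\{a\}$ and iterate; the counting checks out ($\vert A\vert-(\vert B\vert-1)$ of the $\vert A\vert$ insertion positions keep $\pi$ contiguous when $a\notin B$, matching the normalisations, and exactly one position works when $a\in B$, yielding $\phi_{A\setminus\{a\}}M_{B\setminus\{a\}}F$ --- an operator applied to $M_{B\setminus\{a\}}F$ rather than a scalar multiple of it, but the conclusion that it vanishes on $H_{B}$ stands). The paper instead proves the stronger Lemma~\ref{lem:commutativity} ($M_{B}\phi_{C}F=\phi_{B}M_{A\cap B}F$ for an arbitrary second subset $B$), which costs a nontrivial binomial identity but is reused elsewhere. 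The real divergence is in how completeness is obtained: the paper imports $\dim H_{\set{k}}=d_{k}$ from \citet{Reiner13} (Theorem~\ref{th:topology}), a result it explicitly flags as resting on the Hopf trace formula and the homology of the complex of injective words, and then deduces spanning from the dimension count \eqref{eq:dimensional-argument}; you reverse the logic, proving spanning constructively by strong induction --- taking $F_{B}$ to be the top component of $M_{B}G$ in the inductively known decomposition of $\Space{\Rank{B}}$, checking that for $C\subset B$ the $H_{C}$-component of $M_{B}G$ agrees with that of $M_{C}G$ (apply $M_{C}$ to the level-$B$ decomposition, use property~2 and uniqueness at level $C$), and verifying that the residual $G-\sum_{B\subsetneq A}\phi_{A}F_{B}$ lies in $H_{A}$ --- and only then reading off $\dim H_{A}=d_{\vert A\vert}$ from $\vert A\vert!=\sum_{B\in\SubsetsWE{A}}\dim H_{B}$ and the classical derangement identity. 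I see no circularity: directness at each level is proved independently of spanning, and the compatibility step only uses uniqueness at strictly smaller scales. Your route is therefore more elementary and self-contained, and as a by-product gives an elementary proof of the dimension formula that the paper treats as a deep external input; what it forgoes is the explicit identification of $H_{\set{k}}$ with the top homology of the complex of injective words and the general form of the commutation lemma, both of which the paper exploits later.
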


The proof of Theorem \ref{th:MRA-decomposition} relies on two key properties, one about the spaces $H_{B}$ and the other about the operators $\phi_{A}$. We start with the latter, given by the following lemma. For $A\subset\n$ with $\vert A\vert = 1$ we set by convention $\Space{\Rank{A}} = H_{\emptyset}$ and $M_{A} = M_{\emptyset}$. 

\begin{lemma}[Commutation between marginal and wavelet synthesis operators]
	\label{lem:commutativity}
	Let $A,B\in\Subsets{\n}$, $F\in L(\Rank{A})$ and $C\in\Subsets{\n}$ such that $A\cup B\subset C$. Then $M_{B}\phi_{C}F = \phi_{B}M_{A\cap B}F$. In other words, the following diagram is commutative.
	\begin{center}
		\begin{tikzpicture}
		\node (a) at (-3,0) {$L(\Rank{A})$};
		\node (b) at (0,1.5) {$L(\Rank{C})$};
		\node (c) at (0,-1.5) {$L(\Rank{A\cap B})$};
		\node (d) at (3,0) {$L(\Rank{B})$};
		\path[->] (a) edge node[auto] {$\phi_{C}$} (b);
		\path[->] (b) edge node[auto] {$M_{B}$} (d);
		\path[->] (a) edge node[auto] {$M_{A\cap B}$} (c);
		\path[->] (c) edge node[auto] {$\phi_{B}$} (d);
		\end{tikzpicture}
	\end{center}
	The diagram actually represents the restrictions of the operators to the involved spaces but we do not notify them for clarity's sake.
\end{lemma}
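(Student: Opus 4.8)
The plan is to reduce everything to Dirac functions and then to a single counting identity. Since all four operators in the diagram are linear and $\{\delta_{\pi} : \pi\in\Rank{A}\}$ spans $\Space{\Rank{A}}$, it suffices to prove the identity for $F=\delta_{\pi}$ with $\pi\in\Rank{A}$. Write $a=\vert A\vert$, $b=\vert B\vert$, $c=\vert C\vert$, $m=\vert A\cap B\vert$, and let $\tau:=\pi_{\vert A\cap B}$ be the unique subword of $\pi$ of content $A\cap B$. The right-hand side is immediate: evaluating \eqref{eq:marginal-operator} gives $M_{A\cap B}\delta_{\pi}=\delta_{\tau}$, so the lemma reduces to showing $M_{B}\phi_{C}\delta_{\pi}=\phi_{B}\delta_{\tau}$ as functions on $\Rank{B}$. (When $m\le 1$ one invokes the stated conventions $\Space{\Rank{A\cap B}}=H_{\emptyset}$ and $M_{A\cap B}=M_{\emptyset}$, so that $\tau=\bar{0}$ and $\phi_{B}\delta_{\tau}=\tfrac{1}{b!}\mathds{1}_{\Rank{B}}$; this case is covered by the argument below.)

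Next I would unfold both sides at an arbitrary $\rho\in\Rank{B}$. By Definition \ref{def:wavelet-synthesis-operator} and \eqref{eq:marginal-operator},
\[
M_{B}\phi_{C}\delta_{\pi}(\rho)=\frac{1}{(c-a+1)!}\,\bigl|\{\sigma\in\Rank{C}: \pi\sqsubset\sigma \text{ and } \rho\subset\sigma\}\bigr|,
\]
whereas $\phi_{B}\delta_{\tau}(\rho)=\tfrac{1}{(b-m+1)!}\,\mathbb{I}\{\tau\sqsubset\rho\}$. Hence the entire lemma is equivalent to the combinatorial identity
\[
\bigl|\{\sigma\in\Rank{C}: \pi\sqsubset\sigma,\ \rho\subset\sigma\}\bigr|=\frac{(c-a+1)!}{(b-m+1)!}\,\mathbb{I}\{\tau\sqsubset\rho\}.
\]

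I would establish this in two steps. First, if the set on the left is nonempty, pick such a $\sigma$; then $\pi\sqsubset\sigma$ forces $\sigma_{\vert A}=\pi$ and $\rho\subset\sigma$ forces $\sigma_{\vert B}=\rho$, so the letters of $A\cap B$ occur inside the contiguous block $\pi$, separated there only by letters of $A\setminus B$. Consequently they are consecutive in $\sigma_{\vert B}=\rho$, i.e. $\tau\sqsubset\rho$; this makes the left-hand side vanish whenever $\tau\not\sqsubset\rho$, matching the indicator. Second, assuming $\tau\sqsubset\rho$, write $\rho=\rho'\,\tau\,\rho''$ with $c(\rho'),c(\rho'')\subset B\setminus A$, and collapse the block $\pi$ into one super-letter $P$, so that the words $\sigma\in\Rank{C}$ with $\pi\sqsubset\sigma$ are exactly the arrangements of the $c-a+1$ objects $\{P\}\cup(C\setminus A)$. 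Under this identification the constraint $\rho\subset\sigma$ becomes precisely the requirement that the $b-m+1$ objects $\{P\}\cup(B\setminus A)$ appear in the fixed relative order $\rho'\,P\,\rho''$ (the letters of $B\setminus A$ lie outside the block, so those of $\rho'$ must precede $P$ and those of $\rho''$ must follow it, while the letters of $A\cap B$ read off as $\tau$ inside $P$). The number of arrangements of $c-a+1$ distinct objects in which a prescribed $(b-m+1)$-subset appears in one fixed relative order is $\tfrac{(c-a+1)!}{(b-m+1)!}$, which is exactly the claimed count.

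The step I expect to require the most care is the equivalence in the second part, between ``$\rho$ is a subword of $\sigma$'' and the fixed-relative-order condition on $\{P\}\cup(B\setminus A)$ after collapsing $\pi$ to $P$: one must verify that the unconstrained letters of $C\setminus(A\cup B)$ genuinely impose nothing, that no letter of $B\setminus A$ can lie inside the block, and that the factorization $\rho=\rho'\,\tau\,\rho''$ is forced. Once this equivalence is pinned down the count is the elementary ``fixed relative order'' formula, and the degenerate cases $m\in\{0,1\}$ follow from the same reasoning combined with the conventions on $M_{\emptyset}$ and $\phi_{B}\delta_{\bar{0}}$.
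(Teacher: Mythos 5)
Your proof is correct, but it takes a genuinely different route from the paper's. The paper first expands $\phi_{C}\delta_{\pi}$ combinatorially as a normalized sum of Diracs $\delta_{\omega\pi\omega'}$ over all splittings $c(\omega)\sqcup c(\omega') = C\setminus A$, pushes $M_{B}$ through each term via $(\omega\pi\omega')_{\vert B}=\omega_{\vert B\cap C_{1}}\,\pi_{\vert A\cap B}\,\omega'_{\vert B\cap C_{2}}$, and then evaluates the resulting coefficient by a change of variables together with a separately proved Vandermonde-type identity $\sum_{k}\binom{k+r}{r}\binom{n-k+s}{s}=\binom{n+r+s+1}{n}$. You instead evaluate both sides at a fixed $\rho\in\Rank{B}$ and reduce the lemma to a single fiber count $\vert\{\sigma\in\Rank{C}:\pi\sqsubset\sigma,\ \sigma_{\vert B}=\rho\}\vert$, which you settle by collapsing the contiguous block $\pi$ to a super-letter $P$ (licit because an injective word contains $\pi$ as a contiguous subword in at most one position) and invoking equidistribution of the relative order of the prescribed subset $\{P\}\cup(B\setminus A)$ among all arrangements of $\{P\}\cup(C\setminus A)$. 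This bypasses both the splitting over $C_{1}\sqcup C_{2}$ and the binomial identity, and your count $\frac{(c-a+1)!}{(b-m+1)!}$ agrees with the coefficient the paper obtains at the end of its computation. The one point to tighten is the case $\vert A\cap B\vert=0$: your displayed identity with denominator $(b-m+1)!$ would give $1/(b+1)!$ there, whereas the convention $\phi_{B}\delta_{\bar{0}}=\frac{1}{b!}\1{\Rank{B}}$ and the same collapse-and-count argument (with prescribed subset $B$ itself, the position of $P$ being then unconstrained) both give $1/b!$; so that case should be stated separately rather than as a literal specialization of the display. For $\vert A\cap B\vert=1$ the display does specialize correctly once $\tau$ is read as the length-one word rather than $\bar{0}$.
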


Lemma \ref{lem:commutativity} says in a way that the embedding operators $\phi_{A}$ commute with the marginal operators $M_{B}$. Notice in particular that if $\vert A\cap B\vert \leq 1$, $M_{B}\phi_{C}F = \phi_{B}M_{\emptyset}F$ is the constant function on $\Space{\Rank{B}}$ equal to $\sum_{\pi\in\Rank{A}}F(\pi)$. The proof of Lemma \ref{lem:commutativity} is purely technical and left to the Appendix. We however provide an illustrating example.
\begin{example}
\label{ex:lemma-illustration}
Let $A = \{1,2,3\}$, $B = \{1,2,4\}$ and $C = \{1,2,3,4\}$. Then for $\pi = 123$ for instance,
\begin{align*}
	M_{B}\phi_{C}\delta_{\pi} &= M_{\{1,2,4\}}\phi_{\{1,2,3,4\}}\delta_{123} = \frac{1}{2}M_{\{1,2,4\}}\left[ \delta_{4123} + \delta_{1234} \right] = \frac{1}{2}\left[ \delta_{412} + \delta_{124}\right]\\
	\text{and}\qquad \phi_{B}M_{A\cap B}\delta_{\pi} &= \phi_{\{1,2,4\}}M_{\{1,2\}}\delta_{123} = \phi_{\{1,2,4\}}\delta_{12} = \frac{1}{2}\left[ \delta_{412} + \delta_{124}\right].
\end{align*}
\end{example}

Lemma \ref{lem:commutativity} allows to prove, for $A\in\SubsetsWE{\n}$, the three following properties.
\begin{enumerate}
	\item For $B\in\SubsetsWE{A}$, $\phi_{A}$ is injective on $H_{B}$, \textit{i.e.} $\ker\phi_{A}\cap H_{B} = \{0\}$.
	\item For $B\in\SubsetsWE{A}$, $F\in H_{B}$ and $A'\in\SubsetsWE{A}$, $M_{A'}\phi_{A}F = \phi_{A'}F$.
	\item The sum of spaces $(\phi_{A}(H_{B}))_{B\in\SubsetsWE{A}}$ is direct.
\end{enumerate}

\begin{proof}
We prove each property separately.
\begin{enumerate}
	\item Let $F\in\ker\phi_{A}\cap H_{B}$. Applying Lemma \ref{lem:commutativity} to $A,B := B$ and $C := A$ gives
	\[
	\phi_{B}M_{B}F = M_{B}\phi_{A}F \qquad\textit{i.e.}\qquad F = 0\qquad\text{because } F\in\ker\phi_{A},
	\]
	which concludes the proof.
	\item Applying Lemma \ref{lem:commutativity} to $A := B$, $B := A'$ and $C := A$ gives
	\[
	M_{A'}\phi_{A}F = \phi_{A'}M_{B\cap A'}F.
	\]
	If $B\subset A'$ then $B\cap A' = A'$ and one obtains $M_{A'}\phi_{A}F = \phi_{A'}F$. If $B\not\subset A'$ then $B\cap A' \varsubsetneq B$ and $M_{B\cap A'}F = 0$ because $F\in H_{B}$. Hence  $M_{A'}\phi_{A}F = 0 = \phi_{A'}F$.
	\item Let $(F_{B})_{B\in\SubsetsWE{A}}\in\bigoplus_{B\in\SubsetsWE{A}}H_{B}$ such that 
	\begin{equation}
	\label{eq:local-equation}
	\sum_{B\in\SubsetsWE{A}}\phi_{A}F_{B} = 0.
	\end{equation}
	We need to show that $F_{B} = 0$ for each $B\in\SubsetsWE{A}$. We do it recursively on $\vert B\vert$ by applying property $2.$ to \eqref{eq:local-equation} for different subsets $A'$. First, applying $M_{\emptyset}$ cancels all the terms $\phi_{A}F_{B}$ for $B\in\Subsets{A}$, leading to $F_{\emptyset} = 0$. Then for any $A'\subset A$ with $\vert A'\vert = 2$, applying $M_{A'}$ cancels all the terms $\phi_{A}F_{B}$ for $B\in\Subsets{A}\setminus\{A'\}$, leading to $F_{A'} = 0$. The proof is concluded by induction.
\end{enumerate}
\end{proof}

The second key ingredient of the proof of Theorem \ref{th:MRA-decomposition} is the following theorem. We recall that for $k\in\{2,\dots,n\}$, $d_{k}$ is the number of derangements on a set of $k$ elements.

\begin{theorem}[Dimension of the space $H_{\set{k}}$]
	\label{th:topology}
	For $k\in\{2,\dots,n\}$, $\dim H_{\set{k}} = d_{k}$.
\end{theorem}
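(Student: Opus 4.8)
The plan is to compute $\dim H_{\set{k}}$ as the dimension of a kernel of marginal operators and then to identify that kernel with the top homology of the \emph{complex of injective words} on $\set{k}$, whose dimension is classically known to equal the derangement number $d_k$.

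First I would reduce the defining conditions. By the projective relations \eqref{eq:projective-system} one has $M_{B''} = M_{B''}M_{B'}$ whenever $B''\subset B'$, so a function $F\in\Space{\Rank{\set{k}}}$ has all its proper marginals vanishing as soon as the maximal ones do. Hence
\[
H_{\set{k}} \;=\; \bigcap_{\substack{B'\subsetneq\set{k}\\ \vert B'\vert = k-1}}\ker M_{B'} \;=\; \ker M, \qquad M := \bigoplus_{\vert B'\vert = k-1} M_{B'},
\]
and therefore $\dim H_{\set{k}} = k! - \operatorname{rank} M$. The next step is to organize all the marginal operators into a single complex. Set $D_p = \bigoplus_{B\subset\set{k},\,\vert B\vert = p}\Space{\Rank{B}}$ for $0\le p\le k$, so that $D_k = \Space{\Sym{k}}$, $D_0 = H_{\emptyset}$ and $\dim D_p = \binom{k}{p}p! = k!/(k-p)!$. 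The operators $M_{B'}$ (equivalently the low-pass relations \eqref{eq:spanning-tree}) assemble, after the standard alternating sign attached to letter-deletion, into boundary maps $\partial\colon D_p\to D_{p-1}$ making $(D_\bullet,\partial)$ the augmented chain complex of the complex of injective words. Crucially, the top boundary $\partial\colon D_k\to D_{k-1}$ has kernel exactly $H_{\set{k}}$: the images $M_{B'}F$ live in distinct summands of $D_{k-1}$, so no cross-cancellation between different $B'$ can occur and the signs are irrelevant on this top kernel.

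The essential input is then the classical computation of the homology of the complex of injective words (Farmer; Björner--Wachs): it is concentrated in the top degree, all lower (reduced) homology vanishing. Granting this acyclicity below the top, the dimension of $H_{\set{k}}$ is computed as an Euler characteristic,
\[
\dim H_{\set{k}} \;=\; \sum_{p=0}^{k}(-1)^{k-p}\dim D_p \;=\; k!\sum_{i=0}^{k}\frac{(-1)^{i}}{i!} \;=\; d_k,
\]
which is precisely the inclusion--exclusion formula for the number of derangements. Combined with the directness and injectivity furnished by Lemma \ref{lem:commutativity}, the identity $\sum_{B\subseteq\set{k}}d_{\vert B\vert} = k!$ then closes the decomposition of Theorem \ref{th:MRA-decomposition}.

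The main obstacle is entirely topological rather than combinatorial, and it is twofold. The delicate bookkeeping point is aligning the \emph{unsigned} marginal operators $M_{B'}$ with the \emph{signed} simplicial boundary: one must choose signs that make $\partial^2=0$ while still identifying $\ker(\partial|_{D_k})$ with $H_{\set{k}}$. This cannot be done by a purely content-dependent sign (that choice fails $\partial^2=0$ for $k\ge 3$), nor conjugated into place by any diagonal sign-twist on $\Space{\Sym{k}}$; the matching must be carried out at the level of the complex, which is why the top kernel being sign-insensitive is the convenient observation above. Once the complex is correctly set up, the genuinely hard ingredient is the acyclicity of the complex of injective words below its top degree --- the ``result from algebraic topology'' on which the whole construction rests. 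I would either invoke this result directly or reprove it by a discrete Morse / shelling argument on the poset of injective words, after which the Euler-characteristic count finishes the computation.
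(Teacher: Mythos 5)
Your overall strategy --- reduce to the codimension-one marginals, assemble them into the (augmented) chain complex of injective words, invoke acyclicity below the top degree, and close with the Euler-characteristic count $k!\sum_{i=0}^{k}(-1)^{i}/i! = d_{k}$ --- is sound, and it is essentially the route the paper itself delegates to \citet{Reiner13} (the paper gives no proof of this theorem; it cites the identification of $H_{\set{k}}$ with the top homology of the complex of injective words and the acyclicity results of Farmer and Bj\"orner--Wachs). The reduction to the maximal proper marginals and the final arithmetic are both correct.

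The pivotal identification step, however, is wrong as stated. You claim $\ker(\partial\vert_{D_{k}})$ \emph{equals} $H_{\set{k}}$ because the images $M_{B'}F$ sit in distinct summands, so ``the signs are irrelevant on this top kernel.'' The absence of cross-cancellation between different $B'$ is true but beside the point: the obstruction lives \emph{inside} a single summand. For $B'=\set{k}\setminus\{j\}$ and $\pi'\in\Rank{B'}$, the $\pi'$-coefficient of the signed boundary of $F$ is $\sum_{i=1}^{k}(-1)^{i}F(\pi'\lhd_{i}j)$, whereas $M_{B'}F(\pi')=\sum_{i=1}^{k}F(\pi'\lhd_{i}j)$; the sign attached to deleting $j$ depends on its position in $\pi$, which varies over the $k$ words restricting to $\pi'$. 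Already for $k=2$ one gets $\ker(\partial\vert_{D_{2}})=\Span\{\delta_{12}+\delta_{21}\}$ while $H_{\{1,2\}}=\Span\{\delta_{12}-\delta_{21}\}$: same dimension, different subspaces. What you need is an \emph{isomorphism} between the two kernels, and one is in fact furnished by a diagonal sign-twist, contrary to your parenthetical claim: writing $i=\pi(j)$ for the position of $j$ in $\pi$, the standard expansion of the sign of a permutation along one entry gives $(-1)^{i}=(-1)^{j}\,\mathrm{sgn}(\pi)\,\mathrm{sgn}(\pi_{\vert B'})$, so within each summand the signed boundary of $F$ equals $(-1)^{j}\mathrm{sgn}(\pi')\cdot M_{B'}(\mathrm{sgn}\cdot F)(\pi')$, a nonzero constant (depending only on $B'$ and $\pi'$) times the unsigned marginal of $\mathrm{sgn}\cdot F$. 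Hence $F\mapsto\mathrm{sgn}\cdot F$ carries $H_{\set{k}}$ bijectively onto $\ker(\partial\vert_{D_{k}})$; that this twist fails to turn the lower unsigned marginals into a chain complex is irrelevant, since only the top kernel is needed. With that repair your Euler-characteristic argument goes through; without it, the step equating $\dim H_{\set{k}}$ with the top Betti number is unjustified.
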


Theorem \ref{th:topology} is proved in \citet{Reiner13}, where $H_{\set{k}}$ is denoted by $\ker\pi_{\set{k}}$ (see proposition 6.8 and corollary 6.15). As simple as it may seem, this result is far from being trivial. It is actually shown in \citet{Reiner13} that $H_{\set{k}}$ is isomorphic to the top homology space of the complex of injective words on $\set{k}$. The calculation of the dimension of the latter relies on the Hopf trace formula for virtual characters and the topological properties of the partial order of subword inclusion, proved in several contributions of the algebraic topology literature \citep[see][]{Farmer78, BW83, Reiner04}.

Theorem \ref{th:topology} allows to conclude the proof of Theorem \ref{th:MRA-decomposition} with a dimensional argument. First observe that for $k\in\{2,\dots,n\}$, all the spaces $H_{B}$ for $B\subset\n$ with $\vert B\vert = k$ are isomorphic to $H_{\set{k}}$. Thus $\dim H_{B} = d_{\vert B\vert}$ for all $B\in\SubsetsWE{\n}$. Combining this result with properties $1.$ and $3.$, one obtains for any $A\in\Subsets{\n}$,
\begin{equation}
\label{eq:dimensional-argument}
\vert A\vert ! = \dim L(\Rank{A}) \geq \dim \bigoplus_{B\in\SubsetsWE{A}}\phi_{A}\left(H_{B}\right) \geq \sum_{B\in\SubsetsWE{A}}d_{\vert B\vert} = \sum_{k=0}^{\vert A\vert}\binom{\vert A\vert}{k}d_{k} = \vert A\vert !,
\end{equation}
where the last equality is a classic result in elementary combinatorics. All the inequalities in \eqref{eq:dimensional-argument} are therefore equalities, and the proof of Theorem \ref{th:MRA-decomposition} is finished.

\subsection{The construction of the wavelet transform}

Theorem \ref{th:MRA-decomposition} allows to construct implicitly the wavelet transform as follows: for any $A\in\SubsetsWE{\n}$ and $F\in\Rank{A}$, it shows the existence of a unique element $(\Psi_{B}^{A}F)_{B\in\SubsetsWE{A}}$ in $\bigoplus_{B\in\SubsetsWE{A}}H_{B}$ such that 
\[
F = \sum_{B\in\SubsetsWE{A}}\phi_{A}\Psi^{A}_{B}F.
\]
This naturally defines for any $B\in\SubsetsWE{\n}$ the linear operator $\Psi_{B}:\Space{\Gn}\rightarrow H_{B}$ on each subspace $L(\Rank{A})$ for $A\in\Subsets{\n}$ as the mapping
\begin{equation}
\label{eq:wavelet-projection}
\Psi_{B} : F \mapsto \Psi_{B}^{A}F\ \text{ if } B\subset A\text{ and } 0\text{ otherwise}.
\end{equation}

\begin{definition}[Wavelet transform]
\label{def:wavelet-transform}
The wavelet transform is the operator $\Psi:\Space{\Gn}\rightarrow\Hn$ constructed from the operators $\Psi_{B}$ defined in \eqref{eq:wavelet-projection} as
\[
\Psi:F\mapsto\left(\Psi_{B}F\right)_{B\in\SubsetsWE{\n}}.
\]
\end{definition}

All the objects of the MRA representation being defined, we now prove Theorem \ref{th:MRA-general}.

\begin{proof}
Property \eqref{eq:MRA-general-1} and the first part of Property \eqref{eq:MRA-general-2} are direct consequences of Theorem \ref{th:MRA-decomposition} and Definition \ref{def:wavelet-transform}. To prove the second part of Property \eqref{eq:MRA-general-2}, observe that the first part applied to $F$ gives $M_{A'}F = \sum_{B\in\SubsetsWE{A'}}\phi_{A'}\Psi_{B}F$ and \eqref{eq:MRA-general-1} applied to $M_{A'}F$ gives $M_{A'}F = \sum_{B\in\SubsetsWE{A'}}\phi_{A'}\Psi_{B}M_{A'}F$. The uniqueness of the decomposition concludes the proof.
\end{proof}

Definition \ref{def:wavelet-transform} relies on an implicit construction. We now provide an explicit construction of the wavelet transform. First, observe that Property \eqref{eq:MRA-general-1} of Theorem \ref{th:MRA-general} applied to $A=\emptyset$ implies that for any $F\in\Space{\Rank{\bar{0}}}$, $\Psi_{\emptyset}F = F$. Applying Property \eqref{eq:MRA-general-2}, one obtains for any $F\in\Space{\Gn}$,
\begin{equation}
\label{eq:initialization}
\Psi_{\emptyset}F = \Psi_{\emptyset}M_{\emptyset}F = M_{\emptyset}F = \left(\sum_{\pi\in\Gn}F(\pi)\right)\delta_{\bar{0}}.
\end{equation}
On the other hand, one has $\phi_{A}F = F$ for any $A\in\SubsetsWE{\n}$ and $F\in\Space{\Rank{A}}$, so that by Theorem \ref{th:MRA-general},
\begin{equation}
\label{eq:induction-formula}
\Psi_{A}F \quad=\quad F\ - \sum_{B\in\SubsetsWE{A}\setminus\{A\}}\phi_{A}\Psi_{B}F.
\end{equation}
We use Eq. \eqref{eq:initialization} and \eqref{eq:induction-formula} to construct the wavelet projections $\Psi_{B}$ by induction. We actually construct by induction the coefficients $\alpha_{B}(\pi,\pi')$ introduced in Definition \ref{def:alpha-coefficients}. The calculation first relies on the following lemma. For a ranking $\pi = \pi_{1}\dots\pi_{k}\in\Gamma_{n}$ and two indexes $1\leq i < j \leq k$, we denote by $\pi_{\set{i,j}}$ the contiguous subword $\pi_{i}\dots\pi_{j}$ of $\pi$.

\begin{lemma}
\label{lem:useful-lemma}
Let $A\in\SubsetsWE{\n}$ with $\vert A\vert = k$ and $\mathbf{X} = (X_{B})_{B\in\SubsetsWE{A}}\in\Hn$. Then for all $\pi\in\Rank{A}$,
\[
\sum_{B\in\SubsetsWE{A}}\phi_{A}X_{B}(\pi) = \frac{1}{k!}X_{\emptyset}(\bar{0}) + \sum_{1\leq i < j \leq k}\frac{1}{(k-j+i)!}X_{c(\pi_{\set{i, j}})}\left(\pi_{\set{i, j}}\right).
\]
\end{lemma}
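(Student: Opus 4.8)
The plan is to expand everything by linearity onto Dirac functions and then read off Definition \ref{def:wavelet-synthesis-operator}. Writing $X_{B} = \sum_{\pi'\in\Rank{B}}X_{B}(\pi')\delta_{\pi'}$ for each $B\in\SubsetsWE{A}$, linearity of $\phi_{A}$ gives
\[
\sum_{B\in\SubsetsWE{A}}\phi_{A}X_{B}(\pi) = \sum_{B\in\SubsetsWE{A}}\sum_{\pi'\in\Rank{B}}X_{B}(\pi')\,\phi_{A}\delta_{\pi'}(\pi).
\]
I would isolate the term $B=\emptyset$ first: its only summand corresponds to $\pi'=\bar{0}$, and since $\phi_{A}\delta_{\bar{0}} = \frac{1}{k!}\1{\Rank{A}}$, evaluating at $\pi\in\Rank{A}$ it contributes exactly $\frac{1}{k!}X_{\emptyset}(\bar{0})$, which is the first term of the claimed identity. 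It then remains to treat the sum over $B\in\Subsets{A}$.

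Next I would substitute the definition of $\phi_{A}$ on the remaining Dirac functions. For $B\in\Subsets{A}$ with $\vert B\vert = m$ and $\pi'\in\Rank{B}$, Definition \ref{def:wavelet-synthesis-operator} gives
\[
\phi_{A}\delta_{\pi'}(\pi) = \frac{1}{(k-m+1)!}\,\mathbb{I}\{\pi'\sqsubset\pi\},
\]
so only those $\pi'$ occurring as a \emph{literal} contiguous subword of $\pi$ survive. The key combinatorial observation is that, because $\pi$ is an injective word (its letters are pairwise distinct), its contiguous subwords are indexed bijectively by their position: the contiguous subwords of $\pi$ of length $m$ are exactly the words $\pi_{\set{i,\,i+m-1}}$ for $1\le i\le k-m+1$, and two distinct starting positions yield subwords with distinct contents. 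Consequently, for each fixed $B$ there is \emph{at most one} $\pi'\in\Rank{B}$ with $\pi'\sqsubset\pi$, namely the unique contiguous subword of $\pi$ of content $B$, when it exists.

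Finally I would reindex. Rather than summing over $B\in\Subsets{A}$ and then over the (at most one) contiguous subword of content $B$, one sums directly over all contiguous subwords of $\pi$ of length at least $2$; these are precisely the words $\pi_{\set{i,j}}$ for $1\le i<j\le k$, a subword $\pi_{\set{i,j}}$ having length $m=j-i+1$, content $c(\pi_{\set{i,j}})$, and normalizing factor $(k-m+1)! = (k-j+i)!$. This collapses the sum over $B\in\Subsets{A}$ onto
\[
\sum_{1\le i<j\le k}\frac{1}{(k-j+i)!}\,X_{c(\pi_{\set{i,j}})}\!\left(\pi_{\set{i,j}}\right),
\]
which is the second term of the stated identity; combining with the $B=\emptyset$ contribution completes the proof. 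The only genuinely delicate point, and the one I would state carefully, is the injectivity argument matching each subset $B$ to at most one contiguous subword, since it is exactly what guarantees there is no double counting and that the left-hand double sum collapses precisely onto the pairs $(i,j)$.
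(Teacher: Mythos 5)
Your proof is correct and follows essentially the same route as the paper's: split off the $B=\emptyset$ term, expand $\phi_{A}$ on Dirac functions via Definition \ref{def:wavelet-synthesis-operator}, observe that for each $B\in\Subsets{A}$ at most one $\pi'\in\Rank{B}$ (namely $\pi_{\vert B}$) can be a contiguous subword of $\pi$, and reindex the surviving terms by the positions $1\leq i<j\leq k$. The extra care you take with the injectivity of the map from starting positions to contents is exactly the point the paper compresses into the identity $\phi_{A}X_{B}(\pi)=X_{B}(\pi_{\vert B})\,\mathbb{I}\{\pi_{\vert B}\sqsubset\pi\}/(k-\vert B\vert+1)!$, so nothing is missing.
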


\begin{proof}
First, one clearly has $\sum_{B\in\SubsetsWE{A}}\phi_{A}X_{B}(\pi) = \frac{1}{k!}X_{\emptyset}(\bar{0}) + \sum_{B\in\Subsets{A}}\phi_{A}X_{B}(\pi)$. Now by definition of operator $\phi_{A}$, one has for any $B\in\Subsets{A}$
\[
\phi_{A}X_{B}(\pi) 	= \sum_{\pi^{\prime}\in\Rank{B}}X_{B}(\pi')\frac{\mathbb{I}\{ \pi'\sqsubset\pi\}}{(k-\vert\pi'\vert+1)!} = X_{B}(\pi_{\vert B})\frac{\mathbb{I}\{ \pi_{\vert B}\sqsubset\pi\}}{(k-\vert B\vert+1)!}.
\]
Thus only the terms $\phi_{A}X_{B}(\pi)$ where $B$ is such that $\pi_{\vert B}$ is a contiguous subword of $\pi$ are potentially not null in the sum $\sum_{B\in\Subsets{A}}\phi_{A}X_{B}(\pi)$. As the contiguous subwords of $\pi$ are all of the form $\pi_{\set{i,j}}$ with $1\leq i < j \leq k$, this concludes the proof.
\end{proof}

The recursive formula for the coefficients $\alpha_{B}(\pi,\pi')$ for $\pi,\pi'\in\Rank{B}$ and $B\in\SubsetsWE{\n}$ is then given by the following theorem.

\begin{theorem}[Recursive formula for the alpha coefficients]
\label{th:alpha-recursive-formula}
The coefficients $(\alpha_{B}(\pi,\pi'))_{\pi,\pi'\in\Rank{B},\, B\in\Subsets{\n}}$ are given by the following recursive formula:
\begin{itemize}
	\item $\alpha_{\emptyset}(\bar{0},\bar{0}) = 1$
	\item for all $B\in\Subsets{\n}$ and $\pi,\pi'\in\Rank{B}$,
	\[
	\alpha_{B}(\pi,\pi^{\prime}) = \mathbb{I}\{\pi = \pi^{\prime}\} - \frac{1}{\vert B\vert !} - \sum_{\substack{1\leq i < j \leq \vert B\vert \\ j-i < \vert B\vert -1}} \frac{1}{(\vert B\vert-j+i)!} \alpha_{c(\pi_{\set{i,j}})} \left(\pi_{\set{i,j}},\pi'_{\vert c(\pi_{\set{i,j}})}\right).
	\]
\end{itemize}
\end{theorem}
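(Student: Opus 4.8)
The plan is to prove the base case directly and to establish the recursion by applying the explicit reconstruction formula \eqref{eq:induction-formula} to a Dirac function. The base case is immediate: Equation \eqref{eq:initialization} applied to $F = \delta_{\bar{0}}$ gives $\Psi_{\emptyset}\delta_{\bar{0}} = \delta_{\bar{0}}$, whence $\alpha_{\emptyset}(\bar{0},\bar{0}) = \Psi_{\emptyset}\delta_{\bar{0}}(\bar{0}) = 1$ by Definition \ref{def:alpha-coefficients}.

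For the recursive step, fix $B\in\Subsets{\n}$ with $\vert B\vert = k$ and $\pi,\pi'\in\Rank{B}$. I would apply \eqref{eq:induction-formula} with $A := B$ to the function $\delta_{\pi'}\in\Space{\Rank{B}}$ and evaluate at $\pi$, which, recalling $\alpha_{B}(\pi,\pi') = \Psi_{B}\delta_{\pi'}(\pi)$, yields
\[
\alpha_{B}(\pi,\pi') = \mathbb{I}\{\pi = \pi'\} - \sum_{C\in\SubsetsWE{B}\setminus\{B\}}\phi_{B}\Psi_{C}\delta_{\pi'}(\pi).
\]
The core of the argument is to evaluate the remaining sum using Lemma \ref{lem:useful-lemma}. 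I would introduce the element $\mathbf{X}\in\Hn$ defined by $X_{C} = \Psi_{C}\delta_{\pi'}$ for $C\in\SubsetsWE{B}\setminus\{B\}$ and $X_{B} = 0$ (each component lies in the correct $H_{C}$ by construction of $\Psi$). Since $X_{B} = 0$, the sum above equals $\sum_{C\in\SubsetsWE{B}}\phi_{B}X_{C}(\pi)$, to which Lemma \ref{lem:useful-lemma} applies directly and produces $\frac{1}{k!}X_{\emptyset}(\bar{0}) + \sum_{1\leq i<j\leq k}\frac{1}{(k-j+i)!}X_{c(\pi_{\set{i,j}})}(\pi_{\set{i,j}})$.

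It then remains to identify these terms with the alpha coefficients of smaller subsets. For the scale-zero term, \eqref{eq:initialization} gives $X_{\emptyset}(\bar{0}) = \Psi_{\emptyset}\delta_{\pi'}(\bar{0}) = 1$, producing the constant $-1/k!$. For a pair $i<j$, I would use $\Psi_{C}\delta_{\pi'} = \Psi_{C}M_{C}\delta_{\pi'}$ (Property \eqref{eq:MRA-general-2}) together with the elementary identity $M_{C}\delta_{\pi'} = \delta_{\pi'_{\vert C}}$, where $\pi'_{\vert C}$ is the unique subword of $\pi'$ of content $C$, to obtain $X_{c(\pi_{\set{i,j}})}(\pi_{\set{i,j}}) = \alpha_{c(\pi_{\set{i,j}})}(\pi_{\set{i,j}},\pi'_{\vert c(\pi_{\set{i,j}})})$ whenever $c(\pi_{\set{i,j}})\neq B$. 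The remaining point is purely combinatorial: among all contiguous subwords $\pi_{\set{i,j}}$ of $\pi$, the only one with content equal to $B$ is $\pi_{\set{1,k}} = \pi$ itself, i.e. the index $(i,j) = (1,k)$, which satisfies $j-i = k-1$ and is annihilated by $X_{B} = 0$. Excluding it corresponds exactly to the constraint $j-i<\vert B\vert - 1$ in the statement, and assembling the three contributions gives the claimed formula.

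Finally, I would observe that the recursion is well posed: every coefficient on the right-hand side is $\alpha_{C}$ with $\vert C\vert = j-i+1\leq k-1$, strictly smaller than $\vert B\vert$, so the formula determines all alpha coefficients by induction on subset size, starting from $\alpha_{\emptyset} = 1$. The main obstacle is not any single estimate but the bookkeeping of the third step: correctly matching the contiguous-subword indexing of Lemma \ref{lem:useful-lemma} to subsets $C$, verifying that the $C = B$ contribution is the unique term removed, and confirming that the factorial $(k-j+i)!$ arising from $\phi_{B}$ agrees with the exponent in the statement.
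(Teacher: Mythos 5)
Your proof is correct and follows exactly the paper's route: base case from Eq.~\eqref{eq:initialization}, then Eq.~\eqref{eq:induction-formula} applied to $\delta_{\pi'}$ combined with Lemma~\ref{lem:useful-lemma}, with the identification $\Psi_{C}\delta_{\pi'}=\Psi_{C}\delta_{\pi'_{\vert C}}$ via Property~\eqref{eq:MRA-general-2} supplying the alpha coefficients of smaller subsets. The paper leaves the bookkeeping of the last step implicit; you have simply made it explicit, including the correct observation that the excluded index $(i,j)=(1,\vert B\vert)$ accounts for the constraint $j-i<\vert B\vert-1$.
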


\begin{proof}
Eq. \eqref{eq:initialization} directly implies that $\alpha_{\emptyset}(\bar{0},\bar{0}) = 1$. Now,  Eq. \eqref{eq:induction-formula} gives for $B\in\Subsets{\n}$ and $\pi,\pi'\in\Rank{B}$
\[
\Psi_{B}\delta_{\pi'}(\pi) = \delta_{\pi'}(\pi)\quad - \sum_{B'\in\SubsetsWE{B}\setminus\{B\}}\phi_{B}\Psi_{B'}\delta_{\pi'}(\pi).
\]
Combined with Lemma \ref{lem:useful-lemma}, this leads to the desired result.
\end{proof}

\begin{example}
As an example, we provide the matrix $(\alpha_{B}(\pi,\pi'))_{\pi,\pi'\in\Rank{B}}$ for $B = \{1,2\}$ and $B = \{1,2,3\}$:
\[
\left[\alpha_{\{1,2\}}(\pi,\pi')\right]_{(\pi,\pi')} = 
\kbordermatrix{
	& 12 & 21\\
	12 & 1/2 & -1/2\\
	21 & -1/2 & 1/2
}
\]
and
\[
\left[\alpha_{\{1,2,3\}}(\pi,\pi')\right]_{(\pi,\pi')} = 
\kbordermatrix{
	& 123 & 132 & 213 & 231 & 312 & 321\\
	123 & 1/3 & -1/6 & -1/6 & -1/6 & -1/6 & 1/3\\
	132 & -1/6 & 1/3 & -1/6 & 1/3 & -1/6 & -1/6\\
	213 & -1/6 & -1/6 & 1/3 & -1/6 & 1/3 & -1/6\\
	231 & -1/6 & 1/3 & -1/6 & 1/3 & -1/6 & -1/6\\
	312 & -1/6 & -1/6 & 1/3 & -1/6 & 1/3 & -1/6\\
	321 & 1/3 & -1/6 & -1/6 & -1/6 & -1/6 & 1/3\\
}.
\]
\end{example}

\subsection{Interpretation of the wavelet synthesis operators $\phi_{A}$}
\label{subsec:embedding-operator}

Here we provide some more insights about the wavelet synthesis operators $\phi_{A}$. Their definition is indeed not intuitive as mentioned previously. For $A,B\in\Subsets{\n}$ with $B\subset A$, the most natural way to embed a Dirac function $\delta_{\pi}$ with $\pi\in\Rank{B}$ into $\Space{\Rank{A}}$ would rather be to map it to the uniform distribution over all the rankings on $A$ that extend $\pi$, that is to use the following operator
\begin{equation}
\label{eq:alternative-embedding-operator}
\phi'_{A}\quad:\quad\delta_{\pi}\quad\mapsto\quad\frac{\vert A\vert !}{\vert B\vert !}\sum_{\sigma\in\pi,\ \pi\subset\sigma}\delta_{\sigma}.
\end{equation}

\begin{example}
For $\pi = 42$ and $A = \set{4}$:
\begin{align*}
\phi_{A}\delta_{\pi} &= \frac{1}{6}\left[\delta_{1342} + \delta_{3142} + \delta_{1423} + \delta_{3421} + \delta_{4213} + \delta_{4231} \right]\\
\phi'_{A}\delta_{\pi} &= \frac{1}{12}\left[\delta_{1342} + \delta_{3142} + \delta_{1423} + \delta_{3421} + \delta_{4213} + \delta_{4231} + \delta_{1432} + \delta_{3412} + \delta_{4132} + \delta_{4312} + \delta_{4123} + \delta_{4321} \right]
\end{align*}
\end{example}

The mapping $\phi'_{A}$ is used implicitly in shuffling interpretations of rankings \citep[see][]{Diaconis1988, HG2012}. It corresponds to sending a ranking $\pi$ to the uniform distribution over all the possible shuffles between $\pi$ and a random ranking on $\Rank{A\setminus B}$. Notice also that for $A = \n$, $\phi'_{A}:\pi\mapsto(\vert A\vert !/n!)\1{\Sn(\pi)}$. In other words, $\phi'_{\n}$ maps an incomplete ranking to the uniform distribution on the set of its linear extensions. It is thus also involved implicitly in the approaches introduced in \citet{YLA02}, \citet{Kondor2010} and \citet{Sun2012} described in Subsection \ref{subsec:existing-approaches}. For these two reasons, $\phi'_{A}$ can be considered as the most intuitive embedding operator. It does not lead however to the localization properties of Theorem \ref{th:MRA-decomposition}. This is because it does not satisfy the key Lemma \ref{lem:commutativity}, whereas the embedding operator $\phi_{A}$ does. 

\begin{example}
Coming back to Example \ref{ex:lemma-illustration} with $A = \{1,2,3\}$, $B = \{1,2,4\}$ and $C = \{1,2,3,4\}$, we recall that, for $\pi = 123$ for instance,
\begin{align*}
M_{B}\phi_{C}\delta_{\pi} &= \frac{1}{2}M_{\{1,2,4\}}\left[ \delta_{4123} + \delta_{1234} \right] = \frac{1}{2}\left[ \delta_{412} + \delta_{124}\right],\qquad\qquad\qquad\qquad\qquad\\
\phi_{B}M_{A\cap B}\delta_{\pi} &= \phi_{\{1,2,4\}}\delta_{12} = \frac{1}{2}\left[ \delta_{412} + \delta_{124}\right].
\end{align*}
By contrast,
\begin{align*}
M_{B}\phi'_{C}\delta_{\pi} &= \frac{1}{4}M_{\{1,2,4\}}\left[ \delta_{4123} + \delta_{1423} + \delta_{1243} + \delta_{1234} \right] = \frac{1}{4}\left[ \delta_{412} + \delta_{142} + 2\delta_{124}\right]\\
\phi'_{B}M_{A\cap B}\delta_{\pi} &= \phi_{\{1,2,4\}}\delta_{12} = \frac{1}{3}\left[ \delta_{412} + \delta_{142} + \delta_{124}\right].
\end{align*}
Even if the operators $\phi'_{A}$ were normalized differently, they would still not satisfy Lemma \ref{lem:commutativity}. In the example, the difference comes from the fact that the element $\delta_{1243}$ leads to an additional term $\delta_{124}$ in the end.
\end{example}

We now develop a more intuitive interpretation of the localization properties induced by the operators $\phi_{A}$. Let's consider for instance $\pi = 12\in\Rank{\{1,2\}}$ and $C = \set{5}$, and let $\sigma\in\Rank{\set{5}}$ be a ranking that extends $\pi$. It induces rankings on all subsets $B\in\Subsets{\set{5}}$ with in particular $\sigma_{\vert \{1,2\}} = \pi$. Now consider a perturbation that changes $\sigma$ to $\sigma'$ such that $\sigma'_{\vert \{1,2\}} = 21$. It necessarily changes the relative positions of items $1$ and $2$ in $\sigma$ and more generally in all the subwords of $\sigma$ that contain $1$ and $2$. The question is then: how does it affect the other induced rankings $\sigma'_{\vert B}$ for $B\in\Subsets{\set{5}}$ such that $\{1,2\}\not\subset B$? If $B\cap\{1,2\} = \emptyset$, $\sigma'_{\vert B}$ is different from $\sigma_{\vert B}$ if and only if the perturbation also modifies the relative order of some items in $B$. This is independent from the action on $1$ and $2$. Now, for $B\in\Subsets{\set{5}}$ such that $\vert B\cap \{1,2\}\vert = 1$, the key observation is that it depends on the items that are placed \textit{between} $1$ and $2$ in $\sigma$. For instance if $\sigma = 41523$, any perturbation that changes the relative positions of $1$ and $2$ will necessarily impact the relative position of at least $1$ and $5$ or $2$ and $5$. By contrast, if $\sigma = 45123$ for instance, swapping items $1$ and $2$ will not have any impact on $\sigma_{\vert B}$ for all $B$ such that $\vert B\cap\{1,2\}\vert = 1$. Therefore among the rankings that extend $12$, only the ones in which $1$ and $2$ are adjacent can be perturbed such that only the ranking induced on $\{1,2\}$ is affected and not the ones on the subsets $B$ with $\vert B\cap\{1,2\}\vert\leq 1$. A similar interpretation holds for subsets of items of any size. Developing a general theory of perturbations for rankings would certainly be an interesting future research direction.

\section{Connection with $\Sn$-based harmonic analysis and other mathematical constructions}
\label{sec:connections}

Though its construction only relies on results from combinatorics and algebraic topology, it happens that the MRA representation is connected with $\Sn$-based harmonic analysis and other mathematical constructions. In this section we recall some background about $\Sn$-based harmonic analysis and detail these connections. The main results are:
\begin{itemize}
	\item Theorem \ref{th:decomposition-H-k}, which draws the connection between the MRA representation and $\Sn$-based harmonic analysis;
	\item Theorem \ref{th:connection-other-MRA}, which establishes a decomposition for the alternative embedding operator $\phi'_{\n}$ (considered in Subsection \ref{subsec:interpretation}) with subspaces isomorphic to subspaces constructed with the MRA decomposition;
	\item Theorem \ref{th:eigenspaces}, which provides additional insights on the interpretation of scales.
\end{itemize} 

\subsection{Background on $\Sn$-based harmonic analysis}
\label{subsec:background-harmonic-analysis}

Harmonic analysis on a finite set $\mathcal{X}$ consists in analyzing functions $f\in\Space{\mathcal{X}}$ by representing them as sums of projections onto subspaces that are invariant under the action of translations of a canonic group $G$ \citep[see][]{Diaconis89}. Let us introduce some definitions to be more specific \citep[we refer the reader to][for background on group theory]{FH91}. A transitive action $(g,x)\mapsto g\cdot x$ of $G$ on $\mathcal{X}$ naturally defines a family of translation operators $T_{g}$ on $\Space{\mathcal{X}}$ by $T_{g}\delta_{x} = \delta_{g\cdot x}$ or equivalently by $T_{g}f(x) = f(g^{-1}\cdot x)$ for any $f\in\Space{\mathcal{X}}$ and $x\in\mathcal{X}$. The mapping $g\mapsto T_{g}$ is a representation of $G$ on $\Space{\mathcal{X}}$ and a classic result from group representation theory says that the latter is isomorphic to the direct sum $\bigoplus_{\rho}m_{\rho}V_{\rho}$, where each $\rho$ is an irreducible representation of $G$, $V_{\rho}$ its associated linear space and $m_{\rho}$ a nonnegative integer. In other words, given an isomorphism $\Phi:\bigoplus_{\rho}m_{\rho}V_{\rho}\rightarrow\Space{\mathcal{X}}$, any function $f\in\Space{\mathcal{X}}$ admits a decomposition
\[
f = \Phi \sum_{\rho}m_{\rho}\mathcal{F}_{\rho}f,
\]
where $\mathcal{F}_{\rho}f$ is a projection of $f$ onto $V_{\rho}$ for each $\rho$, which thus localizes a certain part of information about $f$ that is invariant under translations. \citep[The projection $\mathcal{F}_{\rho}f$ is usually constructed via the Fourier transform of the spherical function associated to $f$, see][for more details]{ST09}. Harmonic analysis then consists in analyzing the function $f$ through its representation $(\mathcal{F}_{\rho}f)_{\rho}$. In the particular case where $\mathcal{X} = G$, a classic result says that for each $\rho$, the multiplicity $m_{\rho}$ of $V_{\lambda}$ in the decomposition of $\Space{G}$ is equal to its dimension: $m_{\rho} = \dim V_{\rho}$. 

In a discrete setting, the symmetric group usually appears as the canonic group that operates on $\mathcal{X}$. For instance, $\Sn$ naturally operates on $\n$ via $\sigma\cdot i = \sigma(i)$, on $\{A\subset\n\}$ via $\sigma\cdot A = \sigma(A) := \{\sigma(a) \;\vert\; a\in A\}$ or even on $\Sn$ via $\sigma\cdot\tau = \sigma\tau$ or via $\sigma\cdot\tau = \tau\sigma^{-1}$. Representations of the symmetric group have been thoroughly studied in the literature \citep[see for instance][]{JK81, CSST10, Sagan2013}. Each irreducible representation of $\Sn$ is indexed by a partition of $n$, namely a tuple $\lambda = (\lambda_{1},\dots,\lambda_{r})$ of positive integers such that $\lambda_{1}\geq\dots\geq\lambda_{r}$ and $\sum_{i=1}^{r}\lambda_{i} = n$. The fact that $\lambda$ is a partition of $n$ is denoted by $\lambda\vdash n$. The spaces of the irreducible representations are called the Specht modules. They are denoted by $S^{\lambda}$ and their dimensions by $d_{\lambda}$ for $\lambda\vdash n$. One thus has in particular the isomorphism of representations (here and throughout, we use the sign $\cong$ to denote that two spaces are isomorphic as $\Sn$-representations).
\begin{equation}
\label{eq:Fourier-decomposition}
\Space{\Sn}\cong\bigoplus_{\lambda\vdash n}d_{\lambda}S^{\lambda}.
\end{equation}
In the decomposition of Eq. \eqref{eq:Fourier-decomposition}, each irreducible representation $S^{\lambda}$ appears with multiplicity $d_{\lambda}$ for $\lambda\vdash n$. The copies of each irreducible representation admits a finer canonical differentiation, based on \textit{standard Young tableaux}. A Young diagram (or a Ferrer's diagram) of size $n$ is a collection of boxes of the form
\begin{center}
\begin{tikzpicture}[scale = 0.6]
\node at (-2,3.5) {$\lambda_{1}$};
\node at (-2,2.5) {$\lambda_{2}$};
\node at (-2,1.5) {$\vdots$};
\node at (-2,0.5) {$\lambda_{r}$};
\draw (0,4) -- (5,4);
\draw (0,3) -- (5,3);
\draw (0,2) -- (4,2);
\draw[dotted] (1,1) -- (3,1);
\draw (0,1) -- (1,1);
\draw (0,0) -- (1,0);
\draw (0,4) -- (0,2);
\draw (1,4) -- (1,2);
\draw (2,4) -- (2,2);
\draw (3,4) -- (3,2);
\draw (4,4) -- (4,2);
\draw (5,4) -- (5,3);
\draw[dotted] (0,2) -- (0,1);
\draw[dotted] (1,2) -- (1,1);
\draw[dotted] (2,2) -- (2,1);
\draw[dotted] (3,2) -- (3,1);
\draw (0,1) -- (0,0);
\draw (1,1) -- (1,0);
\end{tikzpicture}
\end{center}
where if $\lambda_{i}$ denotes the number of boxes in row $i$, then $\lambda = (\lambda_{1}, \dots, \lambda_{r})$, called the shape of the Young diagram, must be a partition of $n$. The total number of boxes of a Young diagram is therefore equal to $n$, and each row contains at most as many boxes as the row above it. A Young tableau is a Young diagram filled with all the integers $1, \dots, n$, one in each boxes. The shape of a Young tableau $Q$, denoted by $\shape (Q)$, is the shape of the associated Young Diagram, it is thus a partition of $n$. There are clearly $n!$ Young tableaux of a given shape $\lambda \vdash n$. A Young tableau is said to be \textit{standard} if the numbers increase along the rows and down the columns.

\begin{example}
In the following figure, the first tableau is standard whereas the second is not.
\begin{center}
\begin{tikzpicture}[scale=0.6]
\node at (0.5,2.5) {1};
\node at (1.5,2.5) {2};
\node at (2.5,2.5) {3};
\node at (0.5,1.5) {4};
\node at (1.5,1.5) {5};
\node at (0.5,0.5) {6};
\draw (0,3) -- (3,3);
\draw (0,2) -- (3,2);
\draw (0,1) -- (2,1);
\draw (0,0) -- (1,0);
\draw (0,3) -- (0,0);
\draw (1,3) -- (1,0);
\draw (2,3) -- (2,1);
\draw (3,3) -- (3,2);
\node at (6.5,2.5) {1};
\node at (7.5,2.5) {3};
\node at (8.5,2.5) {5};
\node at (6.5,1.5) {4};
\node at (7.5,1.5) {2};
\node at (6.5,0.5) {6};
\draw (6,3) -- (9,3);
\draw (6,2) -- (9,2);
\draw (6,1) -- (8,1);
\draw (6,0) -- (7,0);
\draw (6,3) -- (6,0);
\draw (7,3) -- (7,0);
\draw (8,3) -- (8,1);
\draw (9,3) -- (9,2);
\end{tikzpicture}
\end{center}
\end{example}

Notice that a standard Young tableau always have $1$ in its top-left box, and that the box that contains $n$ is necessarily at the end of a row and a column. We denote by $\SYT$ the set of all standard Young tableaux of size $n$ and by $\SYT (\lambda) = \{Q\in \SYT \;\vert\; \shape (Q) = \lambda\}$ the set of standard Young tableaux of shape $\lambda$, for $\lambda\vdash n$. By construction, $\SYT = \bigsqcup_{\lambda\,\vdash\, n}\SYT (\lambda)$.  Now, a classic result in the representation theory of the symmetric group states that  $d_{\lambda} = \vert\SYT (\lambda)\vert$ for each $\lambda\vdash n$. The decomposition of Equation \eqref{eq:Fourier-decomposition} is then refined into:
\begin{equation}
\label{eq:SYT-decomposition}
L(\Sn) \cong \bigoplus_{\lambda\vdash n}\bigoplus_{Q\in\SYT (\lambda)}S^{\,\shape (Q)} \cong \bigoplus_{Q\in\SYT}S^{\,\shape (Q)}.
\end{equation}
Figure \ref{fig:tableaux-decomposition} represents all the standard Young tableaux of size $n = 4$, gathered by shape.

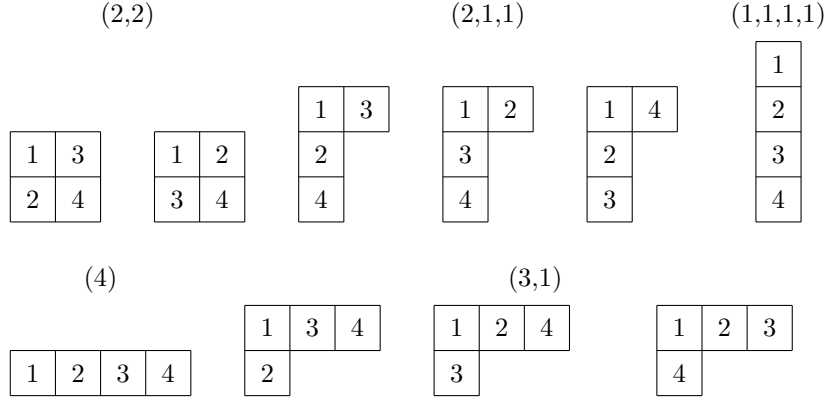
\begin{figure}
\[
\arraycolsep=10pt\def\arraystretch{2}
\begin{array}{ccccccccccccccccccccccccccccccccccccccccccccccccccccccccccccccccccccccccccccccccccccccccccccccccccccccccccccccccccccccccccccccccccccccccccccccc}
\multicolumn{52}{c}{$(2,2)$} & \multicolumn{78}{c}{$(2,1,1)$} & \multicolumn{13}{c}{$(1,1,1,1)$}\\
\multicolumn{26}{c}{
	\begin{tikzpicture}[scale=0.6]
	\draw (0,0) -- (2,0);
	\draw (0,1) -- (2,1);
	\draw (0,2) -- (2,2);
	\draw (0,0) -- (0,2);
	\draw (1,0) -- (1,2);
	\draw (2,0) -- (2,2);
	\node at (0.5,0.5) {2};
	\node at (0.5,1.5) {1};
	\node at (1.5,1.5) {3};
	\node at (1.5,0.5) {4};
	\end{tikzpicture}
}
& \multicolumn{26}{c}{
	\begin{tikzpicture}[scale=0.6]
	\draw (0,0) -- (2,0);
	\draw (0,1) -- (2,1);
	\draw (0,2) -- (2,2);
	\draw (0,0) -- (0,2);
	\draw (1,0) -- (1,2);
	\draw (2,0) -- (2,2);
	\node at (0.5,0.5) {3};
	\node at (0.5,1.5) {1};
	\node at (1.5,1.5) {2};
	\node at (1.5,0.5) {4};
	\end{tikzpicture}
}
& \multicolumn{26}{c}{
	\begin{tikzpicture}[scale=0.6]
	\draw (0,0) -- (1,0);
	\draw (0,1) -- (1,1);
	\draw (0,2) -- (2,2);
	\draw (0,3) -- (2,3);
	\draw (0,0) -- (0,3);
	\draw (1,0) -- (1,3);
	\draw (2,2) -- (2,3);
	\node at (0.5,0.5) {4};
	\node at (0.5,1.5) {2};
	\node at (0.5,2.5) {1};
	\node at (1.5,2.5) {3};
	\end{tikzpicture}
}
& \multicolumn{26}{c}{
	\begin{tikzpicture}[scale=0.6]
	\draw (0,0) -- (1,0);
	\draw (0,1) -- (1,1);
	\draw (0,2) -- (2,2);
	\draw (0,3) -- (2,3);
	\draw (0,0) -- (0,3);
	\draw (1,0) -- (1,3);
	\draw (2,2) -- (2,3);
	\node at (0.5,0.5) {4};
	\node at (0.5,1.5) {3};
	\node at (0.5,2.5) {1};
	\node at (1.5,2.5) {2};
	\end{tikzpicture}
}
& \multicolumn{26}{c}{
	\begin{tikzpicture}[scale=0.6]
	\draw (0,0) -- (1,0);
	\draw (0,1) -- (1,1);
	\draw (0,2) -- (2,2);
	\draw (0,3) -- (2,3);
	\draw (0,0) -- (0,3);
	\draw (1,0) -- (1,3);
	\draw (2,2) -- (2,3);
	\node at (0.5,0.5) {3};
	\node at (0.5,1.5) {2};
	\node at (0.5,2.5) {1};
	\node at (1.5,2.5) {4};
	\end{tikzpicture}
}
& \multicolumn{13}{c}{
	\begin{tikzpicture}[scale=0.6]
	\draw (0,0) -- (1,0);
	\draw (0,1) -- (1,1);
	\draw (0,2) -- (1,2);
	\draw (0,3) -- (1,3);
	\draw (0,4) -- (1,4);
	\draw (0,0) -- (0,4);
	\draw (1,0) -- (1,4);
	\node at (0.5,0.5) {4};
	\node at (0.5,1.5) {3};
	\node at (0.5,2.5) {2};
	\node at (0.5,3.5) {1};
	\end{tikzpicture}
}\\
\multicolumn{44}{c}{$(4)$} & \multicolumn{99}{c}{$(3,1)$}\\
\multicolumn{44}{c}{
 	\begin{tikzpicture}[scale=0.6]
 	\draw (0,0) -- (4,0);
 	\draw (0,1) -- (4,1);
 	\draw (0,0) -- (0,1);
 	\draw (1,0) -- (1,1);
 	\draw (2,0) -- (2,1);
 	\draw (3,0) -- (3,1);
 	\draw (4,0) -- (4,1);
 	\node at (0.5,0.5) {1};
 	\node at (1.5,0.5) {2};
 	\node at (2.5,0.5) {3};
 	\node at (3.5,0.5) {4};
 	\end{tikzpicture}
}
& \multicolumn{33}{c}{
	\begin{tikzpicture}[scale=0.6]
	\draw (0,0) -- (1,0);
	\draw (0,1) -- (3,1);
	\draw (0,2) -- (3,2);
	\draw (0,0) -- (0,2);
	\draw (1,0) -- (1,2);
	\draw (2,1) -- (2,2);
	\draw (3,1) -- (3,2);
	\node at (0.5,0.5) {2};
	\node at (0.5,1.5) {1};
	\node at (1.5,1.5) {3};
	\node at (2.5,1.5) {4};
	\end{tikzpicture}
}
& \multicolumn{33}{c}{
	\begin{tikzpicture}[scale=0.6]
	\draw (0,0) -- (1,0);
	\draw (0,1) -- (3,1);
	\draw (0,2) -- (3,2);
	\draw (0,0) -- (0,2);
	\draw (1,0) -- (1,2);
	\draw (2,1) -- (2,2);
	\draw (3,1) -- (3,2);
	\node at (0.5,0.5) {3};
	\node at (0.5,1.5) {1};
	\node at (1.5,1.5) {2};
	\node at (2.5,1.5) {4};
	\end{tikzpicture}
}
& \multicolumn{33}{c}{
	\begin{tikzpicture}[scale=0.6]
	\draw (0,0) -- (1,0);
	\draw (0,1) -- (3,1);
	\draw (0,2) -- (3,2);
	\draw (0,0) -- (0,2);
	\draw (1,0) -- (1,2);
	\draw (2,1) -- (2,2);
	\draw (3,1) -- (3,2);
	\node at (0.5,0.5) {4};
	\node at (0.5,1.5) {1};
	\node at (1.5,1.5) {2};
	\node at (2.5,1.5) {3};
	\end{tikzpicture}
}
\end{array}
\]
\caption{Standard Young tableaux of size $n = 4$}
\label{fig:tableaux-decomposition}
\end{figure}

\subsection{$\Sn$-based harmonic analysis localizes absolute rank information}
\label{subsec:absolute-rank-information}

By construction, for any finite set $\mathcal{X}$ on which $\Sn$ acts transitively, the projection of a function $f\in\Space{\mathcal{X}}$ on a Specht module $S^{\lambda}$ localizes a certain part of information about $f$ that is invariant under $\Sn$-translations. It happens that this part of information has a concrete interpretation from a ranking point of view: it is the part of information specific to the $\lambda$-marginals \citep[see for instance][]{Diaconis1988, HGG09}. Let us introduce some more definitions to be more specific. For $\lambda = (\lambda_{1},\dots,\lambda_{r})\vdash n$, we define the set 
\[
\Part_{\lambda}(\n) = \{\mathcal{B} = (B_{1},\dots, B_{r}) \;\vert\; B_{1}\sqcup\dots\sqcup B_{r} = \n \text{ and } \vert B_{i}\vert = \lambda_{i} \text{ for each } i = 1,\dots,r \}.
\]
The space $\Space{\Part_{\lambda}(\n)}$ is usually denoted by $M^{\lambda}$ in the literature and called a Young module. For any function $f\in\Space{\Sn}$, we define its $\lambda$-marginal on the partition $\mathcal{B}\in\Part_{\lambda}(\n)$ as the function $f^{\lambda}_{\mathcal{B}}\in M^{\lambda}$ given by
\[
f^{\lambda}_{\mathcal{B}}(\mathcal{B}') = \sum_{\substack{\sigma\in\Sn\\ \sigma(B_{1}) = B'_{1}, \dots, \sigma(B_{r}) = B'_{r}}}f(\sigma) \qquad\text{for all }\mathcal{B}'\in\Part_{\lambda}(\n).
\]
If $p$ is a ranking model over $\Sn$ and $\Sigma$ a random permutation drawn from $p$, the marginal $p^{\lambda}_{\mathcal{B}}$ is simply the law of the random variable $(\Sigma(B_{1}),\dots,\Sigma(B_{r}))$:
\[
p^{\lambda}_{\mathcal{B}}(\mathcal{B}') = \mathbb{P}\left[\Sigma(B_{1}) = B_{1}',\dots,\Sigma(B_{r}) = B_{r}'\right].
\]
The collection of $\lambda$-marginals $(p^{\lambda}_{\mathcal{B}})_{\mathcal{B}\in\Part_{\lambda}(\n)}$ then has a simple interpretation. Let us consider first the simple case $\lambda = (n-1,1)$. Elements of $\Part_{(n-1,1)}(\n)$ are necessarily of the form $(\n\setminus\{i\}, \{i\})$, with $i\in\n$. Then for $(i,j)\in\n^{2}$, we have the simplification
\[
\mathbb{P}\left[\Sigma(\n\setminus\{i\}) = \n\setminus\{j\},\ \Sigma(\{i\}) = \{j\}\right] = \mathbb{P}\left[\Sigma(i) = j\right].
\]
The marginal of $p$ associated to $(\n\setminus\{i\}, \{i\})$ is thus the probability distribution $(\mathbb{P}[\Sigma(i) = j])_{j\in\n}$ on $\n$. From a ranking point of view, this is the law of the rank of item $i$. The $n\times n$ matrix $T_{(n-1,1)}(p)$ that gathers all the $(n-1,1)$-marginals of $p$ is then equal to
\[
T_{(n-1,1)}(p) = \left(
\begin{array}{ccc}
\mathbb{P}[\Sigma(1) = 1] & \cdots & \mathbb{P}[\Sigma(n) = 1]\\
\vdots & \ddots & \vdots \\
\mathbb{P}[\Sigma(1) = n] & \cdots & \mathbb{P}[\Sigma(n) = n]\\
\end{array}\right).
\]
It thus contains the laws of all the random variables $\Sigma(i)$ or $\Sigma^{-1}(j)$ for $i,j\in\n$. All these distributions capture information about an ``absolute rank'', in the sense that it is the rank of an item inside a ranking implying all the $n$ items. Such information is considered to be of order $1$, because it concerns only one item. 

There are two types of marginals of order $2$: the $(n-2,2)$-marginals and the $(n-2,1,1)$-marginals. They correspond respectively to the probability distributions 
\[
\Big(\mathbb{P}[\Sigma(\{i,i'\})=\{j,j'\}]\Big)_{\substack{1\leq i < i'\leq n \\ 1\leq j < j'\leq n}} \qquad\text{and}\qquad \Big(\mathbb{P}[\Sigma(i) = j, \Sigma(i') = j']\Big)_{\substack{1\leq i \neq i'\leq n \\ 1\leq j \neq j'\leq n}}.
\]
In both cases, the matrices that gathers all the marginals $T_{(n-2,2)}(p)$ and $T_{(n-2,1,1)}(p)$ capture information about the absolute ranks of two items, either as a pair or as a couple. More generally for $k\in\{1,\dots,n-1\}$ and $\lambda\vdash n$ such that $\lambda_{1} = k$, the $\lambda$-marginals capture information about the absolute ranks of $k$ items of type $\lambda$.

Now, the absolute rank information localized by $\lambda$-marginals can be decomposed into components that are invariant under translations. Indeed, for any $\lambda\vdash n$, the mapping $(\sigma,\mathcal{B})\mapsto \sigma(\mathcal{B})$ is actually a transitive action of $\Sn$ on $\Part_{\lambda}(\n)$ so that $M^{\lambda}$ is a representation of $\Sn$ and is isomorphic to a decomposition involving the $S^{\mu}$'s for $\mu\vdash n$. This decomposition is given by Young's rule \citep[see for instance][]{Diaconis1988}:
\begin{equation}
\label{eq:Young-rule}
M^{\lambda} \cong S^{\lambda}\oplus\bigoplus_{\mu \rhd \lambda}K_{\mu,\lambda}S^{\mu},
\end{equation}
where $\rhd$ is the strict partial order associated to the dominance order on partitions of $n$, defined for $\lambda = (\lambda_{1}, \dots, \lambda_{r})$ and $\mu = (\mu_{1}, \dots, \mu_{s})$ by $\lambda \unrhd \mu$ if for all $j\in \{1, \dots, r\}$, $\sum_{i=1}^{j}\lambda_{i} \geq \sum_{i=1}^{j}\mu_{i}$, and  the $K_{\mu,\lambda}$'s are positive integers called the Kotska's numbers for $\mu\rhd\lambda$. Applying Young's rule \eqref{eq:Young-rule} recursively leads to
\begin{equation}
\label{eq:recursive-Young-rule}
\begin{aligned}
M^{(n)} &\cong S^{(n)}\\
M^{(n-1)} &\cong S^{(n-1,1)}\oplus M^{(n)}\\
M^{(n-2,2)} &\cong S^{(n-2,2)}\oplus M^{(n-1,1)}\\
M^{(n-2,1,1)} &\cong S^{(n-2,1,1)}\oplus M^{(n-2,2)}\oplus S^{(n-1,1)}.
\end{aligned}
\end{equation}
Equation \eqref{eq:recursive-Young-rule} means first that $S^{(n)}$ contains the part of information of level $0$. Then $S^{(n-1,1)}$ contains the additional part of information to get from $M^{(n)}$ to $M^{(n-1,1)}$, or in other words the part of information specific to level $1$. Then $S^{(n-2,2)}$ contains the additional part of information of $M^{(n-2,2)}$ to get from $M^{(n-1,1)}$ to $M^{(n-2,2)}$, or in other words the part of information specific to $(n-2,2)$-marginals. And finally $S^{(n-2,1,1)}$ contains the additional part of information to get from $M^{(n-2,2)}$ to $M^{(n-2,1,1)}$, or in other words the part of information specific to $(n-2,1,1)$-marginals, because the information of $S^{(n-1,1)}$ is already contained in $M^{(n-2,2)}$. More generally for any given $\lambda \vdash n$, the Specht module $S^{\lambda}$ localizes the information of $M^{\lambda}$ that is not contained in the $M^{\mu}$'s for $\mu \rhd \lambda$. In this sense, $S^{\lambda}$ localizes the part of absolute rank information that is specific to $\lambda$-marginals.

\subsection{The MRA representation and $\Sn$-based harmonic analysis provide ``orthogonal'' decompositions of rank information}
\label{subsec:two-decompositions}

If the $S^{\lambda}$'s localize parts of absolute rank information, we recall by contrast that for $B\in\SubsetsWE{\n}$, the space $H_{B}$ localizes the part of information specific to the marginal on $B$. It thus localizes ``relative'' rank information as it concerns the ranks of the items of $B$ inside rankings that involve only the items of $B$. To stress on the difference, we assert that such information is by nature not invariant under translation. To be more specific, we consider the natural action of $\Sn$ on $\Gn$ defined for $\sigma\in\Sn$ and $\pi = \pi_{1}\dots\pi_{k}\in\Gn$ by $\sigma\cdot\pi = \sigma(\pi_{1})\dots\sigma(\pi_{k})$ (by convention $\sigma(\bar{0}) = \bar{0}$). Denoting by $T_{\sigma}$ the associated translation operators on $\Space{\Gn}$, one has the following proposition.

\begin{proposition}[Action of translations on spaces $H_{B}$]
\label{prop:action-on-H}
For all $\sigma\in\Sn$ and $B\in\SubsetsWE{\n}$,
\[
T_{\sigma} (H_{B}) = H_{\sigma(B)}.
\]
\end{proposition}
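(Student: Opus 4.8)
The plan is to reduce everything to a single commutation identity between the marginal operators and the translation operators, and then read off both inclusions from the definition of $H_B$.

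First I would record the elementary but crucial observation that the $\Sn$-action preserves the subword relation: if $\pi = \pi_{1}\dots\pi_{k}$ and $\pi'$ is the subword obtained from positions $i_{1} < \dots < i_{m}$, then $\sigma\cdot\pi'$ is exactly the subword of $\sigma\cdot\pi$ read off at the same positions, so $\pi'\subset\pi$ if and only if $\sigma\cdot\pi'\subset\sigma\cdot\pi$. Using this, together with the defining formula \eqref{eq:marginal-operator} and $T_{\sigma}F(\pi) = F(\sigma^{-1}\cdot\pi)$, I would establish the commutation relation
\[
M_{A}\circ T_{\sigma} = T_{\sigma}\circ M_{\sigma^{-1}(A)} \qquad \text{for all } A\in\SubsetsWE{\n},\ \sigma\in\Sn.
\]
Concretely, for $\pi\in\Rank{A}$ one computes $M_{A}(T_{\sigma}F)(\pi) = \sum_{\tau\supset\pi}F(\sigma^{-1}\cdot\tau)$ and reindexes the sum by $\tau' = \sigma^{-1}\cdot\tau$; the preservation of the subword relation turns the condition $\pi\subset\tau$ into $\sigma^{-1}\cdot\pi\subset\tau'$, yielding $M_{\sigma^{-1}(A)}F(\sigma^{-1}\cdot\pi) = T_{\sigma}(M_{\sigma^{-1}(A)}F)(\pi)$. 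This identity is the heart of the argument; everything after it is formal.

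With the commutation relation in hand I would prove the forward inclusion $T_{\sigma}(H_{B})\subseteq H_{\sigma(B)}$. Since $F\in H_{B}$ is supported on $\Rank{B}$, the function $T_{\sigma}F$ is supported on $\Rank{\sigma(B)}$, hence lies in $\Space{\Rank{\sigma(B)}}$. For any $B''\subsetneq\sigma(B)$ we have $\sigma^{-1}(B'')\subsetneq B$, so the commutation relation gives $M_{B''}(T_{\sigma}F) = T_{\sigma}(M_{\sigma^{-1}(B'')}F) = 0$ because $F\in H_{B}$ kills all marginals on strict subsets of $B$. By Definition \ref{def:space-H} this means $T_{\sigma}F\in H_{\sigma(B)}$. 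The case $B=\emptyset$ is immediate from the convention $\sigma\cdot\bar{0} = \bar{0}$, which gives $T_{\sigma}(H_{\emptyset}) = H_{\emptyset} = H_{\sigma(\emptyset)}$.

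Finally I would obtain the reverse inclusion for free from the invertibility of the representation. Applying the forward inclusion to the pair $(\sigma^{-1},\sigma(B))$ gives $T_{\sigma^{-1}}(H_{\sigma(B)})\subseteq H_{B}$; composing with $T_{\sigma}$ and using $T_{\sigma}T_{\sigma^{-1}} = \mathrm{Id}$ yields $H_{\sigma(B)}\subseteq T_{\sigma}(H_{B})$, which combined with the forward inclusion gives the claimed equality. I expect no real obstacle beyond carefully justifying the reindexing in the commutation step; the only point that needs genuine care is checking that the subword relation is respected by the action, since this is precisely what makes the marginals equivariant.
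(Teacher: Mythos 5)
Your proof is correct and follows essentially the same route as the paper: the key step in both is the equivariance of restriction under the action, i.e.\ $(\sigma\cdot\pi)_{\vert\sigma(B')} = \sigma\cdot(\pi_{\vert B'})$, which you package as the operator identity $M_{A}\circ T_{\sigma} = T_{\sigma}\circ M_{\sigma^{-1}(A)}$ and which yields the forward inclusion exactly as in the paper. The only (cosmetic) difference is in upgrading the inclusion to an equality: the paper uses the dimension count $\dim H_{\sigma(B)} = \dim H_{B}$ together with injectivity of $T_{\sigma}$, whereas you apply the forward inclusion to $(\sigma^{-1},\sigma(B))$ and use $T_{\sigma}T_{\sigma^{-1}} = \mathrm{Id}$; both are valid.
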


\begin{proof}
Since $\vert\sigma(B)\vert = \vert B\vert$, $\dim H_{\sigma(B)} = \dim H_{B}$. It is thus sufficient to prove that $T_{\sigma} (H_{B}) \subset H_{\sigma(B)}$. For $F\in\Space{\Rank{B}}$, it is clear that $T_{\sigma}F = \sum_{\pi\in\Rank{B}}F(\pi)\delta_{\sigma\cdot\pi}\in\Space{\Rank{\sigma(B)}}$. We just need to show that $M_{C}T_{\sigma}F = 0$ for any $C\in\SubsetsWE{\sigma(B)}\setminus\{\sigma(B)\}$ or equivalently $M_{\sigma(B')}T_{\sigma}F = 0$ for any $B'\in\SubsetsWE{B}\setminus\{B\}$. This is proven by noticing that for any $\pi\in\Rank{B}$, $(\sigma\cdot\pi)_{\vert \sigma(B')} = \sigma\cdot(\pi_{\vert B'})$.
\end{proof}

Proposition \ref{prop:action-on-H} implies that for all $\sigma$ and $B$ such that $\sigma(B)\neq B$, one has $\sigma\cdot H_{B}\neq H_{B}$. The space $H_{B}$ is thus not invariant invariant under all $\Sn$-based translations. We now show however that there is a mathematical connection between the MRA and the harmonic analysis decompositions. For $k\in\{0,\dots,n\}\setminus\{1\}$, we define
\begin{equation}
H^{k} = \bigoplus_{B\subset\n,\ \vert B\vert = k}H_{B}, \qquad\text{so that}\qquad \Hn = \bigoplus_{\substack{k=0 \\ k\neq 1}}^{n}H^{k}.
\end{equation}
Space $H^{k}$ localizes all relative rank information of scale $k$. In addition, as $\vert \sigma(B)\vert = \vert B\vert$ for any $B\in\SubsetsWE{\n}$, Proposition \ref{prop:action-on-H} implies that $\sigma\cdot H^{k}$ for all $\sigma\in\Sn$ or in other words that $H^{k}$ is invariant under $\Sn$-based translations. It is thus also the case of the feature space $\Hn$ and both can be decomposed as a sum of irreducible representations $S^{\lambda}$:
\begin{equation}
\label{eq:H-k-decomposition}
H^{k} \cong \bigoplus_{\lambda \vdash n}\kappa_{\lambda}^{k}S^{\lambda} \qquad\text{and}\qquad \Hn \cong \bigoplus_{\substack{k=0 \\ k\neq 1}}^{n}\bigoplus_{\lambda \vdash n}\kappa_{\lambda}^{k}S^{\lambda},
\end{equation}
where the $\kappa_{\lambda}^{k}$'s are nonnegative integers. Eq. \eqref{eq:H-k-decomposition} means that the space $H^{k}$ also localize some absolute rank information, quantified through the multiplicities $\kappa_{\lambda}^{k}$ of the $S^{\lambda}$'s. The connection with the harmonic decomposition of $\Space{\Sn}$ is provided in the following proposition. Its proof is mainly formal and left in Appendix.

\begin{proposition}[Representation isomorphism]
\label{prop:connection-Fourier-MRA}
The spaces $\Space{\Sn}$ and $\Hn$ are isomorphic as representations of $\Sn$: $\Space{\Sn} \cong \Hn$. In particular one has 
\[
\sum_{\substack{k = 0 \\ k\neq 1}}^{n}\kappa_{\lambda}^{k} = d_{\lambda} \qquad\text{for all}\qquad \lambda\vdash n.
\]
\end{proposition}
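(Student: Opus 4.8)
The plan is to show directly that the synthesis operator $\phi_{\n}$, restricted to the feature space, is an isomorphism of $\Sn$-representations from $\Hn$ onto $\Space{\Sn} = \Space{\Rank{\n}}$. Theorem \ref{th:MRA-decomposition} applied to $A = \n$ already gives the vector-space statement: the sum $\bigoplus_{B\in\SubsetsWE{\n}}\phi_{\n}(H_{B})$ is direct and equals $\Space{\Rank{\n}}$, and $\phi_{\n}$ is injective on each $H_{B}$, so $\phi_{\n}|_{\Hn}:\Hn\rightarrow\Space{\Sn}$ is a linear isomorphism (the dimensions agree since $\dim\Hn = n!= \dim\Space{\Sn}$). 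By Proposition \ref{prop:action-on-H} the translation operators $T_{\sigma}$ preserve $\Hn = \bigoplus_{B}H_{B}$, so both sides carry the $\Sn$-action inherited from the action on $\Space{\Gn}$. It therefore only remains to check that $\phi_{\n}$ intertwines these two actions; an equivariant linear isomorphism is automatically an isomorphism of representations.

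The key step is the equivariance of $\phi_{\n}$, which I would verify on Dirac functions and extend by linearity. For $\pi\in\Gn$ and $\tau\in\Sn$, Definition \ref{def:wavelet-synthesis-operator} writes $\phi_{\n}\delta_{\pi}$ as a scalar multiple of the indicator $\1{\{\sigma\in\Rank{\n}\;\vert\;\pi\sqsubset\sigma\}}$. Since $T_{\tau}\delta_{\sigma} = \delta_{\tau\cdot\sigma}$ and the relabelling $\sigma\mapsto\tau\cdot\sigma$ preserves the contiguous-subword relation, i.e. $\pi\sqsubset\sigma$ if and only if $\tau\cdot\pi\sqsubset\tau\cdot\sigma$, applying $T_{\tau}$ simply shifts the indexing set of the indicator from the extensions of $\pi$ to the extensions of $\tau\cdot\pi$. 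One thus obtains $T_{\tau}\phi_{\n}\delta_{\pi} = \phi_{\n}\delta_{\tau\cdot\pi} = \phi_{\n}T_{\tau}\delta_{\pi}$, and hence $T_{\tau}\phi_{\n} = \phi_{\n}T_{\tau}$ on all of $\Space{\Gn}$, in particular on $\Hn$. This establishes $\Space{\Sn}\cong\Hn$.

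For the multiplicity identity I would invoke uniqueness of isotypic multiplicities. The action $\sigma\cdot\pi = \sigma(\pi_{1})\dots\sigma(\pi_{n})$ on $\Rank{\n}$ is simply transitive, so $\Space{\Rank{\n}}$ is (a copy of) the regular representation and decomposes as $\bigoplus_{\lambda\vdash n}d_{\lambda}S^{\lambda}$ by Eq. \eqref{eq:Fourier-decomposition}. On the other hand, combining the decomposition $\Hn = \bigoplus_{k\neq 1}H^{k}$ with Eq. \eqref{eq:H-k-decomposition} yields $\Hn\cong\bigoplus_{\lambda\vdash n}\bigl(\sum_{k\neq 1}\kappa_{\lambda}^{k}\bigr)S^{\lambda}$. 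Since $\Space{\Sn}\cong\Hn$ and the multiplicity of each irreducible $S^{\lambda}$ in a representation is uniquely determined, matching multiplicities gives $\sum_{k=0,\,k\neq 1}^{n}\kappa_{\lambda}^{k} = d_{\lambda}$ for every $\lambda\vdash n$.

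The proof is essentially formal, and the only genuine point requiring care is the identification of the $\Sn$-action: I must make sure the relabelling action on $\Rank{\n}$ used throughout (and under which $\phi_{\n}$ is shown equivariant) is the same simply transitive action whose character is that of the regular representation, so that Eq. \eqref{eq:Fourier-decomposition} legitimately applies. Checking simple transitivity — given two full rankings there is a unique $\tau$ relabelling one into the other — settles this and is the crux behind the otherwise routine bookkeeping.
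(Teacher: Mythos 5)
Your proof is correct and follows essentially the same route as the paper's: the linear isomorphism comes from Theorem \ref{th:MRA-decomposition} applied to $A=\n$, and the $\Sn$-equivariance $T_{\tau}\phi_{\n}=\phi_{\n}T_{\tau}$ is exactly the content of Proposition \ref{prop:translation-invariance} (with $\tau(\n)=\n$), which you simply re-derive on Dirac functions via the invariance of the contiguous-subword relation under relabelling. Your extra care in identifying the simply transitive action on $\Rank{\n}$ with the regular representation, and the matching of isotypic multiplicities, correctly fills in the step the paper leaves implicit in the phrase ``in particular''.
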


The multiplicity $\kappa_{\lambda}^{k}$ of each irreducible in $H^{k}$ can actually be calculated through a combinatorial formula. This is one of the major results established in \citet{Reiner13}. Its statement requires an additional definition. Notice that any standard Young tableau $Q$ contains a unique maximal subtableau of the form
\begin{center}
\begin{tikzpicture}[scale=0.6, every node/.style={scale=0.6}]
\draw (0,0) -- (0,1);
\draw (0,0) -- (1,0);
\draw (1,0) -- (1,1);
\draw (0,1) -- (1,1);
\node at (0.5,0.5) {$l+m$};
\draw (0,2) -- (0,4);
\draw (1,2) -- (1,4);
\draw (2,3) -- (2,4);
\draw (0,2) -- (1,2);
\draw (0,3) -- (2,3);
\draw (0,4) -- (2,4);
\node at (0.5,2.5) {$l+1$};
\node at (0.5,3.5) {$1$};
\node at (1.5,3.5) {$2$};
\draw (3,3) -- (3,4);
\draw (3,3) -- (4,3);
\draw (3,4) -- (4,4);
\draw (4,4) -- (4,3);
\node at (3.5,3.5) {$l$};
\draw[dashed] (0,1) -- (0,2);
\draw[dashed] (1,1) -- (1,2);
\draw[dashed] (2,3) -- (3,3);
\draw[dashed] (2,4) -- (3,4);
\end{tikzpicture}
\end{center}
with $1 \leq l \leq n$ and $0 \leq m \leq n-l$. Then the authors of \citet{Reiner13} define (in the proof of Proposition 6.23) the following quantity:
\begin{equation}
\label{eq:eig-definition}
\eig (Q) = \left\{
\begin{aligned}
l &\quad\text{if } m \text{ is even},\\
l-1 &\quad\text{if } m \text{ is odd}.\\
\end{aligned}\right.
\end{equation}

\begin{theorem}[Fourier decomposition of the spaces $H^{k}$]
\label{th:decomposition-H-k}
For $k\in\{0,\dots,n\}\setminus\{1\}$ and $\lambda\vdash n$, the multiplicity of $S^{\lambda}$ in $H^{k}$ is given by $\kappa_{\lambda}^{k} = \vert\{Q\in\SYT \;\vert\; \eig (Q) = n-k\}\vert$. In other words, the following decomposition holds
\[
H^{k} \cong \bigoplus_{\substack{Q\in\SYT \\ \eig (Q) = n-k}}S^{\,\shape (Q)}.
\]
\end{theorem}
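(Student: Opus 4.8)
The plan is to realise $H^{k}$ as a representation induced from a Young subgroup and then to import the combinatorial computation of \citet{Reiner13}. First I would identify the $\Sn$-module $H^{k}$. By Proposition \ref{prop:action-on-H} the translations permute the summands of $H^{k}=\bigoplus_{\vert B\vert=k}H_{B}$ via $T_{\sigma}H_{B}=H_{\sigma(B)}$, and $\Sn$ acts transitively on the $k$-subsets of $\n$. The stabiliser of $\set{k}$ is $\Sym{k}\times\Sym{\{k+1,\dots,n\}}\cong\Sym{k}\times\Sym{n-k}$; any element of the second factor fixes every item of $\set{k}$ and hence acts as the identity on $\Space{\Rank{\set{k}}}$, a fortiori on $H_{\set{k}}$, while the first factor acts by the natural $\Sym{k}$-representation on $H_{\set{k}}$. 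Thus $H_{\set{k}}$ is the external tensor product $H_{\set{k}}\boxtimes\mathbf{1}$ as a $\Sym{k}\times\Sym{n-k}$-module, and the standard description of a module obtained by transitively permuting isomorphic summands gives the isomorphism of $\Sn$-representations
\[
H^{k}\cong\mathrm{Ind}_{\Sym{k}\times\Sym{n-k}}^{\Sn}\bigl(H_{\set{k}}\boxtimes\mathbf{1}\bigr).
\]
Equivalently, since $\phi_{\n}$ is $\Sn$-equivariant and injective on each $H_{B}$, the module $H^{k}$ is isomorphic to the scale-$k$ component $\phi_{\n}(H^{k})$ of $\Space{\Sn}=\bigoplus_{k}\phi_{\n}(H^{k})$, which is the form in which it appears in \citet{Reiner13} and is consistent with Proposition \ref{prop:connection-Fourier-MRA}.

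Next I would reduce the multiplicities to the injective-word homology. As recalled after Theorem \ref{th:topology}, $H_{\set{k}}$ is, as a $\Sym{k}$-module, the top homology of the complex of injective words on $\set{k}$, whose decomposition into Specht modules $S^{\mu}$ (with $\mu\vdash k$) is established in \citet{Reiner13}. Inducing this decomposition to $\Sn$ against the trivial module $\mathbf{1}=\mathbf{1}_{\Sym{n-k}}$ is governed by the Pieri rule: each $S^{\mu}$ contributes the Specht modules $S^{\lambda}$ for every $\lambda\vdash n$ obtained from $\mu$ by adding a horizontal strip of $n-k$ boxes. This expresses $\kappa_{\lambda}^{k}$ as a finite sum of products of injective-word multiplicities and Pieri coefficients.

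The final and main step is to show that this induced multiplicity equals $\vert\{Q\in\SYT \;\vert\; \eig(Q)=n-k\}\vert$, with $\eig$ as in \eqref{eq:eig-definition}. This is precisely the content of \citet{Reiner13} (Proposition 6.23): the statistic $\eig$ is designed so that the graded piece indexed by the eigenvalue $n-k$ — which the first two steps identify with $H^{k}$ — decomposes as $\bigoplus_{\eig(Q)=n-k}S^{\shape(Q)}$. I expect this matching to be the delicate point, since it must track the branching of the injective-word homology through the induction at the level of individual tableaux rather than merely dimensions; the staircase extraction in the definition of $\eig(Q)$ is exactly what encodes this branching. Consequently my proof would reduce to verifying that $H^{k}$ coincides with the module decomposed in \citet{Reiner13}, so that their Proposition 6.23 applies verbatim and yields the stated decomposition.
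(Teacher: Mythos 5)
Your proposal is correct, but it takes a more constructive route than the paper, whose entire proof consists of the identification $H_{B}=\ker\pi_{B}$ in the notation of \citet{Reiner13} followed by a direct citation of their Theorem 6.26. Your first two steps are sound and are not in the paper: the transitive permutation of the summands $H_{B}$ (Proposition \ref{prop:action-on-H}) together with the pointwise-trivial action of $\Sym{\{k+1,\dots,n\}}$ on $\Rank{\set{k}}$ does give $H^{k}\cong\mathrm{Ind}_{\Sym{k}\times\Sym{n-k}}^{\Sn}\bigl(H_{\set{k}}\boxtimes\mathbf{1}\bigr)$, and the Pieri rule then expresses $\kappa_{\lambda}^{k}$ as a sum of injective-word homology multiplicities (the Reiner--Webb desarrangement-tableau count for $\Sym{k}$) over horizontal strips $\lambda/\mu$ of size $n-k$. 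What this buys is an explicit formula for $\kappa_{\lambda}^{k}$ that does not mention $\eig$ at all, reducing the theorem to a purely combinatorial identity between that Pieri sum and $\vert\{Q\in\SYT(\lambda)\;\vert\;\eig(Q)=n-k\}\vert$; what it costs is that this identity is exactly the tableau-level bookkeeping you defer to Proposition 6.23 of \citet{Reiner13}, so the hard content is still imported rather than proved. Two points to tighten if you pursue this route: (i) the paper invokes the isomorphism between $H_{\set{k}}$ and the top homology of the complex of injective words only for its dimension (Theorem \ref{th:topology}), so you must check that the isomorphism is $\Sym{k}$-equivariant before applying the Reiner--Webb decomposition; (ii) you should verify that Proposition 6.23 of \citet{Reiner13}, as stated, really yields the identity you need for the induced module, rather than for the filtration quotients they work with --- since the paper itself offers nothing beyond the citation of their Theorem 6.26, this is not a gap relative to the paper's own standard, but it is the one step you have named without checking.
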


In the notations of \citet{Reiner13}, $H_{B} = \ker \pi_{B}$, so that Theorem \ref{th:decomposition-H-k} is a reformulation of their theorem 6.26. It provides a new decomposition of rank information. For $\lambda\vdash n$ we denote by $U^{\lambda}$ the component $d_{\lambda}S^{\lambda}$ in the decomposition \eqref{eq:Fourier-decomposition} of $\Space{\Sn}$ (it is usually called an isotypic component). Then gathering Equation \eqref{eq:Fourier-decomposition} with Theorem \ref{th:decomposition-H-k} gives
\begin{equation}
\label{eq:all-decompositions}
\Space{\Sn} \cong \bigoplus_{Q\in\SYT}S^{\shape (Q)} \cong \bigoplus_{\lambda\vdash n}U^{\lambda} \cong \bigoplus_{\substack{k=0 \\ k\neq 1}}^{n}H^{k}.
\end{equation}
The first decomposition in Equation \eqref{eq:all-decompositions} is the full decomposition of $\Space{\Sn}$ into irreducible representations, each localizing an ``elementary'' part of absolute rank information. The second decomposition, into components $U^{\lambda}$, corresponds to the harmonic analysis decomposition where for each $\lambda\vdash n$, $U^{\lambda}$ localizes the part of absolute rank information specific to $\lambda$-marginals. The last decomposition, into spaces $H^{k}$, corresponds to the MRA decomposition where for each $k\in\{0,\dots,n\}\setminus\{1\}$, $H^{k}$ localizes the part of absolute information specific to scale $k$. These different decompositions are illustrated for $n = 4$ in Figure \ref{fig:decompositions}.

\begin{figure}
\newcommand{\ucong}{\mathbin{\rotatebox[origin=c]{-90}{$\cong$}}}
\[
\def\arraystretch{1.5}
\begin{array}{ccccccccccc}

\Space{\Sym{4}} & \cong & U^{(4)} & \oplus & U^{(3,1)} & \oplus & U^{(2,2)} & \oplus & U^{(2,1,1)} & \oplus & U^{(1,1,1,1)}\\
\ucong				&		& \ucong  & 	   & \ucong    &        & \ucong    &        & \ucong      &        & \ucong       \\

H^{4} & \cong & & & S^{
	\begin{tikzpicture}[scale=0.2, every node/.style={scale=0.4}]
	\draw (0,0) -- (1,0);
	\draw (0,1) -- (3,1);
	\draw (0,2) -- (3,2);
	\draw (0,0) -- (0,2);
	\draw (1,0) -- (1,2);
	\draw (2,1) -- (2,2);
	\draw (3,1) -- (3,2);
	\node at (0.5,0.5) {2};
	\node at (0.5,1.5) {1};
	\node at (1.5,1.5) {3};
	\node at (2.5,1.5) {4};
	\end{tikzpicture}
} & \oplus & 
S^{
	\begin{tikzpicture}[scale=0.2, every node/.style={scale=0.4}]
	\draw (0,0) -- (2,0);
	\draw (0,1) -- (2,1);
	\draw (0,2) -- (2,2);
	\draw (0,0) -- (0,2);
	\draw (1,0) -- (1,2);
	\draw (2,0) -- (2,2);
	\node at (0.5,0.5) {2};
	\node at (0.5,1.5) {1};
	\node at (1.5,1.5) {3};
	\node at (1.5,0.5) {4};
	\end{tikzpicture}
} & \oplus & 
S^{
	\begin{tikzpicture}[scale=0.2, every node/.style={scale=0.4}]
	\draw (0,0) -- (1,0);
	\draw (0,1) -- (1,1);
	\draw (0,2) -- (2,2);
	\draw (0,3) -- (2,3);
	\draw (0,0) -- (0,3);
	\draw (1,0) -- (1,3);
	\draw (2,2) -- (2,3);
	\node at (0.5,0.5) {4};
	\node at (0.5,1.5) {2};
	\node at (0.5,2.5) {1};
	\node at (1.5,2.5) {3};
	\end{tikzpicture}
} & \oplus & 
S^{
	\begin{tikzpicture}[scale=0.2, every node/.style={scale=0.4}]
	\draw (0,0) -- (1,0);
	\draw (0,1) -- (1,1);
	\draw (0,2) -- (1,2);
	\draw (0,3) -- (1,3);
	\draw (0,4) -- (1,4);
	\draw (0,0) -- (0,4);
	\draw (1,0) -- (1,4);
	\node at (0.5,0.5) {4};
	\node at (0.5,1.5) {3};
	\node at (0.5,2.5) {2};
	\node at (0.5,3.5) {1};
	\end{tikzpicture}
}\\
\oplus	            &       &         &        & \oplus    &        & \oplus    &        & \oplus      &               \\
H^{3} & \cong & & & S^{
 	\begin{tikzpicture}[scale=0.2, every node/.style={scale=0.4}]
 	\draw (0,0) -- (1,0);
 	\draw (0,1) -- (3,1);
 	\draw (0,2) -- (3,2);
 	\draw (0,0) -- (0,2);
 	\draw (1,0) -- (1,2);
 	\draw (2,1) -- (2,2);
 	\draw (3,1) -- (3,2);
 	\node at (0.5,0.5) {3};
 	\node at (0.5,1.5) {1};
 	\node at (1.5,1.5) {2};
 	\node at (2.5,1.5) {4};
 	\end{tikzpicture}
 } & \oplus & 
S^{
	\begin{tikzpicture}[scale=0.2, every node/.style={scale=0.4}]
	\draw (0,0) -- (2,0);
	\draw (0,1) -- (2,1);
	\draw (0,2) -- (2,2);
	\draw (0,0) -- (0,2);
	\draw (1,0) -- (1,2);
	\draw (2,0) -- (2,2);
	\node at (0.5,0.5) {3};
	\node at (0.5,1.5) {1};
	\node at (1.5,1.5) {2};
	\node at (1.5,0.5) {4};
	\end{tikzpicture}
} & \oplus & 
S^{
	\begin{tikzpicture}[scale=0.2, every node/.style={scale=0.4}]
	\draw (0,0) -- (1,0);
	\draw (0,1) -- (1,1);
	\draw (0,2) -- (2,2);
	\draw (0,3) -- (2,3);
	\draw (0,0) -- (0,3);
	\draw (1,0) -- (1,3);
	\draw (2,2) -- (2,3);
	\node at (0.5,0.5) {3};
	\node at (0.5,1.5) {2};
	\node at (0.5,2.5) {1};
	\node at (1.5,2.5) {4};
	\end{tikzpicture}
} & & \\
\oplus				&       &         &        & \oplus    &        &           &        & \oplus      &          \\
H^{2} & \cong & & & S^{
 	\begin{tikzpicture}[scale=0.2, every node/.style={scale=0.4}]
 	\draw (0,0) -- (1,0);
 	\draw (0,1) -- (3,1);
 	\draw (0,2) -- (3,2);
 	\draw (0,0) -- (0,2);
 	\draw (1,0) -- (1,2);
 	\draw (2,1) -- (2,2);
 	\draw (3,1) -- (3,2);
 	\node at (0.5,0.5) {4};
 	\node at (0.5,1.5) {1};
 	\node at (1.5,1.5) {2};
 	\node at (2.5,1.5) {3};
 	\end{tikzpicture}
 } & & \oplus & & 
S^{
	\begin{tikzpicture}[scale=0.2, every node/.style={scale=0.4}]
	\draw (0,0) -- (1,0);
	\draw (0,1) -- (1,1);
	\draw (0,2) -- (2,2);
	\draw (0,3) -- (2,3);
	\draw (0,0) -- (0,3);
	\draw (1,0) -- (1,3);
	\draw (2,2) -- (2,3);
	\node at (0.5,0.5) {4};
	\node at (0.5,1.5) {3};
	\node at (0.5,2.5) {1};
	\node at (1.5,2.5) {2};
	\end{tikzpicture}
} & & \\
\oplus				&       &         &        &           &        &           &        &             &               \\
H^{0} & \cong & S^{
 	\begin{tikzpicture}[scale=0.2, every node/.style={scale=0.4}]
 	\draw (0,0) -- (4,0);
 	\draw (0,1) -- (4,1);
 	\draw (0,0) -- (0,1);
 	\draw (1,0) -- (1,1);
 	\draw (2,0) -- (2,1);
 	\draw (3,0) -- (3,1);
 	\draw (4,0) -- (4,1);
 	\node at (0.5,0.5) {1};
 	\node at (1.5,0.5) {2};
 	\node at (2.5,0.5) {3};
 	\node at (3.5,0.5) {4};
 	\end{tikzpicture}
 }
 & & & & & & & &
\end{array}
\]
\caption{Harmonic analysis and MRA decompositions of $\Space{\Sym{4}}$}
\label{fig:decompositions}
\end{figure}

Using the combinatorial formula of Theorem \ref{th:decomposition-H-k} to calculate the multiplicities $\kappa_{\lambda}^{k}$, one can obtain some further properties. They are given in the following proposition.

\begin{proposition}[Properties of the multiplicities $\kappa_{\lambda}^{k}$]
\label{prop:properties-multiplicities}
Let $k\in\{0,\dots,n\}\setminus\{1\}$. One has the following properties:
\begin{enumerate}
	\item The part of absolute rank information of scale $k$ (in terms of MRA) is included in the part of absolute rank information of order $k$ (in terms of harmonic analysis): for any $\lambda\vdash n$ such that $\lambda_{1} < n-k$, $\kappa_{\lambda}^{k} = 0$.
	\item There is exactly one copy of the Specht module $S^{(n-1,1)}$ in each of the decompositions of the spaces $H^{k}$ for $k\in\{2,\dots,n\}$.
\end{enumerate}
\end{proposition}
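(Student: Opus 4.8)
The plan is to read everything off the combinatorial formula of Theorem \ref{th:decomposition-H-k}, which states that $\kappa_\lambda^k$ equals the number of standard Young tableaux $Q$ of shape $\lambda$ with $\eig(Q) = n-k$, and to analyze the statistic $\eig$ directly from its definition \eqref{eq:eig-definition}. The key structural fact I would isolate is that the horizontal arm of the maximal hook subtableau attached to $Q$ lies inside the first row of $Q$.

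For the first claim I would first establish the bound $\eig(Q) \le \lambda_1$ for every $Q \in \SYT(\lambda)$. Indeed, in the notation of \eqref{eq:eig-definition} the arm of the maximal hook is filled with $1, 2, \ldots, l$ and occupies the first $l$ boxes of the first row of $Q$, so $l \le \lambda_1$; since $\eig(Q) \in \{l-1, l\}$, this gives $\eig(Q) \le \lambda_1$. Consequently, if $\lambda_1 < n-k$, then no tableau of shape $\lambda$ can satisfy $\eig(Q) = n-k$, the indexing set of Theorem \ref{th:decomposition-H-k} is empty, and $\kappa_\lambda^k = 0$.

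For the second claim I would enumerate $\SYT((n-1,1))$ explicitly. Such a tableau is determined by the single entry $j$ in its second-row box, which may be any of $2, \ldots, n$; this yields the $d_{(n-1,1)} = n-1$ tableaux $Q_2, \ldots, Q_n$, where the first row of $Q_j$ carries $1, \ldots, n$ with $j$ deleted in increasing order. Then I would compute $\eig(Q_j)$: the arm of the maximal hook is the initial run $1, 2, \ldots, j-1$ along the first row (it stops at column $j$, whose entry is $j+1 \ne j$), after which the hook turns down the first column into the unique second-row box, containing $j = l+1$. Hence $l = j-1$ and $m = 1$ for $j \ge 3$, while the boundary case $j=2$ degenerates to $l = 1$, $m = 1$, fitting the same description. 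As $m=1$ is odd, \eqref{eq:eig-definition} gives $\eig(Q_j) = l-1 = j-2$ for every $j \in \{2,\ldots,n\}$. Thus $j \mapsto \eig(Q_j) = j-2$ is a bijection from $\{2,\ldots,n\}$ onto $\{0,\ldots,n-2\}$, so for each $k \in \{2,\ldots,n\}$ the value $n-k \in \{0,\ldots,n-2\}$ is attained by exactly one tableau, giving $\kappa_{(n-1,1)}^k = 1$.

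The main obstacle is purely in reading the maximal hook subtableau correctly from \eqref{eq:eig-definition}: confirming that the arm is the maximal run of consecutive values along the first row, that the leg then descends the first column, and above all tracking the parity of $m$. For the shape $(n-1,1)$ the single second-row box forces $m=1$ uniformly, which makes the computation clean, so the only real care needed is the degenerate case $j=2$. As a consistency check one sees that these multiplicities sum to $n-1 = d_{(n-1,1)}$, in agreement with the identity $\sum_k \kappa_\lambda^k = d_\lambda$ of Proposition \ref{prop:connection-Fourier-MRA}.
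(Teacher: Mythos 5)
Your proof is correct. For Property 1 you argue exactly as the paper does: $\eig(Q)\leq \lambda_{1}$ for any $Q\in\SYT(\lambda)$ (the paper writes $\lambda_{\lambda_{1}}$, evidently a typo for $\lambda_{1}$), so the indexing set in Theorem \ref{th:decomposition-H-k} is empty when $\lambda_{1}<n-k$; your justification via $l\leq\lambda_{1}$ and $\eig(Q)\in\{l-1,l\}$ just makes the paper's one-line observation explicit. For Property 2 you genuinely diverge: the paper simply cites Proposition 6.34 of \citet{Reiner13}, whereas you derive the statement directly from Theorem \ref{th:decomposition-H-k} by enumerating $\SYT((n-1,1))$ and checking that $Q_{j}\mapsto\eig(Q_{j})=j-2$ is a bijection from $\{2,\dots,n\}$ onto $\{0,\dots,n-2\}$ (note $m=1$ uniformly, since the single second-row box necessarily contains $l+1$). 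This computation is right --- it matches the placement of the $(3,1)$-tableaux in Figure \ref{fig:decompositions} for $n=4$ --- and it buys a self-contained, elementary proof where the paper outsources the claim; the only cost is that it relies on reading the statistic $\eig$ correctly from \eqref{eq:eig-definition}, and your consistency check against $\sum_{k}\kappa_{\lambda}^{k}=d_{\lambda}$ confirms you have. One micro-quibble: the parenthetical ``it stops at column $j$, whose entry is $j+1$'' does not literally apply when $j=n$ (there is no column $n$ in the first row), but the conclusion $l=j-1$ still holds there, so nothing breaks.
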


\begin{proof}
To show Property 1., notice that for $Q\in\SYT (\lambda)$, one necessarily has $\eig (Q) \leq \lambda_{\lambda_{1}}$ by definition \eqref{eq:eig-definition}. Thus if $\lambda_{1} < n-k$ then $\vert\{Q\in\SYT \;\vert\; \eig (Q) = n-k\}\vert =0$ and therefore $\kappa_{\lambda}^{k} = 0$ by Theorem \ref{th:decomposition-H-k}. Property 2. is given by proposition 6.34 from \citet{Reiner13}.
\end{proof}

Notice that the decompositions illustrated by Figure \ref{fig:decompositions} satisfy all properties from Propositions \ref{prop:connection-Fourier-MRA} and \ref{prop:properties-multiplicities}.

\subsection{Alternative embedding of the MRA decomposition into $\Space{\Sn}$, connection with card shuffling and generalized Kendall's tau distances}

In this subsection we provide some further insights about the use of the alternative embedding $\phi'_{\n}$ considered in Subsection \ref{subsec:embedding-operator}, especially about its connection with $\Sn$-based harmonic analysis and card shuffling. We recall that the operator $\phi'_{\n}$ is defined in Eq. \eqref{eq:alternative-embedding-operator} by
\[
\phi'_{\n}:\qquad \Space{\Gn}\rightarrow\Space{\Sn},\qquad f\mapsto\sum_{\pi\in\Gn}\frac{\vert\pi\vert !}{n!}f(\pi)\1{\Sn(\pi)}.
\]
We also recall that $\Sn(\pi)$ is the set of linear extensions of $\pi\in\Gn$, which can be seen as the set of full rankings that induce $\pi$ on $c(\pi)$ or as the set of all the possible configurations obtained by shuffling $\pi$ with any ranking $\pi'\in\Rank{\n\setminus c(\pi)}$. The former interpretation is behind the approaches introduced in \citet{YLA02}, \citet{Kondor2010} and \citet{Sun2012} and more specifically the empirical ranking model $\widehat{p}_{N}$ defined in Equation \eqref{eq:biased-empirical-estimator} is actually equal to $\widehat{p}_{N} = \frac{1}{N}\sum_{i=1}^{N}\phi'_{\n}\left(\delta_{\Pi^{(i)}}\right)$. \citet{HGG09} also follow this interpretation and define probabilistic models on $\Sn$ as linear combinations of elements of the form $\alpha\1{\Sn(ij)} + (1-\alpha)\1{\Sn(ji)}$ with $1\leq i < j\leq n$ and $0\leq\alpha\leq 1$. We show that the part of information contained in these models can be decomposed into components that localize the same part of information as the spaces $H^{k}$.

For $k\in\{2,\dots,n\}$, we denote by $\Gamma_{n}^{k} := \Gamma_{\n}^{k}$ the set of all incomplete rankings of size $k$. Set $V^{0} = \mathbb{R}\1{\Sn}$ the space of constant functions on $\Sn$ and define for $k\in\{2,\dots,n\}$ the space $V^{k} = \phi'_{\n}(\Space{\Gamma^{k}}) = \Span\{\1{\Sn(\pi)} \;\vert\; \pi\in\Gamma^{k}\}$. One has the following nested sequence of spaces
\[
V^{0}\subset V^{2}\subset\dots\subset V^{n} = \Space{\Sn}.
\]
Indeed, $\1{\Sn} = \1{\Sn(ab)} + \1{\Sn(ba)}$ for any distinct $a,b\in\n$, and for $k\in\{2,\dots,n-1\}$, $\pi = \pi_{1}\dots\pi_{k}$ and $a\not\in c(\pi)$, one clearly has $\1{\Sn(\pi)} = \1{\Sn(a\pi_{1}\dots\pi_{k})} + \1{\Sn(\pi_{1}a\dots\pi_{k})} + \dots \1{\Sn(\pi_{1}\dots\pi_{k}a)}$. We then define the space $W^{2}$ as the orthogonal supplementary of $V^{0}$ in $V^{2}$ and for $k\in\{3,\dots,n\}$ the space $W^{k}$ as the orthogonal supplementary of $V^{k-1}$ in $V^{k}$. One thus has $V^{0}\overset{\perp}{\oplus}W^{2} = V^{2}$ and
\[
V^{k-1}\overset{\perp}{\oplus}W^{k} = V^{k} \text{ for all } k\in\{3,\dots,n\} \qquad\text{so that}\qquad \Space{\Sn} = V^{0}\oplus\bigoplus_{k=2}^{n}W^{k}.
\]
One would be highly tempted to say that for $k\in\{2,\dots,n\}$, $W^{k}$ localizes the part of information specific to scale $k$ and $V^{k}$ localizes the part of information of scales lower or equal than $k$. Fortunately, the following theorem establishes this statement.

\begin{theorem}[Decomposition associated to the alternative embedding]
\label{th:connection-other-MRA}
One has 
\[
V^{0} = \phi'_{\n}(H^{0})\qquad\text{and}\qquad W^{k} = \phi'_{\n}(H^{k}) \text{ for all } k\in\{2,\dots,n\}.
\]
In addition, $\phi'_{\n}$ establishes an isomorphism of representations of $\Sn$ between $\mathbb{H}_{n}$ and $\Space{\Sn}$, so that
\[
V^{0}\cong S^{(n)} \qquad\text{and}\qquad W^{k}\cong H^{k} \text{ for all } k\in\{2,\dots,n\}.
\]
\end{theorem}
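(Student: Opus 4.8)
The plan is to show that the restriction of $\phi'_{\n}$ to the feature space $\Hn\subseteq\Space{\Gn}$ is an $\Sn$-equivariant isomorphism onto $\Space{\Sn}$ carrying $H^{0}$ onto $V^{0}$ and each $H^{k}$ onto $W^{k}$; the representation statements then follow formally. Equivariance is immediate, since $\Sn(\sigma\cdot\pi)=\sigma\cdot\Sn(\pi)$ gives $\phi'_{\n}(T_{\sigma}f)=T_{\sigma}\phi'_{\n}(f)$ for every $f\in\Space{\Gn}$. I would first record the computation $\phi'_{\n}\delta_{\bar 0}=\frac{1}{n!}\1{\Sn}$, so that $\phi'_{\n}(H^{0})=V^{0}$, and then reduce the identity $\phi'_{\n}(H^{k})=W^{k}$ to two facts: (i) $\phi'_{\n}\bigl(\bigoplus_{0\le j\le k,\,j\ne1}H^{j}\bigr)=V^{k}$ for every $k$, and (ii) $\phi'_{\n}(H^{k})$ is orthogonal to $V^{k-1}$ (with the convention $V^{1}=V^{0}$). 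Indeed, (i) gives $V^{k}=V^{k-1}+\phi'_{\n}(H^{k})$, and combined with (ii) this forces $V^{k}=V^{k-1}\overset{\perp}{\oplus}\phi'_{\n}(H^{k})$, so $\phi'_{\n}(H^{k})$ is exactly the orthogonal complement $W^{k}$ of $V^{k-1}$ in $V^{k}$.

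For (i), the inclusion $\subseteq$ is clear because $\phi'_{\n}(H^{j})\subseteq\phi'_{\n}(\Space{\Gamma^{j}})=V^{j}\subseteq V^{k}$. For the reverse inclusion I would use a pullback decomposition of each local space: for $B\in\Subsets{\n}$,
\[
\Space{\Rank{B}}=\bigoplus_{B''\subseteq B}M_{B''}^{*}\bigl(H_{B''}\bigr),
\]
where $M_{B''}^{*}:\Space{\Rank{B''}}\to\Space{\Rank{B}}$, $g\mapsto(\sigma\mapsto g(\sigma_{\vert B''}))$, is the adjoint of the marginal operator. This is proved by checking that the summands are mutually orthogonal — using that $M_{B_{1}''}M_{B''}^{*}g$ factors through the marginal of $g$ on $B_{1}''\cap B''$, which vanishes when $B''\not\subseteq B_{1}''$ since $g\in H_{B''}$ — together with the dimension count $\sum_{j}\binom{|B|}{j}d_{j}=|B|!$. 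Expanding $\delta_{\pi}$ ($\pi\in\Rank{B}$) in this decomposition and applying the elementary identity $\phi'_{\n}(M_{B''}^{*}g)=\tfrac{|B|!}{|B''|!}\,\phi'_{\n}(g)$, which holds because $\Sn(\pi'')=\bigsqcup_{\sigma\in\Rank{B},\,\sigma_{\vert B''}=\pi''}\Sn(\sigma)$, I get $\phi'_{\n}\delta_{\pi}\in\sum_{j\le k}\phi'_{\n}(H^{j})$. Since singleton blocks contribute $H_{B''}=0$ (as $d_{1}=0$), every $\1{\Sn(\pi)}=\tfrac{n!}{k!}\phi'_{\n}\delta_{\pi}$ with $|\pi|=k$ lies in $\sum_{j\le k}\phi'_{\n}(H^{j})$, whence $V^{k}\subseteq\sum_{j\le k}\phi'_{\n}(H^{j})$.

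The heart of the argument, and the step I expect to be hardest, is the orthogonality (ii). For $G\in H_{B}$ with $|B|=k$ and $\pi'\in\Rank{B'}$ with $|B'|<k$, grouping full rankings by their restriction to $C:=B\cup B'$ reduces the inner product to
\[
\bigl\langle\phi'_{\n}G,\1{\Sn(\pi')}\bigr\rangle_{\Sn}=\frac{k!}{|C|!}\sum_{\sigma\in\Rank{B}}G(\sigma)\,N(\sigma,\pi'),
\]
where $N(\sigma,\pi')$ counts the linear orders of $C$ restricting to $\sigma$ on $B$ and to $\pi'$ on $B'$. It then suffices to show that $N(\cdot,\pi')$ lies in $H_{B}^{\perp}=\sum_{B''\subsetneq B}\image M_{B''}^{*}$, i.e. is a sum of functions each depending only on the restriction of $\sigma$ to a proper subset of $B$. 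I would establish this from the explicit count $N(\sigma,\pi')=\mathbb{I}\{\sigma_{\vert B\cap B'}=\pi'_{\vert B\cap B'}\}\prod_{g}\binom{m_{g}(\sigma)+t_{g}}{t_{g}}$, obtained by inserting the items of $D:=B'\setminus B$ into $\sigma$ in the gaps prescribed by $\pi'$, where $m_{g}(\sigma)$ is the number of free items of $B\setminus B'$ in gap $g$ and $\sum_{g}t_{g}=|D|$. Expanding this polynomial in the pairwise order-indicators $\mathbb{I}\{i<_{\sigma}j\}$, each monomial involves at most $\sum_{g}t_{g}=|D|$ distinct free items, hence is supported on a set of size at most $|B\cap B'|+|D|=|B'|<k$, i.e. a proper subset of $B$. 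This places $N(\cdot,\pi')$ in $H_{B}^{\perp}$ and kills the inner product; summing over the blocks appearing in $G\in H^{k}$ yields $\phi'_{\n}(H^{k})\perp V^{k-1}$. The delicate point is precisely this degree bound: only because $\pi'$ has strictly fewer items than $B$ does every monomial avoid at least one free item, which is what makes $N(\cdot,\pi')$ factor through proper marginals.

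Finally I would assemble the pieces. By (ii) the subspaces $\phi'_{\n}(H^{0}),\phi'_{\n}(H^{2}),\dots,\phi'_{\n}(H^{n})$ are pairwise orthogonal, and by (i) they span $\Space{\Sn}$; since $\dim\Hn=n!=\dim\Space{\Sn}$, this forces $\phi'_{\n}$ to be injective on each $H^{k}$ and hence an isomorphism $\Hn\to\Space{\Sn}$, with $\phi'_{\n}(H^{0})=V^{0}$ and $\phi'_{\n}(H^{k})=W^{k}$. Equivariance upgrades this to an isomorphism of $\Sn$-representations, so $W^{k}\cong H^{k}$ for $k\ge2$ and $V^{0}\cong H^{0}=H_{\emptyset}=\mathbb{R}\bar{0}$, the latter being the trivial representation $S^{(n)}$, which completes the proof.
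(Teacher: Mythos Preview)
Your overall strategy differs from the paper's: the paper reaches $\phi'_{\n}(H_A)\subseteq W^k$ via an explicit commutation formula for $M_B\phi'_{A\cup B}$ (Property~3 of Lemma~\ref{lem:properties-alternative-embedding}, the $\phi'$-analogue of Lemma~\ref{lem:commutativity}), whereas your step~(ii) is a direct shuffle count showing $N(\,\cdot\,,\pi')\in H_B^{\perp}$. That part of your argument is correct and arguably more transparent than the paper's route.

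There is, however, a genuine gap in step~(i). Your claim that the pullback decomposition $\Space{\Rank{B}}=\bigoplus_{B''\subseteq B}M_{B''}^{*}(H_{B''})$ is \emph{orthogonal} is false, and the justification --- that $M_{B_1''}M_{B''}^{*}g$ factors through $M_{B_1''\cap B''}g$ --- does not hold when $|B_1''|=|B''|$ but $B_1''\neq B''$. Concretely, take $B=\{1,2,3\}$, $B_1''=\{1,2\}$, $B''=\{1,3\}$, $g=\delta_{13}-\delta_{31}\in H_{\{1,3\}}$: then $M_{\{1,2\}}M_{\{1,3\}}^{*}g=\delta_{12}-\delta_{21}\neq0$, while $B_1''\cap B''=\{1\}$ and $M_{\emptyset}g=0$; indeed $\langle M_{\{1,2\}}^{*}(\delta_{12}-\delta_{21}),\,M_{\{1,3\}}^{*}(\delta_{13}-\delta_{31})\rangle=2$. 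Fortunately you only use the \emph{spanning} $\Space{\Rank{B}}=\sum_{B''}M_{B''}^{*}(H_{B''})$, and that does hold by induction on $|B|$: write $\Space{\Rank{B}}=H_B\oplus H_B^{\perp}$ with $H_B^{\perp}=\sum_{B''\subsetneq B}\image M_{B''}^{*}=\sum_{B''\subsetneq B}M_{B''}^{*}(\Space{\Rank{B''}})$, apply the inductive hypothesis to each $\Space{\Rank{B''}}$, and use $M_{B''}^{*}M_{B'''}^{*}=M_{B'''}^{*}$. With this repair your assembly goes through: step~(ii) gives pairwise orthogonality of the $\phi'_{\n}(H^{j})$'s, the corrected step~(i) gives that they span $\Space{\Sn}$, and the dimension count $\dim\Hn=n!$ then forces $\phi'_{\n}(H^{k})=W^{k}$.
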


Refer to the Appendix for the proof of Theorem \ref{th:connection-other-MRA}. The latter draws the connection between the MRA decomposition and the models that involve the embedding operator $\phi'_{\n}$. In particular it allows to say that for $\pi\in\Gn$, the indicator function $\1{\Sn(\pi)}$ contains absolute rank information up to scale $\vert\pi\vert$. It also naturally recovers some already known results. For instance applying Theorem \ref{th:decomposition-H-k} to $W^{2}$ gives Proposition 16 in \citet{HGG09}, or applying Property 1. of Proposition \ref{prop:properties-multiplicities} to $W^{k}$ can be seen as a corollary of Proposition 7 in \citet{Kondor2010}.

The spaces $W^{k}$ also have an interesting connection with card shuffling, more specifically with random-to-random shuffles. The analysis of card shuffling was introduced in the seminal contributions \citet{AD86} and \citet{BD92}. It sees a configuration of a deck of $n$ cards as a permutation of $\n$. The uncertainty about the configuration is then captured by a probability distribution over $\Sn$. The principle of the analysis of card shuffling is to study the properties of a Markov chain on $\Sn$ that represents a particular shuffle. The \textit{random-to-random shuffle}, studied in depth in \citet{UR2002}, consists in picking a card at random from the deck and replacing it at random in the deck. More generally for $k\in\{1,\dots,n-2\}$, the $k$-random-to-random shuffle consists in picking $k$ cards at random from the deck and replacing them at random positions (and in a random order) in the deck. It happens that the transition matrices of the $k$-random-to-random shuffles can be expressed with incomplete rankings.

\begin{proposition}[Connection with card shuffling]
\label{prop:connection-card-shuffling}
For $k\in\{2,\dots,n-1\}$, the transition matrix $R_{k}$ of the $(n-k)$-random-to-random shuffling satisfies for any $f\in\Space{\Sn}$:
\[
R_{k}f = (n-k)!\left(\frac{k!}{n!}\right)^{2}\sum_{\pi\in\Gamma^{k}}\left\langle f,\1{\Sn(\pi)}\right\rangle\1{\Sn(\pi)}.
\]
\end{proposition}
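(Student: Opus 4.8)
The plan is to compute the transition matrix $R_{k}$ entrywise directly from the description of the $(n-k)$-random-to-random shuffle, recognize it as a symmetric similarity matrix indexed by agreements of induced rankings, and then expand $R_{k}f$ to recover the stated formula. The bulk of the work is the entrywise identification; the passage to $R_{k}f$ is a sum exchange.

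First I would unwind one step of the shuffle starting from a configuration $\sigma\in\Sn$. Removing $n-k$ cards uniformly at random amounts to keeping a subset $A\subset\n$ with $\vert A\vert = k$ chosen uniformly among the $\binom{n}{k}$ possibilities; the kept cards retain their relative order, which is exactly the induced ranking $\sigma_{\vert A}\in\Rank{A}$. Reinserting the $n-k$ removed cards at random positions and in a random order produces a uniformly distributed element of $\Sn(\sigma_{\vert A})$, the $n!/k!$ interleavings of the ordered block of kept cards with an ordered arrangement of the removed cards being in bijection with the linear extensions of $\sigma_{\vert A}$ and equiprobable. Hence, for any $\tau\in\Sn$,
\[
R_{k}(\sigma,\tau) = \sum_{\substack{A\subset\n\\ \vert A\vert = k}}\frac{1}{\binom{n}{k}}\cdot\frac{k!}{n!}\,\mathbb{I}\{\tau_{\vert A} = \sigma_{\vert A}\} = (n-k)!\left(\frac{k!}{n!}\right)^{2}\#\{A\subset\n \;\vert\; \vert A\vert = k,\ \sigma_{\vert A} = \tau_{\vert A}\},
\]
where the last equality uses $\binom{n}{k}^{-1}k!/n! = (n-k)!\,(k!/n!)^{2}$. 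In particular $R_{k}$ is symmetric, so the convention chosen for $R_{k}f$ is immaterial.

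Next I would translate the agreement count into a sum over incomplete rankings via the identity
\[
\#\{A\subset\n \;\vert\; \vert A\vert = k,\ \sigma_{\vert A} = \tau_{\vert A}\} = \sum_{\pi\in\Gamma^{k}}\mathbb{I}\{\sigma\in\Sn(\pi)\}\,\mathbb{I}\{\tau\in\Sn(\pi)\},
\]
which holds because a subset $A$ of size $k$ with $\sigma_{\vert A} = \tau_{\vert A}$ corresponds to the unique $\pi = \sigma_{\vert A}\in\Rank{A}\subset\Gamma^{k}$ induced by both $\sigma$ and $\tau$, and conversely every such $\pi$ fixes $A = c(\pi)$. Substituting this into $R_{k}f(\tau) = \sum_{\sigma\in\Sn}R_{k}(\sigma,\tau)f(\sigma)$ and exchanging the order of summation over $\sigma$ and $\pi$ gives
\[
R_{k}f(\tau) = (n-k)!\left(\frac{k!}{n!}\right)^{2}\sum_{\pi\in\Gamma^{k}}\left(\sum_{\sigma\in\Sn(\pi)}f(\sigma)\right)\1{\Sn(\pi)}(\tau),
\]
and since $\sum_{\sigma\in\Sn(\pi)}f(\sigma) = \left\langle f,\1{\Sn(\pi)}\right\rangle$, this is exactly the claimed expression evaluated at $\tau$.

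The main obstacle is the justification, inside the first step, that reinsertion outputs a \emph{uniform} element of $\Sn(\sigma_{\vert A})$ rather than merely some law supported on it; everything afterwards is bookkeeping. I would settle this by modelling reinsertion as a uniformly random interleaving of the length-$k$ ordered block with a uniformly random ordering of the $n-k$ removed cards, these two choices being independent, and checking that the resulting map onto $\Sn(\sigma_{\vert A})$ is a bijection carrying the product uniform law to the uniform law; the count $\binom{n}{k}(n-k)! = n!/k! = \vert\Sn(\sigma_{\vert A})\vert$ confirms it is a bijection. Care must also be taken that the shuffle picks a uniform subset of \emph{cards} (not positions), which is what makes $A$ uniform over the $\binom{n}{k}$ subsets and thereby keeps $R_{k}$ symmetric.
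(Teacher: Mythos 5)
Your proof is correct and follows essentially the same route as the paper's: both arguments reduce one step of the $(n-k)$-random-to-random shuffle to the map $\delta_{\sigma}\mapsto\frac{1}{\binom{n}{k}}\sum_{\vert A\vert = k}\frac{k!}{n!}\1{\Sn(\sigma_{\vert A})}$ and then exchange sums, using $\left\langle f,\1{\Sn(\pi)}\right\rangle=\sum_{\sigma\in\Sn(\pi)}f(\sigma)$. The only cosmetic difference is that you pass through the entrywise formula for $R_{k}(\sigma,\tau)$ (which the paper records separately as Proposition \ref{prop:connection-Reiner}) and you spell out the uniformity of the reinsertion step via the count $\binom{n}{k}(n-k)!=n!/k!$, which the paper takes for granted.
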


\begin{proof}
If one picks $n-k$ cards from a configuration $\sigma\in\Sn$, the configuration of the remaining deck is $\sigma_{\vert A}$, where $A$ is the subset of $k$ cards that were not picked. Then replacing the $n-k$ cards at random positions and in a random order in the deck can lead to any configuration $\sigma\in\Sn(\pi)$. The $n-k$-random-to-random shuffle applied to the Dirac function $\delta_{\sigma}$ therefore decomposes as the sequence of mappings
\[
\delta_{\sigma} \qquad\mapsto\qquad \frac{1}{\binom{n}{k}}\sum_{A\subset\n,\ \vert A\vert = k}\delta_{\sigma_{\vert A}} \qquad\mapsto\qquad \frac{1}{\binom{n}{k}}\sum_{A\subset\n,\ \vert A\vert = k}\frac{k!}{n!}\1{\Sn(\sigma_{\vert A})}.
\]
Thus for $f = \sum_{\sigma\in\Sn}f(\sigma)\delta_{\sigma}$, one has
\[
R_{k}f = \sum_{\sigma\in\Sn}f(\sigma)\frac{1}{\binom{n}{k}}\sum_{A\subset\n,\ \vert A\vert = k}\frac{k!}{n!}\1{\Sn(\sigma_{\vert A})} = \frac{1}{\binom{n}{k}}\frac{k!}{n!}\sum_{\pi\in\Gamma^{k}}\1{\Sn(\pi)}\sum_{\sigma\in\Sn}f(\sigma)\mathbb{I}\{\pi\subset\sigma\}.
\]
This concludes the proof.
\end{proof}

By Proposition \ref{prop:connection-card-shuffling}, it is clear that for $k\in\{2,\dots,n-1\}$ the image space of $R_{k}$ is included in $V^{k}$ and that its null space contains all spaces $W^{j}$ for $k < j \leq n$: $\image R_{k} \subset V^{k}$ and $\ker R_{k} \supset \bigoplus_{j=k+1}^{n}W^{j}$. These results can actually be refined using the results from \citet{Reiner13}. The connection is established via the following proposition.

\begin{proposition}[Connection with matrices from \cite{Reiner13}]
\label{prop:connection-Reiner}
Let $k\in\{2,\dots,n-1\}$. For $\sigma,\sigma'\in\Sn$, $R_{k}(\sigma,\sigma')$ is proportional to the number of subwords of size $k$ that $\sigma$ and $\sigma'$ have in common:
\[
R_{k}(\sigma,\sigma') = (n-k)!\left(\frac{k!}{n!}\right)^{2} \vert\{A\subset\n \text{ with } \vert A\vert = k \ \vert\ \sigma_{\vert A} = \sigma'_{\vert A}\}\vert.
\]
\end{proposition}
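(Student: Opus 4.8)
The plan is to read the matrix entries of $R_{k}$ directly off the operator identity of Proposition \ref{prop:connection-card-shuffling}. With the standard convention $Tf(\sigma) = \sum_{\sigma'\in\Sn}T(\sigma,\sigma')f(\sigma')$ for the matrix of an operator $T$ on $\Space{\Sn}$, one has $R_{k}(\sigma,\sigma') = R_{k}\delta_{\sigma'}(\sigma)$. So the first step is simply to apply Proposition \ref{prop:connection-card-shuffling} with $f = \delta_{\sigma'}$ and then evaluate the resulting function at $\sigma$.

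For this I would use the two elementary identities $\left\langle\delta_{\sigma'},\1{\Sn(\pi)}\right\rangle = \1{\Sn(\pi)}(\sigma') = \mathbb{I}\{\pi\subset\sigma'\}$ and $\1{\Sn(\pi)}(\sigma) = \mathbb{I}\{\pi\subset\sigma\}$, both of which hold because $\Sn(\pi) = \{\tau\in\Sn \;\vert\; \pi\subset\tau\}$. Substituting into Proposition \ref{prop:connection-card-shuffling} and evaluating at $\sigma$ yields
\[
R_{k}(\sigma,\sigma') = (n-k)!\left(\frac{k!}{n!}\right)^{2}\sum_{\pi\in\Gamma^{k}}\mathbb{I}\{\pi\subset\sigma\}\,\mathbb{I}\{\pi\subset\sigma'\}.
\]
It then remains only to identify this sum with the announced count.

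The key step is the collapse of this sum over words into a count over subsets. Since $\pi\subset\tau$ is equivalent to $\tau_{\vert c(\pi)} = \pi$, a word $\pi\in\Gamma^{k}$ satisfies $\mathbb{I}\{\pi\subset\sigma\}\,\mathbb{I}\{\pi\subset\sigma'\} = 1$ if and only if $\sigma_{\vert c(\pi)} = \pi = \sigma'_{\vert c(\pi)}$. Here I would invoke the fact, recorded in Subsection \ref{subsec:consistency-assumption}, that for every $A\in\Subsets{\n}$ there is a unique subword of $\sigma$ of content $A$, namely $\sigma_{\vert A}$. Grouping the words $\pi\in\Gamma^{k}$ by their content $A = c(\pi)$, at most one $\pi$ per subset $A$ of size $k$ contributes, and it contributes exactly when $\sigma_{\vert A} = \sigma'_{\vert A}$; hence the sum equals $\vert\{A\subset\n \text{ with } \vert A\vert = k \;\vert\; \sigma_{\vert A} = \sigma'_{\vert A}\}\vert$, which is the claim. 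There is no substantive obstacle, as the result is a direct corollary of Proposition \ref{prop:connection-card-shuffling}; the only points requiring a little care are the bookkeeping of the matrix-entry convention and the observation that the product of the two indicators forces agreement of the induced rankings on a common content.
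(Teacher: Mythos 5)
Your proposal is correct and follows essentially the same route as the paper's own proof: compute $R_{k}(\sigma,\sigma') = R_{k}\delta_{\sigma'}(\sigma)$ via Proposition \ref{prop:connection-card-shuffling}, use $\left\langle\delta_{\sigma'},\1{\Sn(\pi)}\right\rangle = \mathbb{I}\{\pi\subset\sigma'\}$, and identify the resulting sum of indicator products with the subset count. The only difference is that you spell out the final counting step (grouping words $\pi\in\Gamma^{k}$ by content and invoking uniqueness of the induced subword), which the paper leaves implicit.
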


\begin{proof}
Noticing that $\left\langle\delta_{\sigma},\1{\Sn(\pi)}\right\rangle = \1{\Sn(\pi)}(\sigma) = \mathbb{I}\{\pi\subset\sigma\}$ for any $\pi\in\Gn$ and $\sigma\in\Sn$, one obtains
\[
R_{k}(\sigma,\sigma') = R_{k}\delta_{\sigma'}(\sigma) = (n-k)!\left(\frac{k!}{n!}\right)^{2}\sum_{\pi\in\Gamma^{k}}\mathbb{I}\{\pi\subset\sigma'\}\mathbb{I}\{\pi\subset\pi\},
\]
which gives the desired result.
\end{proof}

The number $\vert\{A\subset\n \text{ with } \vert A\vert = k \ \vert\ \sigma_{\vert A} = \sigma'_{\vert A}\}\vert$ of subwords of size $k\in\{2,\dots,n-1\}$ that $\sigma\in\Sn$ and $\sigma'\in\Sn$ have in common is equal to $\noninv_{k}(\sigma'^{-1}\sigma)$ where $\noninv_{k}$ is the statistics on $\Sn$ defined in \citet{Reiner13}. Proposition \ref{prop:connection-Reiner} thus says that the matrix $R_{k}$ is proportional to the matrix $\nu_{(k,1^{n-k})}$ considered by the authors of \citet{Reiner13}. Now, one of their major results is that these matrices are symmetric positive semidefinite and pairwise commute. They can thus be simultaneously diagonalized and the following result establishes a connection between their eigenspaces and the $W^{k}$'s.

\begin{theorem}[Null spaces of the matrices $R_{k}$]
\label{th:eigenspaces}
Each of the spaces $V^{0}$, $W^{2}$, \dots, $W^{n}$ is stable for all the matrices $R_{k}$ for $k\in\{2,\dots,n-1\}$. It is thus a direct sum of their eigenspaces. In addition, one has
\[
\ker R_{k} = \bigoplus_{j=k+1}^{n}W^{j} \qquad\text{for all } k\in\{2,\dots,n-1\}.
\]
\end{theorem}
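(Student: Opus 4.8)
The plan is to prove the two assertions—stability and the kernel formula—by exploiting the factorization of $R_{k}$ suggested by Proposition \ref{prop:connection-card-shuffling}, together with the fact (recalled just above the statement) that the $R_{k}$'s are self-adjoint and pairwise commute. The starting point is to rewrite, for $k\in\{2,\dots,n-1\}$, the operator as $R_{k}=c_{k}\,C_{k}C_{k}^{\ast}$, where $c_{k}=(n-k)!\,(k!/n!)^{2}$ and $C_{k}:\Space{\Gamma^{k}}\to\Space{\Sn}$ is the synthesis map $a\mapsto\sum_{\pi\in\Gamma^{k}}a(\pi)\1{\Sn(\pi)}$, whose adjoint is $C_{k}^{\ast}:f\mapsto(\langle f,\1{\Sn(\pi)}\rangle)_{\pi\in\Gamma^{k}}$. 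This reading is immediate from Proposition \ref{prop:connection-card-shuffling}; it makes the self-adjointness and positivity of $R_{k}$ transparent, and it identifies $\image C_{k}=\Span\{\1{\Sn(\pi)}\mid\pi\in\Gamma^{k}\}=V^{k}$.

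The key step is then the elementary operator identity $\image(C_{k}C_{k}^{\ast})=\image C_{k}$, valid for any linear map between finite-dimensional inner product spaces: one has $\ker(C_{k}C_{k}^{\ast})=\ker C_{k}^{\ast}$ since $\langle C_{k}C_{k}^{\ast}x,x\rangle=\Vert C_{k}^{\ast}x\Vert^{2}$, whence the two ranks agree and the obvious inclusion $\image(C_{k}C_{k}^{\ast})\subset\image C_{k}$ becomes an equality. This upgrades the inclusion $\image R_{k}\subset V^{k}$ noted before the theorem to the equality $\image R_{k}=V^{k}$. Since $R_{k}$ is self-adjoint, $\ker R_{k}=(\image R_{k})^{\perp}=(V^{k})^{\perp}$, and because the decomposition $\Space{\Sn}=V^{k}\overset{\perp}{\oplus}\bigoplus_{j=k+1}^{n}W^{j}$ is orthogonal, $(V^{k})^{\perp}=\bigoplus_{j=k+1}^{n}W^{j}$. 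This yields the kernel formula directly.

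For the stability assertion I would first observe that, since the $R_{m}$'s pairwise commute, both $\image R_{m}=V^{m}$ and $\ker R_{m}=\bigoplus_{j>m}W^{j}$ are stable under every $R_{m'}$: indeed $R_{m'}(\image R_{m})=\image(R_{m'}R_{m})=\image(R_{m}R_{m'})\subset\image R_{m}$, and $R_{m'}$ preserves $\ker R_{m}$ by the same commutation. It then remains to express each target space through these stable subspaces. A direct computation shows that $R_{m}\1{\Sn}$ is a constant function, so $V^{0}=\mathbb{R}\1{\Sn}$ is stable; consequently $W^{2}=V^{2}\cap(V^{0})^{\perp}$ is stable, the orthogonal complement of a stable subspace of a self-adjoint operator being itself stable. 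For the remaining scales one writes $W^{j}=\ker R_{j-1}\cap\image R_{j}$ when $3\le j\le n-1$ and $W^{n}=\ker R_{n-1}$, each an intersection of stable subspaces, hence stable. Finally, a stable subspace of a commuting family of self-adjoint operators decomposes as a direct sum of their common eigenspaces, which supplies the remaining sentence of the statement.

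The genuinely substantive point is the passage from $\image R_{k}\subset V^{k}$ to the equality $\image R_{k}=V^{k}$; everything else is bookkeeping with orthogonal complements and the commutation relations. I expect the only place demanding care is applying the elementary identities to the correct index ranges and treating the boundary cases $j=2$, $j=n$, together with the space $V^{0}$—which is not itself of the form $\image R_{m}$ or $\ker R_{m}$—separately, exactly as above.
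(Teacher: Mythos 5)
Your proof is correct, and it takes a genuinely different route from the one in the paper. The paper proceeds by dimension counting: it imports $\dim\ker R_{n-1}=d_{n}$ from \citet{UR2002} and the stability and dimension formulas for the spaces $V_{n,j}=\ker R_{n-j-1}\cap\image R_{n-j}$ from \citet{Reiner13}, matches these against $\dim W^{k}=\binom{n}{k}d_{k}$ (Theorem \ref{th:connection-other-MRA}), and concludes by induction from inclusions between spaces of equal dimension. You instead read Proposition \ref{prop:connection-card-shuffling} as a Gram factorization $R_{k}=c_{k}C_{k}C_{k}^{\ast}$ and use the elementary identity $\image(C_{k}C_{k}^{\ast})=\image C_{k}$ to upgrade $\image R_{k}\subset V^{k}$ to an equality, after which $\ker R_{k}=(\image R_{k})^{\perp}=(V^{k})^{\perp}=\bigoplus_{j>k}W^{j}$ falls out of self-adjointness and the orthogonal telescoping definition of the $W^{j}$'s; stability then follows from the commutation relations, with the boundary cases $V^{0}$, $W^{2}$, $W^{n}$ treated by the intersections you describe. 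Your argument is more self-contained and elementary — it needs none of the external dimension computations, only the commutativity already quoted before the statement (and that only for the stability clause, not for the kernel formula). What the paper's route buys in exchange is the explicit identification $V_{n,j}=W^{n-j}$, which ties the $W^{k}$'s to the eigenspace analysis of \citet{Reiner13}; your proof does not produce that identification directly, though it follows a posteriori. One small caveat on the final sentence: invariance under a commuting self-adjoint family gives that each stable subspace is spanned by common eigenvectors, i.e.\ it is a direct sum of its intersections with the common eigenspaces — the same level of precision as the statement itself, so nothing is lost.
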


\begin{proof}
It is proven in \citet{UR2002} that $\dim\ker R_{n-1} = d_{n}$, the number of derangements on a set of $n$ elements. Since $W^{n}\subset\ker R_{n-1}$ and $\dim W^{n} = d_{n}$ by Theorem \ref{th:connection-other-MRA}, one has $\ker R_{n-1} = W^{n}$ and $\image R_{n-1} = V^{n-1}$. Now , in \citet{Reiner13}, the authors define in equation (22) the space $V_{n,j} = \ker R_{n-j-1}\cap\image R_{n-j}$ for $j\in\{1,\dots,n-2\}$. They show that each space $V_{n,j}$ is stable for all matrices $R_{k}$. They show in addition that for each $j\in\{1,\dots,n-2\}$, $\dim V_{n,j} = \binom{n}{j}d_{n-j}$. For $j=1$ one then has
\[
V_{n,1} = \ker R_{n-2} \cap V^{n-1} \quad\text{and}\quad \ker R_{n-2}\supset W^{n-1}\oplus W^{n} \quad\text{so that}\quad V_{n,1}\supset W^{n-1}.
\]
Again, by Theorem \ref{th:connection-other-MRA}, $\dim W^{n-1} = n d_{n-1} = \dim V_{n,1}$ so that $V_{n,1} = W^{n-1}$ and therefore $\ker R_{n-2} = W^{n-1}\oplus W^{n}$. By induction, one obtains that for all $j\in\{1,\dots, n-2\}$, $V_{n,j} = W^{n-j}$ and $\ker R_{n-j} = \bigoplus_{i=0}^{j-1}W^{n-i}$. This concludes the proof.
\end{proof}

The goal of a shuffle is to mix cards so that the configuration of the deck after several iterations is closest to a purely random configuration. By definition, the component of a probability distribution over $\Sn$ that lies in the null space of a shuffle is mixed after one iteration (on average). Theorem \ref{th:eigenspaces} therefore says that the space $W^{k}$ localizes the part of information that is preserved by the $j$-random-to-random shuffles for $1\leq j\leq n-k$ but mixed by the $j$-random-to-random shuffles for $n-k+1\leq j\leq n-2$.

Finally, notice that by Proposition \ref{prop:connection-Reiner}, $R_{2}(\cdot,\cdot)$ is proportional to $\binom{n}{2} - d_{KT}(\cdot,\cdot)$ where $d_{KT}$ is the Kendall's tau distance. It thus has the same null space as the matrix $(d_{KT}(\sigma,\sigma'))_{\sigma,\sigma'\in\Sn}$. More generally for $k\in\{2,\dots,n-1\}$, Proposition \ref{prop:connection-Reiner} gives
\[
R_{k}(\sigma,\sigma') = (n-k)!\left(\frac{k!}{n!}\right)^{2}\left(\binom{n}{k} - d^{k}(\sigma,\sigma')\right),
\]
where $d^{k}(\sigma,\sigma') := \vert\{A\subset\n\;\vert\;\vert A\vert = k \text{ and } \sigma_{\vert A}\neq \sigma_{\vert A'}\}$ is the number of $k$-wise disagreements between $\sigma$ and $\sigma'$, and can therefore be seen as an extension of the Kendall's tau distance. Hence the matrices of the distances $d^{k}$ for $k\in\{2,\dots,n-1\}$ pairwise commute and their null spaces are given by Theorem \ref{th:eigenspaces}.

To conclude this section, we summarize the interpretations that can be given to the spaces $V^{0}, W^{2}, \dots, W^{n}$ and thus to the different scales of the MRA. For $k\in\{2,\dots,n\}$:
\begin{itemize}
	\item $W^{k}$ is the space spanned by the $\1{\Sn(\pi)}$'s for $\pi\in\Gamma^{k}$ that localizes the part of absolute rank information specific to scale $k$.
	\item $W^{k}$ localizes the part of information preserved by the $j$-random-to-random shuffles for $1\leq j\leq n-k$ but mixed by the $j$-random-to-random shuffles for $n-k+1\leq j\leq n-2$.
	\item $W^{k}$ localizes the part of additional information captured by the distance $d^{k}$ compared to $d^{k-1}$.
\end{itemize}

\subsection{The specific case of absolute rank information at scale $2$}
\label{subsec:pairwise-comparisons}

Here we analyze in particular the part of absolute rank information at scale $2$ or in other words the part of information contained in pairwise marginals. By Theorem \ref{th:decomposition-H-k}, one has the isomorphism $H^{2}\cong S^{(n-1,1)}\oplus S^{(n-2,1,1)}$. We give an explicit construction of subspaces of $H^{2}$ that correspond to this decomposition. First we recall that $H^{2} = \bigoplus_{\{a,b\}\subset\n}H_{\{a,b\}}$ with $\dim H_{\{a,b\}} = 1$ for each pair $\{a,b\}\subset\n$, so that $\dim H^{2} = \binom{n}{2}$. The following proposition gives an explicit basis for each $H_{\{a,b\}}$ and thus for $H^{2}$. Its proof is straightforward and left to the reader.

\begin{proposition}[Canonical basis of $H^{2}$]
\label{prop:canonical-basis}
For any $a,b\in\n$ with $a\neq b$, the element $x_{a\succ b} = \delta_{ab} - \delta_{ba}$ generates the space $H_{\{a,b\}}$. By convention, we choose for each pair $\{a,b\}\subset\n$ with $a < b$ the element $x_{a\succ b}$ to be the canonical basis of $H_{\{a,b\}}$. The canonical basis of $H^{2}$ is then given by the family $\left( x_{a\succ b}\right)_{1\leq a < b \leq n}$.
\end{proposition}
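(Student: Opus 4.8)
The plan is to unwind the definition of $H_{\{a,b\}}$ directly and exhibit its unique defining linear constraint. First I would note that $\Rank{\{a,b\}}$ consists of exactly the two injective words $ab$ and $ba$, so that $\Space{\Rank{\{a,b\}}}$ is the two-dimensional space with basis $\{\delta_{ab},\delta_{ba}\}$, and a general element writes $F = \alpha\delta_{ab} + \beta\delta_{ba}$ with $\alpha,\beta\in\mathbb{R}$.

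Next I would identify the constraints imposed by Definition \ref{def:space-H}. The strict subsets $B'\subsetneq\{a,b\}$ are $\emptyset$, $\{a\}$ and $\{b\}$; by the convention fixed just before Lemma \ref{lem:commutativity} that $\Space{\Rank{A}} = H_{\emptyset}$ and $M_{A} = M_{\emptyset}$ whenever $\vert A\vert = 1$, all three conditions collapse to the single requirement $M_{\emptyset}F = 0$. Using the definition $M_{\emptyset}F = (\sum_{\pi\in\Gn}F(\pi))\delta_{\bar{0}}$, I compute $M_{\emptyset}F = (\alpha+\beta)\delta_{\bar{0}}$, so that $F\in H_{\{a,b\}}$ if and only if $\alpha + \beta = 0$.

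It then follows immediately that $H_{\{a,b\}} = \{\alpha(\delta_{ab}-\delta_{ba})\mid\alpha\in\mathbb{R}\} = \mathbb{R}\,x_{a\succ b}$, so that $x_{a\succ b}$ generates $H_{\{a,b\}}$; this is consistent with $\dim H_{\{a,b\}} = d_{2} = 1$ from Theorem \ref{th:MRA-decomposition}. Finally, since $H^{2} = \bigoplus_{\{a,b\}\subset\n}H_{\{a,b\}}$ is a direct sum of these one-dimensional spaces, a basis of $H^{2}$ is obtained by selecting one nonzero generator from each summand; choosing $x_{a\succ b}$ for the pair $\{a,b\}$ with $a<b$ yields the claimed family $\left(x_{a\succ b}\right)_{1\leq a < b\leq n}$.

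There is essentially no obstacle here: the only point requiring care is the treatment of the singleton subsets $\{a\}$ and $\{b\}$, which is handled by the stated convention reducing them to $M_{\emptyset}$; everything else is a one-line computation in a two-dimensional space, which is why the statement is qualified as straightforward.
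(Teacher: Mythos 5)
Your proof is correct and is exactly the direct verification the paper has in mind when it declares the proof ``straightforward and left to the reader'': unwinding Definition \ref{def:space-H} on the two-dimensional space $\Space{\Rank{\{a,b\}}}$, reducing the singleton constraints to $M_{\emptyset}F=0$ via the stated convention, and concluding $H_{\{a,b\}}=\mathbb{R}\,x_{a\succ b}$. Nothing is missing, and your cross-check against $\dim H_{\{a,b\}}=d_{2}=1$ from Theorem \ref{th:MRA-decomposition} is a nice sanity confirmation.
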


We use the canonical basis introduced in Proposition \ref{prop:canonical-basis} to construct the two following subspaces of $H^{2}$:
\begin{equation}
\label{eq:subspaces-H-2}
\begin{aligned}
H^{2}_{1} &= \Span\Bigg\{e_{a} := \sum_{b\in\n,\ b\neq a}x_{a\succ b} \quad \Bigg\vert\quad  a\in\n\Bigg\}\\
\text{and}\qquad H^{2}_{2} &= \Span\Bigg\{f_{(a,b)} := \sum_{c\in\n,\ c\not\in\{a,b\}}\left( x_{a\succ b} + x_{b\succ c} + x_{c\succ a}\right) \quad \Bigg\vert\quad \{a,b\}\subset\n \Bigg\}.
\end{aligned}
\end{equation}
The following theorem shows that they provide a decomposition of $H^{2}$ isomorphic to $S^{(n-1,1)}\oplus S^{(n-2,1,1)}$. Its proof is left in Appendix.

\begin{theorem}[Explicit decomposition of $H^{2}$]
\label{th:explicit-decomposition-H-2}
The spaces $H^{2}_{1}$ and $H^{2}_{2}$ defined in Equation \eqref{eq:subspaces-H-2} satisfy the following properties:
\[
H^{2} = H^{2}_{1}\oplus H^{2}_{2} \qquad\text{with}\qquad H^{2}_{1} \cong S^{(n-1,1)} \qquad\text{and}\qquad H^{2}_{2}\cong S^{(n-2,1,1)}.
\]
\end{theorem}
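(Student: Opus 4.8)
The plan is to leverage the decomposition $H^{2}\cong S^{(n-1,1)}\oplus S^{(n-2,1,1)}$ recalled at the start of this subsection. Its crucial feature is that it is \emph{multiplicity-free}: both irreducibles occur exactly once and they are non-isomorphic. Consequently the only $\Sn$-subrepresentations of $H^{2}$ are $0$, the two isotypic components, and $H^{2}$ itself, and each isotypic component is the unique subrepresentation of its dimension complementary to the other. So it suffices to show that $H^{2}_{1}$ and $H^{2}_{2}$ are subrepresentations, that $H^{2}_{1}\cong S^{(n-1,1)}$, and that $H^{2}_{1}+H^{2}_{2}=H^{2}$ with $\dim H^{2}_{1}+\dim H^{2}_{2}=\binom{n}{2}$; the isomorphism type of $H^{2}_{2}$ will then follow automatically.

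First I would record the equivariance. From $T_{\sigma}x_{a\succ b}=x_{\sigma(a)\succ\sigma(b)}$ one gets $T_{\sigma}e_{a}=e_{\sigma(a)}$ and $T_{\sigma}f_{(a,b)}=f_{(\sigma(a),\sigma(b))}$, so both $H^{2}_{1}$ and $H^{2}_{2}$ are stable under $\Sn$. To identify $H^{2}_{1}$, I would consider the equivariant map $\Space{\n}\to H^{2}$, $\delta_{a}\mapsto e_{a}$, where $\Space{\n}\cong S^{(n)}\oplus S^{(n-1,1)}$ is the permutation module. A short index swap shows $\sum_{a\in\n}e_{a}=0$, so the vector $\sum_{a}\delta_{a}$ spanning the trivial summand $S^{(n)}$ lies in the kernel; since the map is nonzero and its kernel is a subrepresentation containing $S^{(n)}$, the kernel must equal $S^{(n)}$, whence $H^{2}_{1}\cong S^{(n-1,1)}$ and $\dim H^{2}_{1}=n-1$.

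The key step is the identity $f_{(a,b)}=n\,x_{a\succ b}+e_{b}-e_{a}$, obtained by splitting the defining sum of $f_{(a,b)}$ into its three families of terms and rewriting $\sum_{c\notin\{a,b\}}x_{b\succ c}=e_{b}+x_{a\succ b}$ and $\sum_{c\notin\{a,b\}}x_{c\succ a}=-e_{a}+x_{a\succ b}$ using the antisymmetry $x_{b\succ a}=-x_{a\succ b}$. This gives $x_{a\succ b}=\tfrac{1}{n}(f_{(a,b)}+e_{a}-e_{b})\in H^{2}_{1}+H^{2}_{2}$, hence $H^{2}_{1}+H^{2}_{2}=H^{2}$. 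For the dimension of $H^{2}_{2}$, I would view it as the image of the equivariant map $\wedge^{2}\mathbb{R}^{n}\to H^{2}$ sending $u_{a}\wedge u_{b}\mapsto f_{(a,b)}$ (well defined since $f_{(b,a)}=-f_{(a,b)}$). Summing the identity over $b\neq a$ and using $\sum_{a}e_{a}=0$ yields $\sum_{b\neq a}f_{(a,b)}=0$ for every $a$, i.e.\ writing $S=\sum_{c}u_{c}$ the $(n-1)$-dimensional subspace $\{\,v\wedge S : v\in\mathbb{R}^{n}\,\}$ lies in the kernel, so $\dim H^{2}_{2}\le\binom{n}{2}-(n-1)=\binom{n-1}{2}$. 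Combined with $H^{2}_{1}+H^{2}_{2}=H^{2}$ and $\dim H^{2}_{1}=n-1$, this forces $\dim H^{2}_{2}=\binom{n-1}{2}$ and $H^{2}_{1}\cap H^{2}_{2}=\{0\}$, giving the direct sum $H^{2}=H^{2}_{1}\oplus H^{2}_{2}$.

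Finally, since $H^{2}_{2}$ is a nonzero subrepresentation complementary to the full $S^{(n-1,1)}$-isotypic component $H^{2}_{1}$, multiplicity-freeness forces $H^{2}_{2}\cong S^{(n-2,1,1)}$. The only genuinely computational point is the identity for $f_{(a,b)}$; the main conceptual obstacle is pinning down the isomorphism type of $H^{2}_{2}$, which is exactly where the multiplicity-one property of $H^{2}\cong S^{(n-1,1)}\oplus S^{(n-2,1,1)}$ does the work. Alternatively one could identify $H^{2}_{2}$ directly by applying the standard fact $\wedge^{2}S^{(n-1,1)}\cong S^{(n-2,1,1)}$ to the map $\wedge^{2}\mathbb{R}^{n}\to H^{2}$ above.
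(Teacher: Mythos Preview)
Your proof is correct and shares with the paper the two essential ingredients: the identity $f_{(a,b)}=n\,x_{a\succ b}+e_{b}-e_{a}$ (the paper's Lemma~\ref{lem:other-expressions}) and the multiplicity-free decomposition $H^{2}\cong S^{(n-1,1)}\oplus S^{(n-2,1,1)}$ from Theorem~\ref{th:decomposition-H-k}. The route, however, is genuinely different. The paper first computes inner products to show $H^{2}_{1}\perp H^{2}_{2}$, then checks stability under translations, and finally uses the crude bound $\dim H^{2}_{1}\le n<\binom{n-1}{2}$ to rule out $H^{2}_{1}\cong S^{(n-2,1,1)}$. You instead avoid orthogonality entirely: you identify $H^{2}_{1}$ directly as the image of the permutation module $\Space{\n}$ modulo its trivial summand (via $\sum_{a}e_{a}=0$), use the key identity to get $H^{2}_{1}+H^{2}_{2}=H^{2}$, and bound $\dim H^{2}_{2}$ from above by realizing it as a quotient of $\wedge^{2}\mathbb{R}^{n}$ through the relation $\sum_{b\neq a}f_{(a,b)}=0$.

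The paper's argument is shorter and more hands-on; yours is more structural and yields more: it explains \emph{why} $H^{2}_{1}$ carries $S^{(n-1,1)}$ (it is the nontrivial part of the natural permutation module) and sets up the identification of $H^{2}_{2}$ with $\wedge^{2}S^{(n-1,1)}\cong S^{(n-2,1,1)}$ that you mention as an alternative. Either approach is perfectly adequate here.
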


It happens that the spaces $H^{2}_{1}$ and $H^{2}_{2}$ defined in Equation \eqref{eq:subspaces-H-2} appear in several other mathematical constructions and therefore have different interpretation. We first detail the connection with social choice theory. These spaces are indeed closely related to the ``Borda space'' $\mathcal{B}_{n}$ and the ``Condorcet space'' $\mathcal{C}_{n}$ introduced in \citet{Sibony2014}. More specifically, it is easy to see that one has $\mathcal{B}_{n} = \phi'_{\n}(H^{2}_{1})$ and $\mathcal{C}_{n} = \phi'_{\n}(H^{2}_{2})$. Since $\phi'_{\n}$ is an isomorphism between $\mathbb{H}_{n}$ and $\Space{\Sn}$ by Theorem \ref{th:connection-other-MRA}, this implies that $H^{2}_{1}$ and $H^{2}_{2}$ respectively localize the same part of information as $\mathcal{B}_{n}$ and $\mathcal{C}_{n}$. Hence, as explained in \citet{Sibony2014}, $H^{2}_{1}$ localizes the part of information captured by the \textit{Borda count} and $H^{2}_{2}$ localizes the part of information of ``pairwise voting inconsistencies'' responsible for the \textit{Condorcet paradox}, refer for instance to \citet{Saari2000}, \citet{CS2013} or \citet{Crisman2014} for more details.

The second connection we detail is with the HodgeRank framework. Introduced in \cite{JLYY11}, it models a collection of pairwise comparisons as an oriented flow on the graph with vertices $\n$ where two items are linked if the pair appears at least once in the comparisons. The collection of observed pairwise comparisons is he observation design $\mathcal{A}$ of our present setting. The space of edge flows considered in \cite{JLYY11} is then equal to the space $\mathbb{H}(\mathcal{A}) = \bigoplus_{\{a,b\}\in\mathcal{A}}H_{\{a,b\}}$. The HodgeRank framework then decomposes any element of this space as the sum of three components: a ``gradient flow'' that corresponds to globally consistent rankings, a ``curl flow'' that corresponds to locally inconsistent rankings, and a ``harmonic flow'', that corresponds to globally inconsistent but locally consistent rankings. The following proposition establishes the connection with the present work. Its proof is left in Appendix.

\begin{proposition}[Connection with HodgeRank]
\label{prop:connection-HodgeRank}
In the particular case where $\mathcal{A} = \{\{a,b\}\subset\n \}$, the space of edge flows in the HodgeRank framework is equal to $H^{2}$, the space of gradient flows to $H_{1}^{2}$, the space of curl flows to $H_{2}^{2}$ and the space of harmonic flows is null. The Hodge decomposition then boils down to $H^{2} = H^{2}_{1}\oplus H^{2}_{2}$. There is no particular connection in the general case.
\end{proposition}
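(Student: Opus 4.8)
The plan is to make explicit the dictionary between the combinatorial Hodge theory of \citet{JLYY11} and the algebraic objects of this section, and then to read off every clause of the statement by a single dimension count. In the complete case $\mathcal{A} = \{\{a,b\}\subset\n\}$ the clique complex of the comparison graph is the full simplex on $\n$, so the space of edge flows is $\mathbb{H}(\mathcal{A}) = \bigoplus_{\{a,b\}\subset\n}H_{\{a,b\}} = H^{2}$: by Proposition \ref{prop:canonical-basis} an antisymmetric edge flow $Y$ (with $Y_{ab}=-Y_{ba}$) is identified with $\sum_{a<b}Y_{ab}\,x_{a\succ b}$. This is the first identification and needs nothing beyond fixing sign conventions.

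I would next identify the gradient flows with $H^{2}_{1}$. The gradient operator sends a vertex potential $s\in\Space{\n}$ to the edge flow $(a,b)\mapsto s(b)-s(a)$, which under the identification above reads $\operatorname{grad} s = \sum_{a<b}(s(b)-s(a))\,x_{a\succ b}$. Evaluating on the vertex indicators gives $\operatorname{grad}\delta_{a} = -\sum_{b\neq a}x_{a\succ b} = -e_{a}$, and since the $\delta_{a}$ span $\Space{\n}$ we obtain $\operatorname{im}(\operatorname{grad}) = \Span\{e_{a} : a\in\n\} = H^{2}_{1}$, the space of globally consistent (gradient) flows of \eqref{eq:subspaces-H-2}.

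For the curl flows I would first record the easy inclusion $H^{2}_{2}\subset\operatorname{im}(\operatorname{curl}^{\ast})$: the curl space is spanned by the elementary triangle circulations $x_{a\succ b}+x_{b\succ c}+x_{c\succ a}$ attached to each $3$-clique $\{a,b,c\}$, all of which are present since $\mathcal{A}$ is complete, and each generator $f_{(a,b)} = \sum_{c\notin\{a,b\}}(x_{a\succ b}+x_{b\succ c}+x_{c\succ a})$ of $H^{2}_{2}$ is visibly such a sum. The reverse inclusion, together with the vanishing of the harmonic part, I would obtain purely by dimension counting. The Hodge decomposition gives the direct sum $H^{2} = \operatorname{im}(\operatorname{grad})\oplus\operatorname{im}(\operatorname{curl}^{\ast})\oplus\mathcal{H}$, with $\mathcal{H}$ the harmonic space. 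Combining this with $\operatorname{im}(\operatorname{grad}) = H^{2}_{1}$, with $H^{2}_{2}\subset\operatorname{im}(\operatorname{curl}^{\ast})$, and with the splitting $H^{2} = H^{2}_{1}\oplus H^{2}_{2}$ of Theorem \ref{th:explicit-decomposition-H-2}, subtraction of dimensions yields $\dim\operatorname{im}(\operatorname{curl}^{\ast}) + \dim\mathcal{H} = \dim H^{2}_{2} \leq \dim\operatorname{im}(\operatorname{curl}^{\ast})$, forcing $\dim\mathcal{H}=0$ and $\operatorname{im}(\operatorname{curl}^{\ast}) = H^{2}_{2}$. Hence the harmonic space is null and the Hodge decomposition collapses to $H^{2} = H^{2}_{1}\oplus H^{2}_{2}$.

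The main obstacle is not any single computation but the careful translation of the HodgeRank formalism (oriented edges, coboundary operators on the clique complex, the weighting of the inner product) into the representation-theoretic language used here, so that the three inclusions above become literal equalities of subspaces of $H^{2}$; once the dictionary is fixed the count is immediate, since $\dim H^{2}_{1} = \dim S^{(n-1,1)} = n-1$ and $\dim H^{2}_{2} = \dim S^{(n-2,1,1)} = \binom{n-1}{2}$ sum to $\binom{n}{2} = \dim H^{2}$. For the final assertion I would merely observe that when $\mathcal{A}$ is a proper subset of the pairs the edge-flow space is the strict subspace $\mathbb{H}(\mathcal{A})\subsetneq H^{2}$, whereas the generators $e_{a}$ and $f_{(a,b)}$ involve all items of $\n$ and do not lie in $\mathbb{H}(\mathcal{A})$ in general; moreover the gradient/curl/harmonic splitting of $\mathbb{H}(\mathcal{A})$ depends on the incidence and triangle structure of $\mathcal{A}$ and generically carries a nonzero harmonic part, so no canonical alignment with the global spaces $H^{2}_{1},H^{2}_{2}$ survives.
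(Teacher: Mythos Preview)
Your proof is correct and follows essentially the same route as the paper for the edge-flow and gradient identifications: both establish $\operatorname{im}(\operatorname{grad}) = H^{2}_{1}$ by computing the gradient on vertex potentials (the paper packages this as Lemma \ref{lem:other-expressions}, you do it on the basis $\delta_{a}$; the sign convention is opposite but irrelevant).

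The one genuine difference is in handling the curl and harmonic pieces. The paper simply quotes from \citet{JLYY11} that on the complete graph the Hodge decomposition reads $H^{2} = \operatorname{im}(\operatorname{grad})\overset{\perp}{\oplus}\operatorname{im}(\operatorname{grad})^{\perp}$, i.e.\ the harmonic space is already known to vanish, and then identifies $\operatorname{im}(\operatorname{curl}^{\ast}) = (H^{2}_{1})^{\perp} = H^{2}_{2}$ via the orthogonality established in the proof of Theorem \ref{th:explicit-decomposition-H-2}. You instead prove the inclusion $H^{2}_{2}\subset\operatorname{im}(\operatorname{curl}^{\ast})$ by hand (observing that each $f_{(a,b)}$ is a sum of triangle circulations) and recover the vanishing of the harmonic part as a byproduct of the dimension count. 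Your argument is thus slightly more self-contained, as it needs only the general Hodge splitting rather than the specific complete-graph result; the paper's is shorter since it outsources that step to \citet{JLYY11}.
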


\section{Discussion and future directions}
\label{sec:discussion}

Here we describe some relevant developments for the MRA framework. We do not consider its application to the different statistical problems mentioned in Subsection \ref{subsec:problems} as each requires a specific treatment. Instead we focus on general results that would be useful for all applications.

\subsection{The need for regularization}

As already mentioned in Subsection \ref{subsec:general-method}, the MRA framework needs to be applied together with a regularization procedure in order to be most efficient. This is all the more true than the number $n$ of items or the maximal size $K$ of a ranking increase. A simple way to see it is by comparing the number of degrees of freedom of the wavelet empirical estimator $\mathbf{\widehat{X}}$ defined in Equation \eqref{eq:wavelet-estimator} with the minimal number of parameters required to store $\mathcal{D}_{N}$. By construction $\mathbf{\widehat{X}} = (\widehat{X}_{B})_{B\in\SubsetsWE{\mathcal{A}}}$ is an element of $\mathbb{H}(\SubsetsWE{\mathcal{A}})$ and therefore the former is equal to $\sum_{B\in\SubsetsWE{\mathcal{A}}}d_{\vert B\vert}$, whereas the latter is equal to $\min(N,\sum_{A\in\mathcal{A}}\vert A\vert !)$ by Lemma \ref{lem:dataset-storage}. In a case where one observes all subsets of items of size lower or equal than $k$, that is $\mathcal{A} = \{A\subset\n \;\vert\; 2\leq \vert A\vert \leq K\}$, one has $\Subsets{\mathcal{A}} = \mathcal{A}$ and thus
\[
\sum_{B\in\SubsetsWE{\mathcal{A}}}d_{\vert B\vert} = 1 + \sum_{k=2}^{K}\binom{n}{k}d_{k} \qquad\text{whereas}\qquad \sum_{A\in\mathcal{A}}\vert A\vert ! = \sum_{k=2}^{K}\binom{n}{k}k!.
\]
Since $d_{k} \geq k!/3$ for all $k\geq 0$ by a classic result from elementary combinatorics, this means that the initial dimension of the data is at most reduced by a factor $3$ in the wavelet empirical estimator. In particular the number of degrees of freedom of the latter remains in $O(K!n^{K})$. This shows that using the wavelet empirical estimator alone reduces very little the dimension of the problem and therefore does not provide a strong generalization effect. 

We point out that the MRA representation first allows to exploit the consistency assumption \eqref{eq:consistency-assumption} efficiently. It overcomes the statistical challenge of dealing with the heterogeneity of incomplete rankings and the computational challenge of manipulating a ranking model. But by construction, it only transfers information between included subsets of items. Transferring more information requires an additional regularization procedure. Also, it does not provide sparse representations of usual ranking models. This is for instance illustrated by Figure \ref{fig:approximations}, where one can see that setting the unobserved wavelet projections $X_{B}$ to $0$ does not provide better results than setting them randomly. Constructing a basis where usual ranking models would be sparse needs indeed an additional regularity assumption. However, the MRA representation provides a general and flexible framework to define such regularization procedures and regularity assumptions.

\subsection{Regularization procedures}

Here we describe some regularization that one may consider but the list is of course non exhaustive. Our suggestions are based on intuition and analogy with classic regularization procedures on other types of data. Hence they do not come with any theoretical guarantees. Finding a good regularity assumption and the associated regularization procedure in the feature space $\mathbb{H}_{n}$ largely remains an open problem.

\ \\
\noindent 
{\bf Kernel-based smoothing.} The most usual way to define a notion of regularity is to say that a function $f$ is regular if ``$f(x)\simeq f(y)$'' for ``$x\simeq y$''. In this case, the knowledge of $f(x)$ can be used to infer some knowledge about $f(y)$. Thus if one has an estimation of $f$ at some point $x$ and assumes that $f$ is regular, she can obtain estimations for points $y\simeq x$. A typical approach is then to regularize an initial estimator by applying a smoothing kernel $K_{h}$ that will ``diffuse'' the knowledge of $f(x)$ to points $y$ close to $x$. The parameter $h$ is usually a window parameter that controls both the ``speed and the range of the diffusion''. As we detailed in Subsection \ref{subsec:existing-approaches}, kernel smoothing for incomplete rankings is already used in \cite{Kondor2010} and \cite{Sun2012}. The difference here is that we propose to define kernels on the feature space $\mathbb{H}_{n}$ instead of the space $\Space{\Sn}$.

Here we propose an approach to transpose these ideas for the feature space $\Hn$. By analogy, one wants to say that an element $\mathbf{X} = (X_{B})_{B\in\SubsetsWE{\n}}$ is regular if ``$X_{B}\simeq X_{B'}$'' for ``$B\simeq B'$''. The first step is therefore to define relevant meanings for ``$X_{B}\simeq X_{B'}$'' and ``$B\simeq B'$''. We assert that the MRA representation already exploits the consistency assumption to transfer information between included subsets and therefore between different scales. Transferring information between elements $X_{B}$ and $X_{B'}$ indexed by two subsets of different size is then not relevant. Hence we define a notion of regularity for each subspace $H^{k}$ and from now on we fix $k\in\{0,2,\dots,n\}$. First we propose to consider the distance $D_{k}$ defined for $B,B'\in\SubsetsWE{\n}$ with $\vert B\vert = \vert B'\vert = k$ by
\[
D_{k}(B,B') = \frac{1}{2}\left(k - \vert B\cap B'\vert\right)
\] 
(the proof that $D_{k}$ is a distance on $\{B\subset\n \;\vert\; \vert B\vert = k\}$ is left to the reader). Two subsets $B,B'$ with $\vert B\vert = \vert B'\vert = k$ thus have distance $1$ if they have $k-1$ items in common, $2$ if they have $k-2$ items in common, \dots, and $k$ if they have no item in common. The distance $D_{k}$ is also the distance on the graph with set of nodes $\{B\subset\n \;\vert\; \vert B\vert = k\}$ and where $B$ and $B'$ are connected if they have $k-1$ items in common. An illustration of this graph for $n = 5$ and $k = 2$ is provided on Figure \ref{fig:graph-subsets}.

\begin{figure}
\centering
\begin{tikzpicture}[scale = 2]
\node (12) at (0,1) {$\{1,2\}$};
\node (13) at (0.87,0.5) {$\{1,3\}$};
\node (14) at (0.87,-0.5) {$\{1,4\}$};
\node (23) at (0,-1) {$\{2,3\}$};
\node (24) at (-0.87,-0.5) {$\{2,4\}$};
\node (34) at (-0.87,0.5) {$\{3,4\}$};
\draw 	(12) -- (13)
		(12) -- (14)
		(12) -- (23)
		(12) -- (24)
		(13) -- (14)
		(13) -- (23)
		(13) -- (34)
		(14) -- (24)
		(14) -- (34)
		(23) -- (24)
		(23) -- (34)
		(24) -- (34);
\end{tikzpicture}
\caption{Graph on pairs of items for $n=5$}
\label{fig:graph-subsets}
\end{figure}
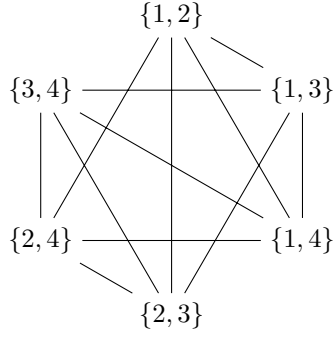

We now define a relevant meaning for ``$X_{B}\simeq X_{B'}$''. The difficulty is that for $B\neq B'$, the elements $X_{B}$ and $X_{B'}$ lie in different spaces and how they should be compared is not obvious. To tackle this problem we propose to send one to the space of the other and then to compare them. For $B,B'\in\SubsetsWE{\n}$ we define the set
\[
\Bij(B,B') = \{\tau:B\rightarrow B' \text{ bijection } \;\vert\; \tau(b) = b \text{ for all } b\in B\cap B'\}.
\]
For $\tau\in\Bij(B,B')$ we denote by $\tau(\pi_{1}\dots\pi_{k}) := \tau(\pi_{1})\dots\tau(\pi_{k})$ and define for $X_{B}\in H_{B}$ the element $\tau\cdot X_{B} := \sum_{\pi\in\Rank{B}}X_{B}(\pi)\delta_{\tau(\pi)}$. With a proof similar to the one of \ref{prop:action-on-H}, it is easy to show that $\tau\cdot X_{B}\in H_{B'}$. We then say that ``$X_{B}\simeq X_{B'}$'' if 
\[
X_{B'} \simeq \frac{1}{\vert\Bij(B,B')\vert}\sum_{\tau\in\Bij(B,B')}\tau\cdot X_{B} \qquad\text{in } H_{B'}.
\]
The kernels associated to the regularity assumption ``$X_{B}\simeq X_{B'}$'' for ``$B\simeq B'$'' are then functions $K_{h}:H^{k}\rightarrow H^{k}$ defined by
\[
K_{h}:X_{B} \mapsto\sum_{\vert B'\vert = k}\frac{q_{h}(D_{k}(B,B'))}{\vert\Bij(B,B')\vert}\sum_{\tau\in\Bij(B,B')}\tau\cdot X_{B},
\]
where $q_{h}:\mathbb{N}\rightarrow\mathbb{R}$ is a nonnegative function such that $\sum_{\pi\in\Gamma^{k}}K_{h}X_{B}(\pi) = \sum_{\pi\in\Rank{B}}X_{B}(\pi)$. Since for any $B'\subset\n$ with $\vert B'\vert = k$ and $\tau\in\Bij(B,B')$, $\sum_{\pi\in\Gamma^{k}}\tau\cdot X_{B}(\pi) = \sum_{\pi\in\Rank{B}}X_{B}(\pi)$, the condition on $q_{h}$ boils down to 
\[
\sum_{\vert B'\vert = k}q_{h}(D_{k}(B,B')) = 1 \qquad\textit{i.e.}\qquad \sum_{j=0}^{k}q_{h}(j)\binom{k}{j}\binom{n-k}{j} = 1.
\]
One can take for instance $q_{h}(j) = [(h+1)\binom{k}{j}\binom{n-k}{j}]^{-1}$ if $0\leq j\leq h$ and $0$ otherwise.

\ \\
\noindent
{\bf Penalty minimization and sparsity.} As already mentioned in Subsection \ref{subsec:general-method}, another classic approach to define regularization procedure is through the minimization of a penalty function. One chooses a dissimilarity measure $\Delta$ on $\Hn$, and then defines a regularized version of an initial element $\mathbf{X}\in\Hn$ as the solution of a minimization problem of the form
\begin{equation}
\label{eq:penalty-minimization}
\min_{\mathbf{X'}\in\Hn} \Delta(\mathbf{X},\mathbf{X'}) + \lambda\Omega(\mathbf{X'}),
\end{equation}
where $\Omega:\Hn\rightarrow\mathbb{R}$ is a penalty function and $\lambda > 0$ is a regularization parameter. As $\Hn$ is constructed as $\bigoplus_{B\in\SubsetsWE{\n}}H_{B}$, it is natural to define a dissimilarity measure $\Delta$ of the form
\[
\Delta(\mathbf{X},\mathbf{X'}) = \sum_{B\in\SubsetsWE{\n}}\Delta_{B}(X_{B},X'_{B}),
\] 
where for each $B\in\SubsetsWE{\n}$, $\Delta_{B}$ is a dissimilarity measure on $H_{B}$. If one takes $\Delta_{B} = \Vert\cdot\Vert_{B}^{2}$ then $\Delta := \Vert\cdot\Vert^{2}_{\Gn}$, the Euclidean norm on $\Space{\Gn}$. The challenge in this approach lies more in the definition of a ``good'' penalty function $\Omega$. If one wants to enforce the regularity assumption described previously, one can use the Tikhonov regularization approach and take $\Omega(\mathbf{X'}) = \Vert K_{h}\mathbf{X'} - \mathbf{X'} \Vert_{\Gn}^{2}$. The use of a penalty function can also force the solution of \eqref{eq:penalty-minimization} to be sparse in a certain basis or dictionary. The first challenge is then to define a dictionary where ``regular'' elements of $\Hn$ should be sparse in. As explained previously, such a dictionary should not contain elements that lie in one single space $H_{B}$ only. In other words, ``regular'' elements of $\Hn$ should not have the form $\sum_{i=1}^{s}\alpha_{i}X_{B_{i}}$ with a small $s$, where for $i\in\{1,\dots,s\}$, $B_{i}\in\SubsetsWE{\n}$, $X_{B_{i}}\in H_{B_{i}}$ and $\alpha_{i}\in\mathbb{R}$. Instead, we advocate to define atoms of the form $\mathbf{X}_{\mathcal{B}}^{k} = \sum_{B\in\mathcal{B}}X_{B}$ with $\mathcal{B}\subset \{B\subset\n \;\vert\; \vert B\vert = k \}$ and $X_{B}\in H_{B}$ for each $B\in\mathcal{B}$. As an example, we consider for distinct items $a,b\in\n$ the following element (defined in Proposition \ref{prop:canonical-basis}):
\[
x_{a\succ b} = \delta_{ab} - \delta_{ba} \in H_{\{a,b\}}.
\]
Then one can consider a dictionary with atoms 
\[
\mathbf{X}^{2}_{a,B} = \sum_{b\in B}x_{a\succ b}\in \bigoplus_{b\in B}H_{\{a,b\}} \qquad\text{for } a\in\n \text{ and } B\subset\n\setminus\{a\}.
\]
Such an atom localizes the part of rank information that says that item $a$ is preferred to each of the items of $B$ in pairwise comparisons in the sense that for $i,j\in\n$ with $i\neq j$,
\[
\phi_{\{i,j\}}\mathbf{X}^{2}_{a,B} =
\left\{
\begin{aligned}
\delta_{ab} - \delta_{ba} 	&\qquad\text{if } \{i,j\} = \{a,b\} \text{ with } b\in B\\
0							&\qquad\text{otherwise.}
\end{aligned}
\right.
\]

\ \\
\noindent
{\bf Fourier band-limited approximation.} Another classic regularization procedure is to compute the Fourier transform of a function, truncate it to the low frequencies, and output its inverse. The performance of this procedure for functions on Euclidean spaces stems from the fact that the Fourier spectrum of irregularities is usually localized in high frequencies. Keeping only the low frequencies of the Fourier spectrum of a function $f$ therefore leads to a regularized version of $f$. The analogue of this approach can been applied for functions on the symmetric group, using $\Sn$-based harmonic analysis. The additional challenge is that ``frequencies'' are then partitions of $n$ (see Subsection \ref{subsec:background-harmonic-analysis}) and thus are not naturally ordered. Fortunately the dominance order (defined in Subsection \ref{subsec:absolute-rank-information}) is a partial order on partitions of $n$ that orders Fourier coefficients by a certain level of ``smoothness''. Hence the band-limited approximation procedure has been proven to be efficient on real datasets \citep[see][]{HGG07, Irurozki2011}.

This regularization procedure can also be applied to the statistical analysis of incomplete rankings:
\begin{enumerate}
	\item Compute the wavelet empirical estimator $\mathbf{\widehat{X}}\in\Hn$
	\item Apply the procedure to $\phi_{\n}\mathbf{\widehat{X}}\in\Space{\Sn}$
	\item Compute its wavelet transform to obtain a regularized wavelet estimator $\mathbf{\tilde{X}}\in\Hn$
\end{enumerate}
This procedure is theoretical because it would not lead to tractable computations. For that, one needs to find a way to obtain $\mathbf{\tilde{X}}$ from $\mathbf{\widehat{X}}$ without passing by $\phi_{\n}\mathbf{\widehat{X}}$. We point out this direction however because we assert that this regularization procedure gains a new interpretation when applied to the statistical analysis of incomplete rankings: it allows to regularize small pieces of relative rank information into global parts of absolute rank information. Assume for instance that one observes pairwise comparisons and keeps only absolute rank information of level $1$. Besides the piece of rank information of level $0$, there are $n(n-1)/2$ potential degrees of freedom in the data, one for the piece of relative rank information related to each pair in $\n$. By contrast, there are only $n-1$ degrees of freedom in the part of absolute rank information localized in the copy of $S^{(n-1,1)}$ that appears in the decomposition of $H^{2}$ (see Subsection \ref{subsec:two-decompositions}). Keeping only this component therefore allows to enforce the regularity constraints of absolute rank information on the pieces of relative rank information captured by $\mathbf{\widehat{X}}$.

\ \\
\noindent
{\bf Local regularization.} In some applications, one is only interested in using an estimator to make local predictions on small subsets of items. One then does not have to regularize the full wavelet empirical estimator $\mathbf{\widehat{X}}$ but can regularize only the coefficients involved in each prediction. For $A\in\Subsets{\n}$, we recall that the estimation of the marginal $P_{A}$ of the true ranking model $p$ provided by $\mathbf{\widehat{X}}$ is equal to $\phi_{A}\sum_{B\in\SubsetsWE{A}}\widehat{X}_{B}$. One therefore only needs to regularize the coefficients $(\widehat{X}_{B})_{B\in\SubsetsWE{A}}\in\mathbb{H}(\SubsetsWE{A})$ to improve the estimation of $P_{A}$. Thanks to the multi-scale nature of $\Subsets{\n}$, the three aforementioned families of regularization procedures naturally apply to $\mathbb{H}(\SubsetsWE{A})$. Notice however that if one wants to apply the the Fourier band-limited approximation procedure, she will have to use the Fourier transform based on $\Sym{A}$, the group of permutations of $A$. The regularization then will involve ``absolute rank information on $A$'' and not absolute rank information on $\n$.

The drawbacks of a local regularization procedure is of course that it does not allow to transfer information from subsets of items not included in $A$ to subsets of items included in $A$. The major advantage however is the much lower computational cost: the parameter $n$ that would appear in any of the procedures when regularizing globally becomes $\vert A\vert$ when regularizing locally, which is much smaller in practical applications.

\subsection{Wavelet basis and connection with Hopf algebras}

As already mentioned, one major difference between the MRA representation and classic multiresolution analysis on a Euclidean space is that the wavelet projections $\Psi_{B}F$ of a function $F\in\Space{\Gn}$ are not scalar coefficients but vectors. In other words, they project on subspaces of the feature space $\Hn$ but not on subspaces of dimension $1$. To this purpose one would need to refine the decomposition $\Hn = \bigoplus_{B\in\SubsetsWE{\n}}H_{B}$ and construct a basis of $\Hn$ consistent with it, in the sense that it is equal to the concatenation of the bases of each $H_{B}$ for $B\in\SubsetsWE{\n}$.

An example of such a basis is introduced in \citet{CJS2014}. We did not recall it in the present article because it is not required for the definition and use of the MRA framework. We however point out some interesting observations about it. The basis is generated in \citet{CJS2014} by an algorithm which is a slight variation from the algorithm introduced in \citet{RT11} to define a basis for the top homology space of the complex of injective words over the field $\mathbb{F}_{2} = \mathbb{Z}/2\mathbb{Z}$ of two elements. Now, it happens that a sub-procedure of the algorithm used in \citet{CJS2014} is exactly the same algorithm as the one used to generate the Lyndon words of length $n$ \citep[see][]{CFL58}. Such Lyndon words can be used to construct a basis for the $n$th homogeneous component of the free Lie algebra and are also involved in the study of Hopf algebras, in particular in \citet{DPR14} where they are used to define eigenvectors for the transition matrix of the Markov chain associated to a certain card shuffle. At last, the basis obtained for $H_{\n}$ in \citet{CJS2014} is exactly the same as the basis computed in \citet{AL11} for what the authors call ``the Hopf kernel of the canonical morphism of Hopf monoids between the species of linear orders and the exponential species'' (see part 5.3). These connections may bring new results or insights to the MRA framework.

\subsection{Extension to the analysis of incomplete rankings with ties}

In practical applications, one may observe incomplete rankings with ties. For instance if a user chooses some items $a_{1},\dots, a_{k}$ among a selection of proposed items $\{a_{1},\dots,a_{k},b_{1},\dots,b_{l}\}$ then one can model her preference by the ranking $a_{1},\dots,a_{k} \succ b_{1},\dots,b_{l}$. More generally, incomplete rankings with ties are partial orders of the form
\begin{equation}
\label{eq:incomplete-ranking-with-ties}
a_{1,1},\dots,a_{n_{1},1}\succ\dots\succ a_{1,r},\dots, a_{n_{r},r} \qquad\text{with } r\geq 1 \text{ and } \sum_{i=1}^{r}n_{i} < n.
\end{equation}
Observations then cannot be represented as incomplete rankings anymore, but as incomplete rankings with ties, and the MRA framework needs to be extended before it can be applied. To do so,observe that an incomplete ranking with ties of the form \eqref{eq:incomplete-ranking-with-ties} can be seen as a partial ranking on the subset of items $\{a_{1,1},\dots,a_{n_{r},r}\}$. We therefore propose to extend the MRA framework as follows:
\begin{enumerate}
	\item Construct an estimator $\widehat{Q}_{A}$ on each observed subset of items $A$ using any method to analyze partial rankings from the literature
	\item Compute the wavelet transforms of all the $\widehat{Q}_{A}$'s and average them to obtain a wavelet estimator $\mathbf{\tilde{X}}$
	\item Perform the task related to the considered application in the feature space $\Hn$ using $\mathbf{\tilde{X}}$ as empirical distribution 
\end{enumerate}
Of course, this extended framework needs to be developed for each statistical application with respect to the considered method to analyze partial rankings.

\section{Conclusion}

This article introduces a novel general framework for the statistical analysis of incomplete rankings. The latter problem is defined here in a rigorous setting: incomplete rankings on a set of $n$ items $\n$ are drawn from one ranking model $p$ over $\Sn$ and observed on random subsets of items drawn independently. Each ranking thus provides information about the marginal of $p$ over the involved subset of items only, and the purpose of the statistical analysis of incomplete rankings is to consolidate and transfer information between observations to recover some part of $p$. Performing this task efficiently represents a great statistical challenge as there is no simple and general way to handle such heterogeneous data. It also presents a great computational challenge as the computation of marginals become by far intractable in modern applications where $n$ is around $10^{4}$ and the rankings involve less than $10$ items. 

The MRA framework we have introduced overcomes these challenges by sending the data into a tailor-made feature space. Procedures can then be defined to infer only the parameters of $p$ than are accessible/identifiable, with a complexity shown to be of the same order as that related to the storage of the dataset. The MRA framework thus offers a general and flexible approach to tackle any statistical application based on incomplete rankings. These advantages stem from the strong localization properties of the MRA representation, established in the present article. The latter decomposes any function of rankings into components that each localize the part of information specific to the marginal on one subset. This decomposition thus fits the multi-scale structure of the marginals and has a natural multiresolution interpretation.
 
We then established multiple connections between the MRA representation and other mathematical constructions. In particular we showed that if the latter can be interpreted as localizing \textit{relative rank information}, $\Sn$-based harmonic analysis can be interpreted as localizing \textit{absolute rank information} in contrast. We then provided a precise relationship between the part of information that contains all relative rank information of a given scale in the MRA representation and the corresponding pieces of absolute rank information expressed in the $\Sn$-based harmonic analysis framework.

We believe that the contributions of this article have several interests. From a fundamental point of view, the MRA decomposition introduces a novel yet natural way to decompose rank information. The connections we establish also provide important insights about the numerous mathematical objects involved in the analysis of ranking data. All these results should be of great interest to gain a better understanding of the latter and obtain new theoretical guarantees about its associated procedures. From a methodological point of view, the MRA framework provides a novel, efficient and general approach to analyze incomplete rankings. Though this article only settles the basis of the framework, we described at length many directions to extend it, in particular through the design of efficient regularization procedures, as well as to apply it to the relevant statistical problems. We therefore believe that it paves the way to many future developments in the statistical analysis of ranking data.


\appendix

\gdef\thesection{Appendix \Alph{section}.}

\section{Commutation with translations}

\begin{proposition}
	\label{prop:translation-invariance}
	For any $A,B\in\SubsetsWE{\n}$ and $\tau\in\Sn$,
	\[
	T_{\tau}\phi_{A} = \phi_{\tau(A)}T_{\tau} \qquad\text{and}\qquad T_{\tau}\Psi_{B} = \Psi_{\tau(B)}T_{\tau}.
	\]
\end{proposition}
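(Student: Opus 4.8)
The plan is to prove the two identities separately, deriving the relation for $\Psi$ from the one for $\phi$. Both sides of each identity are linear operators on $\Space{\Gn}$, so it suffices to verify them on the Dirac basis $(\delta_{\pi})_{\pi\in\Gn}$, and I would begin with the commutation $T_{\tau}\phi_{A} = \phi_{\tau(A)}T_{\tau}$.

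First I would record two elementary facts about the action $\tau\cdot\pi = \tau(\pi_{1})\dots\tau(\pi_{k})$: it maps $\Rank{A}$ bijectively onto $\Rank{\tau(A)}$ by relabeling the content through $\tau$, and it preserves the contiguous-subword relation, i.e. $\pi\sqsubset\sigma \iff \tau\cdot\pi\sqsubset\tau\cdot\sigma$, since relabeling the entries of a word does not change which of its factors are contiguous. Evaluating Definition \ref{def:wavelet-synthesis-operator} on $\delta_{\pi}$ and pushing $T_{\tau}$ through the indicator $\1{\{\sigma\in\Rank{A}:\pi\sqsubset\sigma\}}$ then turns the summation over $\{\sigma\in\Rank{A}:\pi\sqsubset\sigma\}$ into the summation over $\{\sigma'\in\Rank{\tau(A)}:\tau\cdot\pi\sqsubset\sigma'\}$; using $\vert\tau(A)\vert=\vert A\vert$ and $\vert\tau\cdot\pi\vert=\vert\pi\vert$ to match the normalizing factorials, this is exactly $\phi_{\tau(A)}\delta_{\tau\cdot\pi}=\phi_{\tau(A)}T_{\tau}\delta_{\pi}$. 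The case $\pi=\bar{0}$ is handled identically from $\phi_{A}\delta_{\bar{0}}=\frac{1}{\vert A\vert!}\1{\Rank{A}}$, using $\tau(\bar{0})=\bar{0}$.

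For the $\Psi$ identity I would fix $A\in\SubsetsWE{\n}$ and $F\in\Space{\Rank{A}}$ and apply $T_{\tau}$ to the multiresolution decomposition $F=\sum_{B\in\SubsetsWE{A}}\phi_{A}\Psi_{B}F$ of Theorem \ref{th:MRA-general}. The $\phi$-commutation just proved gives $T_{\tau}F=\sum_{B\in\SubsetsWE{A}}\phi_{\tau(A)}\bigl(T_{\tau}\Psi_{B}F\bigr)$, and by Proposition \ref{prop:action-on-H} each term $T_{\tau}\Psi_{B}F$ lies in $T_{\tau}(H_{B})=H_{\tau(B)}$. As $B$ runs over $\SubsetsWE{A}$ the subset $\tau(B)$ runs over $\SubsetsWE{\tau(A)}$, so this exhibits $T_{\tau}F\in\Space{\Rank{\tau(A)}}$ as a sum $\sum_{B'\in\SubsetsWE{\tau(A)}}\phi_{\tau(A)}G_{B'}$ with $G_{\tau(B)}=T_{\tau}\Psi_{B}F\in H_{\tau(B)}$. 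The uniqueness of such a decomposition (Theorem \ref{th:MRA-decomposition}, equivalently the first item of Theorem \ref{th:MRA-general}) forces $G_{B'}=\Psi_{B'}(T_{\tau}F)$ for every $B'$, and taking $B'=\tau(B)$ yields $\Psi_{\tau(B)}T_{\tau}F=T_{\tau}\Psi_{B}F$. Since $\Space{\Gn}=\bigoplus_{A}\Space{\Rank{A}}$ this establishes the identity on all of $\Space{\Gn}$; the degenerate cases $B\not\subset A$ are consistent, because then $\tau(B)\not\subset\tau(A)$ and both sides vanish.

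The only genuinely delicate point is this second part, where I must be careful that the two expansions of $T_{\tau}F$ are indexed over the same collection $\SubsetsWE{\tau(A)}$ and that each summand sits in the correct space $H_{\tau(B)}$, so that the uniqueness of the decomposition can be invoked term by term; the $\phi$ computation itself is routine once the two invariance facts about the action are isolated.
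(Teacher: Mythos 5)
Your proof is correct and follows essentially the same route as the paper: the first identity is checked on Dirac functions using the fact that $\tau$ preserves the contiguous-subword relation and maps $\Rank{A}$ bijectively onto $\Rank{\tau(A)}$, and the second is deduced by applying $T_{\tau}$ to the multiresolution decomposition of Theorem \ref{th:MRA-general}, invoking Proposition \ref{prop:action-on-H} to place each term in $H_{\tau(B)}$, and concluding by uniqueness of the decomposition. The only (harmless) additions are your explicit treatment of the $\bar{0}$ and $B\not\subset A$ cases, which the paper leaves implicit.
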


\begin{proof}
	For $\pi\in\Gn$ and $\pi'\in\Rank{A}$ it is clear that $\pi\sqsubset\pi' \Rightarrow \tau(\pi)\sqsubset\tau(\pi')$. Hence, the mapping $\pi'\mapsto\tau(\pi')$ being injective, $\tau(\{\pi'\in\Rank{A} \;\vert\; \pi\sqsubset\pi'\}) = \{\pi'\in\Rank{\tau(A)} \;\vert\; \tau(\pi)\sqsubset\pi'\}$ and one has
	\begin{align*}
	(\vert A\vert - \vert\pi\vert + 1)!T_{\tau}\phi_{A}\delta_{\pi} 
	&= T_{\tau}\1{\{\pi'\in\Rank{A} \;\vert\; \pi\sqsubset\pi'\}}\\
	&= \1{\tau(\{\pi'\in\Rank{A} \;\vert\; \pi\sqsubset\pi'\})}\\
	&= \1{\{\pi'\in\Rank{\tau(A)} \;\vert\; \tau(\pi)\sqsubset\pi'\}}\\
	&= (\vert A\vert - \vert\pi\vert + 1)!\phi_{\tau(A)}\delta_{\tau(\pi)}.
	\end{align*}
	This proves the first part. For the second part, let $B'\in\SubsetsWE{\n}$ with $B\subset B'$ and $F\in\Space{\Rank{B'}}$. By Theorem \ref{th:MRA-general} one has $F = \phi_{B'}\sum_{B\in\SubsetsWE{B'}}\Psi_{B}F$. Applying the operator $T_{\tau}$ and using the previous result one obtains
	\[
	T_{\tau}F = T_{\tau}\phi_{B'}\sum_{B\in\SubsetsWE{B'}}\Psi_{B}F = \phi_{\tau(B')}\sum_{B\in\SubsetsWE{B'}}T_{\tau}\Psi_{B}F
	\]
	where for each $B\in\SubsetsWE{B'}$, $T_{\tau}\Psi_{B}F\in H_{\tau(B)}$ by Proposition \ref{prop:action-on-H}. On the other hand, applying Theorem \ref{th:MRA-general} to $T_{\tau}F\in\Space{\Rank{\tau(B')}}$ gives
	\[
	T_{\tau}F = \phi_{\tau(B')}\sum_{B\in\SubsetsWE{\tau(B')}}\Psi_{B}T_{\tau}F = \phi_{\tau(B')}\sum_{B\in\SubsetsWE{B'}}\Psi_{\tau(B)}T_{\tau}F.
	\]
	The uniqueness of the MRA decomposition concludes the proof.
\end{proof}

\section{Proofs of Propositions \ref{prop:complexity-alpha-coefficients} and \ref{prop:complexity-synthesis-operator}}

We first show Proposition \ref{prop:complexity-alpha-coefficients} and give at the same time a method to compute the coefficients $\alpha_{B}(\pi,\pi')$ for $\pi,\pi'\in B$ and $B\in\Subsets{\n}$ with $\vert B\vert \leq k$; for $k\in\{2,\dots,n\}$. The first simplification in their computation stems from the following lemma. For $\pi = \pi_{A}\dots\pi_{k}\in\Gamma_{n}$ and $\tau\in\Sn$, we denote by $\tau(\pi)$ the word $\tau(\pi_{1})\dots\tau(\pi_{k})\in\Rank{\tau(c(\pi))}$, as in Subsection \ref{subsec:two-decompositions}.

\begin{lemma}
	\label{lem:alpha-translation-invariance}
	Let $B\in\Subsets{\n}$ and $\tau\in\Sn$ a permutation that keeps the order of the items in $B$, \textit{i.e.} such that for all $b,b^{\prime}\in B$, $b < b^{\prime} \Rightarrow \tau(b) < \tau(b^{\prime})$. Then for all $\pi,\pi^{\prime}\in\Rank{B}$, 
	\[ 
	\alpha_{B}(\pi,\pi^{\prime}) = \alpha_{\tau(B)}(\tau(\pi), \tau(\pi^{\prime}))
	\]
\end{lemma}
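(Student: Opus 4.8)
The plan is to prove the statement by strong induction on $\vert B\vert$, leveraging the recursive formula for the alpha coefficients established in Theorem \ref{th:alpha-recursive-formula}. The key observation is that the recursion expresses $\alpha_B(\pi,\pi')$ purely in terms of the combinatorial structure of $\pi$ and $\pi'$ — namely indices $1 \leq i < j \leq \vert B\vert$, factorial weights depending only on $\vert B\vert$, $i$, $j$, and lower-order coefficients indexed by contiguous subwords $\pi_{\set{i,j}}$ and induced subwords $\pi'_{\vert c(\pi_{\set{i,j}})}$ — with no direct dependence on the actual item labels. Since $\tau$ preserves the order of items in $B$, it acts on $\Rank{B}$ essentially as a relabeling that commutes with taking contiguous subwords and induced subwords, so the two sides of the claimed equality should satisfy the very same recurrence.

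First I would establish the base case: for $\vert B\vert = 2$, write $B = \{a,b\}$ with $a < b$. The recursion gives $\alpha_B(\pi,\pi') = \mathbb{I}\{\pi = \pi'\} - 1/2$ (the sum over $1\leq i<j\leq 2$ with $j-i < 1$ is empty). Since $\tau$ preserves order, $\tau(B) = \{\tau(a),\tau(b)\}$ with $\tau(a) < \tau(b)$, and $\tau$ induces a bijection $\Rank{B}\to\Rank{\tau(B)}$ with $\pi = \pi' \Leftrightarrow \tau(\pi) = \tau(\pi')$, so $\alpha_{\tau(B)}(\tau(\pi),\tau(\pi')) = \mathbb{I}\{\pi=\pi'\} - 1/2$ as well. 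For the inductive step, I would apply the recursive formula to both $\alpha_B(\pi,\pi')$ and $\alpha_{\tau(B)}(\tau(\pi),\tau(\pi'))$ and match terms. The constant $-1/(\vert B\vert!)$ and the indicator $\mathbb{I}\{\pi=\pi'\} = \mathbb{I}\{\tau(\pi)=\tau(\pi')\}$ agree trivially since $\vert\tau(B)\vert = \vert B\vert$ and $\tau$ is injective on $\Rank{B}$.

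The crux is the summation term. I would check the key bookkeeping identities: for each pair $(i,j)$, the contiguous subword of $\tau(\pi)$ at positions $i$ through $j$ is exactly $\tau(\pi_{\set{i,j}})$ (because $\tau$ is applied coordinatewise), so $c((\tau\pi)_{\set{i,j}}) = \tau(c(\pi_{\set{i,j}}))$; and the induced subword of $\tau(\pi')$ on this content equals $\tau$ applied to the induced subword of $\pi'$ on $c(\pi_{\set{i,j}})$. Moreover $\tau$ restricted to the smaller content set $c(\pi_{\set{i,j}})$ still preserves order, so the inductive hypothesis applies to give $\alpha_{c(\pi_{\set{i,j}})}(\pi_{\set{i,j}}, \pi'_{\vert c(\pi_{\set{i,j}})}) = \alpha_{\tau(c(\pi_{\set{i,j}}))}(\tau(\pi_{\set{i,j}}), \tau(\pi'_{\vert c(\pi_{\set{i,j}})}))$. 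Since the factorial weights $1/(\vert B\vert - j + i)!$ depend only on $\vert B\vert = \vert\tau(B)\vert$, $i$, and $j$, every summand matches and the two recursions coincide.

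The main obstacle I anticipate is purely notational rather than conceptual: verifying carefully that taking contiguous subwords and taking induced subwords both commute with the order-preserving relabeling $\tau$, and that the restriction of $\tau$ to any content subset $c(\pi_{\set{i,j}})$ again preserves order (which is immediate, since a subset of an order-preserving map's domain inherits the property). One subtlety worth stating explicitly is that $\tau$ need not be order-preserving globally — only on $B$ — but the induced words $\pi_{\set{i,j}}$ have content inside $B$, so the hypothesis transfers correctly to each recursive call. Once these commutation facts are recorded as short lemmas or inline remarks, the induction closes routinely.
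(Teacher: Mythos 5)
Your proof is correct, but it follows a genuinely different route from the paper's. The paper disposes of this lemma in two lines: by Definition \ref{def:alpha-coefficients} one has $\Psi_{B}\delta_{\pi'} = \sum_{\pi\in\Rank{B}}\alpha_{B}(\pi,\pi')\delta_{\pi}$, and applying the translation operator $T_{\tau}$ together with the equivariance property $T_{\tau}\Psi_{B} = \Psi_{\tau(B)}T_{\tau}$ (Proposition \ref{prop:translation-invariance}) yields two expansions of $T_{\tau}\Psi_{B}\delta_{\pi'}$ in the Dirac basis, whose coefficients are then identified. That argument is conceptual --- the invariance of the alpha coefficients is nothing but the $\Sn$-equivariance of the wavelet transform --- and it visibly holds for an \emph{arbitrary} $\tau\in\Sn$; the order-preservation hypothesis in the statement is only what is needed for the downstream application (reducing every $\alpha_{B}$ to the reference coefficients $\alpha_{\set{k}}$). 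Your induction on the recursive formula of Theorem \ref{th:alpha-recursive-formula} is a legitimate alternative: that theorem is established in Section \ref{sec:MRA-construction} independently of the present lemma, so there is no circularity, and the bookkeeping identities you isolate --- $\tau(\pi)_{\set{i,j}} = \tau(\pi_{\set{i,j}})$, $c(\tau(\pi)_{\set{i,j}}) = \tau(c(\pi_{\set{i,j}}))$, commutation of induced subwords with the coordinatewise relabeling, and the fact that the factorial weights depend only on $\vert B\vert$ and the positions $(i,j)$ --- are all valid, as is the base case $\vert B\vert = 2$ where the sum is empty. As you half-observe, your argument never actually uses order-preservation either (any injective relabeling would do), so both proofs in fact establish the stronger, unrestricted statement. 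What the paper's approach buys is brevity and a structural explanation via equivariance; what yours buys is a self-contained combinatorial verification that does not invoke Proposition \ref{prop:translation-invariance}, hence does not rest on the equivariance of the $\phi_{A}$'s and the uniqueness of the multiresolution decomposition.
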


\begin{proof}
	By Definition \ref{def:alpha-coefficients} one has $\Psi_{B}\delta_{\pi'} = \sum_{\pi\in\Rank{B}}\alpha_{B}(\pi,\pi')\delta_{\pi}$. Applying $T_{\tau}$ then gives 
	\[
	T_{\tau}\Psi_{B}\delta_{\pi'} = \sum_{\pi\in\Rank{B}}\alpha_{B}(\pi,\pi')\delta_{\tau(\pi)}.
	\]
	On the other hand, Proposition \ref{prop:translation-invariance} gives
	\[
	T_{\tau}\Psi_{B}\delta_{\pi'} = \Psi_{\tau(B)}\delta_{\tau(\pi')} = \sum_{\pi\in\Rank{\tau(B)}}\alpha_{\tau(B)}(\pi,\tau(\pi'))\delta_{\pi} = \sum_{\pi\in\Rank{B}}\alpha_{\tau(B)}(\tau(\pi),\tau(\pi'))\delta_{\tau(\pi)}.
	\]
	Identifying the coefficients concludes the proof.
\end{proof}

Property 3. of Lemma \ref{lem:alpha-translation-invariance} implies two simplifications:
\begin{itemize}
	\item First, for $k\in\{2,\dots,n\}$, the coefficients $(\alpha_{B}(\pi,\pi^{\prime}))_{\pi,\pi^{\prime}\in\Rank{B}}$ are obtained directly from the $(\alpha_{\set{k}}(\pi,\pi^{\prime}))_{\pi,\pi^{\prime}\in\Rank{\set{k}}}$ for all $B\subset\n$ with $\vert B\vert = k$.
	\item Second, for $B = \{b_{1},\dots,b_{k}\}\in\Subsets{\n}$ with $b_{1} < \dots < b_{k}$, the coefficients $(\alpha_{B}(\pi,\pi'))_{\pi'\in\Rank{B}}$ are obtained directly from the $(\alpha_{B}(b_{1}\dots b_{k},\pi'))_{\pi'\in\Rank{B}}$ for any $\pi\in\Rank{B}$.
\end{itemize} 

\begin{example}
	Let $B = \{2,4,5\}$ and $\tau\in\Sn$ such that $\tau(2) = 1, \tau(4) = 2$ and $\tau(5) = 3$. Then for $\pi,\pi^{\prime}\in\Rank{\{2,4,5\}}$, $\alpha_{\{2,4,5\}}(\pi,\pi^{\prime}) = \alpha_{\{1,2,3\}}(\tau(\pi), \tau(\pi^{\prime}))$.
\end{example}

With the precedent simplifications, one only needs to compute and store the $j!$ coefficients $(\alpha_{\set{j}}(12\dots j,\pi))_{\pi\in\Rank{\set{j}}}$ for each $j\in\{2,\dots, k\}$. These coefficients are computed using the recursive formula from Theorem \ref{th:alpha-recursive-formula}. Let $j\in\{2,\dots,k\}$. If all coefficients the $\alpha_{\set{j'}}(12\dots j',\pi)$ for $\pi\in\Rank{\set{j'}}$ and $2\leq j'\leq j-1$, it is easy to see that the computation of each $\alpha_{\set{j}}(12\dots j,\pi)$ for $\pi\in\Rank{\set{j}}$ then has complexity bounded by $\binom{j}{2}$. The global complexity of the computation of the coefficients $(\alpha_{\{1,\dots,j\}}(12\dots j,\pi))_{\pi\in\Rank{\set{j}}, 2\leq j\leq k}$ is therefore bounded by
\[
\sum_{j=2}^{k}\binom{j}{2}j! \leq \frac{k-1}{2}\sum_{j=2}^{k}\left[(j+1)!-j!\right] \leq \frac{1}{2}k^{2}k!.
\]
This establishes Proposition \ref{prop:complexity-alpha-coefficients}. Proposition \ref{prop:complexity-synthesis-operator} is a direct consequence of Lemma \ref{lem:useful-lemma}.

\section{Proof of Lemma \ref{lem:commutativity}}

Lemma \ref{lem:commutativity} is a cornerstone in the construction of the MRA representation. Its proof relies on the exploitation of the combinatorial structure of the wavelet synthesis operators. This requires some more definitions. Let $\GGn := \Gn\cup\n$ be the set of all injective words on $\n$, including the words of length $1$ of the form $a$, with $a\in\n$.

\begin{definition}
	\label{def:elementary-operations}
	Let $\pi,\pi'\in\GGn$ such that $c(\pi)\cap c(\pi') = \emptyset$. Their concatenation product is then defined by
	\[
	\pi\pi' := \pi_{1}\dots\pi_{\vert\pi\vert}\pi'_{1}\dots\pi_{\vert\pi'\vert}'.
	\]
\end{definition}

The following lemma gives a combinatorial expression for the wavelet synthesis operator.

\begin{lemma}
	\label{lem:combinatorial-expression}
	Let $\pi\in\Gamma_{n}$ and $A\in\Subsets{\n}$ such that $c(\pi)\subset A$. Then one has
	\[
	\phi_{A}\delta_{\pi} = \frac{1}{(\vert A\vert - \vert\pi\vert + 1)!}\sum_{\substack{A_{1},A_{2}\subset A \\ A_{1}\sqcup A_{2} = A\setminus c(\pi)}}\sum_{\substack{\omega\in\Rank{A_{1}} \\ \omega'\in\Rank{A_{2}}}}\delta_{\omega\pi\omega'}.
	\]
\end{lemma}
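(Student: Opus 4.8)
The plan is to unfold the definition of $\phi_A\delta_\pi$ from Definition \ref{def:wavelet-synthesis-operator} and reinterpret the indicator function it involves in a purely combinatorial way. Recall that $\phi_A\delta_\pi = \frac{1}{(\vert A\vert - \vert\pi\vert + 1)!}\1{\{\sigma\in\Rank{A} \;\vert\; \pi\,\sqsubset\,\sigma\}}$, so the normalizing constant already matches the one in the target formula and the only real work is to rewrite the set $\{\sigma\in\Rank{A} \;\vert\; \pi\,\sqsubset\,\sigma\}$ as a disjoint union indexed by the data $(A_1, A_2, \omega, \omega')$ appearing on the right-hand side.

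The key observation is that a ranking $\sigma\in\Rank{A}$ contains $\pi$ as a \emph{contiguous} subword precisely when $\sigma$ can be written as $\omega\pi\omega'$, where $\omega$ is the (possibly empty) block of items appearing before $\pi$ in $\sigma$ and $\omega'$ is the block appearing after. First I would establish the bijection between the set $\{\sigma\in\Rank{A} \;\vert\; \pi\,\sqsubset\,\sigma\}$ and the set of tuples $(A_1, A_2, \omega, \omega')$ with $A_1\sqcup A_2 = A\setminus c(\pi)$, $\omega\in\Rank{A_1}$ and $\omega'\in\Rank{A_2}$: given such $\sigma$, the items before $\pi$ determine $A_1$ and their order determines $\omega\in\Rank{A_1}$, and symmetrically the items after $\pi$ determine $A_2$ and $\omega'\in\Rank{A_2}$; conversely each tuple yields exactly one $\sigma = \omega\pi\omega'$ via the concatenation product of Definition \ref{def:elementary-operations} (which is well-defined here since $c(\omega)$, $c(\pi)$, $c(\omega')$ are pairwise disjoint and cover $A$). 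This correspondence is clearly a bijection, hence
\[
\1{\{\sigma\in\Rank{A} \;\vert\; \pi\,\sqsubset\,\sigma\}} = \sum_{\substack{A_{1},A_{2}\subset A \\ A_{1}\sqcup A_{2} = A\setminus c(\pi)}}\sum_{\substack{\omega\in\Rank{A_{1}} \\ \omega'\in\Rank{A_{2}}}}\delta_{\omega\pi\omega'},
\]
where I must check that the words $\omega\pi\omega'$ appearing in the double sum are pairwise distinct so that no Dirac mass is counted twice; this follows because distinct tuples give distinct partitions of the positions of $\sigma$ around the contiguous block $\pi$. Dividing by $(\vert A\vert - \vert\pi\vert + 1)!$ then yields the claimed identity.

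I do not expect any genuine obstacle here, as the statement is essentially a bookkeeping reformulation of the definition of $\phi_A$. The only point requiring minor care is the treatment of boundary cases where $A_1$ or $A_2$ is empty: when $A_1=\emptyset$ the word $\omega$ is the empty word and $\omega\pi\omega' = \pi\omega'$, and similarly when $A_2=\emptyset$, so one should verify the concatenation product and the conventions for $\Rank{\emptyset}$ (a single empty word) are consistent with these degenerate tuples. The other routine verification is that the total number of tuples equals $(\vert A\vert - \vert\pi\vert + 1)!$ when $\pi$ ranges correctly, which serves as a sanity check that the normalization produces a genuine uniform distribution over $\{\sigma \;\vert\; \pi\sqsubset\sigma\}$, consistent with Definition \ref{def:wavelet-synthesis-operator}.
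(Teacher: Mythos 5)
Your proof is correct and follows exactly the paper's argument: the paper's proof consists of the single observation that $\{\sigma\in\Rank{A}\;\vert\;\pi\sqsubset\sigma\} = \{\omega\pi\omega' \;\vert\; (\omega,\omega')\in\Rank{A_{1}}\times\Rank{A_{2}},\ A_{1}\sqcup A_{2} = A\setminus c(\pi)\}$, which is precisely the bijection you establish. Your additional checks (distinctness of the words $\omega\pi\omega'$, the empty-block boundary cases, and the count $\sum_{k}\binom{m}{k}k!(m-k)! = (m+1)!$ confirming the normalization) are all valid and merely make explicit what the paper leaves implicit.
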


\begin{proof}
	The proof only consists in noticing that
	\[
	\{ \sigma\in\Rank{A} \;\vert\; \pi\sqsubset \sigma \} = \{ \omega\pi\omega' \;\vert\; (\omega,\omega')\in\Rank{A_{1}}\times\Rank{A_{2}} \text{ with } A_{1}\sqcup A_{2} = A\setminus c(\pi) \}.
	\]
\end{proof}

Lemma \ref{lem:commutativity} then relies on the following lemma, the proof of which is straightforward and left to the reader.

\begin{lemma}
	\label{lem:elementary-commutativity}
	Let $A,A\subset\n$ be two disjoint subsets and $(\pi,\pi')\in\Rank{A}\times\Rank{A'}$. Then for any $B\in\Subsets{\n}$ such that $B\cap A \neq \emptyset$ and $B\cap A'\neq \emptyset$ one has
	\[
	(\pi\pi')_{\vert B} = \pi_{\vert B\cap A}\pi'_{\vert B\cap A'}.
	\]
\end{lemma}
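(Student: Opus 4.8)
The plan is to reduce everything to the uniqueness of the subword of prescribed content. Recall from the discussion preceding Eq.~\eqref{eq:consistency-assumption} that for any word and any subset of its content there is a \emph{unique} subword with that content; thus $(\pi\pi')_{\vert B}$ is by definition the unique subword of $\pi\pi'$ whose content equals $B$ (here $B\subset c(\pi\pi')=A\sqcup A'$ is implicitly assumed, so that the left-hand side is defined). Rather than compute this subword directly, I would exhibit an explicit candidate and then invoke uniqueness.

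First I would record the elementary content computation. Since $A$ and $A'$ are disjoint and $B\subset A\sqcup A'$, one has $B=(B\cap A)\sqcup(B\cap A')$, and by Definition~\ref{def:elementary-operations} the content of $\pi_{\vert B\cap A}\pi'_{\vert B\cap A'}$ is $c(\pi_{\vert B\cap A})\sqcup c(\pi'_{\vert B\cap A'})=(B\cap A)\sqcup(B\cap A')=B$. The hypotheses $B\cap A\neq\emptyset$ and $B\cap A'\neq\emptyset$ guarantee that both factors are genuine nonempty words in $\GGn$, so their concatenation product is well defined.

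The core step is to check that $\pi_{\vert B\cap A}\pi'_{\vert B\cap A'}$ is indeed a \emph{subword} of $\pi\pi'$. By definition $\pi_{\vert B\cap A}$ is a subword of $\pi$ and $\pi'_{\vert B\cap A'}$ is a subword of $\pi'$; that is, the letters of $B\cap A$ appear in $\pi$ in this order and the letters of $B\cap A'$ appear in $\pi'$ in this order. In the concatenation $\pi\pi'$ every letter of $\pi$ precedes every letter of $\pi'$, so the increasing indices realizing $\pi_{\vert B\cap A}$ as a subword of $\pi$, followed by those realizing $\pi'_{\vert B\cap A'}$ as a subword of $\pi'$ (shifted by $\vert\pi\vert$), form a single strictly increasing sequence of positions in $\pi\pi'$. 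Hence $\pi_{\vert B\cap A}\pi'_{\vert B\cap A'}\subset\pi\pi'$ with content $B$, and by uniqueness of the subword of content $B$ it must coincide with $(\pi\pi')_{\vert B}$, which is the claim.

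There is no serious obstacle here: the statement is purely combinatorial and follows from unwinding the definitions of the concatenation product and of the induced ranking. The only point deserving care is the bookkeeping that all letters of the first block precede all letters of the second block in $\pi\pi'$, which is precisely what makes the order of appearance of the $B$-letters split as the concatenation of the two induced subwords; the nonemptiness hypotheses merely ensure that both blocks contribute, so that the asserted equality is between bona fide words.
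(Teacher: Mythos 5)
Your proof is correct. The paper explicitly declares this lemma ``straightforward and left to the reader'', so there is no authorial proof to compare against; your argument --- showing that $\pi_{\vert B\cap A}\pi'_{\vert B\cap A'}$ is a subword of $\pi\pi'$ with content $B$ (because every letter of $\pi$ precedes every letter of $\pi'$ in the concatenation) and then invoking the uniqueness of the subword of prescribed content --- is precisely the intended elementary verification, and it is complete.
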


\begin{proof}[Proof of Lemma \ref{lem:commutativity}]
	Let $A,B,C\in\Subsets{\n}$ such that $A\cup B\subset C$ and $\pi\in\Rank{A}$. We need to prove that $M_{B}\phi_{C}\delta_{\pi} = \phi_{B}M_{A\cap B}\delta_{\pi}$. Lemma \ref{lem:combinatorial-expression} gives on the one hand
	\[
	\phi_{B}M_{A\cap B}\delta_{\pi} = \phi_{B}\delta_{\pi_{\vert A\cap B}} = \frac{1}{(\vert B\vert - \vert A\cap B\vert + 1)!}\sum_{\substack{B_{1},B_{2}\subset B \\ B_{1}\sqcup B_{2} = B\setminus A}}\sum_{\substack{\omega\in\Rank{B_{1}} \\ \omega'\in\Rank{B_{2}}}}\delta_{\omega\pi_{\vert A\cap B}\omega'}
	\]
	and on the other hand
	\[
	(\vert C\vert - \vert A\vert + 1)!M_{B}\phi_{C}\delta_{\pi} = M_{B}\sum_{\substack{C_{1},C_{2}\subset C \\ C_{1}\sqcup C_{2} = C\setminus A}}\sum_{\substack{\omega\in\Rank{C_{1}} \\ \omega'\in\Rank{C_{2}}}}\delta_{\omega\pi\omega'} = \sum_{\substack{C_{1},C_{2}\subset C \\ C_{1}\sqcup C_{2} = C\setminus A}}\sum_{\substack{\omega\in\Rank{C_{1}} \\ \omega'\in\Rank{C_{2}}}}\delta_{(\omega\pi\omega')_{\vert B}}.
	\]
	Now, by Lemma \ref{lem:elementary-commutativity}, one has for any $C_{1},C_{2}\subset C$ such that $C_{1}\sqcup C_{2} = C\setminus A$ and $(\omega,\omega')\in\Rank{C_{1}}\times\Rank{C_{2}}$,
	\[
	(\omega\pi\omega')_{\vert B} = \omega_{\vert B\cap C_{1}}\pi_{\vert A\cap B}\,\omega'_{\vert B\cap C_{2}}.
	\]
	Therefore, doing the change of variables $B_{1} := C_{1}\cap B$, $B_{2} := C_{2}\cap B$, $\upsilon := \omega_{\vert B\cap C_{1}}$ and $\upsilon' := \omega'_{\vert B\cap C_{2}}$, one obtains
	\[
	M_{B}\phi_{C}\delta_{\pi} = \frac{1}{(\vert C\vert - \vert A\vert + 1)!}\sum_{\substack{B_{1},B_{2}\subset B \\ B_{1}\sqcup B_{2} = B\setminus A}}\sum_{\substack{\upsilon\in\Rank{B_{1}} \\ \upsilon'\in\Rank{B_{2}}}}c(B_{1},B_{2},\upsilon,\upsilon')\delta_{\upsilon\pi_{\vert A\cap B}\upsilon'},
	\]
	where the coefficient $c(B_{1},B_{2},\upsilon,\upsilon')$ is given by
	\begin{align*}
	c(B_{1},B_{2},\upsilon,\upsilon') 
	&= \sum_{\substack{C_{1},C_{2}\subset C \\ C_{1}\sqcup C_{2} = C\setminus A}}\sum_{\substack{\omega\in\Rank{C_{1}} \\ \omega'\in\Rank{C_{2}}}}\mathbb{I}\{C_{1}\cap B = B_{1}, C_{2}\cap B = B_{2}, \omega_{\vert B_{1}} = \upsilon, \omega'_{\vert B_{2}} = \upsilon'\}\\
	&= \sum_{\substack{C_{1},C_{2}\subset C \\ C_{1}\sqcup C_{2} = C\setminus A}}\mathbb{I}\{C_{1}\cap B = B_{1}, C_{2}\cap B = B_{2}\}\frac{\vert C_{1}\vert !}{\vert B_{1}\vert !}\frac{\vert C_{2}\vert !}{\vert B_{2}\vert !}\\
	&= \frac{\vert C_{1}\vert !}{\vert B_{1}\vert !}\frac{\vert C_{2}\vert !}{\vert B_{2}\vert !}\sum_{k=0}^{\vert C\vert - \vert A\cup B\vert}(k+\vert B_{1}\vert)!(\vert C\vert - \vert A\cup B\vert - k + \vert B_{2}\vert)!\\
	&= (\vert C\vert - \vert A\cup B\vert)!\sum_{k=0}^{\vert C\vert - \vert A\cup B\vert}\binom{k + \vert B_{1}\vert}{\vert B_{1}\vert}\binom{\vert C\vert - \vert A\cup B\vert - k + \vert B_{2}\vert}{\vert B_{2}\vert}\\
	&= (\vert C\vert - \vert A\cup B\vert)!\binom{\vert C\vert - \vert A\cup B\vert + \vert B_{1}\vert + \vert B_{2}\vert + 1}{\vert C\vert - \vert A\cup B\vert},
	\end{align*}
	where the last equality is given by Lemma \ref{lem:combinatorial-identity} below for $n := \vert C\vert - \vert A\cup B\vert$, $r := \vert B_{1}\vert$ and $s := \vert B_{2}\vert$. The proof is concluded by noticing that for $B_{1},B_{2}\subset B$ such that $B_{1}\sqcup B_{2} = B\setminus A$, $\vert B_{1}\vert + \vert B_{2}\vert = \vert B\vert - \vert A\cap B\vert$ and $\vert A\cup B\vert - \vert B_{1}\vert - \vert B_{2}\vert = \vert A\vert$, so that
	\[
	\binom{\vert C\vert - \vert A\cup B\vert + \vert B_{1}\vert + \vert B_{2}\vert + 1}{\vert C\vert - \vert A\cup B\vert} = \frac{(\vert C\vert - \vert A\vert + 1)!}{(\vert C\vert - \vert A\cup B\vert)!(\vert B\vert - \vert A\cap B\vert + 1)!}.
	\]
\end{proof}

\begin{lemma}
	\label{lem:combinatorial-identity}
	For any $n,r,s\in\mathbb{N}$, one has the identity
	\[
	\sum_{k=0}^{n}\binom{k+r}{r}\binom{n-k+s}{s} = \binom{n+r+s+1}{n}
	\]
\end{lemma}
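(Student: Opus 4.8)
The plan is to prove the Vandermonde-type identity
\[
\sum_{k=0}^{n}\binom{k+r}{r}\binom{n-k+s}{s} = \binom{n+r+s+1}{n}
\]
by a generating-function argument, which I expect to be the cleanest route and to avoid any delicate index bookkeeping. The key observation is that the left-hand side is exactly the coefficient extraction from a product of two simple power series. First I would recall the negative binomial expansion
\[
\frac{1}{(1-x)^{r+1}} = \sum_{k\geq 0}\binom{k+r}{r}x^{k},
\]
valid for each fixed nonnegative integer $r$, and similarly
\[
\frac{1}{(1-x)^{s+1}} = \sum_{j\geq 0}\binom{j+s}{s}x^{j}.
\]
The coefficient of $x^{n}$ in the product of these two series is, by the definition of the Cauchy product, precisely $\sum_{k=0}^{n}\binom{k+r}{r}\binom{n-k+s}{s}$, which is the left-hand side of the identity we want.

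The second step is to simplify the product on the right. Since $(1-x)^{-(r+1)}(1-x)^{-(s+1)} = (1-x)^{-(r+s+2)}$, applying the negative binomial expansion once more with exponent $r+s+2$ gives
\[
\frac{1}{(1-x)^{r+s+2}} = \sum_{m\geq 0}\binom{m+r+s+1}{r+s+1}x^{m}.
\]
Reading off the coefficient of $x^{n}$ yields $\binom{n+r+s+1}{r+s+1}$, and the symmetry of binomial coefficients, $\binom{n+r+s+1}{r+s+1} = \binom{n+r+s+1}{n}$, produces exactly the right-hand side. Equating the two expressions for the coefficient of $x^{n}$ in the same power series completes the proof.

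I do not anticipate a genuine obstacle here, since the identity is a standard consequence of the convolution of negative binomial series; the only point requiring a word of care is to ensure the generating-function manipulations are interpreted as formal power series in $x$ (so that no convergence issues arise) and that the coefficient-comparison is legitimate because a formal power series is determined by its coefficients. An alternative, should one prefer a purely combinatorial phrasing, would be the hockey-stick / stars-and-bars interpretation: both sides count the number of ways to write $n$ as an ordered sum distributing among $r+s+2$ parts, split according to the position of a distinguished boundary. If a fully elementary argument is desired instead, one could also proceed by induction on $n$, using Pascal's rule $\binom{n+r+s+1}{n} = \binom{n+r+s}{n-1} + \binom{n+r+s}{n}$ together with the telescoping behaviour of the partial sums; but the generating-function proof is the most transparent and is the one I would write out.
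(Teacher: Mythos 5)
Your proof is correct, and it takes a genuinely different route from the one in the paper. The paper proceeds by a purely elementary double recursion: writing $S_{n}(r,s)$ for the left-hand side, it first derives $S_{n}(r+1,s) = S_{n}(r,s) + S_{n-1}(r+1,s)$ from Pascal's rule, telescopes this to $S_{n}(r+1,s) = \sum_{k=0}^{n}S_{k}(r,s)$, and then closes the induction on $r$ using the hockey-stick identity $\sum_{j=k}^{n}\binom{j}{k} = \binom{n+1}{k+1}$ (itself proved by induction). Your argument instead recognizes the left-hand side as the Cauchy-product coefficient of $x^{n}$ in $(1-x)^{-(r+1)}(1-x)^{-(s+1)} = (1-x)^{-(r+s+2)}$ and reads off the answer from the negative binomial expansion. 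The generating-function proof is shorter, avoids all index bookkeeping, and makes the "why" of the identity transparent (it is just additivity of exponents); the paper's proof has the advantage of being entirely self-contained at the level of binomial recurrences, requiring no appeal to formal power series. Your remark about working with formal power series to sidestep convergence is the right precaution, and your suggested stars-and-bars bijection (splitting a weak composition of $n$ into $r+s+2$ parts at a distinguished boundary) would serve equally well as a one-line combinatorial proof.
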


\begin{proof}
	Denote the sum by $S_{n}(r,s)$. By Pascal's rule, one has
	\begin{align*}
	S_{n}(r+1,s) 
	&= \sum_{k=0}^{n}\binom{k+r+1}{k}\binom{n-k+s}{s}\\
	&= \binom{n+s}{s} + \sum_{k=1}^{n}\binom{k+r}{k}\binom{n-k+s}{s} + \sum_{k=1}^{n}\binom{k+r}{k-1}\binom{n-k+s}{s}\\
	&= \sum_{k=0}^{n}\binom{k+r}{k}\binom{n-k+s}{s} + \sum_{k=0}^{n-1}\binom{k+r+1}{k}\binom{n-1-k+s}{s}\\
	&= S_{n}(r,s) + S_{n-1}(r+1,s).
	\end{align*}
	One thus has $S_{n}(r+1,s) - S_{n-1}(r+1,s) = S_{n}(r,s)$ and, noticing that $S_{0}(r,s) = 1$ for all $r,s\in\mathbb{N}$, one obtains by a telescoping sum
	\[
	S_{n}(r+1,s) = \sum_{k=0}^{n}S_{k}(r,s).
	\]
	The identity is now proven by induction on $r$ using the well-known identity 
	\begin{equation}
	\label{eq:well-known-identity}
	\sum_{j=k}^{n}\binom{j}{k} = \binom{n+1}{k+1}
	\end{equation} 
	(it can be proven by induction on $n$ with Pascal's rule). For $r=0$ one has
	\[
	S_{n}(0,s) = \sum_{k=0}^{n}\binom{n-k+s}{s} = \sum_{j=s}^{n+s}\binom{j}{s} = \binom{n+s+1}{s+1},
	\]
	which satisfies the identity. Assuming the identity true for all $k\leq r$, one has
	\[
	S_{n}(r+1,s) = \sum_{k=0}^{n}\binom{k+r+s+1}{r+s+1} = \sum_{j=r+s+1}^{n+r+s+1}\binom{j}{r+s+1} = \binom{n+r+s+2}{r+s+2},
	\]
	where the last equality also stems from identity \eqref{eq:well-known-identity}. This concludes the proof.
\end{proof}

\section{Proofs of Proposition \ref{prop:connection-Fourier-MRA} and Theorem \ref{th:connection-other-MRA}}

A full ranking $\pi_{1}\succ\dots\succ \pi_{n}$ is either seen as the word $\pi_{1}\dots \pi_{n}\in\Rank{\n}$ or as the permutation $\sigma\in\Sn$ defined by $\sigma(\pi_{i}) = i$ for all $i\in\{1,\dots,n\}$. The action of $\Sn$ on $\Gn$ defined in Subsection \ref{subsec:two-decompositions} by $\tau\cdot\pi = \tau(\pi)$ is transitive on $\Rank{\n}$. If $\sigma\in\Sn$ is the permutation associated to the full ranking $\pi\in\Rank{\n}$ then the permutation associated to $\tau\cdot\pi$ is the permutation $\sigma'\in\Sn$ defined by $\sigma'(\tau(\pi_{i})) = i$. In other words it is such that $\sigma'\tau = \sigma$, or equivalently it is given by $\sigma' = \sigma\tau^{-1}$. Hence, through the identification of $\Rank{\n}$ and $\Sn$, the natural action of $\Sn$ on $\Rank{\n}$ is  equivalent to the classic right translation $\sigma\mapsto\sigma\tau^{-1}$ on $\Sn$. We therefore use this action and the associated representation on $\Space{\Sn}$, namely the right regular representation.\\

\begin{proof}[Proof of Proposition \ref{prop:connection-Fourier-MRA}]
	Theorem \ref{th:MRA-decomposition} shows that $\phi_{\n}$ is a linear isomorphism between $\mathbb{H}_{n}$ and $\Space{\Sn}$, and Proposition \ref{prop:translation-invariance} shows that for any $\tau\in\Sn$, $T_{\tau}\phi_{\n} = \phi_{\tau(\n)}T_{\tau} = \phi_{\n}T_{\tau}$. This concludes the proof.
\end{proof}

The proof of Theorem \ref{th:connection-other-MRA} relies on the properties of the embedding operator $\phi'_{A}$, given by the following lemma. For $A\in\Subsets{\n}$ and $\pi'\in\Gamma^{\vert A\vert}$, we define the operator $T_{A\rightarrow\pi'}:\Space{\Gn}\rightarrow\Gn$ that maps the Dirac function of a ranking $\pi\in\Gn$ to the Dirac function of the ranking obtained by replacing $\pi_{\vert A}$ by $\pi'$ if $A\subset c(\pi)$ or to $0$ otherwise.

\begin{lemma}
	\label{lem:properties-alternative-embedding}
	Let $A\in\Subsets{\n}$ and $\pi\in\Rank{A}$. The following properties hold.
	\begin{enumerate}
		\item For all $A',C\in\Subsets{\n}$ such that $A\subset A'\subset C$,
		\[
		\phi'_{C}\delta_{\pi} = \phi'_{C}\phi'_{A'}\delta_{\pi}.
		\]
		\item For all $B,C\in\Subsets{\n}$ such that $A\cup B \subset C$,
		\[
		M_{B}\phi'_{C}\delta_{\pi} = M_{B}\phi'_{A\cup B}\delta_{\pi}.
		\]
		\item For all $B\in\Subsets{\n}$,
		\[
		M_{B}\phi'_{A\cup B}\delta_{\pi} = \sum_{\substack{A_{1}\subset A\setminus B \\ B_{1}\subset B\setminus A \\ \vert A_{1}\vert = \vert B_{1}\vert}}\lambda_{\vert B_{1}\vert}\sum_{\pi'\in\Rank{B_{1}}}\phi'_{B}M_{(A\cap B)\sqcup B_{1}}T_{A_{1}\rightarrow\pi'}\delta_{\pi},
		\]
		where $\lambda_{t} = (\vert A\vert !\vert B\vert !)/(\vert A\cup B\vert ! (\vert A\cap B\vert + t)!)$ for any $t\in\mathbb{N}$.
		\item For all $\tau\in\Sn$
		\[
		T_{\tau}\phi'_{A} = \phi'_{\tau(A)}T_{\tau}
		\]
	\end{enumerate}
\end{lemma}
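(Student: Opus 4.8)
The plan is to prove the four properties of Lemma~\ref{lem:properties-alternative-embedding} in the order stated, since each one builds on the combinatorial description of $\phi'_{A}$ as a uniform average over shuffles. Throughout I would work with the combinatorial expression analogous to Lemma~\ref{lem:combinatorial-expression}: for $\pi\in\Rank{A}$ and $A\subset C$,
\[
\phi'_{C}\delta_{\pi} = \frac{\vert A\vert!}{\vert C\vert!}\sum_{\substack{A_{1},A_{2}\subset C\setminus A\\ A_{1}\sqcup A_{2}=C\setminus A}}\ \sum_{\substack{\omega\in\Rank{A_{1}}\\ \omega'\in\Rank{A_{2}}}}\delta_{\omega\pi\omega'}\quad\text{(up to interleaving)},
\]
more precisely $\phi'_{C}\delta_{\pi}=(\vert A\vert!/\vert C\vert!)\sum_{\sigma\in\Rank{C},\,\pi\subset\sigma}\delta_{\sigma}$, the uniform distribution over linear extensions scaled appropriately. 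Property~1 (the ``tower'' identity $\phi'_{C}\delta_{\pi}=\phi'_{C}\phi'_{A'}\delta_{\pi}$) I would get by a direct counting argument: both sides are supported on $\{\sigma\in\Rank{C}\mid\pi\subset\sigma\}$, and I would check that each such $\sigma$ receives the same total coefficient on both sides. The right-hand side factors the extension $\pi\to\sigma$ through an intermediate extension $\pi\to\tau\in\Rank{A'}$, and the multinomial identity counting the number of ways to refine is exactly what makes the two uniform-extension operators compose correctly.

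Property~2 ($M_{B}\phi'_{C}\delta_{\pi}=M_{B}\phi'_{A\cup B}\delta_{\pi}$) is the localization statement: extending $\pi$ to all of $C$ and then marginalizing onto $B$ only depends on the intermediate extension to $A\cup B$. The cleanest route is to combine Property~1 with $A':=A\cup B$, writing $\phi'_{C}\delta_{\pi}=\phi'_{C}\phi'_{A\cup B}\delta_{\pi}$, and then to argue that $M_{B}\phi'_{C}=M_{B}$ on $\Space{\Rank{A\cup B}}$, i.e.\ that once a ranking lives on a subset containing $B$, extending it further and then marginalizing back to $B$ leaves it unchanged. That last fact is the $\phi'$-analogue of the relation $M_{B}M_{C}=M_{B}$ from \eqref{eq:projective-system}, and it follows because the uniform extension to $C$ averages over positions of the new items in a way that integrates out cleanly under $M_{B}$.

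Property~3 is the genuinely hard part and the main obstacle. Here I must open up $M_{B}\phi'_{A\cup B}\delta_{\pi}$ explicitly and reorganize the sum over extensions-then-marginalizations into the stated sum indexed by pairs $(A_{1},B_{1})$ with $A_{1}\subset A\setminus B$, $B_{1}\subset B\setminus A$ and $\vert A_{1}\vert=\vert B_{1}\vert$. The combinatorial content is that when one extends $\pi$ to $A\cup B$ and marginalizes onto $B$, the items of $A\setminus B$ get deleted, but some subset $B_{1}$ of the ``new'' items of $B\setminus A$ can land in the positions formerly occupied by a matching subset $A_{1}\subset A\setminus B$; the operator $T_{A_{1}\to\pi'}$ records precisely this substitution. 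I would carry out the bookkeeping by tracking, for each term $\delta_{\sigma}$ in $\phi'_{A\cup B}\delta_{\pi}$, which items of $A\setminus B$ sit between consecutive items of $A\cap B$ and are to be replaced, and the normalizing constant $\lambda_{t}=(\vert A\vert!\,\vert B\vert!)/(\vert A\cup B\vert!\,(\vert A\cap B\vert+t)!)$ should emerge from counting the shuffles compatible with a fixed substitution pattern of size $t=\vert B_{1}\vert$. The delicate point is confirming that the interleaving multiplicities collapse to exactly $\lambda_{\vert B_{1}\vert}$ independently of the particular $A_{1},B_{1},\pi'$, which is where a Vandermonde/multinomial identity (in the spirit of Lemma~\ref{lem:combinatorial-identity}) will be needed.

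Finally Property~4 ($T_{\tau}\phi'_{A}=\phi'_{\tau(A)}T_{\tau}$) is the $\Sn$-equivariance of the alternative embedding, and it is formal and immediate: the same argument as the first half of Proposition~\ref{prop:translation-invariance} applies, since $\tau$ maps the set $\{\sigma\in\Rank{A}\mid\pi\subset\sigma\}$ bijectively onto $\{\sigma\in\Rank{\tau(A)}\mid\tau(\pi)\subset\sigma\}$ and preserves cardinalities, so the uniform-extension coefficients transport verbatim. I would state it last and dispatch it in a line. Once these four properties are in hand, Theorem~\ref{th:connection-other-MRA} follows by feeding Property~3 into the identities $V^{0}=\phi'_{\n}(H^{0})$, $W^{k}=\phi'_{\n}(H^{k})$, using that $\phi'_{\n}$ is the representation isomorphism supplied by Proposition~\ref{prop:connection-Fourier-MRA} combined with the injectivity that Property~3 guarantees on each $H_{B}$.
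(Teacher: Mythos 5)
Your treatment of Properties 1, 2 and 4 matches the paper's: Property 1 by composing the uniform-extension operators and checking coefficients, Property 2 by writing $M_{B}=M_{B}M_{A\cup B}$ and using that $M_{A\cup B}\phi'_{C}$ acts as the identity on $\Space{\Rank{A\cup B}}$, and Property 4 by transporting the set of linear extensions under $\tau$ exactly as in Proposition \ref{prop:translation-invariance}. Those parts are fine.

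Property 3 is where the proof actually lives, and there your proposal stops short of an argument. You correctly identify that everything reduces to checking, for each $\rho\in\Rank{B}$, that the interleaving multiplicities on the two sides agree, and you defer this to an unspecified ``Vandermonde/multinomial identity in the spirit of Lemma \ref{lem:combinatorial-identity}.'' But that identity is used in the paper only for the operator $\phi_{C}$ (Lemma \ref{lem:commutativity}); it plays no role here. The paper's route is different in kind: it factors $M_{A'}$ as a composition of deletion operators $\varrho_{a}$ and $\phi'_{B}$ as a normalized composition of insertion operators $\varrho^{\ast}_{b}$, observes that the constants $\lambda_{t}$ cancel exactly against the normalizations of $\phi'_{B}$ and $\phi'_{A\cup B}$ so that Property 3 becomes a normalization-free commutation identity between words in the $\varrho$'s and $\varrho^{\ast}$'s, and then proves that identity by a double induction whose seed is the single-letter relation $\varrho_{a}\varrho^{\ast}_{b}=\varrho^{\ast}_{b}\varrho_{a}+T_{\{a\}\rightarrow b}$ (inserting $b$ adjacent to $a$ on either side and then deleting $a$ produces the substitution term). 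Your direct-counting plan would instead require exhibiting, for each $\rho$, a bijection between the linear extensions $\sigma$ of $\pi$ on $A\cup B$ with $\sigma_{\vert B}=\rho$ and the compatible triples $(A_{1},B_{1},\pi')$; that statement is true (the constants do cancel to make it an unweighted count), but it is the whole difficulty of the lemma, and you supply neither the bijection nor any substitute mechanism. As written, the ``delicate point'' you flag is not a detail to be checked but the entire content of Property 3, so the proposal has a genuine gap there. (A minor side remark: the injectivity of $\phi'_{\n}$ on each $H_{A}$ that you invoke at the end comes from Property 2, via $0=M_{A}\phi'_{\n}F=F$, not from Property 3.)
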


\begin{proof}
	We prove the properties in the order.\\
	$1.$ Let $A',C\in\Subsets{\n}$ such that $A\subset A'\subset C$. One has
	\[
	\phi'_{C}\phi'_{A'}\delta_{\pi} = \frac{\vert A\vert !}{\vert A'\vert !}\phi'_{C}\sum_{\substack{\pi'\in\Rank{A'} \\ \pi\subset\sigma}}\delta_{\pi'} = \frac{\vert A\vert !}{\vert A'\vert !}\frac{\vert A'\vert !}{\vert C\vert !}\sum_{\substack{\pi'\in\Rank{A'} \\ \pi\subset\pi'}}\sum_{\substack{\sigma\in\Rank{C} \\ \pi'\subset\sigma}}\delta_{\sigma} = \frac{\vert A\vert !}{\vert C\vert !}\sum_{\substack{\sigma\in\Rank{C} \\ \pi\subset\sigma}}\delta_{\sigma} = \phi'_{C}\delta_{\pi}.
	\]
	$2.$ Let $B,C\in\Subsets{\n}$ such that $A\cup B \subset C$. By definition of the marginal operator and by Property $1.$, one has
	\[
	M_{B}\phi'_{C}\delta_{\pi} = M_{B}M_{A\cup B}\phi'_{C}\phi'_{A\cup B}\delta_{\pi}.
	\]
	Now, for any $A'\in\Subsets{C}$ and $\pi'\in\Rank{A'}$, it is clear that $M_{A'}\phi'_{C}\delta_{\pi'} = \delta_{\pi'}$. Applied to $A\cup B$, this concludes the proof of Property $2.$\\
	$3.$ This is certainly the longest part of the proof. We introduce two new operators. First, the deletion operator
	\[
	\varrho_{a} : \delta_{\pi} \mapsto \delta_{\pi\setminus\{a\}} \qquad\text{for } a\in c(\pi),
	\]
	where $\pi\setminus\{a\}$ is the ranking obtained by deleting the item $a$ in $\pi$. Second, the insertion operator 
	\[
	\varrho^{\ast}_{b} : \delta_{\pi} \rightarrow \sum_{i=1}^{\vert\pi\vert + 1}\delta_{\pi\lhd_{i}b} \qquad\text{for } b\not\in c(\pi),
	\]
	where $\pi\lhd_{i}b$ is the ranking obtained by inserting item $b$ at the $i^{th}$ position. Then for $A'\in\Subsets{A}$ with $A\setminus A' = \{a_{1},\dots, a_{r}\}$, and $B$ such that $A\subset B$ with $B\setminus A = \{b_{1},\dots,b_{s}\}$, one has
	\[
	M_{A'}\delta_{\pi} = \varrho_{a_{1}}\dots\varrho_{a_{r}}\delta_{\pi} \qquad\text{and}\qquad \phi'_{B}\delta_{\pi} = \frac{\vert A\vert !}{\vert B\vert !}\varrho^{\ast}_{b_{1}}\dots\varrho^{\ast}_{b_{s}}.
	\]
	Property $3.$ is then equivalent for any $B\in\Subsets{\n}$ to
	\begin{multline}
	\label{eq:hyp-de-rec-0}
	\varrho_{a_{1}}\dots\varrho_{a_{r}}\varrho^{\ast}_{b_{1}}\dots\varrho^{\ast}_{b_{s}}\delta_{\pi} = \\
	\sum_{k=0}^{\min(r,s)}\sum_{\substack{A_{1}\sqcup\{a_{i_{1}},\dots,a_{i_{r-k}}\}=\{a_{1},\dots,a_{r}\} \\ B_{1}\sqcup\{b_{j_{1}},\dots, b_{j_{s-k}}\} = \{b_{1},\dots,b_{s}\} \\ \vert A_{1}\vert = \vert B_{1}\vert = k}}
	\sum_{\pi'\in\Rank{B_{1}}}
	\varrho^{\ast}_{b_{j_{1}}}\dots\varrho^{\ast}_{b_{j_{s-k}}}\varrho_{a_{i_{1}}}\dots\varrho_{a_{i_{r-k}}} T_{A_{1}\rightarrow\pi'}\delta_{\pi},
	\end{multline}
	where $\{a_{1},\dots, a_{r}\} = A\setminus B$ and $\{b_{1},\dots,b_{s}\} = B\setminus A$. We prove Formula \eqref{eq:hyp-de-rec-0} in three steps. First for $r = s = 1$, one has
	\[
	\varrho_{a}\varrho^{\ast}_{b}\delta_{\pi} = \sum_{i=1}^{\vert A\vert + 1}\varrho_{a}\delta_{\pi\lhd_{i}b} = \delta_{\pi\lhd_{1}b\setminus\{a\}} + \dots + \delta_{\pi\lhd_{\pi(a)}b\setminus\{a\}} + \delta_{\pi\lhd_{\pi(a)+1}b\setminus\{a\}} + \dots + \delta_{\pi\lhd_{1}b\setminus\{a\}}.
	\]
	The ranking $\pi\lhd_{\pi(a)}b\setminus\{a\}$ is the ranking obtained by inserting $b$ at the left of $a$ in $\pi$ and then by deleting $a$. The ranking $\pi\lhd_{\pi(a)+1}b\setminus\{a\}$ is the ranking obtained by inserting $b$ at the right of $a$ in $\pi$ and then by deleting $a$. It is clear that they are both equal to the ranking $\pi_{\{a\}\rightarrow b}$ obtained by changing $a$ to $b$ in $\pi$. Hence one has
	\[
	\varrho_{a}\varrho^{\ast}_{b}\delta_{\pi} = \varrho^{\ast}_{b}\varrho_{a}\delta_{\pi} + T_{\{a\}\rightarrow b}\delta_{\pi}
	\]
	and Formula \eqref{eq:hyp-de-rec-0} is satisfied. We now show by induction on $s\in\{1,\dots,\vert B\setminus A\vert\}$ that
	\begin{equation}
	\label{eq:hyp-de-rec-1}
	\varrho_{a}\varrho_{b_{1}}^{\ast}\dots\varrho_{b_{s}}^{\ast}\delta_{\pi} = \varrho_{b_{1}}^{\ast}\dots\varrho_{b_{s}}^{\ast}\varrho_{a}\delta_{\pi} + \sum_{i=1}^{s}\varrho_{b_{1}}^{\ast}\dots\varrho_{b_{i-1}}^{\ast}\varrho_{b_{i+1}}^{\ast}\dots\varrho_{b_{s}}^{\ast}T_{\{a\}\rightarrow b_{i}}\delta_{\pi}.
	\end{equation}
	Notice that for any $A_{1}\varsubsetneq A\setminus B$, $\pi'\in\Rank{B_{1}}$ with $B_{1}\subset B\setminus A$, $a\in A\setminus (A_{1}\sqcup B)$ and $b\in B\setminus(A\sqcup B_{1})$ one clearly has
	\begin{equation}
	\label{eq:commutation}
	\varrho_{a}T_{A_{1}\rightarrow \pi'}\delta_{\pi} = T_{A_{1}\rightarrow \pi'}\varrho_{a}\delta_{\pi} \qquad\text{\qquad } \varrho^{\ast}_{b}T_{A_{1}\rightarrow \pi'}\delta_{\pi} = T_{A_{1}\rightarrow \pi'}\varrho^{\ast}_{b}\delta_{\pi}.
	\end{equation}
	Therefore, assuming \eqref{eq:hyp-de-rec-1} true for $s\leq \vert B\setminus A\vert - 1$, one has
	\begin{align*}
	\varrho_{a}\varrho_{b_{1}}^{\ast}\dots\varrho_{b_{s+1}}^{\ast}\delta_{\pi} 
	&= \varrho_{a}\varrho_{b_{1}}^{\ast}\dots\varrho_{b_{s}}^{\ast}\left(\varrho_{b_{s+1}}^{\ast}\delta_{\pi}\right)\\
	&= \varrho_{b_{1}}^{\ast}\dots\varrho_{b_{s}}^{\ast}\varrho_{a}\left(\varrho_{b_{s+1}}^{\ast}\delta_{\pi}\right) + \sum_{i=1}^{s}\varrho_{b_{1}}^{\ast}\dots\varrho_{b_{i-1}}^{\ast}\varrho_{b_{i+1}}^{\ast}\dots\varrho_{b_{s}}^{\ast}T_{\{a\}\rightarrow b_{i}}\left(\varrho_{b_{s+1}}^{\ast}\delta_{\pi}\right)\\
	&= \varrho_{b_{1}}^{\ast}\dots\varrho_{b_{s+1}}^{\ast}\varrho_{a}\delta_{\pi} + \varrho_{b_{1}}^{\ast}\dots\varrho_{b_{s}}^{\ast}T_{\{a\}\rightarrow b_{s+1}} + \sum_{i=1}^{s}\varrho_{b_{1}}^{\ast}\dots\varrho_{b_{i-1}}^{\ast}\varrho_{b_{i+1}}^{\ast}\dots\varrho_{b_{s+1}}^{\ast}T_{\{a\}\rightarrow b_{i}}\delta_{\pi}\\
	&= \varrho_{b_{1}}^{\ast}\dots\varrho_{b_{s+1}}^{\ast}\varrho_{a}\delta_{\pi} + \sum_{i=1}^{s+1}\varrho_{b_{1}}^{\ast}\dots\varrho_{b_{i-1}}^{\ast}\varrho_{b_{i+1}}^{\ast}\dots\varrho_{b_{s+1}}^{\ast}T_{\{a\}\rightarrow b_{i}}\delta_{\pi},
	\end{align*}
	which concludes the proof of \eqref{eq:hyp-de-rec-1}. At last, we show \eqref{eq:hyp-de-rec-0} by induction on $r\in\{1,\dots, \vert A\setminus B\vert\}$. Assuming it true for $r\leq \vert A\setminus B\vert - 1$, one has
	\begin{align*}
	&\varrho_{a_{1}}\dots\varrho_{a_{r+1}}\varrho^{\ast}_{b_{1}}\dots\varrho^{\ast}_{b_{s}}\delta_{\pi}\\
	&= \varrho_{a_{r+1}}\left[\varrho_{a_{1}}\dots\varrho_{a_{r}}\varrho^{\ast}_{b_{1}}\dots\varrho^{\ast}_{b_{s}}\delta_{\pi}\right]\\
	&= \varrho_{a_{r+1}}\left[
	\sum_{k=0}^{\min(r,s)}\sum_{\substack{A_{1}\sqcup\{a_{i_{1}},\dots,a_{i_{r-k}}\}=\{a_{1},\dots,a_{r}\} \\ B_{1}\sqcup\{b_{j_{1}},\dots, b_{j_{s-k}}\} = \{b_{1},\dots,b_{s}\} \\ \vert A_{1}\vert = \vert B_{1}\vert = k}}
	\sum_{\pi'\in\Rank{B_{1}}}
	\varrho^{\ast}_{b_{j_{1}}}\dots\varrho^{\ast}_{b_{j_{s-k}}}\varrho_{a_{i_{1}}}\dots\varrho_{a_{i_{r-k}}} T_{A_{1}\rightarrow\pi'}\delta_{\pi}
	\right].
	\end{align*}
	If $r\leq s$, Equations \eqref{eq:hyp-de-rec-1} and \eqref{eq:commutation} give
	\begin{align*}
	&\varrho_{a_{1}}\dots\varrho_{a_{r+1}}\varrho^{\ast}_{b_{1}}\dots\varrho^{\ast}_{b_{s}}\delta_{\pi}\\
	&= 
	\sum_{k=0}^{r}\sum_{\substack{A_{1}\sqcup\{a_{i_{1}},\dots,a_{i_{r-k}}\}=\{a_{1},\dots,a_{r}\} \\ B_{1}\sqcup\{b_{j_{1}},\dots, b_{j_{s-k}}\} = \{b_{1},\dots,b_{s}\} \\ \vert A_{1}\vert = \vert B_{1}\vert = k}}
	\sum_{\pi'\in\Rank{B_{1}}}
	\left[\varrho^{\ast}_{b_{j_{1}}}\dots\varrho^{\ast}_{b_{j_{s-k}}}\varrho_{a_{r+1}}\varrho_{a_{i_{1}}}\dots\varrho_{a_{i_{r-k}}} T_{A_{1}\rightarrow\pi'}\delta_{\pi}\right. \\
	&\qquad\qquad\qquad\qquad\left. + \sum_{i=1}^{s-k}\varrho^{\ast}_{b_{j_{1}}}\dots\varrho^{\ast}_{b_{j_{i-1}}}\varrho^{\ast}_{b_{j_{i+1}}}\dots\varrho^{\ast}_{b_{j_{s-k}}}T_{\{a_{r+1}\}\rightarrow b_{j_{i}}}\varrho_{a_{i_{1}}}\dots\varrho_{a_{i_{r-k}}} T_{A_{1}\rightarrow\pi'}\delta_{\pi}\right]\\
	&= 
	\sum_{k=0}^{r}\sum_{\substack{A_{1}\sqcup\{a_{i_{1}},\dots,a_{i_{r+1-k}}\}=\{a_{1},\dots,a_{r+1}\} \\ B_{1}\sqcup\{b_{j_{1}},\dots, b_{j_{s-k}}\} = \{b_{1},\dots,b_{s}\} \\ \vert A_{1}\vert = \vert B_{1}\vert = k \\ a_{r+1}\not\in A_{1}}}
	\sum_{\pi'\in\Rank{B_{1}}}
	\varrho^{\ast}_{b_{j_{1}}}\dots\varrho^{\ast}_{b_{j_{s-k}}}\varrho_{a_{i_{1}}}\dots\varrho_{a_{i_{r+1-k}}} T_{A_{1}\rightarrow\pi'}\delta_{\pi} \\
	&+ 
	\sum_{k=1}^{r+1}\sum_{\substack{A_{1}\sqcup\{a_{i_{1}},\dots,a_{i_{r-k}}\}=\{a_{1},\dots,a_{r+1}\} \\ B_{1}\sqcup\{b_{j'_{1}},\dots, b_{j'_{s-k-1}}\} = \{b_{1},\dots,b_{s}\} \\ \vert A_{1}\vert = \vert B_{1}\vert = k+1 \\ a_{r+1}\in A_{1}}}
	\sum_{\pi'\in\Rank{B_{1}}}
	\varrho^{\ast}_{b_{j'_{1}}}\dots\varrho^{\ast}_{b_{j'_{s-k-1}}}\varrho_{a_{i_{1}}}\dots\varrho_{a_{i_{r-k}}} T_{A_{1}\rightarrow\pi'}\delta_{\pi}\\
	&= 
	\sum_{k=0}^{r+1}\sum_{\substack{A_{1}\sqcup\{a_{i_{1}},\dots,a_{i_{r+1-k}}\}=\{a_{1},\dots,a_{r+1}\} \\ B_{1}\sqcup\{b_{j_{1}},\dots, b_{j_{s-k}}\} = \{b_{1},\dots,b_{s}\} \\ \vert A_{1}\vert = \vert B_{1}\vert = k}}
	\sum_{\pi'\in\Rank{B_{1}}}
	\varrho^{\ast}_{b_{j_{1}}}\dots\varrho^{\ast}_{b_{j_{s-k}}}\varrho_{a_{i_{1}}}\dots\varrho_{a_{i_{r+1-k}}} T_{A_{1}\rightarrow\pi'}\delta_{\pi}.
	\end{align*}
	If $s < r$, Equations \eqref{eq:hyp-de-rec-1} and \eqref{eq:commutation} give
	\begin{align*}
	&\varrho_{a_{1}}\dots\varrho_{a_{r+1}}\varrho^{\ast}_{b_{1}}\dots\varrho^{\ast}_{b_{s}}\delta_{\pi}\\
	&= 
	\sum_{k=0}^{s-1}\sum_{\substack{A_{1}\sqcup\{a_{i_{1}},\dots,a_{i_{r-k}}\}=\{a_{1},\dots,a_{r}\} \\ B_{1}\sqcup\{b_{j_{1}},\dots, b_{j_{s-k}}\} = \{b_{1},\dots,b_{s}\} \\ \vert A_{1}\vert = \vert B_{1}\vert = k}}
	\sum_{\pi'\in\Rank{B_{1}}}
	\left[\varrho^{\ast}_{b_{j_{1}}}\dots\varrho^{\ast}_{b_{j_{s-k}}}\varrho_{a_{r+1}}\varrho_{a_{i_{1}}}\dots\varrho_{a_{i_{r-k}}} T_{A_{1}\rightarrow\pi'}\delta_{\pi}\right. \\
	&\left. + \sum_{i=1}^{s-k}\varrho^{\ast}_{b_{j_{1}}}\dots\varrho^{\ast}_{b_{j_{i-1}}}\varrho^{\ast}_{b_{j_{i+1}}}\dots\varrho^{\ast}_{b_{j_{s-k}}}T_{\{a_{r+1}\}\rightarrow b_{j_{i}}}\varrho_{a_{i_{1}}}\dots\varrho_{a_{i_{r-k}}} T_{A_{1}\rightarrow\pi'}\delta_{\pi}\right] \\
	&+ \sum_{\substack{A_{1}\sqcup\{a_{i_{1}},\dots,a_{i_{r-s}}\}=\{a_{1},\dots,a_{r}\} \\ \vert A_{1}\vert = s}}
	\sum_{\pi'\in\Rank{\{b_{1},\dots,b_{s}\}}}\varrho_{a_{r+1}}\varrho_{a_{i_{1}}}\dots\varrho_{a_{i_{r-s}}} T_{A_{1}\rightarrow\pi'}\delta_{\pi}\\
	&= 
	\sum_{k=0}^{s}\sum_{\substack{A_{1}\sqcup\{a_{i_{1}},\dots,a_{i_{r+1-k}}\}=\{a_{1},\dots,a_{r+1}\} \\ B_{1}\sqcup\{b_{j_{1}},\dots, b_{j_{s-k}}\} = \{b_{1},\dots,b_{s}\} \\ \vert A_{1}\vert = \vert B_{1}\vert = k}}
	\sum_{\pi'\in\Rank{B_{1}}}
	\varrho^{\ast}_{b_{j_{1}}}\dots\varrho^{\ast}_{b_{j_{s-k}}}\varrho_{a_{i_{1}}}\dots\varrho_{a_{i_{r+1-k}}} T_{A_{1}\rightarrow\pi'}\delta_{\pi}.
	\end{align*}
	In both cases the proof is concluded.\\
	$4.$ The proof of Property $4.$ is fully analogous to the one of Proposition \ref{prop:translation-invariance}. It is left to the reader.
\end{proof}

Property $3$ from Lemma \ref{lem:properties-alternative-embedding} is the analogue of Lemma \ref{lem:commutativity}. It allows to prove Theorem \ref{th:connection-other-MRA}.

\begin{proof}[Proof of Theorem \ref{th:connection-other-MRA}]
	One clearly has $\phi'_{\n}(H^{0}) = V^{0}$ and $V^{0}\cong S^{(n)}$. Let $k\in\{2,\dots,n\}$ and $A\in\Subsets{\n}$ with $\vert A\vert = k$. We define the space $W_{A}^{k} = W^{k}\cap\Span\{\1{\Sn(\pi)} \;\vert\; \pi\in\Rank{A}\}$. We first prove that $\phi'_{\n}(H_{A})\subset W^{k}_{A}$. Let $F\in H_{A}$ and let $B\in\Subsets{\n}$ with $\vert B\vert \leq k-1$. By definition $\phi'_{\n}(H_{A})\subset\Span\{\1{\Sn(\pi)} \;\vert\; \pi\in\Rank{A}\}$. We then need to prove that $M_{B}\phi'_{\n}F = 0$. Properties $2.$ and $3.$ of Lemma \ref{lem:properties-alternative-embedding} give
	\[
	M_{B}\phi'_{\n}F = M_{B}\phi'_{A\cup B}F = \sum_{\substack{A_{1}\subset A\setminus B \\ B_{1}\subset B\setminus A \\ \vert A_{1}\vert = \vert B_{1}\vert}}\sum_{\pi'\in\Rank{B_{1}}}\phi'_{B}M_{(A\cap B)\sqcup B_{1}}T_{A_{1}\rightarrow\pi'}F.
	\]
	The space $H^{k}$ being stable under translations, one has $T_{A_{1}\rightarrow\pi'}F\in H^{k}$ for any $A_{1}\subset A$ and $\pi'\in\Gamma^{\vert A_{1}\vert}$. Now, for any $B_{1}\subset B\setminus A$, $\vert (A\cap B)\sqcup B_{1}\vert = \vert A\cap B\vert + \vert B_{1}\vert \leq \vert B\vert \leq k-1$. Hence $M_{(A\cap B)\sqcup B_{1}}T_{A_{1}\rightarrow\pi'}F = 0$ and $M_{B}\phi'_{\n}F = 0$. One therefore has $\phi'_{\n}(H_{A})\subset W^{k}_{A}$. In addition, for $F\in H_{A}$ such that $\phi'_{\n}F = 0$, property $2.$ of Lemma \ref{lem:properties-alternative-embedding} gives $0 = M_{A}\phi'_{\n}F = F$. The operator $\phi'_{\n}$ is thus an injection from $H_{A}$ to $W_{A}^{k}$ and thus $\dim W_{A}^{k}\geq d_{k}$ by Theorem \ref{th:topology}. Now, by construction $W^{k} = \bigoplus_{\vert A\vert = k}W_{A}^{k}$, so that
	\[
	n! = \dim\left(V^{0}\oplus\bigoplus_{k=2}^{n}\bigoplus_{\vert A\vert = k}W_{A}^{k}\right) \leq 1 + \sum_{k=2}^{n}\binom{n}{k}d_{k} = n!.
	\]
	Hence all the inequalities are equalities and therefore $\phi'_{\n}(H^{k}) = W^{k}$. Property $4.$ of Lemma \ref{lem:properties-alternative-embedding} then ensures that $W^{k}\cong H^{k}$.
\end{proof}

\section{Technical proofs of Subsection \ref{subsec:pairwise-comparisons}}

The proofs of Theorem \ref{th:explicit-decomposition-H-2} and Proposition \ref{prop:connection-HodgeRank} require the two following lemmas. The proof of the first one is straightforward and left to the reader.

\begin{lemma}
	\label{lem:inner-products}
	For $a,b,c\in\n$ with $b\neq c$ one has
	\[
	\left\langle e_{a},e_{b}\right\rangle =
	\left\{
	\begin{aligned}
	-1 &\qquad\text{if } a \neq b\\
	n-1 &\qquad\text{if } a = b
	\end{aligned}\right.
	\qquad\text{and}\qquad
	\left\langle e_{a},x_{b\succ c}\right\rangle = \left\{
	\begin{aligned}
	1 &\qquad\text{if } a = b\\
	-1 &\qquad \text{if } a = c\\
	0 &\qquad \text{if } a\not\in\{b,c\}
	\end{aligned}\right.
	\]
\end{lemma}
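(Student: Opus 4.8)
The plan is to reduce both inner products to the elementary pairings among the canonical generators $x_{a\succ b}=\delta_{ab}-\delta_{ba}$ and then expand everything by bilinearity. The atomic facts I would record first are these. Since $x_{a\succ b}\in\Space{\Rank{\{a,b\}}}$ and the local spaces $\Space{\Rank{\{a,b\}}}$ sit inside $\Space{\Gn}=\bigoplus_{A}\Space{\Rank{A}}$ as summands supported on the pairwise disjoint sets $\Rank{\{a,b\}}$, one has $\langle x_{a\succ b},x_{c\succ d}\rangle=0$ whenever $\{a,b\}\neq\{c,d\}$. Within a single pair, the relation $x_{b\succ a}=-x_{a\succ b}$ together with the orthonormality of the Dirac functions $\delta_{ab}$ and $\delta_{ba}$ fixes the self-pairing of a generator on $\{a,b\}$ and hence, up to sign, every within-pair pairing. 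These few identities are the only computational input needed.

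For the second formula I would write $e_a=\sum_{d\neq a}x_{a\succ d}$ and expand $\langle e_a,x_{b\succ c}\rangle=\sum_{d\neq a}\langle x_{a\succ d},x_{b\succ c}\rangle$. Cross-pair orthogonality annihilates every term except those with $\{a,d\}=\{b,c\}$, so I split into cases on whether $a$ lies in $\{b,c\}$. If $a=b$, the only surviving index is $d=c$, and the term is the self-pairing of $x_{a\succ c}$, giving $1$; if $a=c$, the only surviving index is $d=b$, and since $x_{a\succ b}=x_{c\succ b}=-x_{b\succ c}$ the sign flips, giving $-1$; if $a\notin\{b,c\}$ no index survives, so the value is $0$. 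This produces the three claimed values.

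Finally, rather than expanding $e_a$ and $e_b$ simultaneously for the first formula, I would derive it from the second one already in hand. Writing $e_b=\sum_{c\neq b}x_{b\succ c}$ gives $\langle e_a,e_b\rangle=\sum_{c\neq b}\langle e_a,x_{b\succ c}\rangle$, and each summand is evaluated by the second formula: when $a=b$ every one of the $n-1$ terms equals $1$, yielding $n-1$; when $a\neq b$ the single index $c=a$ contributes $-1$ while all others contribute $0$, yielding $-1$. This route has the added virtue of making the internal consistency of the two displayed formulas manifest. The only genuine obstacle is careful bookkeeping of the index matching and of the orientation signs coming from $x_{b\succ a}=-x_{a\succ b}$, together with fixing the normalization convention for the generators so that the self-pairing is the unit value used in the statement; there is no conceptual difficulty, which is precisely why the paper leaves this verification to the reader.
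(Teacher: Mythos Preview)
Your approach is exactly the intended straightforward verification the paper leaves to the reader: expand by bilinearity, use that the generators $x_{a\succ b}$ supported on distinct pairs are orthogonal (they live in different summands $\Space{\Rank{\{a,b\}}}$), and exploit the antisymmetry $x_{b\succ a}=-x_{a\succ b}$ for the within-pair sign. Your idea of deriving the first formula from the second by writing $e_b=\sum_{c\neq b}x_{b\succ c}$ is a clean way to organize the bookkeeping.

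One point worth making explicit is the normalization, which you rightly flag. With the ambient inner product on $\Space{\Gn}$ (Dirac functions orthonormal), one has $\langle x_{a\succ b},x_{a\succ b}\rangle = \langle \delta_{ab}-\delta_{ba},\delta_{ab}-\delta_{ba}\rangle = 2$, so every value in the lemma would come out doubled. The values as stated correspond to the inner product on $H^2$ for which the canonical basis $(x_{a\succ b})_{1\leq a<b\leq n}$ is orthonormal. Either reading is fine for the sole application of the lemma (the orthogonality of $H_1^2$ and $H_2^2$ in the proof of Theorem~\ref{th:explicit-decomposition-H-2}), since the two inner products differ by a global factor of~$2$; your hedge about ``fixing the normalization convention'' is precisely the right observation.
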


\begin{lemma}
	\label{lem:other-expressions}
	For $a,b\in\n$ with $a\neq b$ and $s\in\mathbb{R}^{n}$ one has
	\[
	\sum_{1\leq i < j \leq n}(s_{i} - s_{j})x_{i\succ j} = \sum_{i\in\n}s_{i}e_{i} \qquad\text{and}\qquad f_{(a,b)} = n\, x_{a\succ b} + e_{b} - e_{a}.
	\]
\end{lemma}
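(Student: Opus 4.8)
The plan is to derive both identities directly from the definitions of $e_a$, $f_{(a,b)}$ and the single structural fact that the generators are antisymmetric, namely $x_{b\succ a} = \delta_{ba} - \delta_{ab} = -x_{a\succ b}$. Neither identity requires anything beyond expanding the defining sums from Equation \eqref{eq:subspaces-H-2} and regrouping, so the whole argument is bookkeeping anchored on this sign relation; there is no genuine obstacle, only the need to keep the index conventions consistent.

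For the first identity I would start from the right-hand side and expand $\sum_{i\in\n}s_{i}e_{i} = \sum_{i\in\n}s_{i}\sum_{b\neq i}x_{i\succ b}$, viewing it as a sum over \emph{ordered} pairs $(i,b)$ with $i\neq b$. I would then regroup these terms by unordered pair: for a fixed pair $\{i,j\}$ with $i<j$, the two contributing ordered pairs $(i,j)$ and $(j,i)$ produce $s_{i}x_{i\succ j} + s_{j}x_{j\succ i}$, which collapses by antisymmetry to $(s_{i}-s_{j})x_{i\succ j}$. Summing over all $1\leq i<j\leq n$ then yields exactly $\sum_{1\leq i<j\leq n}(s_{i}-s_{j})x_{i\succ j}$, establishing the first claim.

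For the second identity I would split the defining sum into three pieces,
\[
f_{(a,b)} = \sum_{c\notin\{a,b\}}x_{a\succ b} \;+\; \sum_{c\notin\{a,b\}}x_{b\succ c} \;+\; \sum_{c\notin\{a,b\}}x_{c\succ a}.
\]
The first piece is simply $(n-2)x_{a\succ b}$, since there are $n-2$ admissible indices $c$. For the second piece I would complete the index set to recover $e_{b} = \sum_{c\neq b}x_{b\succ c}$, so that $\sum_{c\notin\{a,b\}}x_{b\succ c} = e_{b} - x_{b\succ a} = e_{b} + x_{a\succ b}$ after applying antisymmetry to the omitted term. For the third piece I would first rewrite $x_{c\succ a} = -x_{a\succ c}$ and then complete to $e_{a} = \sum_{c\neq a}x_{a\succ c}$, obtaining $\sum_{c\notin\{a,b\}}x_{c\succ a} = -\bigl(e_{a} - x_{a\succ b}\bigr) = -e_{a} + x_{a\succ b}$. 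Adding the three contributions gives $(n-2+1+1)x_{a\succ b} + e_{b} - e_{a} = n\,x_{a\succ b} + e_{b} - e_{a}$, as required, which completes the proof.
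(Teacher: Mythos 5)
Your proof is correct and follows essentially the same route as the paper: both identities are obtained by expanding the defining sums and using the antisymmetry $x_{b\succ a}=-x_{a\succ b}$, with the second identity handled by exactly the same three-way split and completion of the index set. The only cosmetic difference is that you verify the first identity from right to left (grouping ordered pairs into unordered ones) while the paper symmetrizes the left-hand side over ordered pairs; the computation is the same.
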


\begin{proof}
	Recalling that for any $i,j\in\n$ with $i\neq j$, $x_{j\succ i} = -x_{i\succ j}$, straightforward calculations give
	\begin{align*}
	\sum_{1\leq i < j \leq n}(s_{i} - s_{j})x_{i\succ j} = \frac{1}{2}\sum_{1\leq i\neq j\leq n}(s_{i} - s_{j})x_{i\succ j} = \sum_{i\in\n}s_{i}\sum_{j\neq i}x_{i\succ j} + \sum_{j\in\n}s_{j}\sum_{i\neq j}x_{j\succ i} = \sum_{i\in\n}s_{i}e_{i}
	\end{align*}
	and
	\[
	f_{(a,b)} = \sum_{c\not\in\{a,b\}}(x_{a\succ b} + x_{b\succ c} + x_{c\succ a}) = (n-2)x_{a\succ b} + (e_{b} - x_{b\succ a}) - (e_{a} - x_{a\succ b}) = n\, x_{a\succ b} + e_{b} - e_{a}.
	\]
\end{proof}

\begin{proof}[Proof of Theorem \ref{th:explicit-decomposition-H-2}]
	We first show that the spaces $H_{1}^{2}$ and $H_{2}^{2}$ are orthogonal. Let $a,b,c\in\n$ with $b\neq c$. By Lemmas \ref{lem:inner-products} and \ref{lem:other-expressions}, one has
	\[
	\left\langle e_{a}, f_{(b,c)}\right\rangle = n\left\langle e_{a},x_{b\succ c}\right\rangle + \left\langle e_{a},e_{c}\right\rangle - \left\langle e_{a},e_{b}\right\rangle = 
	\left\{
	\begin{aligned}
	n - 1 - (n-1) = 0 &\qquad\text{if } a = b\\
	-n + (n-1) +1 = 0 &\qquad\text{if } a = c\\
	0 -1 +1 = 0 &\qquad\text{if } a\not\in\{b,c\}\\
	\end{aligned}\right.
	.
	\]
	Next we prove that $H_{1}^{2}$ and $H_{2}^{2}$ are both representations of $\Sn$, or equivalently stable under translations. For $a,b\in\n$ with $a\neq b$ and $\tau\in\Sn$ one has by definition $T_{\tau}x_{a\succ b} = x_{\tau(a)\succ \tau(b)}$, so that
	\[
	T_{\tau}e_{a} = \sum_{c\neq a}x_{\tau(a)\succ \tau(c)} = \sum_{c\neq a}x_{\tau(a)\succ c} = e_{\tau(a)}
	\]
	and
	\begin{multline*}
	T_{\tau}f_{(a,b)} = \sum_{c\not\in\{a,b\}}(x_{\tau(a)\succ \tau(b)} + x_{\tau(b)\succ \tau(c)} + x_{\tau(c)\succ \tau(a)})\\
	= \sum_{c\not\in\{a,b\}}T_{\tau}(x_{\tau(a)\succ \tau(b)} + x_{\tau(b)\succ c} + x_{c\succ \tau(a)}) = f_{(\tau(a),\tau(b)}.
	\end{multline*}
	Now, Theorem \ref{th:decomposition-H-k} ensures that $H^{2}\cong S^{(n-1,1)}\oplus S^{(n-2,1,1)}$ as representations of $\Sn$, where $S^{(n-1,1)}$ and $S^{(n-2,1,1)}$ are both irreducible representations. Since $H_{1}^{2}\neq\{0\}$, one then necessarily has $H_{1}^{2}\cong S^{(n-1,1)}$ and $H_{2}^{2}\cong S^{(n-2,1,1)}$ or $H_{2}^{2}\cong S^{(n-1,1)}$ and $H_{1}^{2}\cong S^{(n-2,1,1)}$. To conclude, notice that since $H_{1}^{2} = \Span\{e_{a} \;\vert\; a\in\n\}$, $\dim H_{1}^{2}\leq n < \binom{n-1}{2}$ and one cannot have $H_{1}^{2}\cong S^{(n-2,1,1)}$. Hence the other alternative is true and this concludes the proof.
\end{proof}

\begin{proof}[Proof of Proposition \ref{prop:connection-HodgeRank}]
	Following the notations of \citet{JLYY11}, we denote by $G$ the complete graph on $\n$ and by $K_{G}$ its clique complex. The space of ``edge flows'' on $G$ is defined by $C^{1}(K_{G},\mathbb{R}) := \{(X_{i,j})_{i,j}\in\mathbb{R}^{n\times n} \;\vert\; X_{i,j} = - X_{j,i}\}$. Identifying index $(i,j)$ with $ij$, one clearly has $C^{1}(K_{G},\mathbb{R}) = H^{2}$. The HodgeRank decomposition, established by theorem 2 in \cite{JLYY11}, is then given by 
	\[
	H^{2} = \image(\text{grad})\overset{\perp}{\oplus}\image(\text{curl}^{\ast}) = \image(\text{grad})\overset{\perp}{\oplus}\image(\text{grad})^{\perp},
	\]
	where by definition $\image(\text{grad}) = \{\sum_{1\leq i < j \leq n}(s_{i}-s_{j})x_{i\succ j} \;\vert\; s\in\mathbb{R}^{n}\}$. Now, Lemma \ref{lem:other-expressions} shows that for any $s\in\mathbb{R}^{n}$, an element of the form $\{\sum_{1\leq i < j \leq n}(s_{i}-s_{j})x_{i\succ j}$ is of the form $\sum_{i\in\n}s_{i}e_{i}$ and reciprocally. This means that $\image(\text{grad}) = H_{1}^{2}$, which concludes the proof.
\end{proof}

\bibliographystyle{apalike}
\bibliography{biblio}

\end{document}